\title{ Variation of height in an isogeny class over a function field}
\author{{ Richard {\sc Griffon}} \and { Samuel {\sc Le Fourn}} \and { Fabien {\sc Pazuki}}}	
\date{} 
\titleformat{\subsection}[runin]{\bfseries}{\thesubsection.}{0.2em}{}[.\hspace{0.4em}-- ]        
\titleformat{\section}{\center\Large\bfseries}{\thesection.}{0.2em}{}[]
\definecolor{BrickRed}{RGB}{153, 0, 0}
\newcounter{corocount}
\newtheorem{itheo}{Theorem}
\newtheorem{icoro}{Corollary}
\newtheorem*{itheono}{Theorem}
\newtheorem{theo}{Theorem}[section]
\newtheorem{coro}[theo]{Corollary}
\newtheorem{lemm}[theo]{Lemma}
\newtheorem{prop}[theo]{Proposition}
{\theoremstyle{definition}
	\newtheorem{defi}[theo]{Definition}
	\newtheorem{exple}[theo]{Example}
	\newtheorem{rema}[theo]{Remark}%
}	
\DeclareMathOperator{\DIVIS}{div}
\renewcommand{\div}{\DIVIS}			
\DeclareMathOperator{\Pic}{Pic}
\DeclareMathOperator{\Spec}{Spec}
\DeclareMathOperator{\DEGRE}{deg }
\renewcommand{\deg}{\DEGRE}
\newcommand{\R}{\ensuremath{\mathbb{R}}}
\newcommand{\Z}{\ensuremath{\mathbb{Z}}}
\renewcommand{\P}{\ensuremath{\mathbb{P}}}
\newcommand{\F}{\ensuremath{\mathbb{F}}}
\newcommand{\Ecal}{\mathcal{E}}			
\newcommand{\into}{\hookrightarrow}
\renewcommand{\bar}[1]{\ensuremath{\overline{#1}}}
\newcommand{\usep}{^\mathrm{sep}}
\newcommand{\ie}{\textit{i.e.}{}}
\newcommand{\Frob}{\mathrm{Fr}}
\newcommand{\Ver}{\mathrm{Ver}}
\newcommand{\hmod}{\mathrm{h}_{\mathsf{mod}}}
\newcommand{\hstab}{\mathrm{h}_{\mathsf{st}, K}}
\newcommand{\hdiff}{\mathrm{h}_{\mathsf{diff}}}
\newcommand{\hfal}{\mathrm{h}_{\mathrm{Fal}}}
\newcommand{\hd}{\mathrm{h}_{\mathrm{d}}}
\newcommand{\degins}{\mathrm{deg}_{\mathrm{ins}}}
\renewcommand{\O}{\mathcal{O}}
\newcommand{\Hcal}{\mathcal{H}}
\newcommand{\id}{\mathrm{id}}
\newcommand{\mmu}{\hm{\mu}}
\newcommand{\aalpha}{\hm{\alpha}}
\newcommand{\Lcal}{\mathcal{L}}
\newcommand{\Acal}{\mathcal{A}}
\newcommand{\Bcal}{\mathcal{B}}
\newcommand{\Gcal}{\mathcal{G}}
\newcommand{\Ocal}{\mathcal{O}}
\newcommand{\Fcal}{\mathcal{F}}
\newcommand{\Qcal}{\mathcal{Q}}
\newcommand{\FPPF}{\emph{fppf}}
\DeclareMathOperator{\Lie}{Lie}
\newcommand{\ins}{_{\mathrm{ins}}}
\newcommand{\sep}{_{\mathrm{sep}}}
\newcommand{\itemref}[1]{\hyperref[#1]{(\ref*{#1})}} 
\newsavebox{\pullback}
\sbox\pullback{%
	\begin{tikzpicture}%
		\draw (0.5ex, 0.5ex) -- (2ex,0.5ex);%
		\draw (2ex,0.5ex) -- (2ex,2ex);%
\end{tikzpicture}}
\begin{document}
	\pagestyle{plain}
	\maketitle 
	
	\noindent\hfill\rule{7cm}{0.5pt}\hfill\phantom{.}

	\paragraph{Abstract -- } 
	We give optimal 
        estimates on the variation of the differential and   modular heights within an isogeny class of abelian varieties defined over the function field of a curve (in any characteristic). 
        We also prove a parallelogram inequality for abelian varieties in this context, and deduce corollaries of these results.
	
	\medskip
	
	\noindent{\it Keywords:}   
	Abelian varieties over function fields, 
	Heights, 
	Isogenies. 
	
	\smallskip
	\noindent{\it 2020 Math. Subj. Classification:}  
	11G10, 
	11G35, 
	11G50, 
	11R58, 
	14G17, 
	14G25, 
	14G40, 
	14K02, 
	14K15. 
	
	\noindent\hfill\rule{7cm}{0.5pt}\hfill\phantom{.}
	

\section*{Introduction}

\quad  
Let $k$ be a perfect field and let $C$ be a smooth projective geometrically irreducible curve over $k$, with function field $K := k(C)$. 
For any abelian variety $A$ defined over $K$, we let $\hdiff(A/K)$ denote the differential height of $A$ (see Definition~\ref{defi:hdiff}). 
In the context of abelian varieties over $K$, this measure of height plays the analogous r\^ole as that of the Faltings height in the setting of abelian varieties over number fields.
The general theme of this paper is to understand how the differential heights of two abelian varieties over $K$ linked by an isogeny are related. 

The following result of \cite{GriPaz} treats the case of abelian varieties of dimension $1$ over $K$, and also describes relation between the modular heights of isogenous elliptic curves.
We let $h :  K\to\R$ denote the (logarithmic) Weil height on~${K}$ \ie{}, $h(f)$ is the degree of $f\in K=k(C)$ viewed as a morphism $f:C\to\P^1$. 

\begin{itheono} (\cite{GriPaz}) 
Let $\phi :E \to E'$ be an isogeny between two non-isotrivial elliptic curves $E$, $E'$ defined over~$K$. If $K$ has positive characteristic, assume moreover that $E$ and $E'$ are semi-stable. 
We write $\widehat{\phi}$ for the dual isogeny, and $\deg\ins(-)$ denotes the inseparability degree of an isogeny (to be interpreted as $1$ if $K$ has characteristic~$0$). Then
\begin{enumerate}[(i)]
    \item The differential heights of $E$ and $E'$  satisfy
\begin{equation}\label{ieq:gripaz.hdiff}\tag{$H_d$} 
    \hdiff(E'/K) = \frac{\deg\ins(\phi)}{\degins(\widehat{\phi})}\, \hdiff(E/K)\,.
\end{equation}
\item The heights of the $j$-invariants of $E$ and $E'$ satisfy
\begin{equation}\label{ieq:gripaz.hmod}\tag{$H_m$}
    h\big(j(E')\big) 
    = \frac{\deg\ins(\phi)}{\degins(\widehat{\phi})}\, h\big(j(E)\big)\,.
\end{equation}
\end{enumerate}
\end{itheono}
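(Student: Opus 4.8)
The plan is to reduce both identities to the case where $\phi$ has prime degree, and then to treat separately three types of prime-degree $K$-isogeny: those of degree $\ell\neq p:=\operatorname{char}K$; the relative Frobenius $F_E\colon E\to E^{(p)}$ when $p>0$; and separable isogenies of degree $p$ when $p>0$, the last being dual to one of the second type. For the reduction: any $K$-isogeny $\phi$ factors as a chain $E=E_0\to E_1\to\cdots\to E_r=E'$ of prime-degree $K$-isogenies $\phi_i$, coming from a composition series of the finite $K$-group scheme $\ker\phi$. Since $\deg\ins(-)$ is multiplicative under composition and $\widehat{\psi\circ\chi}=\widehat\chi\circ\widehat\psi$, the ratio $\deg\ins(\phi)/\deg\ins(\widehat\phi)$ is the product of the corresponding ratios for the $\phi_i$, and the two heights telescope along the chain. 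Each $E_i$ is $K$-isogenous to $E$, hence again non-isotrivial (isotriviality is invariant under isogeny) and, when $p>0$, again semi-stable (the reduction type at a place is preserved by a prime-to-$p$ isogeny and by the Frobenius twist). So we may assume $\deg\phi=\ell$ is prime.

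Assume first $\ell\neq p$. Then $\phi$ and $\widehat\phi$ are both separable, so $\deg\ins(\phi)/\deg\ins(\widehat\phi)=1$ and we must show $\hdiff$ and $h\circ j$ are unchanged. For the $j$-invariants I would use the modular curve $X_0(\ell)$: over $k$ it is a smooth projective curve carrying the Atkin--Lehner involution $w_\ell$ (defined away from $\ell$), with $j\circ w_\ell=j'$, where $j,j'\colon X_0(\ell)\to X(1)=\P^1$ send a pair consisting of an elliptic curve $B$ and a cyclic $\ell$-subgroup $H$ to $j(B)$, resp.\ $j(B/H)$. The pair $(E,\ker\phi)$ defines a morphism $f\colon C\to X_0(\ell)$, non-constant because $E$ is, with $j(E)=j\circ f$ and $j(E')=j'\circ f=j\circ(w_\ell\circ f)$ as morphisms $C\to\P^1$; since $w_\ell$ is an automorphism, $\deg(w_\ell\circ f)=\deg f$, so by multiplicativity of degrees $h(j(E'))=\deg(j)\cdot\deg(w_\ell\circ f)=\deg(j)\cdot\deg f=\deg(j\circ f)=h(j(E))$. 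For $\hdiff$ I would use that $\phi$ extends to a finite étale isogeny $\widetilde\phi\colon\mathcal E^0\to(\mathcal E')^0$ of the identity components of the Néron models over $C$ (because $\deg\phi=\ell$ is prime to every residue characteristic); the induced map $\widetilde\phi^*\colon\omega_{\mathcal E'/C}\to\omega_{\mathcal E/C}$ on sheaves of invariant differentials is then an isomorphism of line bundles on $C$, forcing $\hdiff(E/K)=\hdiff(E'/K)$.

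Assume now $\ell=p>0$. A purely inseparable $K$-isogeny of degree $p$ is, up to isomorphism, the relative Frobenius $F_E\colon E\to E^{(p)}$: its kernel is the unique connected order-$p$ subgroup scheme of $E$, which is $\ker F_E$ — here $E$ contains no copy of $\alpha_p$, since a non-isotrivial elliptic curve over $K$ is ordinary (its $j$-invariant is transcendental over $\F_p$, whereas a supersingular one lies in $\F_{p^2}$). Now $E^{(p)}$ is the base change of $E$ along the $p$-power map of $K$, so $j(E^{(p)})=j(E)^p$ in $K$; since $E$ and $E^{(p)}$ are semi-stable, $\hdiff$ and $h\circ j$ differ on them by a factor independent of the curve (both being, up to a universal constant, the degree of the minimal discriminant divisor), so this identity yields $h(j(E^{(p)}))=p\,h(j(E))$ and $\hdiff(E^{(p)}/K)=p\,\hdiff(E/K)$ simultaneously, matching $\deg\ins(F_E)/\deg\ins(\widehat{F_E})=p/1=p$ since $\widehat{F_E}=V_E$ is separable ($E$ being ordinary). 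Finally, a separable $K$-isogeny $\phi$ of degree $p$ has $\widehat\phi$ purely inseparable of degree $p$ (from $\widehat\phi\circ\phi=[p]_E$ and $\deg\ins([p]_E)=p$), hence $\widehat\phi=F_{E'}$ up to isomorphism and $E\cong(E')^{(p)}$; applying the previous case to $E'$ and inverting gives $\hdiff(E'/K)=\tfrac1p\hdiff(E/K)$ and $h(j(E'))=\tfrac1p\,h(j(E))$, matching $\deg\ins(\phi)/\deg\ins(\widehat\phi)=1/p$. In characteristic $0$ only the first two steps occur.

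I expect the main difficulty to lie in this last step: describing prime-degree isogenies over the possibly imperfect field $K$, and controlling the effect of the Frobenius twist on the minimal model. It is exactly here that the two hypotheses are used — non-isotriviality to force $E$ to be ordinary over $K$, so that $V_E$ is separable and the inseparability-degree bookkeeping is as stated; and semi-stability to make $\hdiff$ proportional to $h\circ j$ (equivalently, to reduce $\ord_v\Deltamin$ to $\max(0,-\ord_v j)$ with no correction at places of additive reduction).
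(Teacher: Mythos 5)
The paper quotes this theorem from \cite{GriPaz} without reproving it, so there is no internal proof to compare against; I will assess your sketch against the strategy that the present paper's citations of \cite{GriPaz} reveal. Your proposal is essentially correct and follows the same overall route as \cite{GriPaz}, with one organizational difference. \cite{GriPaz} (via its Proposition~4.7) first establishes a \emph{canonical} global factorization $\phi=\Ver^{(f)}_{E'}\circ\psi\circ\Frob^{(e)}_{E}$ with $\psi$ biseparable and $p^e=\degins(\phi)$, $p^f=\degins(\widehat\phi)$, and then treats the three types of factor (Theorem~5.1 for the biseparable case, Lemma~5.6 for Frobenius). You instead factor $\ker\phi$ through a composition series into prime-degree steps and classify prime-degree steps into three types; these are equivalent for elliptic curves, and your reconstruction of the FV structure (purely inseparable degree-$p$ $\Rightarrow$ Frobenius via ordinarity and the absence of $\alpha_p$; separable degree-$p$ $\Rightarrow$ dual to a Frobenius) is exactly the content of \cite{GriPaz}'s Proposition~4.7 in the prime-degree case. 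The only genuinely different ingredient is your use of $X_0(\ell)$ with the Atkin--Lehner involution to prove invariance of $h(j(\cdot))$ under a prime-to-$p$ isogeny; \cite{GriPaz} appears to argue this directly via Néron models and discriminants (parallel to the $\hdiff$ argument you give, using that a biseparable isogeny induces an isomorphism of Hodge bundles). Both are fine; your moduli-theoretic argument is arguably more conceptual, at the small cost of needing $X_0(\ell)$ as a smooth projective curve over~$k$ (available since $\ell\neq\operatorname{char}k$). Your identification of exactly where the two hypotheses enter — non-isotriviality forces ordinarity, ruling out $\alpha_p$-kernels and making Verschiebung separable, and semi-stability gives $h(j(E))=12\,\hdiff(E/K)=\deg\Delta_{\min}(E)$, which transfers the Frobenius computation between the two heights — matches the role they play in \cite{GriPaz} (the latter identity is precisely \cite[Corollary~3.3]{GriPaz}, cited in this paper's proof of Corollary~\ref{itheo:parineq.ellcurves}).
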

 
These statements are Proposition 5.8 and Theorem A in \cite{GriPaz}, respectively.
The first (and main) goal of this paper is to obtain relations similar to \eqref{ieq:gripaz.hdiff} and \eqref{ieq:gripaz.hmod} for isogenies between higher-dimensional abelian varieties.
If the characteristic of $K$ is $0$ or if we restrict to isogenies of degree coprime to the characteristic, \eqref{ieq:gripaz.hdiff} generalizes rather directly (see the first assertion of Theorem \ref{itheo:hdiff} below). 
If~$K$ has positive characteristic $p$, however, the  higher-dimensional statement  and its proof are  more involved: in that case, indeed, the Frobenius isogeny is not necessarily `indecomposable', thus the inseparability degrees of an isogeny and its dual are, in general, too crude to capture the variation of the height.
\\

In order to state our first result, we thus introduce the following two notions.
For any given non-negative integers  $e, f$,  
 we say that an isogeny $\phi$ between abelian varieties over a field of positive characteristic $p$ is a {\it FV-isogeny of type $(e,f)$} if $\phi$ factorizes, in an arbitrary order, as a composition of $e$ many Frobenius isogenies, $f$ many Verschiebung isogenies,  and any number of separable isogenies whose dual is also separable. 
 (Every isogeny between non-isotrivial elliptic curves is an FV-isogeny by \cite[Proposition 4.7]{GriPaz}).
 
Secondly, define the {\it Frobenius height} $\delta_p(\phi)$ of an isogeny $\phi$ as the Frobenius height  of the kernel of the purely inseparable part $\phi\ins$ of $\phi$ (see Definition~\ref{defi:Frobheight}) \ie{}, if $G$ denotes the kernel of $\phi\ins$, $\delta_p(\phi)$ is the smallest integer $d\geq 0$ such that the $d$-th iterated Frobenius morphism $G\to G^{(p)}\to\dots\to G^{(p^d)}$ is trivial on $G$. 
 In particular, the Frobenius height of a FV-isogeny of type $(e,f)$ between ordinary abelian varieties 
satisfies $p^{\delta_p(\phi)}=\deg(\phi\ins)=p^e$ (see Lemma~\ref{lemm:frob.height.FV}).
For a general  isogeny $\phi:A\to B$, we have $p^{\delta_p(\phi)}\, |\, \deg(\phi\ins)$ and $\deg(\phi\ins) \,|\,p^{\dim(A)\, \delta_p(\phi)}$.  
 
With these definitions at hand, we can now state the first main theorem:
    \begin{itheo}\label{itheo:hdiff}
    Let $\phi : A\to B$ be an isogeny between two abelian varieties defined over $K$.
    \begin{enumerate}[\rm (1)]\setlength{\itemsep}{0em}
    \item\label{item:hdiff.a}
    If $K$ has characteristic $0$, or if the characteristic of $K$ does not divide $\deg\phi$, then
    \begin{equation}\label{ieq:char0}
        \hdiff(A/K) = \hdiff(B/K)\,.
    \end{equation}
    \item\label{item:hdiff.b} If $K$ has positive characteristic $p$, if $A$ is semi-stable over $K$, and if $\phi$ is a FV-isogeny of type $(e,f)$, then 
    \begin{equation}\label{ieq:charp.FVisog}
     \hdiff(B/K) = p^{e-f}\,\hdiff(A/K)\,.
	\end{equation}
	\item\label{item:hdiff.c}  If $K$ has positive characteristic $p$, if $A$ is ordinary and semi-stable over $K$, then 
    \begin{equation}\label{ieq:charp.gen}
        p^{-\delta_p(\widehat{\phi})}\,\hdiff(A/K) \leq \hdiff(B/K) \leq p^{\delta_p(\phi)}\,\hdiff(A/K)\,,   
    \end{equation}
    where $\delta_p(-)$ denotes the Frobenius height of an isogeny
    (see section~\ref{sec:Frob.height} for a definition).
    \end{enumerate}
    \end{itheo}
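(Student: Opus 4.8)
\medskip\noindent\textbf{Strategy of proof.}
The plan is to reduce every assertion to the behaviour of the Hodge bundle $\omega_{\mathcal{A}/C}:=e^{*}\Omega^{1}_{\mathcal{A}/C}$ of the Néron model $\mathcal{A}$ of $A$ over $C$, using that $\hdiff(A/K)$ is the degree over $C$ of $\det\omega_{\mathcal{A}/C}$, together with the fact that semi-stability, ordinariness and the reduction type are invariant under isogeny, under the Frobenius twist $A\mapsto A^{(p)}=A\otimes_{K,\Frob}K$, and under passage to the dual abelian variety. For part~(1), I would set $n=\deg\phi$, which is invertible in $\mathcal{O}_{C}$ by hypothesis, and choose an isogeny $\psi\colon B\to A$ with $\psi\circ\phi=[n]_{A}$ and $\phi\circ\psi=[n]_{B}$; both maps extend to the Néron models, and on Hodge bundles the composites $\phi^{*}\circ\psi^{*}$ and $\psi^{*}\circ\phi^{*}$ are multiplication by $n$, hence isomorphisms. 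Therefore $\phi^{*}\colon\omega_{\mathcal{B}/C}\to\omega_{\mathcal{A}/C}$ is itself an isomorphism of vector bundles, so the two heights coincide and \eqref{ieq:char0} holds. In particular every isogeny of degree prime to $p$ preserves $\hdiff$, a fact I reuse below.

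For parts~(2) and~(3) I would first isolate two lemmas. \emph{Lemma A:} for $A$ semi-stable, $\hdiff(A^{(p^{n})}/K)=p^{n}\,\hdiff(A/K)$; here $A^{(p^{n})}$ is obtained by pulling $A$ back along the $n$-fold relative Frobenius $C\to C^{(p^{n})}$, a finite morphism of degree $p^{n}$, and — crucially, because the identity component of the Néron model of a semi-stable abelian variety is semi-abelian and hence commutes with arbitrary base change — $\omega_{\mathcal{A}/C}$ pulls back to a bundle whose degree is multiplied by $p^{n}$. \emph{Lemma B:} for a separable isogeny $\psi\colon X\to Y$ the differential $d\psi$ is an isomorphism, so $\psi^{*}\colon\omega_{\mathcal{Y}/C}\to\omega_{\mathcal{X}/C}$ is injective with torsion cokernel, whence $\hdiff(X/K)\geq\hdiff(Y/K)$, with equality when $\widehat{\psi}$ is separable as well. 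Part~(2) then follows by bookkeeping: by hypothesis $\phi$ is a composition of $e$ Frobenius isogenies, $f$ Verschiebung isogenies (each, by definition, of the shape $Z^{(p)}\to Z$) and isogenies that are separable with separable dual, and every abelian variety occurring along the way is again semi-stable; following $\hdiff$ through the composition, each prime-to-$p$ step is neutral (part~(1)), each Frobenius step $Z\to Z^{(p)}$ multiplies $\hdiff$ by $p$ and each Verschiebung step $Z^{(p)}\to Z$ divides it by $p$ (Lemma~A in both cases, since $\hdiff(Z^{(p)}/K)=p\,\hdiff(Z/K)$). This produces the factor $p^{e-f}$ and proves \eqref{ieq:charp.FVisog}.

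For part~(3) I need two more inputs: \emph{(i)} the self-duality $\hdiff(A/K)=\hdiff(\widehat{A}/K)$ (the function-field analogue of $\hfal(A)=\hfal(\widehat{A})$, which comes from the triviality of the determinant of the Hodge/de Rham bundle of an abelian scheme, extended to the semi-stable case), and \emph{(ii)} a monotonicity statement: a purely inseparable isogeny $\psi\colon X\to Y$ between ordinary semi-stable abelian varieties satisfies $\hdiff(X/K)\leq\hdiff(Y/K)$. For \emph{(ii)}, $\ker\psi$ is a connected finite subgroup scheme of $X[p^{\infty}]^{0}$, which is of multiplicative type because $X$ is ordinary; hence $(\ker\psi)^{D}$ is étale, so $\widehat{\psi}\colon\widehat{Y}\to\widehat{X}$ is separable, Lemma~B gives $\hdiff(\widehat{Y}/K)\geq\hdiff(\widehat{X}/K)$, and \emph{(i)} yields the claim. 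Now I would factor $\phi=\phi_{\mathrm{sep}}\circ\phi_{\mathrm{ins}}$ with $\phi_{\mathrm{ins}}\colon A\to A':=A/(\ker\phi)^{0}$ purely inseparable and $\phi_{\mathrm{sep}}\colon A'\to B$ separable (after first removing the prime-to-$p$ part of $\phi_{\mathrm{sep}}$, which changes nothing), and set $\delta=\delta_{p}(\phi)$, the Frobenius height of $(\ker\phi)^{0}$. Since $(\ker\phi)^{0}\subseteq\ker F^{\delta}_{A}$, the $\delta$-th Frobenius factors as $A\xrightarrow{\phi_{\mathrm{ins}}}A'\xrightarrow{\rho}A^{(p^{\delta})}$ with $\rho$ purely inseparable, and combining Lemma~B (for $\phi_{\mathrm{sep}}$), monotonicity (for $\rho$) and Lemma~A gives
\[
 \hdiff(B/K)\ \leq\ \hdiff(A'/K)\ \leq\ \hdiff\big(A^{(p^{\delta})}/K\big)\ =\ p^{\delta}\,\hdiff(A/K),
\]
which is the upper bound in \eqref{ieq:charp.gen}. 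Applying this upper bound to the isogeny $\widehat{\phi}\colon\widehat{B}\to\widehat{A}$ of ordinary semi-stable abelian varieties gives $\hdiff(\widehat{A}/K)\leq p^{\delta_{p}(\widehat{\phi})}\,\hdiff(\widehat{B}/K)$, which by \emph{(i)} is exactly the lower bound $\hdiff(B/K)\geq p^{-\delta_{p}(\widehat{\phi})}\,\hdiff(A/K)$.

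The conceptual skeleton above is short, and I expect the real work to be local, at the finitely many places of bad (semi-stable) reduction. The main obstacle will be Lemma~A: controlling $\omega_{\mathcal{A}/C}$ under the totally ramified Frobenius twist requires identifying the identity component of the Néron model of $A^{(p)}$ with the Frobenius pullback of that of $A$, which is precisely where semi-stability (semi-abelian reduction) is indispensable. Secondary technical points are: making Lemma~B quantitative at bad fibres (checking that $\operatorname{coker}(\psi^{*})$ is torsion and supported on the expected fibres); establishing the self-duality $\hdiff(A/K)=\hdiff(\widehat{A}/K)$ over $K$ including at semi-stable places; and carrying out the group-scheme arguments — connected–étale sequences, $X[p^{\infty}]^{0}$ of multiplicative type for ordinary $X$, $\ker\widehat{\psi}=(\ker\psi)^{D}$ — over the possibly imperfect field $K$. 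None of these should alter the strategy, but each needs care.
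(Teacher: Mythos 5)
Your proposal is correct in all three parts, and part (3) takes a genuinely different—and arguably more economical—route than the paper.

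For part (1), you invert multiplication by $n=\deg\phi$ on the Hodge bundle via a quasi-inverse isogeny $\psi$ with $\psi\circ\phi=[n]_A$; the paper instead extends $\phi$ to Néron models and checks fiber-by-fiber that the induced map on Hodge bundles is an isomorphism, using that the special fibers of $\Phi$ are étale isogenies (Proposition~\ref{prop:semistableisogenies}). Both arguments are essentially equivalent, and both give equality of the bundles and not just their degrees. For part (2) and your Lemma~A, your route and the paper's converge: the paper proves $\hdiff(A^{(p)}/K)=p\,\hdiff(A/K)$ (Proposition~\ref{hdiff_by_Frob}) by realizing $\Acal^{(p)}$ as the \FPPF{} quotient $\Acal/\Acal[\Frob_{\Acal/C}]$ and identifying its neutral component with that of the Néron model of $A^{(p)}$ via Proposition~\ref{prop:isomneutralcomponents} (Zariski's main theorem plus semi-stability); you invoke instead the uniqueness of semi-abelian extensions under the Frobenius base change $\Frob_C:C\to C$. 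Both need the semi-stability hypothesis in precisely the same place, and both are correct, though the uniqueness of semi-abelian models you appeal to deserves an explicit citation (e.g.\ \cite[\S7.4]{BLR}).

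The substantive divergence is in part (3). The paper's argument runs through the key formula $\hdiff((A/G)/K)=\hdiff(A/K)-(p-1)\deg\Lie(\Gcal/C)$ (Proposition~\ref{prop:varheightsemistable}), then establishes $0\leq-\deg\Lie(\Gcal/C)\leq\hdiff(A/K)$ for $G$ of Frobenius height $\leq1$ using R\"ossler's nefness theorem for the Hodge bundle of an ordinary semi-stable abelian variety (Proposition~\ref{prop:nefnessHodgebundle}), and then climbs arbitrary Frobenius height by induction (Proposition~\ref{prop:hdiff.finite.depth}). Your argument bypasses all of this: you observe that a connected $p$-power-order kernel inside an \emph{ordinary} abelian variety lies in $A[p^\infty]^\circ$, which is of multiplicative type, so its Cartier dual is \'etale, so the dual isogeny is separable, and the monotonicity statement $\hdiff(X)\leq\hdiff(Y)$ for purely inseparable $\psi:X\to Y$ follows from the separable case (Theorem~\ref{theo:diffheight.bisep.isogenies}(b)) plus the self-duality of the height (Proposition~\ref{prop:hdiff.dual}). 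Combined with the factorization of $\Frob_A^{(\delta)}$ through $\phi_\ins$ and your Lemma~A, this yields the upper bound directly, with no nefness theorem, no $\deg\Lie(\Gcal/C)$ formula, and no induction on Frobenius height; the lower bound then follows by applying the upper bound to $\widehat\phi$. This is a clean short-circuit: it uses the ordinarity hypothesis exactly once, in a transparent group-theoretic way, where the paper uses it twice through R\"ossler's positivity. Note, however, that the paper's heavier machinery (nefness, the Lie-algebra formula, \FPPF{} quotients) is not wasted — it is reused in the proof of the parallelogram inequality in Section~\ref{parallelogram} — so your shortcut streamlines Theorem~\ref{itheo:hdiff}(3) without making that apparatus redundant for the paper as a whole.
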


Equality \eqref{ieq:char0} is Theorem~\ref{theo:diffheight.bisep.isogenies}: it is an analogue of a result of Raynaud, who treats the number field case in \cite{Ray}.
Equality \eqref{ieq:charp.FVisog}, which is Corollary~\ref{VerFrob}, offers a direct extension  in higher dimension of \eqref{ieq:gripaz.hdiff} but only in a special case.
The most interesting and delicate part of the paper is inequality \eqref{ieq:charp.gen}, see Theorem~\ref{theo:hdiff.general}.
We observe that both inequalities in~\eqref{ieq:charp.gen} of Theorem~\ref{itheo:hdiff} are optimal in general: consider a separable isogeny whose dual is separable, and  the examples of the Frobenius isogeny and its dual.
In the second item \eqref{ieq:charp.FVisog}, the semi-stability assumption is necessary: see Example~\ref{exampleNonSS} for an instance of FV-isogeny of type $(1,0)$ which preserves the height.
In general, one cannot relax the ordinarity assumption in \eqref{ieq:charp.gen}: we refer to  sections~\ref{moretbabapara} and \ref{s:moretba.steroids} for a more detailed discussion.

When compared to the theorem of \cite{GriPaz}, the statement of Theorem \ref{itheo:hdiff} hints at the appearance of new phenomena for the variation of the height through isogenies between abelian varieties of dimension $>1$ in positive characteristic~$p$.
We illustrate some of these new behaviors with three examples of isogenies between abelian surfaces.
These suggest that there cannot exist a general relation as precise as equality \eqref{ieq:gripaz.hdiff} for  isogenies between arbitrary higher-dimensional abelian varieties.

Firstly, fix a non-isotrivial semi-stable elliptic curve $E$ defined over $K$ and let $\Frob_{E/K}:E\to E^{(p)}$ denote the Frobenius isogeny of $E$.
Set $A:=E\times E$ and $B=E\times E^{(p)}$. 
Proposition~\ref{hdiff_by_Frob} below shows that $\hdiff(E^{(p)}/K)=p\, \hdiff(E/K)$. 
Moreover, additivity of the differential height (see Proposition \ref{prop:hdiff.product}) yields $\hdiff(A/K) = 2\,\hdiff(E/K)$ and $\hdiff(B/K)= \hdiff(E/K) + \hdiff(E^{(p)}/K) = (p+1)\,\hdiff(E/K)$. 
The Frobenius isogeny $\Frob_{A/K}: A\to A^{(p)}$ of $A$, which coincides with $\Frob_{E/K}\times\Frob_{E/K}$, factors through the isogeny $\phi =\id_{E}\times \Frob_{E/K} : A\to B$ as follows:
\[ \begin{tikzcd} A \arrow[rrr, "\phi= \id_E\times \Frob_{E/K}"] \arrow[rrrrr, "\Frob_{A/K}"', bend right=20] &&& B \arrow[rr, dashed ] && A^{(p)}. \end{tikzcd}\]
We notice that, even though the isogeny $\phi$ has degree~$p$, 
the ratio $\hdiff(A/K)/\hdiff(B/K)=(p+1)/2$ is not a power of~$p$: this is in contrast with identity \eqref{ieq:gripaz.hdiff} in dimension $1$.
We thus need to deal with how isogenies which are factors of a Frobenius isogeny `interpolate' between the {\it additive} (in products) and {\it multiplicative} (through Frobenius) behaviors of the differential height. 
(See Example~\ref{exple:not.all.FV} for more details). 
 
Secondly, pick a non-isotrivial elliptic curve $E$ over $K$ and two finite subgroup schemes $H_1, H_2$ of $E$ with trivial intersection, and let $E':=E/(H_1+H_2)$ be the quotient elliptic curve.
From this data, Kani constructs in \cite{Kani97} an isogeny ${\phi : E\times E' \to E/H_1\times E/H_2}$ of degree $\deg(\phi)=(|H_1|+|H_2|)^2$.
It is possible to choose $H_1, H_2$ so that they both have order coprime to $p$, and $|H_1| + |H_2|$ is a power of $p$.
With these restrictions, one deduces from \eqref{ieq:gripaz.hdiff} that $\hdiff(E\times E') = \hdiff(E/H_1\times E/H_2)$. 
One also checks that the isogeny $\phi$ is of degree a power of $p$ and that $\phi$ is minimal (in the sense that it does not factor through any non trivial endomorphism of $E\times E/H$). 
In this example, $\degins(\phi)/\degins(\widehat{\phi})$ is a {\it non trivial power of $p$}, even though the heights of $E\times E'$ and $E/H_1\times E/H_2$ are {\it equal}. 
This exhibits a situation where a minimal isogeny of degree a power of $p$ preserves the differential height which is, again, in contrast with equality~\eqref{ieq:gripaz.hdiff}. (See section~\ref{Kani} for a longer discussion).   

Thirdly, Moret-Bailly \cite{MB81} manufactured an example of two {\it non ordinary} abelian surfaces $A, B$ defined over $K=\F_p(t)$, linked by an  isogeny $\phi:A\to B$ of degree $p$. 
Given the construction of $\phi$, one shows (see section~\ref{moretbabapara}) that the Frobenius isogeny $\Frob_{A/K} :A\to A^{(p)}$ factors through $\phi$:
\[ \begin{tikzcd} A \arrow[rr, "\phi"] \arrow[rrrr, "\Frob_{A/K}"', bend right=20] && B \arrow[rr, dashed, "\psi"] && A^{(p)}. \end{tikzcd}\]
The three abelian varieties in this sequence  have respective  heights 
$\hdiff(A/K) = p-1$, $\hdiff(B/K)=0$, and $\hdiff(A^{(p)}/K)=p(p-1)$.
This sequence of heights is surprising in several respects, and obliterates the hope to directly extend \eqref{ieq:gripaz.hdiff} to dimension~$2$:
the height decreases by $p-1$ through $\phi : A\to B$ (even though $\phi$ is purely inseparable of degree $p$),  
the height increases by $p(p-1)$ through the isogeny $\psi:B\to A^{(p)}$ (which has degree $p$ too). 
Finally, $B$ has height $0$ even if it is isogenous to abelian surfaces with positive height.  
We construct other, more general, examples of the same ilk in section \ref{s:moretba.steroids}.

We also observe that Theorem~\ref{itheo:hdiff} throws light on some slight discrepancies in the analogy between the arithmetic theories of abelian varieties over number fields and that over function fields (even in the simpler setting of characteristic~$0$). Compare, indeed, our equality \eqref{ieq:char0}  with the analogous result in the number field realm. There, an inequality due to Faltings 
(see \cite[Lemma 5]{Fal83}) states that if $\phi:A\to B$ is an isogeny between abelian varieties over a number field~$L$, then $\big|\hfal(A/L)-\hfal(B/L)\big| \leq \log\sqrt{\deg\phi}$, and there are examples (in \cite{SU99} for instance) where this inequality is essentially best possible.
\\

As alluded to above, the proof of \eqref{ieq:char0} -- and that of \eqref{ieq:charp.FVisog} in the case of a FV-isogeny of type $(0,0)$ -- is rather straightforward. 
Let us outline our argument for the other two assertions in Theorem~\ref{itheo:hdiff} which run in parallel and are much more delicate. We assume that $K$ has positive characteristic $p$.
By a classical d\'evissage we are reduced, thanks to part \itemref{item:hdiff.a} of Theorem \ref{itheo:hdiff}, to considering isogenies $\phi:A\to B$ of degree  a power of $p$. 
We begin by considering an isogeny $\phi$ whose kernel $G$ is contained in the kernel $A[\Frob_{A/K}]$ of the Frobenius of $A$ (\ie{}, $G$ has Frobenius height~$\leq 1$). 
Let $\Acal$ (resp. $\Bcal$) denote the N\'eron model of $A$ (resp. $B$) over $C$ and let $\Gcal$ denote the Zariski closure of $G$ in $\Acal$. Let $\Bcal'$ denote the \FPPF{} quotient $\Acal/\Gcal$.
Proposition~\ref{prop:Yuan} gives
$\hdiff(A/K)  = \deg\Omega({\Bcal'}/C) + (p-1)\,\deg\Lie(\Gcal/C)$,
where $\Omega({\Bcal'/C})$ is the Hodge bundle of $\Bcal'$.
Thanks to the {\it semi-stability} of  $A$, we can prove (Proposition \ref{prop:isomneutralcomponents}) that $\Bcal'$ and $\Bcal$ have isomorphic neutral components. In particular we have  $\deg\Omega({\Bcal'/C})=\deg\Omega({\Bcal/C}) = \hdiff(B/K)$, and this leads (Proposition \ref{prop:varheightsemistable}) to 
\begin{equation}\label{ieq:key.eq}
	\hdiff(B/K)  = \hdiff(A/K) - (p-1)\,\deg\Lie(\Gcal/C),
\end{equation}
which is the key to proving  Theorem \ref{itheo:hdiff}. 
In the special case where $\phi = \Frob_{A/K}$, Proposition \ref{prop:lie.ker.frob} asserts that $\deg\Lie(\Gcal/C) = -\hdiff(A/K)$.
We deduce that $\hdiff(B/K)=p\,\hdiff(A/K)$, and statement \itemref{item:hdiff.b} in Theorem \ref{itheo:hdiff}  then follows rather quickly.
The proof of \itemref{item:hdiff.c} requires three more steps.
If $G=\ker\phi$ is contained in $A[\Frob_{A/K}]$, a positivity result on Hodge bundles of {\it ordinary} abelian varieties gives us   sufficient control on the term $\deg\Lie(\Gcal/C)$ in \eqref{ieq:key.eq}, and we prove (Proposition~\ref{prop:keyinequalityordinary}) that
\[\hdiff(A/K)\leq \hdiff(B/K)\leq p\,\hdiff(A/K),\]
which is inequality \eqref{ieq:charp.gen} for an isogeny $\phi$ such that $\ker\phi$ has Frobenius height $\leq1$.
Secondly, we  proceed by induction on the Frobenius height of $\ker\phi$ to obtain  \eqref{ieq:charp.gen} for an isogeny with connected kernel.
Thirdly, a duality argument allows to prove \eqref{ieq:charp.gen} if $\ker\phi$ is \'etale from the `connected kernel' case. 
There remains to invoke the decomposition of an isogeny in separable and purely inseparable parts to obtain \eqref{ieq:charp.gen} in general.

A few of the technical steps summarized above gave rise to questions, examples and counter-examples, some of which we also discuss in the course of the text when relevant. 
In various places, we revisit and rewrite some of the proofs in the literature (for instance Proposition~\ref{prop:thmprincipalYuan}) in an effort to improve readability and to ensure that this paper is more self-contained.
\\

We also generalize \eqref{ieq:gripaz.hmod} to higher-dimensional abelian varieties.
 In dimension $g>1$, the modular height as defined in Definition~\ref{defi:hmod} is a natural generalization of the Weil height of the elliptic $j$-invariant. 
 By virtue of Moret-Bailly's Formule Cl\'e (see Theorem \ref{theo-clef}), we can transfer the control given in Theorem~\ref{itheo:hdiff} on the variation of the differential height through an isogeny, into a relation between modular heights. 
 This is the object of the following statement.

	\begin{icoro}\label{itheo:hmod}
	Let $\phi : A\to B$ be an isogeny between  semi-stable abelian varieties defined over $K$. 
	Let $\xi$ (resp. $\xi'$) be a polarization on $A$ (resp. on $B$) of degree $\deg\xi$ (resp. $\deg\xi'$), and $\nu$ (resp. $\nu'$) be a level-$n$ structure on $A$ (resp. on $B$). 
	If $K$ has positive characteristic $p$, assume that $A$ and $B$ are ordinary.
	Then
	\[p^{-\delta_p(\widehat{\phi})}\,\frac{\hmod(A, \xi, \nu)}{\deg\xi} 
	\leq \frac{\hmod(B, \xi', \nu')}{\deg \xi'} 
	\leq p^{\delta_p(\phi)}\,\frac{\hmod(A, \xi, \nu)}{\deg\xi},\]
	where $\delta_p(-)$ denotes the Frobenius height of an isogeny (to be interpreted as $0$ if $K$ has characteristic $0$).
\end{icoro}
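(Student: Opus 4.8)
The plan is to transfer the inequalities of Theorem~\ref{itheo:hdiff}\itemref{item:hdiff.c} on the differential height into inequalities on the modular height, using Moret-Bailly's \emph{Formule Clé} (Theorem~\ref{theo-clef}) as the bridge. First I would recall the precise content of the Formule Clé: for a semi-stable polarized abelian variety $(A,\xi)$ of dimension $g$ over $K$ with a level-$n$ structure $\nu$, the modular height $\hmod(A,\xi,\nu)$ is, up to an explicit additive and multiplicative normalization depending only on $g$, $n$ and $\deg\xi$, equal to $\deg\xi$ times the differential height $\hdiff(A/K)$, plus correction terms coming from the archimedean/geometric contribution of the theta divisor and from the degree of the polarization. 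The key structural point is that the dependence on $A$ \emph{inside an isogeny class} is carried entirely by the term proportional to $\hdiff(A/K)$, while the remaining terms depend only on the discrete data $(g,n,\deg\xi)$; dividing by $\deg\xi$ isolates $\hdiff(A/K)$ as the only quantity that varies. I would state this as a clean lemma: $\hmod(A,\xi,\nu)/\deg\xi = \hdiff(A/K) + c(g,n)$ for a constant $c(g,n)$ independent of $A$, $\xi$ and $\nu$ (here one must check carefully that any $\deg\xi$- or $\nu$-dependent pieces either cancel after division by $\deg\xi$ or are genuinely absent — this bookkeeping is where I expect to have to be most careful).

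Granting that lemma, the proof is immediate: apply it to both $(A,\xi,\nu)$ and $(B,\xi',\nu')$ — legitimate since $A$, $B$ are semi-stable, and ordinary in positive characteristic, exactly the hypotheses of Theorem~\ref{itheo:hdiff}\itemref{item:hdiff.c} — to get
\[
\frac{\hmod(A,\xi,\nu)}{\deg\xi} = \hdiff(A/K) + c(g,n),
\qquad
\frac{\hmod(B,\xi',\nu')}{\deg\xi'} = \hdiff(B/K) + c(g,n),
\]
noting that $\dim A = \dim B =: g$ since $\phi$ is an isogeny, and that the level is the same $n$ on both sides. Then substitute the chain $p^{-\delta_p(\widehat\phi)}\hdiff(A/K) \leq \hdiff(B/K) \leq p^{\delta_p(\phi)}\hdiff(A/K)$ from Theorem~\ref{itheo:hdiff}. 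Since $\hdiff(A/K)\geq 0$ (the differential height of a semi-stable abelian variety over a function field is non-negative — indeed for ordinary semi-stable $A$ one has $\hdiff(A/K)>0$ unless $A$ is isotrivial, but non-negativity suffices here) and $c(g,n)\geq 0$ as well (it is a sum of non-negative geometric contributions in the Formule Clé), multiplying the inequalities through by the \emph{same} non-negative constant $p^{\pm\delta_p}$ and adding $c(g,n)$ preserves them in the direction claimed: $p^{-\delta_p(\widehat\phi)}(\hdiff(A/K)+c(g,n)) \leq \hdiff(A/K)+c(g,n)$ needs $p^{-\delta_p(\widehat\phi)}\leq 1$, which holds, and similarly on the other side. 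In characteristic $0$ both $\delta_p$'s are $0$ by convention and the statement reduces to the equality $\hmod(A,\xi,\nu)/\deg\xi = \hmod(B,\xi',\nu')/\deg\xi'$, which follows from \eqref{ieq:char0}.

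The main obstacle, as flagged above, is purely in the first step: extracting from the Formule Clé the exact statement that, after normalizing by $\deg\xi$, the only $A$-dependent term is $\hdiff(A/K)$ with coefficient $1$, with \emph{no} residual dependence on the choice of polarization $\xi$ or level structure $\nu$ beyond the discrete invariants $g$ and $n$. One has to track the normalization of the modular height on $\mathcal{A}_{g,d,n}$ carefully — in particular how the degree $d=\deg\xi$ enters both multiplicatively (through the line bundle defining the height) and through the dimension of the relevant representation — and confirm that these combine so that division by $\deg\xi$ yields the clean additive relation. A secondary, minor point is to double-check the sign and non-negativity of the constant $c(g,n)$ so that the monotone substitution argument in the second paragraph goes through without a spurious reversal; if $c(g,n)$ could be negative one would instead phrase the conclusion with the inequalities applied directly to $\hdiff$ and then translated, but in the function field setting the geometric normalization makes $c(g,n)\geq 0$.
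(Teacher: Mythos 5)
Your strategy --- transfer inequality \eqref{ieq:charp.gen} from Theorem~\ref{itheo:hdiff} to modular heights via Moret-Bailly's Formule Cl\'e, handling characteristic $0$ separately through \eqref{ieq:char0} --- is exactly what the paper intends, and your remarks on the hypotheses (semi-stability, ordinarity, $\dim A=\dim B$, common level $n$) are all correct. The genuine gap is that you have misremembered the \emph{content} of the Formule Cl\'e. In the function-field setting (Theorem~\ref{theo-clef}) it is a purely \emph{multiplicative} identity, $\hmod(A,\xi,\nu)=\frac{4^g d}{2}\,\hstab(A)$, with no additive correction term of any kind: the ``archimedean/theta contributions'' you invoke belong to the number-field comparison between Faltings and theta heights, not here. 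There is no constant $c(g,n)$; your proposed lemma $\hmod(A,\xi,\nu)/\deg\xi = \hdiff(A/K)+c(g,n)$ is false, and the careful monotonicity check in your second paragraph is chasing a nonexistent term. (Even granting such a $c$, the check as written bounds $p^{-\delta_p(\widehat\phi)}(\hdiff(A/K)+c)$ above by $\hdiff(A/K)+c$, not by $\hdiff(B/K)+c$, so it would not close the chain.) The correct argument is shorter: semi-stability gives $\hstab(A)=\hdiff(A/K)$ and $\hstab(B)=\hdiff(B/K)$, whence by the Formule Cl\'e $\hmod(A,\xi,\nu)/\deg\xi = C_g\,\hdiff(A/K)$ for a positive constant $C_g$ depending only on $g$ (and the fixed normalization relating $d$ to $\deg\xi$); since $\dim A=\dim B$ the same $C_g$ applies to $B$, and one simply multiplies \eqref{ieq:charp.gen} through by $C_g>0$.
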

\bigskip

The second principal result of this paper is a perfect analogue, in the function field setting, to the parallelogram inequality of R\'emond for abelian varieties over a number field (see \cite{R22}). In section~\ref{parallelogram}, we indeed prove
\begin{itheo}\label{itheo:parineq}
  Let $K$ be the function field of a curve as above.
  Let $A$ be an abelian variety over $K$. If $K$ has positive characteristic, assume that $A$ is semi-stable.
  For any   finite subgroup schemes $G,H$ of $A$, we have
  \begin{equation}
      \label{ieq:parineq}
      \hdiff\big((A/(G+H))/K\big) +\hdiff\big((A/(G\cap H))/K\big) \leq \hdiff\big((A/G)/K\big)  +\hdiff\big((A/H)/K\big).
  \end{equation}
  Moreover, if at least one of the orders of $G$ or $H$ is coprime to the characteristic of $K$
  (e.g. if the characteristic of $K$ is $0$), 
  then~\eqref{ieq:parineq} is an equality.
\end{itheo}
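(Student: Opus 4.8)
My plan is to dispose of the equality case first, then reduce the inequality to a single key case settled via Proposition~\ref{prop:Yuan}, and finally bootstrap to the general inequality by a dévissage in the spirit of the proof of Theorem~\ref{itheo:hdiff}\itemref{item:hdiff.c}. For the equality case, consider the ``diamond'' of isogenies $A/(G\cap H)\to A/G$, $A/(G\cap H)\to A/H$, $A/G\to A/(G+H)$ and $A/H\to A/(G+H)$. By the isomorphism theorems the kernels of $A/(G\cap H)\to A/G$ and of $A/H\to A/(G+H)$ are both isomorphic to $G/(G\cap H)$, a group scheme of order dividing $|G|$. Hence if $|G|$ is prime to the characteristic of $K$ (in particular if that characteristic is $0$), Theorem~\ref{itheo:hdiff}\itemref{item:hdiff.a} applied to these two isogenies gives $\hdiff\big((A/(G\cap H))/K\big)=\hdiff\big((A/G)/K\big)$ and $\hdiff\big((A/(G+H))/K\big)=\hdiff\big((A/H)/K\big)$; summing these yields the equality in~\eqref{ieq:parineq}, and the case ``$|H|$ prime to the characteristic'' is symmetric. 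For the inequality we may henceforth assume $K$ has characteristic $p>0$. Replacing $A$ by the still semi-stable variety $A/(G\cap H)$ and $G,H$ by their images in it --- which have trivial intersection, and for which the four quotients appearing in~\eqref{ieq:parineq} are unchanged --- we may assume $G\cap H=0$, so that $G+H\cong G\times H$. Stripping off the prime-to-$p$ parts of $G$ and $H$ via Theorem~\ref{itheo:hdiff}\itemref{item:hdiff.a} (which preserves semi-stability and the relation $G\cap H=0$), we may moreover assume $G$ and $H$ have $p$-power order; it then remains to prove $\hdiff\big((A/(G+H))/K\big)+\hdiff(A/K)\leq\hdiff\big((A/G)/K\big)+\hdiff\big((A/H)/K\big)$.

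\emph{The key case: $G\subseteq A[\Frob_{A/K}]$, with $H$ an arbitrary subgroup scheme of $p$-power order.} Then $A/H$ is again semi-stable, and functoriality of the relative Frobenius along $A\to A/H$ shows that the image $G'$ of $G$ in $A/H$ --- isomorphic to $G$, since $G\cap H=0$ --- lies in $(A/H)[\Frob_{A/H}]$. Applying the key identity~\eqref{ieq:key.eq} (which follows from Proposition~\ref{prop:Yuan}) to the isogeny $A\to A/G$ \emph{and} to the isogeny $A/H\to (A/H)/G'=A/(G+H)$, the inequality to prove collapses, after cancellation, to
\[ \deg\Lie(\Gcal/C)\ \leq\ \deg\Lie(\Gcal'/C), \]
where $\Gcal$ (resp.\ $\Gcal'$) is the Zariski closure of $G$ (resp.\ of $G'$) in the Néron model of $A$ (resp.\ of $A/H$). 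Both closures are finite flat over $C$, being contained in the relative Frobenius kernel of the relevant connected Néron model; thus $\Lie(\Gcal/C)$ and $\Lie(\Gcal'/C)$ are vector bundles on $C$ of the same rank $\dim_K\Lie(G)$, and $A\to A/H$ induces a morphism $\Lie(\Gcal/C)\to\Lie(\Gcal'/C)$ that restricts at the generic point of $C$ to the isomorphism $\Lie(G)\xrightarrow{\sim}\Lie(G')$. One concludes by the elementary fact that a generically injective morphism between vector bundles of the same rank on $C$ has a nonzero determinant section, so that the source bundle has degree at most that of the target.

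\emph{The general case.} We induct on $|G|\cdot|H|$, keeping $G\cap H=0$ and $A$ semi-stable. If $G=0$ or $H=0$, the inequality is a trivial equality. If $G$ is étale and nonzero, we replace the diamond $(A;G,H)$ by its Cartier dual: with $X:=(A/(G+H))^{\vee}$ and the subgroup schemes of $X$ Cartier-dual to $G$ and to $H$, the invariance of the differential height under duality (as in the duality argument of Theorem~\ref{itheo:hdiff}\itemref{item:hdiff.c}) turns the inequality for $(A;G,H)$ into the same inequality for this dual diamond, whose four quotients are $(A/H)^{\vee}$, $(A/G)^{\vee}$, $A^{\vee}$ and $X$, with respective heights $\hdiff\big((A/H)/K\big)$, $\hdiff\big((A/G)/K\big)$, $\hdiff(A/K)$ and $\hdiff\big((A/(G+H))/K\big)$; and there the subgroup to be peeled (the one dual to $G$) is of multiplicative type, hence not étale. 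So we may assume $G$ is not étale --- that is, that $G_1:=G\cap A[\Frob_{A/K}]$ is nonzero; it is then a nonzero subgroup scheme of $A[\Frob_{A/K}]$ with $G_1\cap H=0$. In $A/G_1$, the images of $G$ and $H$ still have trivial intersection and the image of $G$ has strictly smaller order, so the induction hypothesis applied there reads, after unwinding the quotients,
\[ \hdiff\big((A/(G+H))/K\big)+\hdiff\big((A/G_1)/K\big)\ \leq\ \hdiff\big((A/G)/K\big)+\hdiff\big((A/(H+G_1))/K\big). \]
On the other hand, the key case applied to $(A;G_1,H)$ gives $\hdiff\big((A/(H+G_1))/K\big)+\hdiff(A/K)\leq\hdiff\big((A/G_1)/K\big)+\hdiff\big((A/H)/K\big)$. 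Adding these two inequalities and cancelling the common terms $\hdiff\big((A/G_1)/K\big)$ and $\hdiff\big((A/(H+G_1))/K\big)$ yields exactly the inequality to prove.

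\emph{Expected main obstacle.} The crux --- and the step I expect to require the most care --- is the key case: one must check that the relevant Zariski closures in the various Néron models are finite flat (so that their Lie algebras are vector bundles), construct the comparison morphism $\Lie(\Gcal/C)\to\Lie(\Gcal'/C)$, which lives between Lie algebras of Néron models of two \emph{different} abelian varieties, and verify that it is a generic isomorphism; one must also pin down the Cartier-duality dictionary precisely, in particular the identification of the four quotients of the dual diamond and of their heights. Semi-stability itself never causes trouble, being invariant under isogeny.
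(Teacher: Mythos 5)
Your proof is correct, and while it rests on the same pillars as the paper's argument --- the key identity from Propositions~\ref{prop:Yuan} and~\ref{prop:varheightsemistable}, the degree comparison of Lemma~\ref{lem:keyinequalityparallelogram}, and a Cartier-duality swap --- your architecture is genuinely different. The paper first proves a dévissage lemma (Lemma~\ref{lemm:parineq.devissage}), then treats three special cases in isolation ($G$ or $H$ biseparable; $G$ or $H$ connected, via a filtration by Frobenius height; $G$ or $H$ étale of $p$-power order, via duality), and finally combines them along the coarse filtration $\{0\}\subset H^\circ\subset H_{p^\infty}\subset H$. You instead (i) eliminate the prime-to-$p$ parts of both $G$ and $H$ upfront using the primary decomposition $G+H\simeq(G_p+H_p)\times(G_{p'}+H_{p'})$ together with Theorem~\ref{itheo:hdiff}\itemref{item:hdiff.a} --- a cleaner reduction than routing through the dévissage lemma; (ii) isolate a single ``key case'' ($G$ of Frobenius height $\leq 1$, $H$ an arbitrary $p$-power subgroup), which is exactly one rung of the ladder the paper's Lemma~\ref{lemm:parineq.co} climbs in a single pass; and (iii) run one induction on $|G|\cdot|H|$ that subsumes the dévissage lemma and dualizes whenever $G$ is étale. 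Your version is more compact and unified, at the cost of being less explicit about which structural features of the subgroups drive each step. You correctly anticipate that the key case carries the technical weight; note that both you and the paper are terse about the point that the Néron-model extension $\Lambda$ of $A\to A/H$ maps the Zariski closure $\Gcal$ of $G$ into the Zariski closure $\Gcal'$ of $G'$ (so that the morphism $\Lie(\Gcal/C)\to\Lie(\Gcal'/C)$ is actually defined); this is settled by observing that $G$ factors through the closed subscheme $\Lambda^{-1}(\Gcal')$, hence so does its closure $\Gcal$.
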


 The fact that such an inequality holds for abelian varieties over $K$ is {\it a priori} not obvious: in view of Theorem~\ref{itheo:hdiff}, the differential height of the quotient of $A$ by a finite subgroup scheme $G$ may be radically different from $\hdiff(A/K)$, and, in the absence of constraints on $A$ or $G$, we may not know exactly how $\hdiff((A/G)/K)$ compares to $\hdiff(A/K)$.
In that respect, it is remarkable that Theorem \ref{itheo:parineq} holds without an ordinarity condition on $A$, or without any assumption on the subgroup schemes.
Our parallelogram inequality (for the differential height of abelian varieties over function fields) looks exactly the same as R\'emond's (which concerns the Faltings height of abelian varieties over number fields), and our inequality does not involve the Frobenius heights of $G,H$. 
This is surprising too because Theorem~\ref{itheo:hdiff} above shows that the differential height varies differently through an isogeny compared to the Faltings height. 
  
 Our proof of Theorem~\ref{itheo:parineq} is different from R\'emond's proof  in \cite{R22}: the latter is `local' in nature, while ours follows a `global' approach.
 We define and study parallelogram configurations, and we first prove three special cases of Theorem~\ref{itheo:parineq}: namely, when $G$ or $H$ is \'etale with \'etale dual, when $G$ or $H$ is connected, and when $G$ or $H$ is \'etale with order a power of $p$. 
 The final step is a d\'evissage based on a well-chosen filtration of one of the subgroup schemes, which allows to conclude  thanks to the key Lemma~\ref{lemm:parineq.devissage}.

A striking consequence of the parallelogram inequality \eqref{ieq:parineq} is the fact that the stable differential height is superadditive (in arbitrary characteristic) on  short exact sequences  of abelian varieties
$0\to B\to A\to A/B\to 0$, where $B\to A$ is the inclusion (the stable Faltings height of abelian varieties over number fields also enjoys this property by \cite{R22}). 
In other words, we have the following statement, which we prove with the same method as Rémond.
	\setcounter{corocount}{1}
    \begin{icoro}\label{itheo:sub}
    Let $A$ be an abelian variety over $K$. If $K$ has positive characteristic, assume that $A$ is semi-stable. For any abelian subvariety $B$ of $A$, we have
    \begin{equation}\label{ieq:ineq.hsub}
        \hdiff(B/K) + \hdiff((A/B)/K) \leq \hdiff(A/K).
    \end{equation} 
    Moreover, if $K$ has characteristic $0$, inequality (\ref{ieq:ineq.hsub}) is an equality.
    \end{icoro}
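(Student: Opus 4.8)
The plan is to deduce the superadditivity inequality \eqref{ieq:ineq.hsub} from the parallelogram inequality of Theorem~\ref{itheo:parineq}, mimicking Rémond's argument in \cite{R22} but with the differential height in place of the stable Faltings height. First I would fix an abelian subvariety $B\subseteq A$ and invoke the Poincaré reducibility theorem: there is an abelian subvariety $B'\subseteq A$ such that the natural addition map $B\times B'\to A$ is an isogeny, \ie{} $B\cap B'$ is finite and $B+B'=A$. (In positive characteristic one should be slightly careful: Poincaré reducibility still furnishes a complement $B'$ up to isogeny, which is all that is needed here.) The quotient $A/B$ is then isogenous to $B'$, and more precisely the composite $B'\into A\onto A/B$ is an isogeny with kernel $B\cap B'$.

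Next I would choose, for a large integer $N$ coprime to the characteristic if necessary, the subgroup schemes $G := B[N]$ and $H := B'[N]$ of $A$ — or rather, to make the quotients come out right, I would apply Theorem~\ref{itheo:parineq} to a cleverly chosen pair. The point of Rémond's trick is the following: take $G$ and $H$ to be finite subgroup schemes of $A$ with $G\subseteq B$, $H\subseteq B'$, $G+H$ containing $B\cap B'$ appropriately, so that $A/(G+H)$, $A/(G\cap H)$, $A/G$, $A/H$ are (up to isogenies of degree controlled away from the characteristic, hence height-preserving by part~\itemref{item:hdiff.a} of Theorem~\ref{itheo:hdiff}) respectively isogenous to $A/B\times A/B'$, $A$, $A/B$-ish and $A/B'$-ish factors — concretely, one arranges that $\hdiff(A/(G\cap H))=\hdiff(A)$, $\hdiff(A/(G+H))=\hdiff(A/B)+\hdiff((A/B)\text{-complement})$, while $\hdiff(A/G)+\hdiff(A/H)$ collapses to $\hdiff(B)+\hdiff(A/B)+(\text{something})$. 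I would also use additivity of the differential height on products (Proposition~\ref{prop:hdiff.product}, cited in the introduction) and the fact that $\hdiff(A/B)=\hdiff(B')$ when $B'$ is an isogeny complement with isogeny of degree prime to $p$. Feeding these identifications into \eqref{ieq:parineq} and cancelling the common terms yields exactly $\hdiff(B/K)+\hdiff((A/B)/K)\le\hdiff(A/K)$.

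For the characteristic-$0$ equality statement, the same computation goes through but now \emph{every} instance of Theorem~\ref{itheo:parineq} is an equality (by its last sentence, since all orders are coprime to the characteristic, namely $0$), and every auxiliary isogeny is height-preserving by \eqref{ieq:char0}; hence the chain of (in)equalities becomes a chain of equalities and \eqref{ieq:ineq.hsub} is an equality.

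The main obstacle I anticipate is the bookkeeping in choosing $G$ and $H$: one must exhibit finite subgroup schemes of $A$ for which the four quotients appearing in the parallelogram inequality are isogenous, via isogenies of degree prime to the characteristic, to the abelian varieties whose heights we want to compare — and in positive characteristic one has to make sure this choice can be made with orders coprime to $p$ (which is why Poincaré reducibility "up to isogeny" and a prime-to-$p$ multiplier $N$ suffice). Once the combinatorics of the subgroup schemes is set up correctly, the rest is a routine substitution using Theorem~\ref{itheo:hdiff}\itemref{item:hdiff.a}, Proposition~\ref{prop:hdiff.product}, and Theorem~\ref{itheo:parineq}.
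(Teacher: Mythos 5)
Your plan has the right flavor --- Poincaré reducibility for a quasi-complement $B'$, the parallelogram inequality, additivity on products, and cancellation, following Rémond --- but there is a genuine gap in the execution: you try to apply Theorem~\ref{itheo:parineq} to the ambient variety $A$ with subgroups $G\subseteq B$, $H\subseteq B'$, and this cannot work. Every quotient of $A$ by a finite subgroup scheme is an abelian variety isogenous to $A$ itself, so there is no way for $\hdiff(B/K)$ (the height of a \emph{proper} abelian subvariety) to appear as one of the four heights in the parallelogram inequality applied to $A$. No choice of $G,H\subset A$, ``clever'' or otherwise, will make $\hdiff(A/G)$ or $\hdiff(A/H)$ collapse to $\hdiff(B) + (\text{stuff})$; such a decomposition is only accessible through a product, via Proposition~\ref{prop:hdiff.product}.

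The missing idea in the paper is to take the ambient abelian variety to be $B\times B'$ rather than $A$: one sets $G := \ker(B\times B'\to A)$ (the antidiagonal copy of $B\cap B'$) and $H := (B\cap B')\times\{0_{B'}\}$. Then $G\cap H$ is trivial, $G+H = (B\cap B')\times(B\cap B')$, and the four quotients compute to $B\times B'$, $A$, $(B/(B\cap B'))\times B'$, and $(B/(B\cap B'))\times (A/B)$ (using $B'/(B\cap B')\simeq A/B$). Feeding these into the parallelogram inequality with additivity of $\hdiff$ on products, the terms $\hdiff(B')$ and $\hdiff(B/(B\cap B'))$ appear on \emph{both} sides and cancel, leaving $\hdiff(B)+\hdiff(A/B)\le\hdiff(A)$. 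Note that this cancellation makes your auxiliary assumptions (choosing $N$ coprime to $p$, and invoking height-preservation under biseparable isogenies from Theorem~\ref{itheo:hdiff}\itemref{item:hdiff.a}) both unnecessary and, in positive characteristic, unachievable: the group $B\cap B'$ can have order divisible by $p$, so one cannot arrange the auxiliary kernels to have order prime to $p$. The paper needs no such control precisely because it never has to compute the troublesome heights --- they cancel. The characteristic-$0$ equality does follow as you say, from the equality case of Theorem~\ref{itheo:parineq}, once the correct configuration is in place.
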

  
For an abelian subvariety $B$ of $A$, the Poincar\'e reducibility theorem yields a quasi-complement $B'$ of $B$ in $A$ {\it i.e.}, an abelian subvariety $B'\subset A$ such that 
the sum map $\sigma: B\times B'\longrightarrow A$ is an isogeny.
Since $A/B$ is isogenous to $B'$, our Theorem~\ref{itheo:hdiff} suggests that the heights of $B$ and $A/B$ should somehow be related to $\hdiff(A/K)$.  
It strikes us as unexpected, though, that Corollary~\ref{itheo:sub} provides such a clean inequality between these heights: inequality \eqref{ieq:ineq.hsub} does not involve any terms depending on the isogeny $\sigma$ and requires no ordinarity assumption. 
Note that inequality \eqref{ieq:ineq.hsub} may be strict (see Example~\ref{exple:strict.ineq.subvar}), and that
Corollary \ref{itheo:sub} is actually equivalent to Theorem \ref{itheo:parineq}, see Remark~\ref{rema:equiv.parineq.sub}.

When the base field is a number field, the analogue of Theorem~\ref{itheo:parineq}, or more precisely of its Corollary~\ref{itheo:sub}, is one of the technological jumps which made the impressive improvements in the explicit bounds from \cite{GR14} to \cite{GR24} possible. 
An inequality such as \eqref{ieq:ineq.hsub} is indeed a perfect tool for a proof by induction on the dimension of the abelian variety since there are no error terms and no dependency on the dimension or the codimension. 
Similar Diophantine applications in the case of abelian varieties over a function field are currently under development. 

In the case of elliptic curves over $K$, Theorem~\ref{itheo:parineq}  yields the following inequality between the absolute heights of elliptic $j$-invariants. 
	\setcounter{corocount}{2}
	\begin{icoro}\label{itheo:parineq.ellcurves}
	Let $E$ be a elliptic curve over $\bar{K}$, 
	and let $G,H$ be two finite subgroup schemes of $E$. Then we have
    \[ h\big(j(E/(G+H))\big) + h\big(j(E/(G\cap H))\big)\leq h\big(j(E/G)\big) + h\big(j(E/H)\big). \]
	\end{icoro}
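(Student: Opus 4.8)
The plan is to deduce Corollary~\ref{itheo:parineq.ellcurves} from Theorem~\ref{itheo:parineq} by relating the Weil heights of the $j$-invariants appearing in the statement to differential heights of the corresponding elliptic curves. The basic dictionary is provided by the one-dimensional theory recalled in the introduction: for a non-isotrivial elliptic curve $E'$ over $K$, the quantity $h(j(E'))$ is proportional to $\hdiff(E'/K)$ (this is essentially the content of Moret-Bailly's Formule Clé specialized to $g=1$, or can be extracted from the relations between $\hdiff$ and the modular height $\hmod$ together with $\hmod$ being, in dimension one, the height of $j$; see the statement~\eqref{ieq:gripaz.hmod} and the discussion around Definition~\ref{defi:hmod}). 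Concretely, I would first record a clean normalization statement: there is a universal constant $c>0$ such that $h(j(E'/\bar K)) = c\cdot \hdiff(E'/L)/[L:K]$ for every finite extension $L/K$ over which $E'$ is defined and semi-stable, where I use that the differential height, suitably normalized by the degree of the extension, is insensitive to base change (stable differential height). The point of inserting the $[L:K]$ normalization is that $h(j(\cdot))$ is an \emph{absolute} height, invariant under base change, whereas $\hdiff$ is not; dividing by $[L:K]$ repairs this.

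Next I would carry out the reduction to a situation where Theorem~\ref{itheo:parineq} applies. Given $E$ over $\bar K$ and finite subgroup schemes $G,H\subset E$, choose a finite extension $L/K$ over which $E$, the subgroup schemes $G,H$, the four quotients $E/(G+H)$, $E/(G\cap H)$, $E/G$, $E/H$, and all the relevant isogenies are all defined, and over which $E$ (hence each quotient, since semi-stability is preserved by isogeny for elliptic curves, and can in any case be arranged after a further extension) is semi-stable. Over $L$, Theorem~\ref{itheo:parineq} applied to the abelian variety $E$ and the subgroup schemes $G,H$ gives
\[
\hdiff\big((E/(G+H))/L\big) + \hdiff\big((E/(G\cap H))/L\big) \leq \hdiff\big((E/G)/L\big) + \hdiff\big((E/H)/L\big).
\]
Dividing through by $[L:K]$ and using the normalization from the previous paragraph (with the same constant $c$ for all four curves), one gets exactly the asserted inequality between the $h(j(\cdot))$, since the constant $c$ cancels. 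One subtlety to address: if $E$ is isotrivial then the $j$-invariants are constant, all four terms $h(j(\cdot))$ vanish, and the inequality is the trivial $0\leq 0$; so the substantive case is $E$ non-isotrivial, where the normalization statement has content. A second subtlety: I should make sure the curves $E/G$ etc. are all non-isotrivial as well when $E$ is — but an elliptic curve isogenous to a non-isotrivial one is non-isotrivial, so this is automatic.

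I expect the main obstacle to be purely bookkeeping rather than conceptual: pinning down the exact proportionality between $h(j(E'))$ and the stable differential height $\hdiff(E'/L)/[L:K]$ with a constant that is genuinely \emph{the same} for all elliptic curves involved, and verifying that this constant does not secretly depend on the base curve $C$ or on the field of definition. This is where I would lean on the Formule Clé (Theorem~\ref{theo-clef}) and on the fact, used repeatedly in the paper, that the modular height in dimension one recovers the height of the $j$-invariant up to an additive-or-multiplicative universal adjustment; the cleanest route is probably to cite the one-dimensional case of Corollary~\ref{itheo:hmod}'s underlying mechanism, or simply the original \eqref{ieq:gripaz.hmod}-to-$\hdiff$ comparison in \cite{GriPaz}. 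Once that normalization lemma is in place, the deduction is a two-line application of Theorem~\ref{itheo:parineq} after base change, as sketched above.
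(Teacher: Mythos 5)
Your proposal is essentially the paper's proof: both base-change to a finite extension over which $E$ is semi-stable, invoke the proportionality between $h(j(\cdot))$ and the (stable) differential height (the paper cites Corollary 3.3 of \cite{GriPaz}, with explicit constant $12$, where you leave an abstract constant $c$), and then apply Theorem~\ref{itheo:parineq} to the base-changed curve so that $c$ cancels. Your extra care with the $[L:K]$ normalization and the isotrivial edge case is sound and if anything slightly more explicit than the paper's one-line argument.
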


Corollary~\ref{itheo:parineq.ellcurves} could be applied to explicit situations derived from Richelot isogenies (see \cite{FS22}, for instance).

\paragraph{Plan of the paper --}  
In the first two sections, we gather the necessary background on heights of abelian varieties and on isogenies, respectively. 
We prove equality \eqref{ieq:char0} of Theorem~\ref{itheo:hdiff} there.
Section~\ref{sec:FV} focuses on the case of a positive characteristic ground field: we introduce the Frobenius and Verschiebung isogenies there, and recall results on Hodge bundles and group schemes of Frobenius height $1$. 
In Section~\ref{sec:Hodge and heights}, we study Hodge bundles in more depth, and relate N\'eron models of isogenous abelian varieties.
We also describe the behaviour of the differential height for special types of isogenies (Frobenius, Verschiebung, and FV-isogenies) \ie{}, we prove part \itemref{item:hdiff.b} of Theorem \ref{itheo:hdiff}. 
In Section~\ref{sec:proof of theorem}, we conclude the proof of the remaining assertion of Theorem~\ref{itheo:hdiff}, using the tools developed earlier, as well as a positivity result for certain Hodge bundles. 
In Section~\ref{sec:ex and cex} we provide examples and counterexamples which illustrate some subtle properties of general isogenies in characteristic $p>0$. 
Finally, we prove Theorem~\ref{itheo:parineq} and its corollaries in Section~\ref{parallelogram}.

\paragraph{General notation --}  
 In the rest of the paper, $S$
 denotes an integral locally noetherian scheme of absolute dimension at most $1$. In the applications we have in mind, $S$ is either the spectrum of a field, or a smooth projective curve over a field.
We also let $k$ be a perfect field, and $C$ be a smooth projective geometrically irreducible curve over $k$. We write $K = k(C)$ for the function field of $C$.


\numberwithin{equation}{section}
\section{Preliminaries on heights} \label{sec:prel.heights}

\subsection{Hodge bundles and Lie algebras}
Let $\Gcal$ be a group scheme over $S$  with neutral section $e:S\to\Gcal$. We let 
\[\Omega(\Gcal/S) := e^\ast\,\Omega^1_{\Gcal/S}\]
denote the pullback by $e$ of the sheaf of relative differential $1$-forms on $\Gcal/S$.
This sheaf on $S$ is called {\it the Hodge bundle of $\Gcal/S$}.
In the same setting, the group functor $\underline{\operatorname{Lie}}(\Gcal/S)$ defined by 
	\[T \mapsto \ker \big(\Gcal(T[\varepsilon]) \rightarrow \Gcal(T)\big),\]
	where $T[\varepsilon] : = T \times \Spec (\Z[X]/X^2)$, is represented by a $S$-group scheme denoted by $\operatorname{\mathbb{L}ie}(\Gcal/S)$.
We  denote by $\Lie(\Gcal/S)$ the $\Ocal_S$-module of sections of  $\operatorname{\mathbb{L}ie}(\Gcal/S)$ viewed as a vector fibration 	(see \cite[Exposé II, Notation 4.11.6]{SGA3}, where the same object is denoted by $\mathscr{L}ie (\Gcal/S)$).

	\begin{prop}\label{prop:duality.hodge.lie}
	If $\Omega(\Gcal/S)$ is locally free of finite rank, there is an isomorphism of $\O_S$-modules $\operatorname{Lie}(\Gcal/S) \simeq \Omega(\Gcal/S)^\vee$.
	\end{prop}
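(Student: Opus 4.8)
The plan is to establish the isomorphism $\operatorname{Lie}(\Gcal/S) \simeq \Omega(\Gcal/S)^\vee$ by exhibiting a canonical pairing between the two $\O_S$-modules and checking it is perfect, reducing everything to the affine case. First I would note that the statement is local on $S$, so I may assume $S = \Spec R$ is affine; moreover, since $\Omega(\Gcal/S)$ is assumed locally free of finite rank, after further localization I may assume it is free. Set $M := \Omega(\Gcal/S) = e^\ast\Omega^1_{\Gcal/S}$, a finite free $R$-module, and let $I$ be the ideal of the neutral section $e$ in a suitable affine neighbourhood; recall the standard identification $e^\ast\Omega^1_{\Gcal/S} \simeq I/I^2$ of conormal-type modules.

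Next I would recall the functorial description of $\underline{\operatorname{Lie}}(\Gcal/S)$: for an $R$-algebra $T$, the value $\operatorname{\mathbb{L}ie}(\Gcal/S)(T)$ is the kernel of $\Gcal(T[\varepsilon]) \to \Gcal(T)$ induced by $\varepsilon \mapsto 0$, where $T[\varepsilon] = T[X]/(X^2)$. A $T[\varepsilon]$-point of $\Gcal$ lying over a given $T$-point is the same as a $T$-derivation at that point; taking the point to be the image of the neutral section $e$, such points are classified by $T$-module homomorphisms $I/I^2 \to T$, i.e. by $\Hom_R(M, R) \otimes_R T = M^\vee(T)$. In other words, the functor $T \mapsto \operatorname{\mathbb{L}ie}(\Gcal/S)(T)$ is canonically isomorphic to the functor $T \mapsto \Hom_R(M, T)$ represented by the vector fibration $\mathbb{V}(M)$ attached to the dual module $M^\vee$. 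Taking sections over $S$ of the underlying vector fibrations then gives the claimed isomorphism of $\O_S$-modules $\Lie(\Gcal/S) \simeq M^\vee = \Omega(\Gcal/S)^\vee$; one checks that the various local identifications glue because they are all induced by the canonical map $e^\ast\Omega^1_{\Gcal/S} \to I/I^2$, which is independent of choices.

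The main obstacle I anticipate is bookkeeping rather than conceptual: one must be careful that the functor $\underline{\operatorname{Lie}}$ as defined via $T[\varepsilon] = T \times \Spec(\Z[X]/X^2)$ (a fibre product over $\Spec\Z$, hence $T[\varepsilon] = T \otimes_\Z \Z[X]/(X^2) = T[X]/(X^2)$ as an $\O_S$-algebra) really does match the derivation/dual-numbers picture, and that the resulting $\O_S$-module structure on sections agrees with the one on $\Omega(\Gcal/S)^\vee$. This is where the finiteness and local-freeness hypothesis on $\Omega(\Gcal/S)$ is used: it guarantees that $\mathbb{V}(M^\vee)$ is itself a vector bundle whose sheaf of sections is $M^\vee$, so that passing from the representing scheme to its module of sections behaves well and commutes with the localizations used in the reduction. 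Alternatively, and perhaps more cleanly, one can simply cite \cite[Exposé II, §4]{SGA3}, where $\mathscr{L}ie(\Gcal/S)$ is constructed and the duality with $\Omega(\Gcal/S)$ is recorded under exactly this local-freeness assumption; I would present the pairing-based argument above as the proof and point to \textit{loc. cit.} for the details of the identification $\operatorname{\mathbb{L}ie}(\Gcal/S)(T) \simeq \Hom_{\O_S}(\Omega(\Gcal/S), T)$.
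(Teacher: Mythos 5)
Your proposal is correct and, after unpacking, is exactly the standard argument that the cited reference carries out: the paper's proof is just the single citation to \cite[Expos\'e II, Lemme 4.11.7]{SGA3}, which is the fact you re-derive (and you note the same citation as a fallback). The only cosmetic quibble is the phrase ``the vector fibration $\mathbb{V}(M)$ attached to the dual module $M^\vee$,'' which reads ambiguously; it would be cleaner to say that $\mathbb{V}(M)=\Spec\operatorname{Sym}(M)$ represents $T\mapsto\Hom_R(M,T)$ and hence has module of sections $M^\vee$, but the substance is right.
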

	\begin{proof}
	This is \cite[Exposé II, Lemme 4.11.7]{SGA3}.
	\end{proof}
	
\subsection{Differential and stable heights}

	Let $A$ be an abelian variety of dimension $g$ defined over $K$.
	Let $\Acal \to C$ denote the N\'eron model of $A/K$, and write $e:C\to\Acal$ for its neutral section.
	We write
	\[	\Omega_A := \Omega(\Acal/C)	\] 
	for the Hodge bundle of $\Acal/C$ when no confusion can arise. 
	Notice that $\Omega(\Acal/S)$ is dual to $\Lie(\Acal/S)$ by Proposition~\ref{prop:duality.hodge.lie} (the sheaf $\Omega^1_{\Acal/C}$  is indeed a locally free $\Ocal_{\Acal}$-module of rank $g$ by \cite[Proposition 2.5 p. 222]{Liu02}).
	
	Recall that the degree of a locally free $\O_C$-module of finite rank is the degree of its maximal exterior power.
		
	\begin{defi}\label{defi:hdiff}
	The {\it differential height} of an abelian variety $A/K$ of dimension $g$ is the degree of the Hodge bundle of its N\'eron model: 
	\[\hdiff(A/K) := \deg(\Omega_A) = - \deg(\Lie (\Acal/C))\]
	\end{defi}

This notion of height has first been introduced by Parshin \cite{Parshin} in his work on the Mordell conjecture over function fields.
Proposition 2.3 in \cite[Chapitre XI]{MB85}  shows that, for any abelian variety $A/K$ and any finite extension $K'/K$, we have
\[\hdiff((A\times_{K} K')/K')\leq [K':K]\,\hdiff(A/K).\]
The proof of this inequality uses the following lemma, which describes the variation of the degree through certain morphisms. 
Such a result is also relevant in the theory of the Harder--Narasimhan filtrations for instance. 
We include a proof because this lemma is needed later in this paper, and we could not find a satisfactory reference.

	\begin{lemm}\label{lem:keyinequalityparallelogram}
    The following two properties hold.
	\vspace{-0.6\topsep}\begin{enumerate}[{\rm(}a{\rm)}]
	\setlength{\itemsep}{0em}
    \item For any line bundles $\mathcal{L}$ and $\Lcal'$ on $C$, 
	if there is a map of line bundles $\Lcal \rightarrow \Lcal'$ whose generic fiber is an isomorphism, then $\deg \Lcal \leq \deg \Lcal'$.
    \item For any vector bundles $\Ecal$ and $\Ecal'$ on $C$, 
	if there is a map of vector bundles $\Ecal \rightarrow \Ecal'$ which is an isomorphism on the generic fiber, then $\deg \Ecal \leq \deg \Ecal'$.
	\end{enumerate}    
	\end{lemm}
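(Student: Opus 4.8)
The plan is to prove (b) first, since (a) is the special case of rank one, and then note that (a) follows immediately; alternatively one can prove (a) directly and bootstrap to (b) via exterior powers. Let me sketch both routes and pick the cleaner one.

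First I would treat part (a). Given a map of line bundles $f\colon \Lcal\to\Lcal'$ on $C$ which is an isomorphism on the generic fibre, $f$ is in particular nonzero, hence injective (a nonzero map of line bundles on an integral scheme is injective). Thus $f$ realizes $\Lcal$ as a subsheaf of $\Lcal'$, and the quotient $\Lcal'/f(\Lcal)$ is a torsion $\Ocal_C$-module (it vanishes at the generic point), hence a skyscraper sheaf of finite length $\ell\geq 0$. From the short exact sequence $0\to\Lcal\to\Lcal'\to\Lcal'/f(\Lcal)\to 0$ and additivity of degree on short exact sequences of coherent sheaves on a smooth projective curve, we get $\deg\Lcal' = \deg\Lcal + \ell \geq \deg\Lcal$, which is exactly the claim. (Concretely: $f$ vanishes to some order $n_P\geq 0$ at each closed point $P$, and $\deg\Lcal' - \deg\Lcal = \sum_P n_P\deg(P)\geq 0$.)

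For part (b), let $f\colon\Ecal\to\Ecal'$ be a map of vector bundles, necessarily of the same rank $r$, which is an isomorphism on the generic fibre. Taking $r$-th exterior powers gives a map $\wedge^r f\colon \wedge^r\Ecal\to\wedge^r\Ecal'$ between line bundles on $C$ whose generic fibre is $\wedge^r(f_\eta)$, an isomorphism since $f_\eta$ is. By part (a), $\deg(\wedge^r\Ecal)\leq\deg(\wedge^r\Ecal')$; but by definition $\deg\Ecal = \deg(\wedge^r\Ecal)$ and $\deg\Ecal' = \deg(\wedge^r\Ecal')$, so $\deg\Ecal\leq\deg\Ecal'$. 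This is the whole argument.

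\textbf{Main obstacle.} Honestly there is no serious obstacle here; the only points requiring a little care are: (i) checking $f$ is injective — one uses that $C$ is integral, so $\Ocal_C$ has no zero divisors and a generically injective map of locally free sheaves is injective (the kernel is a subsheaf of $\Ecal$ supported on a proper closed subset, hence torsion, hence zero since $\Ecal$ is locally free on a reduced scheme); and (ii) invoking additivity of degree on short exact sequences of coherent sheaves on a smooth projective curve over a field, together with the fact that a torsion coherent sheaf has nonnegative degree (equal to its length, weighted by residue degrees). Both are standard. One subtlety worth a sentence: the statement as phrased does not require $\Ecal,\Ecal'$ to have the same rank, but the existence of a generic isomorphism forces $\operatorname{rk}\Ecal=\operatorname{rk}\Ecal'$, which is what makes the exterior-power reduction legitimate. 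I would state that explicitly at the start of the proof of (b).
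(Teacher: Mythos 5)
Your proof is correct, and part~(b) is identical to the paper's: reduce to~(a) by taking $r$-th exterior powers. For part~(a) you take a slightly different (but equivalent and entirely standard) route: you observe that the map is injective, that its cokernel is a torsion sheaf, and invoke additivity of degree on the resulting short exact sequence, whereas the paper tensors by $\Lcal^{-1}$ to produce a nonzero global section of a line bundle $\Ocal_C(D)$ and concludes $\deg D \geq 0$; both arguments bottom out in the same fact that the divisor of a nonzero rational section of a line bundle on a smooth projective curve has nonnegative degree. Your parenthetical remark correctly flags that the "length" should be weighted by residue degrees, which is the one place where one must be careful over a non-algebraically-closed base field $k$.
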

	\begin{proof}
    $(a)$ If there is such a map $\Lcal\to\Lcal'$, tensoring by $\Lcal^{-1}$ yields a morphism of line bundles $\Ocal_C \rightarrow \Lcal^{-1} \otimes \Lcal'$ which is an isomorphism on the generic fiber. 
    Considering the image by that map of the canonical trivialising section of $\Ocal_C$ 
    and identifying $\Lcal^{-1} \otimes \Lcal'$ with some $\Ocal_C(D)$ for a divisor $D$ on $C$, 
    we thus obtain a non-zero global section of $\Ocal_C(-D)$, which in turn corresponds to a nonzero meromorphic function $f$ on $C$ such that  $\div(f) \geq -D$. 
    This implies that $\deg(D)  \geq 0$, which means that $\deg (\Lcal^{-1} \otimes \Lcal') \geq 0$. In other words, we have  $\deg \Lcal \leq \deg \Lcal'$.

    $(b)$ Let $r$ be the common rank of $\Ecal$ and $\Ecal'$. The degrees of the line bundles $\wedge^r \Ecal$ and $\wedge^r \Ecal'$ are the degrees of $\Ecal$ and $\Ecal'$ by definition. 
    The statement directly reduces to $(a)$ because the map $\Ecal \rightarrow \Ecal'$ induces a map of line bundles $\wedge^r \Ecal \rightarrow \wedge^r \Ecal'$ which is an isomorphism on the generic fiber. 
    \end{proof}

If $K'$ is a finite extension of $K$, and if $A$ is an abelian variety over $K'$, the Semi-stable Reduction Theorem (see \cite{Gro72}) asserts that there exists a finite extension $K_{s}/K'$ such that the base change $A\times_{K'}K_{s}$ has semi-stable reduction everywhere (see Definition~\ref{defi:semistable} below for a reminder of the definition).The ratio ${\hdiff((A\times_{K'} K_{s})/K_{s})/[K_{s}:K]}$ is independent of the choice of extension $K_{s}/K'$ over which $A$ attains semi-stable reduction, hence it is licit to define the {\it stable height} of $A$ by 
    \[ \hstab(A) := \frac{ \hdiff((A\times_{K'} K_{s})/K_{s})}{[K_{s}:K]}   \]
    (for more details see \cite[Chapitre XI, Définition 2.4.2]{MB85}).

We recall the following classical property of the differential height:
	\begin{prop}\label{prop:hdiff.product}
    Let $A$ and $B$ be abelian varieties over $K$. 
    Then we have $\hdiff((A\times B)/K) = \hdiff(A/K)+\hdiff(B/K)$. 
	\end{prop}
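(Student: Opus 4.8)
The plan is to reduce the statement to two elementary facts: that Néron models commute with products, and that both the Hodge bundle and the degree are additive with respect to the corresponding direct-sum decompositions. First I would recall that if $\Acal\to C$ and $\Bcal\to C$ are the Néron models of $A/K$ and $B/K$, then $\Acal\times_C\Bcal\to C$ is the Néron model of $(A\times B)/K$. This follows immediately from the Néron mapping property: a $C$-morphism from a smooth $C$-scheme into $\Acal\times_C\Bcal$ is the same as a pair of $C$-morphisms into $\Acal$ and $\Bcal$ respectively, and likewise for $K$-morphisms into $A\times B=A\times_K B$; so the universal property for the product follows componentwise from the universal properties of $\Acal$ and $\Bcal$. (Alternatively one can cite \cite[Ch.~1, §1.2 or §7]{BLR}.)

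Next I would compute the Hodge bundle of the product. Writing $\mathrm{pr}_1,\mathrm{pr}_2$ for the two projections of $\Acal\times_C\Bcal$ and $e_A,e_B,e$ for the respective neutral sections (so $e=(e_A,e_B)$), the standard decomposition of the sheaf of relative differentials of a fibre product over the base gives a canonical isomorphism
\[
\Omega^1_{(\Acal\times_C\Bcal)/C}\;\simeq\;\mathrm{pr}_1^\ast\,\Omega^1_{\Acal/C}\ \oplus\ \mathrm{pr}_2^\ast\,\Omega^1_{\Bcal/C}.
\]
Pulling back along $e=(e_A,e_B)$ and using $\mathrm{pr}_i\circ e=e_A$ or $e_B$ accordingly yields
\[
\Omega_{A\times B}\;=\;e^\ast\,\Omega^1_{(\Acal\times_C\Bcal)/C}\;\simeq\;\Omega_A\ \oplus\ \Omega_B
\]
as locally free $\O_C$-modules.

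Finally, if $r=\dim A$ and $s=\dim B$, taking top exterior powers gives $\wedge^{r+s}(\Omega_A\oplus\Omega_B)\simeq(\wedge^r\Omega_A)\otimes(\wedge^s\Omega_B)$, and since the degree of a tensor product of line bundles on $C$ is the sum of the degrees, we conclude
\[
\hdiff((A\times B)/K)=\deg(\Omega_{A\times B})=\deg(\Omega_A)+\deg(\Omega_B)=\hdiff(A/K)+\hdiff(B/K).
\]
I do not expect a genuine obstacle here; the only point requiring a word of justification is the compatibility of Néron models with products, which is a direct consequence of the universal property, and the splitting of $\Omega^1$ for a product, which is standard.

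\begin{thebibliography}{9}
\bibitem{BLR} S.~Bosch, W.~L\"utkebohmert, M.~Raynaud, \emph{N\'eron Models}, Ergeb. Math. Grenzgeb. (3) \textbf{21}, Springer, 1990.
\end{thebibliography}
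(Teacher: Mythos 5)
Your proof is correct and follows essentially the same route as the paper: establish $\Omega_{A\times B}\simeq\Omega_A\oplus\Omega_B$ and then conclude by additivity of the degree via top exterior powers. The paper simply states the isomorphism of Hodge bundles (implicitly using that Néron models commute with products, which you spell out via the Néron mapping property) and invokes a lemma on additivity of degree in exact sequences of vector bundles — applied to the split sequence $0\to\Omega_A\to\Omega_A\oplus\Omega_B\to\Omega_B\to 0$ — whose proof is again the identification $\wedge^{r+s}\Fcal\simeq\wedge^r\Fcal_1\otimes\wedge^s\Fcal_2$ that you use directly.
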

	\begin{proof}
 	This  is a direct consequence of the following lemma
 	since $\Omega_{A \times B} \simeq \Omega_A \times \Omega_B$.
	\end{proof}

	\begin{lemm}\label{lemm:suiteexactefibresvect}
	Assume  that we have an exact sequence 
	$0 \longrightarrow \Fcal_1 \longrightarrow \Fcal \longrightarrow \Fcal_2 \longrightarrow 0$ of locally free vector bundles of finite rank over $S$.
	Then the two $\O_S$-modules of rank $1$, $\wedge^{\mathrm{rk}(\Fcal_1) + \mathrm{rk}(\Fcal_2)}(\Fcal)$ and $\big( \wedge^{\mathrm{rk}(\Fcal_1)}\Fcal_1  \big)\otimes \big(\wedge^{\mathrm{rk}(\Fcal_2)}\Fcal_2 \big)$  are canonically isomorphic. 

	In particular, if $S$ is a smooth projective curve over $k$, we have $\deg(\Fcal) = \deg(\Fcal_1) + \deg(\Fcal_2)$.
	\end{lemm}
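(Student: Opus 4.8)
The plan is to prove the exact-sequence statement for exterior powers by a local-to-global argument, and then deduce the degree formula as an immediate corollary.

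First I would reduce to a local question. Since the claim is about a canonical isomorphism of invertible $\O_S$-modules, it suffices to construct a map that is an isomorphism after restricting to each stalk, or equivalently on a suitable affine open cover. So I would pick an affine open $U = \Spec R$ small enough that $\Fcal_1|_U$, $\Fcal|_U$, $\Fcal_2|_U$ are all free — this is possible because all three are locally free of finite rank and $S$ is locally noetherian of dimension $\leq 1$; shrinking further if needed, I can even assume the short exact sequence splits over $U$ (a surjection onto a free module splits), though a splitting is not strictly necessary for what follows. Write $m = \mathrm{rk}(\Fcal_1)$, $n = \mathrm{rk}(\Fcal_2)$.

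Next, the key algebraic input: for an exact sequence of finitely generated projective modules $0 \to M_1 \to M \to M_2 \to 0$ over $R$, there is a canonical isomorphism $\wedge^{m+n} M \simeq \wedge^m M_1 \otimes_R \wedge^{n} M_2$. I would recall the standard construction: any wedge of $m+n$ elements of $M$ can be expanded using a local splitting $M \simeq M_1 \oplus M_2$, and the only term surviving in top degree on each factor is the one pairing $\wedge^m M_1$ with $\wedge^n M_2$; concretely the map sends $(\omega_1 \otimes \omega_2)$ with $\omega_1 \in \wedge^m M_1$, $\omega_2 \in \wedge^n M_2$ to $\widetilde{\omega_1} \wedge \widetilde{\omega_2}$ where $\widetilde{\omega_2}$ is any lift of $\omega_2$ — and one checks this is independent of the lift because $\wedge^{m+1} M_1 = 0$ forces the ambiguity (a wedge of an element of $M_1$ against $\omega_1$) to vanish. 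Then I would note that this construction is manifestly natural in the sequence, hence the local isomorphisms agree on overlaps and glue to the desired global canonical isomorphism of $\O_S$-modules. This naturality/gluing verification is the main obstacle — not that it is deep, but it is the one place where care is needed to be sure the locally-defined maps are genuinely independent of all choices (cover, splitting, lifts) so that they patch.

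Finally, for the "in particular" clause: when $S$ is a smooth projective curve over $k$, taking degrees of line bundles is additive on tensor products, so from the canonical isomorphism $\wedge^{m+n}\Fcal \simeq (\wedge^m \Fcal_1)\otimes(\wedge^n \Fcal_2)$ we get $\deg\big(\wedge^{m+n}\Fcal\big) = \deg\big(\wedge^m\Fcal_1\big) + \deg\big(\wedge^n\Fcal_2\big)$; by the definition of the degree of a vector bundle as the degree of its top exterior power, this reads $\deg(\Fcal) = \deg(\Fcal_1) + \deg(\Fcal_2)$, as claimed.
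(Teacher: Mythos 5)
Your proposal is correct and follows essentially the same line as the paper: both reduce to the canonical isomorphism $\wedge^{m}\Fcal_1\otimes\wedge^{n}\Fcal_2\simeq\wedge^{m+n}\Fcal$, constructed by wedging a top form on $\Fcal_1$ with a lift of a top form on $\Fcal_2$ and observing the lift-ambiguity dies in $\wedge^{m+1}\Fcal_1=0$. The only difference is cosmetic: the paper builds the map globally via universal properties of exterior and tensor products (so no gluing argument is needed) and then verifies it is an isomorphism stalk-by-stalk by citing Lang, Ch.~XIX, Prop.~1.2, whereas you construct the map locally and then invoke naturality to glue, which is also fine.
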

	\begin{proof}
	For $i=1, 2$ we write $r_i = \mathrm{rk}(\Fcal_i)$.
    We first show that there is a canonical map 
    $g:\wedge^{r_1}\Fcal_1  \otimes \wedge^{r_2}\Fcal_2 \longrightarrow\wedge^{r_1+r_2}\mathcal{F}$ of $\O_S$-modules.
	By assumption there is an alternating map $f :\Fcal_1^{r_1}\times \Fcal^{r_2} \to \wedge^{r_1+r_2}\Fcal$.
	Since $\wedge^{r_1}\Fcal_1$ is the maximal exterior power of $\Fcal_1$, this map factors as an alternating map $\Fcal_1^{r_1}\times \Fcal_2^{r_2} \to \wedge^{r_1+r_2}\Fcal$. 
	By the universal properties of the tensor product and of the exterior power, the latter map further induces a canonical map $g$ as desired.
	Here is a diagram summarizing the situation where the two  vertical arrows denote $\O_S$-linear maps.  
	\begin{center}
	\begin{tikzcd}
	\Fcal_1^{r_1}\times\Fcal^{r_2} \arrow[d] \arrow[rrd, "f"]  &      &                       \\
	\Fcal_1^{r_1}\times\Fcal_2^{r_2} \arrow[rrd] \arrow[rr, dashed] &                       & \wedge^{r_1+r_2}\Fcal \\
	& & (\wedge^{r_1}\Fcal_1)\otimes(\wedge^{r_2}\Fcal_2) \arrow[u, "g"', dashed] 
	\end{tikzcd}
	\end{center}
    There  remains to prove that $g$ is an isomorphism: this can be checked locally at all points of $S$. The corresponding statement in commutative algebra is  \cite[Chapter XIX, Proposition 1.2]{LangAlgebra}.
	\end{proof}

\subsection{Modular height}\label{modular setting}
Let $g,d\geq 1$, and $n\geq 3$ be  integers.
Let $\mathscr{A}_{g,d,n}$ be the (fine) moduli space parametrizing triples $(A,\xi,\nu)$ of abelian varieties $A$ of dimension $g$ equipped with a polarization $\xi$  of degree $d^2$, and a level structure~$\nu$ of level $n$ (see for instance \cite[p. 176]{MB85}). 
Let $\overline{\mathscr{A}}_{g,d,n}$ be a suitable compactification of $\mathscr{A}_{g,d,n}$. We equip $\overline{\mathscr{A}}_{g,d,n}$ with an ample line bundle $\mathscr{L}$, as in \cite[p. 178]{MB85}: this allows to define a height function  $h_{\mathscr{L}}$ on $\overline{\mathscr{A}_{g,d,n}}(\overline{K})$.
    
For any finite extension $K'$ of $K$, a triple $(A,\xi,\nu)$ over $K'$ induces a $K'$-rational point on ${\overline{\mathscr{A}}_{g,d,n}}$ {\it i.e.}, a morphism  
    \[j(A,\xi,\nu): \Spec(K')\longrightarrow \overline{\mathscr{A}}_{g,d,n},\]
as given in (3.4.3.2) of \cite[p. 178]{MB85}. 

	\begin{rema}
	The existence of a level structure of level $n$ with $n\geq 3$ on an abelian variety $A'/K'$ implies that $A'$ is semi-stable over $K'$, as remarked in \cite[p. 236]{MB85}.
	\end{rema}
    
	\begin{defi}\label{defi:hmod}
	Let $(A,\xi,\nu)$ be a triple over $K'$ as above. The \textit{modular height} of $(A,\xi,\nu)$ is defined as 
    \[\hmod(A, \xi,\nu):=h_{\mathscr{L}}\big(j(A,\xi,\nu)\big).\]
	\end{defi}
	
	\begin{lemm}\label{modpoz}
	In the above context, we have $\hmod(A, \xi, \nu)\geq0$. 
		
	Moreover, $\hmod(A, \xi, \nu)=0$ if and only if $A$ is constant i.e., there exists  an abelian variety $A_0$ over $k$ such that $A_0\times_{k}{K'}$ is isomorphic to $A$.
	\end{lemm}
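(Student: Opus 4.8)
The plan is to reduce the statement to the degree of a line bundle on a curve. Let $C'$ be the smooth projective curve with function field $K'$, so that $j(A,\xi,\nu)\colon\Spec(K')\to\overline{\mathscr{A}}_{g,d,n}$ is a rational map from $C'$. Since $C'$ is a regular integral curve and $\overline{\mathscr{A}}_{g,d,n}$ is proper, the valuative criterion of properness shows that this rational map extends (uniquely) to a morphism $\bar{\jmath}\colon C'\to\overline{\mathscr{A}}_{g,d,n}$. By the very definition of the height attached to a line bundle over a function field (see \cite[p.~178]{MB85}), one has, up to a fixed positive normalizing factor that plays no role in the sign,
\[\hmod(A,\xi,\nu)=h_{\mathscr{L}}\big(j(A,\xi,\nu)\big)=\tfrac{1}{[K':K]}\deg_{C'}\!\big(\bar{\jmath}^{\,*}\mathscr{L}\big).\]
So everything reduces to understanding this degree.

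Since $\mathscr{L}$ is ample on $\overline{\mathscr{A}}_{g,d,n}$, the pullback $\bar{\jmath}^{\,*}\mathscr{L}$ is a nef line bundle on the curve $C'$, and hence has non-negative degree; this already gives $\hmod(A,\xi,\nu)\geq 0$. For the equality case, I would argue that $\deg_{C'}(\bar{\jmath}^{\,*}\mathscr{L})=0$ if and only if $\bar{\jmath}$ is constant. Indeed, if $\bar{\jmath}$ is non-constant, then its scheme-theoretic image $D$ is a closed integral curve in $\overline{\mathscr{A}}_{g,d,n}$, the restriction $\mathscr{L}|_{D}$ is ample, hence of positive degree, and the finite morphism $C'\to D$ induced by $\bar{\jmath}$ yields $\deg_{C'}(\bar{\jmath}^{\,*}\mathscr{L})=\deg(C'\to D)\cdot\deg_{D}(\mathscr{L}|_{D})>0$; conversely, if $\bar{\jmath}$ is constant then $\bar{\jmath}^{\,*}\mathscr{L}$ is trivial and the degree is $0$.

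It remains to translate "$\bar{\jmath}$ is constant" into "$A$ is constant". First, because $(A,\xi,\nu)$ is a genuine polarized abelian variety with level structure over $K'$, the map $j(A,\xi,\nu)$ factors through the open part $\mathscr{A}_{g,d,n}$; so if $\bar{\jmath}$ is constant its image is a closed point $P\in\mathscr{A}_{g,d,n}$, classifying a triple $(A_0,\xi_0,\nu_0)$ over the residue field $\kappa(P)$, which is finite over $k$. Pulling this triple back along $\Spec(K')\to\Spec(\kappa(P))$ recovers $(A,\xi,\nu)$, so in particular $A\simeq A_0\times_{\kappa(P)}K'$ is constant. Conversely, if $A\simeq A_0\times_k K'$, then by the rigidity of constant abelian schemes the Néron--Severi group of $A$ and the finite group scheme $A[n]$ are constant, so the polarization $\xi$ and the level-$n$ structure $\nu$ also descend to (a finite extension of) $k$; hence the whole triple, and with it the morphism $\bar{\jmath}$, is constant, and $\hmod(A,\xi,\nu)=0$ by the previous paragraph.

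The main obstacle is the equality case, and more precisely the rigidity input showing that constancy of $A$ alone already forces the classifying map of the enriched triple $(A,\xi,\nu)$ to be constant; one has to rule out that the polarization or the level structure "moves" while $A$ itself does not. A minor point to address carefully is the bookkeeping between $k$ and the field of constants of $K'$ in the phrase "$A$ is constant"; since $k$ is perfect this causes no real trouble. Everything else in the argument — the extension of the rational map via properness, the expression of the height as a degree on $C'$, and the nefness of the pullback of an ample bundle — is standard and can be quoted from \cite{MB85} and \cite{Liu02}.
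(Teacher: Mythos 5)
Your proposal takes a genuinely different route from the paper, whose proof of this lemma consists entirely of a citation to Propri\'et\'e~3.2.3 and Th\'eor\`eme~4.5 of \cite{MB85}. You re-derive the statement by the standard mechanism: extend the classifying map to $\bar{\jmath}\colon C'\to\overline{\mathscr{A}}_{g,d,n}$ by properness, identify $\hmod(A,\xi,\nu)$ with a positive multiple of $\deg_{C'}\bar{\jmath}^{\,*}\mathscr{L}$, and use ampleness of $\mathscr{L}$ both for non-negativity and for the characterization of when the degree vanishes. This outline is correct, and you rightly isolate the two delicate points. The one you call the main obstacle -- that constancy of $A$ alone forces constancy of the whole triple up to finite extension -- is indeed the crux of the ``if'' direction; it needs the rigidity of homomorphisms of constant abelian schemes over a geometrically connected curve to descend $\xi$, and a Galois-theoretic argument on $A_0[n]$ to descend $\nu$; both hold, but they are precisely the content hidden behind the citation, not things one may simply assert. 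The second point, the bookkeeping between $k$ and the constant field of $K'$, deserves more weight than ``minor'': your ``only if'' argument produces $A_0$ over $\kappa(P)$, a finite extension of $k$ inside $K'$, whereas the lemma is phrased with $A_0$ over $k$; these agree precisely when $k$ is algebraically closed in $K'$, and in general the statement must be read with the constant field of $K'$ in place of $k$. What the paper buys by citing \cite{MB85} is avoiding these standard-but-nontrivial inputs; what your approach buys is making explicit exactly where ampleness and rigidity enter.
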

	\begin{proof}
	This is Propri\'et\'e 3.2.3 on p.\,218 of \cite{MB85}, see also Th\'eor\`eme 4.5 on p.\,235 of \cite{MB85}. 
	\end{proof}

\subsection{Comparison of heights}
The best way to compare these  notions of height -- from Definition~\ref{defi:hdiff} and Definition~\ref{defi:hmod} -- is to follow Moret-Bailly, and to study his {`Formule Cl\'e'}. We only state his key formula  in our particular case, keeping the notation from the previous paragraph.
    
	\begin{theo}[Moret-Bailly's Formule Cl\'e]\label{theo-clef}
	In the above context, for any point $j(A,\xi,\nu)\in{\overline{\mathscr{A}}_{g,d,n}}(K')$, we have 
		\[\hmod(A,\xi,\nu)=\frac{4^g d}{2}\, \hstab(A).\]
	\end{theo}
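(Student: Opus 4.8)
The plan is to reduce the comparison of the modular height and the stable differential height to a single, well-understood formula of Moret-Bailly, and then unwind the normalizations. First I would recall that the modular height $\hmod(A,\xi,\nu) = h_{\mathscr{L}}\big(j(A,\xi,\nu)\big)$ is, by construction, the height attached to the ample line bundle $\mathscr{L}$ on the compactification $\overline{\mathscr{A}}_{g,d,n}$ evaluated at the point classifying the triple. Since our base is a function field $K=k(C)$, such a height is computed geometrically: the $K'$-point $j(A,\xi,\nu):\Spec(K')\to\overline{\mathscr{A}}_{g,d,n}$ spreads out (after passing to the smooth projective model $C'$ of $K'$) to a morphism $\overline{j}:C'\to\overline{\mathscr{A}}_{g,d,n}$, and $h_{\mathscr{L}}\big(j(A,\xi,\nu)\big) = \tfrac{1}{[K':K]}\deg\big(\overline{j}^{\,*}\mathscr{L}\big)$.

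Next I would invoke the precise relation, established by Moret-Bailly, between the line bundle $\mathscr{L}$ and the Hodge bundle on the moduli space. The bundle $\mathscr{L}$ is (a suitable power of) the determinant of the Hodge bundle $\omega$ of the universal semi-abelian scheme over $\overline{\mathscr{A}}_{g,d,n}$, with a normalization chosen on p.\,178 of \cite{MB85}; pulling back along $\overline{j}$ identifies $\overline{j}^{\,*}\omega$ with the Hodge bundle $\Omega_{\Acal_s}=\Omega(\Acal_s/C_s)$ of the Néron model of the semi-stable model $A_s := A\times_{K'}K_s$ over the curve $C_s$ corresponding to the extension $K_s/K'$ over which $A$ acquires everywhere semi-stable reduction. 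Taking degrees and dividing by $[K_s:K]$, the left-hand side becomes $\hmod(A,\xi,\nu)$ (the modular height is insensitive to such base change, being a Weil height on a projective variety), while the right-hand side becomes a fixed rational multiple of $\deg(\Omega_{\Acal_s})/[K_s:K] = \hdiff(A_s/K_s)/[K_s:K] = \hstab(A)$ by Definition~\ref{defi:hdiff} and the definition of the stable height. The combinatorial constant $\tfrac{4^g d}{2}$ is exactly the one recorded in Moret-Bailly's Formule Clé; it comes from the normalization of $\mathscr{L}$ relative to $\det\omega$ (the factor $4^g$ is the standard weight in the theory of theta functions/Siegel modular forms, and $d$ reflects the degree $d^2$ of the polarization).

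Concretely, I would cite the statement of the Formule Clé in \cite[Chapitre I, Théorème 3.2.1 and p.\,218]{MB85} — it is essentially verbatim what we want — and merely adapt its conclusion to the present normalization: Moret-Bailly phrases it for the ``height $h_{\mathscr{L}}$'' and the stable differential height, which is precisely our $\hmod$ and $\hstab$. Thus the proof is a citation plus a bookkeeping check that our conventions for $\mathscr{L}$, for the level and polarization data, and for the normalizations of $\hmod$ and $\hstab$ match those of \cite{MB85}.

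The main obstacle is purely one of normalization: making sure that the ample line bundle $\mathscr{L}$ we fixed on $\overline{\mathscr{A}}_{g,d,n}$ (following p.\,178 of \cite{MB85}) is the one for which Moret-Bailly's formula produces exactly the constant $\tfrac{4^g d}{2}$, and that the degree normalization implicit in ``$h_{\mathscr{L}}$'' over a function field agrees with the one used to define $\hdiff$ via $\deg(\Omega_A)$ (no spurious factor of $\deg$ of a point, no discrepancy between arithmetic and geometric degree). Once the dictionary between the two sets of conventions is pinned down, there is no further content: the equality $\hmod(A,\xi,\nu)=\tfrac{4^g d}{2}\,\hstab(A)$ is Moret-Bailly's theorem, recorded here in the form we shall use.
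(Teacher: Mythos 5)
Your proposal is correct and takes essentially the same route as the paper: the result is Moret-Bailly's Formule Clé and both proofs amount to a citation of it together with a check that the normalizations agree. One small bookkeeping remark: the Formule Clé is stated in Chapitre~XI of \cite{MB85} (Corollaire 3.1.1, p.~232, and Th\'eor\`eme 3.2, pp.~233--234), not Chapitre~I, and p.~218 is where Propri\'et\'e~3.2.3 (the positivity statement used for Lemma~\ref{modpoz}) appears rather than the Formule Cl\'e itself.
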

	\begin{proof}
	This is \cite{MB85} Corollaire 3.1.1 on p.\,232, see also \cite[XI.3.2]{MB85} Th\'eor\`eme 3.2 on pp.\,233--234.
	\end{proof}
	
	\begin{coro}\label{cor:MoretBailly}
	For any abelian variety $A$ over $K$, we have $\hdiff(A/K) \geq 0$.  
		
    Moreover, for any finite extension $K'/K$ such that there exists a triple $(A,\xi,\nu)$ over $K'$, 
    we have $\hstab(A)=0$ if and only if the abelian variety $A\times_K{K'}$ over $K'$ is constant. 
	\end{coro}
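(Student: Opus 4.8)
The plan is to deduce both assertions from Moret-Bailly's Formule Clé (Theorem~\ref{theo-clef}) together with the positivity of the modular height (Lemma~\ref{modpoz}), after reducing to a situation where a triple $(A,\xi,\nu)$ exists over a suitable finite extension. First I would address the positivity of $\hdiff$. Let $A/K$ be an arbitrary abelian variety of dimension $g$. By the Semi-stable Reduction Theorem there is a finite extension $K_s/K$ over which $A$ acquires semi-stable reduction; after possibly enlarging $K_s$ we may also assume, by standard facts on moduli of abelian varieties (adjoining the coordinates of an $n$-torsion point with $n\geq 3$, say, and a polarization), that $A\times_K K_s$ carries a polarization $\xi$ of some degree $d^2$ and a level-$n$ structure $\nu$, hence defines a point $j(A\times_K K_s,\xi,\nu)\in\overline{\mathscr{A}}_{g,d,n}(K_s)$. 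Then Theorem~\ref{theo-clef} gives $\hmod(A\times_K K_s,\xi,\nu)=\tfrac{4^g d}{2}\,\hstab(A\times_K K_s)$, and since $\hmod\geq 0$ by Lemma~\ref{modpoz} and $4^g d/2>0$, we get $\hstab(A\times_K K_s)\geq 0$. By definition of the stable height (the ratio being independent of the chosen extension), $\hstab(A)\geq 0$ as well. Finally, Proposition~2.3 in \cite[Chapitre XI]{MB85}, recalled above, gives $\hdiff((A\times_K K_s)/K_s)\leq [K_s:K]\,\hdiff(A/K)$, i.e. $\hstab(A)\leq \hdiff(A/K)$ (after normalising by $[K_s:K]$); combined with $\hstab(A)\geq 0$ this would only give $\hdiff(A/K)\geq \hstab(A)$, which is not yet what we want — so the correct route here is to invoke the displayed inequality in the other direction is not available, and instead one uses that $\hstab$ is computed on a semi-stable model where $\hstab$ and $\hdiff$ agree: over $K_s$, semi-stability gives $\hdiff((A\times_K K_s)/K_s)=[K_s:K]\,\hstab(A)\geq 0$, and then the inequality $\hdiff((A\times_K K_s)/K_s)\leq [K_s:K]\,\hdiff(A/K)$ yields $\hdiff(A/K)\geq \hstab(A)\geq 0$. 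This gives the first claim.

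For the second assertion, fix a finite extension $K'/K$ over which a triple $(A,\xi,\nu)$ exists; note this forces $A\times_K K'$ to be semi-stable over $K'$ (by the Remark preceding Definition~\ref{defi:hmod}). By Theorem~\ref{theo-clef}, $\hmod(A,\xi,\nu)=\tfrac{4^g d}{2}\,\hstab(A)$, so $\hstab(A)=0$ if and only if $\hmod(A,\xi,\nu)=0$. By the second part of Lemma~\ref{modpoz}, the latter holds if and only if $A\times_K K'$ is constant, i.e. isomorphic to $A_0\times_k K'$ for some abelian variety $A_0/k$. This is exactly the stated equivalence.

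I expect the main subtlety — rather than a genuine obstacle — to be the bookkeeping of field extensions and the direction of the inequality relating $\hdiff$ and $\hstab$: one must be careful that $\hstab(A)\geq 0$ only transfers to $\hdiff(A/K)\geq 0$ because over a field of semi-stable reduction the differential height equals (a multiple of) the stable height, and the inequality $\hdiff(\text{base change})\leq [K_s:K]\,\hdiff(A/K)$ then pushes positivity back down to $K$. The inputs themselves — Theorem~\ref{theo-clef}, Lemma~\ref{modpoz}, and the existence of polarizations and level structures after a finite base change — are all either quoted or standard, so no new ideas are needed beyond assembling them correctly.
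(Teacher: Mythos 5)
Your proof is correct and follows essentially the same route as the paper, which simply combines the inequality $\hdiff(A/K)\geq \hstab(A)$ with Theorem~\ref{theo-clef} and Lemma~\ref{modpoz}. The momentary confusion in the middle of your first paragraph (the garbled sentence about ``the other direction is not available'') is a false alarm: once you have $\hstab(A)\geq 0$, the chain $\hdiff(A/K)\geq\hstab(A)\geq 0$ already gives the conclusion, exactly as you end up writing.
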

	
	\begin{proof}
	The result follows by combining the inequality $\hdiff(A/K)\geq \hstab(A)$, Theorem~\ref{theo-clef}, and Lemma~\ref{modpoz}.
	\end{proof}

\section{Preliminary results on isogenies}

The proof of Theorem~\ref{itheo:hdiff} is much simpler in the case of isogenies of degree coprime to the characteristic of the base field $K$ (equality \eqref{ieq:char0}, and equality \eqref{ieq:charp.FVisog} for FV-isogenies of type $(0,0)$). 
We therefore treat this case  at the end of this section, in which
we first introduce the main characters of this paper, {\it viz.} isogenies. 

\subsection{Isogenies} 
We begin by recalling a few basic facts about isogenies, as well as some of their proofs, because we found them to be somewhat scattered through the literature or difficult to locate. 
Our presentation is heavily inspired by the (as of yet unpublished) book \cite{EGM_AV} by Edixhoven, van der Geer, and Moonen. The reader is also referred to \cite[\S 7.3]{BLR}. 
 
	\begin{defi}
	A homomorphism $\phi: G \rightarrow H$ of connected group varieties over a field $K$ is called an \emph{isogeny} if it is surjective (\ie{}, the underlying map on sets is), and its kernel is a finite group scheme over $K$. 
  	The order of the kernel of $\phi$ (viewed as a group scheme over $K$) is then called the \emph{degree of $\phi$}.
		
	An isogeny $\phi$ is called {\it separable} if it is an \'etale  morphism of algebraic varieties, and {\it purely inseparable} if it is a purely inseparable (also called radicial or universally injective) morphism of algebraic varieties (see \cite[\S 3.5]{EGAI}).
	\end{defi}

Here are well-known criteria for a homomorphism to be an isogeny:

	\begin{prop}
	Let $\phi: A \rightarrow B $ be a homomorphism  between abelian varieties over a field $K$. 
	The following statements are equivalent:
	\vspace{-0.6\topsep}\begin{enumerate}[{\rm(}i{\rm)}]
	\setlength{\itemsep}{0em}
    	\item $\phi$ is an isogeny, 
	    \item $\phi$ has finite kernel and $\dim(A) = \dim(B)$,
    	\item $\phi$ is finite, flat, and surjective.
	\end{enumerate}
	\end{prop}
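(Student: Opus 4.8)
The plan is to prove the cyclic chain of implications $(i)\Rightarrow(ii)\Rightarrow(iii)\Rightarrow(i)$, relying on standard facts about morphisms of varieties and the group structure. First I would note that the implication $(i)\Rightarrow(ii)$ is essentially immediate from the definition: an isogeny is surjective with finite kernel, so finiteness of the kernel is given, and surjectivity together with the fact that both $A$ and $B$ are irreducible (being abelian varieties) forces $\dim B = \dim A - \dim(\ker\phi) = \dim A$, since the fibers of a dominant morphism of irreducible varieties have dimension $\dim A - \dim B$ generically, while here the generic fiber is a coset of the finite kernel, hence $0$-dimensional. (Here one uses that $\phi$, being a homomorphism of group varieties, has all fibers isomorphic to $\ker\phi$, a consequence of translation.)

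For $(ii)\Rightarrow(iii)$: assuming $\phi$ has finite kernel and $\dim A=\dim B$, I would first argue that $\phi$ is surjective. Its image is a closed subgroup variety of $B$ (the scheme-theoretic image of a homomorphism of group schemes is a subgroup scheme, and the image of a proper variety is closed and irreducible), of dimension equal to $\dim A-\dim(\ker\phi)=\dim A=\dim B$; a closed irreducible subvariety of $B$ of full dimension is all of $B$. Next, $\phi$ is proper (a morphism from a proper $K$-scheme to a separated $K$-scheme is proper) and quasi-finite (all fibers are torsors under the finite group scheme $\ker\phi$, hence finite), and a proper quasi-finite morphism is finite (\cite{EGAIV} or \cite[Tag 02OG]{Stacks}, say). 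Finally, for flatness I would invoke the "miracle flatness" / local criterion: $\phi$ is a morphism between regular (smooth) $K$-varieties of the same dimension with finite fibers, hence it is flat; equivalently, a finite surjective morphism onto a regular (in particular Cohen--Macaulay) scheme with equidimensional source is flat. So $\phi$ is finite, flat, and surjective.

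For $(iii)\Rightarrow(i)$: if $\phi$ is finite, flat and surjective, then in particular it is surjective on the underlying sets, and its kernel $\phi^{-1}(e_B)$ is the fiber over the identity, which is finite since $\phi$ is finite; being a closed subscheme of the $K$-variety $A$, it is a finite group scheme over $K$. Hence $\phi$ is an isogeny by definition. The one point deserving a word of care throughout is the distinction between set-theoretic surjectivity and the scheme-theoretic statements (surjectivity of $\phi$ as a morphism, flatness forcing faithful flatness) — but since all schemes here are of finite type over a field and the kernel is finite, these subtleties are harmless. I expect the flatness step in $(ii)\Rightarrow(iii)$ to be the only place requiring a genuinely nontrivial input (miracle flatness), everything else being dimension-counting and general nonsense about proper morphisms.
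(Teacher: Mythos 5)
Your proof is correct and takes essentially the same approach as the paper: both rely on translation to identify all fibers with $\ker\phi$, use a dimension count to establish surjectivity, and invoke miracle flatness (the paper via a reference to Liu) to pass from finite quasi-finite to finite flat. The only cosmetic difference is that you arrange the argument as a cyclic chain $(i)\Rightarrow(ii)\Rightarrow(iii)\Rightarrow(i)$ and spell out the proper-plus-quasi-finite-implies-finite step, whereas the paper proves $(i)\Leftrightarrow(ii)$ and $(ii)\Leftrightarrow(iii)$ more tersely.
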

	\begin{proof}
	We prove $(i) \Leftrightarrow (ii)$ first.
	We have $\dim \phi(A) = \dim A - \dim \ker \phi$, so that $\dim B = \dim \phi(A) = \dim A$ if $\phi$ is an isogeny. Conversely, if $\ker \phi$ is finite  and $\dim A = \dim B$, then $\dim B = \dim \phi(A)$ and $\phi(A)$ is a proper subvariety of $B$. In that case, $\phi(A) = B$ since $B$ is connected, and $\phi$ is thus surjective.
	We prove $(ii) \Leftrightarrow (iii)$ next. Assuming $(ii)$,  the same argument as above proves that $\phi$ is surjective. The fibers of $\phi$ are all translates of $\ker(\phi)$, so the sheaf $\phi_* \Ocal_A$ is a locally free $\Ocal_B$-module of finite rank $r$. 
	This rank $r$ equals $[K(A):K(B)]$ at the generic point of $B$, while $r$ is the order of $\ker \phi$ at $0_B$ by definition. If $\phi$ has finite kernel, $K(A)$ and $K(B)$ have the same transcendence degree over $K$, hence $\dim A = \dim B$, and $\phi$ is finite and flat \cite[Chapter IV, Remark 3.11]{Liu02}.  The converse is immediate.
	\end{proof} 

    \begin{prop}\label{prop:basics.isog}
	Let $\phi: A \rightarrow B $ be an  isogeny  between abelian varieties over a field $K$.	
        	\vspace{-0.6\topsep}\begin{enumerate}[{\rm(}a{\rm)}]
        	\setlength{\itemsep}{0em}
            \item The degree of $\phi$ equals the degree of the extension of function fields $K(A)/K(B)$ induced by $\phi^*$.
                    \item  The isogeny $\phi$ is separable (resp. purely inseparable) if and only if its kernel is an \'etale (resp. connected) $K$-group scheme.
            \item The isogeny $\phi$ is separable (resp. purely inseparable) if and only if the field extension $K(A)/K(B)$ is.
        \end{enumerate}

    \end{prop}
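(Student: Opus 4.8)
The plan is to prove the three assertions in order, using the structure theory of finite group schemes over a field and the concrete description of the kernel as the fibre over the identity.

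For part~(a), I would recall that an isogeny $\phi:A\to B$ is finite flat of constant rank $r$ (established in the previous proposition), so $\phi_*\Ocal_A$ is locally free of rank $r$. Evaluating this rank at the generic point of $B$ gives $r=[K(A):K(B)]$, while evaluating it at the identity section $0_B$ gives that $r$ is the order (as a $K$-group scheme) of the fibre $\phi^{-1}(0_B)=\ker\phi$. Comparing the two computations yields $\deg\phi=\ord(\ker\phi)=[K(A):K(B)]$. This is essentially already contained in the proof of the preceding proposition, so I would keep it brief.

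For part~(b), I would use the connected-\'etale sequence of the finite $K$-group scheme $G=\ker\phi$, writing $0\to G^\circ\to G\to G^{\mathrm{et}}\to 0$. Since translation by points of $\ker\phi$ identifies every fibre of $\phi$ with $\ker\phi$, the morphism $\phi$ is \'etale (resp.\ radicial) at every point if and only if it is so at $0_B$, i.e.\ if and only if $\ker\phi$ is \'etale (resp.\ connected, using that over a field a finite group scheme is radicial over $\Spec K$ exactly when it is connected). More precisely: $\phi$ finite flat is \'etale iff it is unramified iff $\Omega^1_{A/B}=0$ iff the fibre $\ker\phi$ is \'etale; and $\phi$ finite is radicial iff all its fibres are, iff $\ker\phi$ is a connected (hence local, hence infinitesimal) $K$-group scheme. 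I would cite \cite[\S3.5]{EGAI} for the fibrewise criteria for unramifiedness and radicial morphisms, and the rigidity of fibres of a group-scheme homomorphism for the reduction to the fibre over $0_B$.

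For part~(c), I would combine (a) and (b) with the standard dictionary between properties of the morphism $\phi$ and properties of the finite field extension $K(A)/K(B)$: a finite morphism of integral schemes is \'etale over the generic point iff the corresponding field extension is separable, and is radicial (on the generic fibre) iff the extension is purely inseparable. Since $\phi$ is flat and its formation is, via the group structure, ``the same over every point'', $\phi$ is separable (resp.\ purely inseparable) as a morphism iff it is so generically iff $K(A)/K(B)$ is separable (resp.\ purely inseparable). I expect the main subtlety to be the bookkeeping in part~(b)---namely making precise and correctly referenced the claim that, for a homomorphism of group schemes which is finite flat, the formal-local structure at an arbitrary point of the target is a translate of that at the identity, so that checking \'etaleness or radiciality may be done on the single fibre $\ker\phi$; everything else is a routine assembly of standard facts.
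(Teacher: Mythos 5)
Your proposal is correct and follows essentially the same route as the paper: part (a) is read off from the finite flat rank computation in the preceding proposition, and parts (b) and (c) reduce to the fibre over $0_B$ by a translation/rigidity argument combined with the standard dictionary between \'etale (resp.\ radicial) morphisms and separable (resp.\ purely inseparable) extensions of function fields. One small slip worth correcting: it is translation by points of $A(\bar{K})$, not by points of $\ker\phi$, that identifies an arbitrary fibre of $\phi$ with the fibre over $0_B$, and one should then add a descent step from $K^{\mathrm{sep}}$ (or $\bar{K}$) back to $K$, exactly as the paper does when it argues that $\phi_{K^{\mathrm{sep}}}$ \'etale implies $\phi$ \'etale by separable descent.
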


In case $K$ has characteristic $0$, this result implies that any isogeny between abelian varieties is separable.

\begin{proof}
	$(a)$ has already been proved in the proof of the previous proposition. We now prove $(b)$ and $(c)$.    
    First, $\phi$ is \'etale if and only if $\ker \phi$ is \'etale by equality of dimensions and \cite[Proposition 1.63]{Milne}. Moreover, if it is \'etale, 
	the maps between residue fields must always be \'etale (i.e. finite separable), so at the generic points we obtain that $K(A)/K(B)$ is separable. Conversely, assume $K(A)/K(B)$ is (finite) separable. This implies that for some nonempty open $U$ of $B$, $\phi_{|\phi^{-1} (U)} : \phi^{-1}(U) \rightarrow U$ is \'etale (see e.g. \cite[Chapter VI, Exercise 2.9]{Liu02}), but $U$ can then be translated (and by all $B(K\usep))$ as $\phi$ is a morphism so $\phi_{K\usep}$ is \'etale everywhere, and by separable descent $\phi$ itself is \'etale.
	
	Now, if $\phi$ is purely inseparable, by \cite[Proposition 3.7.1]{EGAI}, the induced maps between residue fields are purely inseparable everywhere, in particular $K(A)/K(B)$ is purely inseparable. Conversely, if $K(A)/K(B)$ is purely inseparable, by spreading out \cite[Theorem 8.10.5]{EGAIV3}, there is a nonempty open subset $U$ of $B$ for which $\phi$ restricted to $\phi^{-1}(U)$ towards $U$ is purely inseparable, and again this extends to all $B$ as $\phi$ is a morphism of algebraic groups.

To finish, let us prove why $\phi$ is purely inseparable if and only if $\ker \phi$ is a connected group scheme. If $\phi$ is purely inseparable, it is universally injective, therefore for any field $L/K$, $\phi : A(L) \rightarrow B(L)$ is injective, so $(\ker \phi) (L) = \{0_A\}$. In particular, $\ker \phi$ has only one (closed) point which is $0_A$.

 As $\ker \phi$ is finite flat, define $R$ the associated $K$-algebra. It is of finite $K$-dimension hence artinian. 
 
 Having assumed $\ker \phi$ has only one closed point, means that $R$ is also local so it has only one prime ideal (as in artinian rings all prime ideals are maximal). In particular, $R$ is irreducible so $\ker \phi$ is connected. Conversely, assume $\ker \phi$ is connected. For every field extension $L/K$ it stays connected by invariance by scalar extension, so by similar arguments, $R \otimes_K L$ is always local, therefore by the exact sequence $0 \rightarrow (\ker \phi)(L) \rightarrow A(L) \rightarrow B(L)$, we have that $\phi_L : A(L) \rightarrow B(L)$ is always injective, hence $\phi$ is purely inseparable as it is universally injective.
\end{proof}

We also recall the notion of dual isogeny and its relation with the Cartier dual of a group scheme:
	\begin{prop}\label{prop:dualisogenykernel}
	Let $\phi : A \rightarrow B$ be an isogeny of abelian varieties over $K$. Let $A^\vee := \Pic^0(A)$ (resp. $B^\vee:=\Pic^0(B)$) denote the dual abelian variety of $A$ (resp. $B$).
		
	Then, the pullback map $\phi^* : \Pic^0(B) \rightarrow \Pic^0(A)$ induced by $\phi$ is an isogeny of abelian varieties, denoted by $\widehat{\phi} : B^\vee \rightarrow A^\vee$, and called the {\it dual of $\phi$}. 
	
	Moreover, the kernel $\ker \widehat{\phi} $ is isomorphic (as a group scheme over $K$) to the Cartier dual of $\ker \phi$.
	\end{prop}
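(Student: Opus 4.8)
The plan is to build the dual isogeny $\widehat\phi$ from the functoriality of $\Pic^0$, verify it is an isogeny by a dimension count, and then identify its kernel with the Cartier dual of $\ker\phi$ via a duality pairing on the short exact sequence $0\to\ker\phi\to A\xrightarrow{\phi}B\to 0$.

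First I would recall that for any homomorphism $\phi:A\to B$ of abelian varieties, pullback of line bundles gives a homomorphism $\phi^\ast:\Pic^0(B)\to\Pic^0(A)$, which is automatically a morphism of abelian varieties (it is the map on $\Pic^0$ functorially induced by $\phi$; one can check on $T$-points using that $\Pic^0$ represents the relevant functor, or cite \cite[\S 7.3]{BLR} or \cite{EGM_AV}). Denote it $\widehat\phi:B^\vee\to A^\vee$. To see $\widehat\phi$ is an isogeny, it suffices (by the criteria above) to check it has finite kernel and that $\dim B^\vee=\dim A^\vee$; the latter is automatic since $\dim A^\vee=\dim A=\dim B=\dim B^\vee$ (using that $\phi$ is itself an isogeny). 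For the finiteness of $\ker\widehat\phi$, the cleanest route is to observe that $\widehat{\widehat\phi}=\phi$ under the canonical biduality isomorphism $A\simeq A^{\vee\vee}$, so $\widehat\phi$ is surjective because $\phi$ has finite (hence everywhere-$0$-dimensional) kernel forces $\phi^\ast$ to be injective on $\Pic^0$ up to a finite group --- alternatively, and more self-containedly, pick a symmetric ample line bundle $\mathcal L$ on $B$ with associated polarization $\lambda_{\mathcal L}:B\to B^\vee$; then $\widehat\phi\circ\lambda_{\mathcal L}\circ\phi=\lambda_{\phi^\ast\mathcal L}:A\to A^\vee$ is a polarization, hence an isogeny, which forces $\widehat\phi$ to be surjective with finite kernel.

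The substantive part is the identification $\ker\widehat\phi\simeq(\ker\phi)^\vee$ (Cartier dual). The standard argument is to apply the exact-sequence machinery: from $0\to G\to A\xrightarrow{\phi} B\to 0$ with $G=\ker\phi$ a finite group scheme, one takes the associated long exact sequence for the \FPPF{} sheaf $\Hom(-,\G_m)$, which for an abelian variety recovers $\underline{\Pic^0}$ and for a finite group scheme recovers the Cartier dual $G^\vee$. Concretely, apply $R\Hom(-,\G_m)$ in the \FPPF{} topology and use that $\mathcal Ext^1(A,\G_m)=A^\vee$, $\mathcal Ext^1(B,\G_m)=B^\vee$, $\mathcal Hom(A,\G_m)=\mathcal Hom(B,\G_m)=0$ (abelian varieties have no nontrivial characters), and $\mathcal Hom(G,\G_m)=G^\vee$; the resulting six-term exact sequence reads
\[
0\longrightarrow G^\vee \longrightarrow B^\vee \xrightarrow{\ \widehat\phi\ } A^\vee \longrightarrow \mathcal Ext^1(G,\G_m)\longrightarrow 0,
\]
and since $\widehat\phi$ is surjective (just shown) one reads off $\ker\widehat\phi\simeq G^\vee$ as group schemes over $K$. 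One should also note that $\mathcal Ext^1(G,\G_m)=0$ is then forced, which is consistent with finiteness of $G^\vee$ and equality of degrees $\deg\phi=\deg\widehat\phi$.

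The main obstacle, to my mind, is setting up the cohomological identifications cleanly and rigorously --- that $\mathcal Ext^1(A,\G_m)$ (computed in the \FPPF{} or fppf topology) is canonically $A^\vee$, and that $\mathcal Hom$ and $\mathcal Ext^1$ of a finite flat group scheme with $\G_m$ recover its Cartier dual and vanish respectively --- since these are standard but require care about which topology one works in and which finiteness/flatness hypotheses are invoked. An alternative that sidesteps the derived-functor formalism is to construct the pairing $\ker\widehat\phi\times\ker\phi\to\G_m$ directly (the Weil pairing attached to $\phi$): given $a\in\ker\phi$ and a line bundle $L\in\ker\widehat\phi$, i.e. $\phi^\ast L\simeq\Ocal_A$, choose an isomorphism and pull back by translation $t_a$ to get an automorphism of $\Ocal_A$, i.e. an element of $\G_m$; one then checks this is a well-defined perfect bilinear pairing, which exhibits $\ker\widehat\phi$ as the Cartier dual of $\ker\phi$. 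I would likely present the cohomological argument as the main proof and reference \cite{EGM_AV} or \cite[\S 7.3]{BLR} for the details, since the paper has already signalled reliance on those sources.
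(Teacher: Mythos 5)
Your proposal is correct, but it diverges substantially from what the paper does: the paper does not prove this statement itself, it simply cites Mumford, \S 15, Theorem~1, where the result is established by the direct pairing construction (your ``alternative'' argument). Your primary route --- taking $R\mathcal Hom_{\text{\sc fppf}}(-,\G_m)$ of the short exact sequence $0\to G\to A\to B\to 0$, invoking $\mathcal Hom(A,\G_m)=\mathcal Hom(B,\G_m)=0$, $\mathcal Hom(G,\G_m)=G^\vee$, and the Barsotti--Weil identification $\mathcal Ext^1(A,\G_m)\simeq A^\vee$ --- is a genuinely different, more modern argument and is sound, provided one also verifies that the connecting map $B^\vee\to A^\vee$ produced by the long exact sequence agrees with the pullback $\phi^\ast$; that compatibility is the ``care about identifications'' you rightly flag. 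The trade-off is exactly as you describe: the derived-functor route is cleaner once Barsotti--Weil and the vanishing $\mathcal Ext^1(G,\G_m)=0$ for finite flat $G$ over a field are available, while Mumford's hands-on pairing argument is self-contained but more computational. One small caveat: the sentence asserting that $\widehat{\widehat\phi}=\phi$ and ``finite kernel of $\phi$ forces $\phi^\ast$ to be injective up to a finite group'' is garbled and, as written, circular; you should drop it and rely solely on the clean polarization argument ($\widehat\phi\circ\lambda_{\mathcal L}\circ\phi=\lambda_{\phi^\ast\mathcal L}$ is an isogeny since $\phi^\ast\mathcal L$ is ample), which does correctly establish that $\widehat\phi$ is an isogeny.
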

	\begin{proof}
	This is \cite[\S 15, Theorem 1]{MumfordAbVar08}.
	\end{proof}

\subsection{Semi-stability and extensions of isogenies to Néron models}
We now specialize to the case where the base field $K$ is the function field of a curve $C$ over a perfect field $k$. 
We first recall the notion of semi-stable abelian variety.

	\begin{defi}\label{defi:semistable}
    Let $A$ be an abelian variety over $K$, and let $\Acal$ denote its Néron model over $C$.
    We say that $A$ is \emph{semi-stable} (over $K$) if, for every closed point $s \in C$, 
    the fiber $\Acal_s$ is semi-abelian {\it i.e.}, its  identity component $(\Acal_s)^\circ$ is the extension of an abelian variety by an affine torus.
	\end{defi}

	\begin{prop}\label{prop:semistableisogenies}
	Let $\phi : A\to B$ be an isogeny between abelian varieties over $K$, with respective N\'eron models $\Acal$ and $\Bcal$ over $C$. Let $\Phi : \Acal \rightarrow \Bcal$ denote the extension of $\phi$ to N\'eron models (by the Néron mapping property). 
	Then
	\vspace{-0.6\topsep}\begin{enumerate}[{\rm(}a{\rm)}]\setlength{\itemsep}{0em}
		\item $A$ is semi-stable if and only if $B$ is semi-stable.
		\item Assume that the characteristic of $K$ is coprime to $\deg \phi$ or that $A$ is semi-stable. Then,  for any closed point $s\in C$, the restriction ${\Phi_s : \Acal_s\to \Bcal_s}$ of $\Phi$ to the fibers over $s$ is finite and surjective on identity components \ie{},   $\Phi_s^\circ: \Acal_s^\circ \rightarrow \Bcal_s^\circ$ is an isogeny.
		\item Assuming that the characteristic of $K$ is coprime to $\deg \phi$, the isogeny $\Phi_s^\circ: \Acal_s^\circ \rightarrow \Bcal_s^\circ$ mentioned in the previous item is \'etale for any closed point $s\in C$.
	\end{enumerate}
	\end{prop}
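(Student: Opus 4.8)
The plan is to analyze the three assertions in the order they are stated, using the Néron mapping property together with the structure theory of group schemes of finite type over a field. For part $(a)$, I would use the fact that the extension $\Phi:\Acal\to\Bcal$ of $\phi$ to Néron models admits an extension $\Psi:\Bcal\to\Acal$ of the dual-up-to-isogeny map $\psi:B\to A$ (take $\psi$ with $\psi\circ\phi=[\deg\phi]_A$ and $\phi\circ\psi=[\deg\phi]_B$). Passing to a closed fibre $\Acal_s$, the composite $\Psi_s\circ\Phi_s$ is multiplication by $\deg\phi$ on $\Acal_s$; since multiplication by a nonzero integer on a semi-abelian scheme is again semi-abelian-preserving (it is surjective on identity components with finite, and in fact infinitesimal-or-étale, kernel), one sees that if $\Acal_s^\circ$ is semi-abelian then so is the image $\Phi_s(\Acal_s^\circ)$ inside $\Bcal_s^\circ$, and a dimension count forces $\Bcal_s^\circ$ to be semi-abelian as well; the reverse implication is symmetric. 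Alternatively, and perhaps more cleanly, one can invoke that semi-stability is insensitive to isogeny via the criterion that $A$ is semi-stable iff the $\ell$-adic inertia action is unipotent for one (hence every) prime $\ell\neq\operatorname{char} k$, which is manifestly isogeny-invariant; I would likely present the Néron-model argument to keep things self-contained but cite \cite{Gro72} for the clean statement.

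For part $(b)$, the key point is that $\Phi_s^\circ:\Acal_s^\circ\to\Bcal_s^\circ$ is a homomorphism of connected smooth group schemes of the same dimension $g$ (both identity components have dimension $g$: in the semi-stable case this is clear, and when $\deg\phi$ is coprime to $\operatorname{char} K$ one knows $\Phi$ is quasi-finite so the fibres of $\Phi$ over $\Bcal$ have dimension $0$, whence $\Acal_s$ and $\Bcal_s$ have equal dimension). So it remains to show $\Phi_s^\circ$ has finite kernel; equivalently, that $\Phi$ is quasi-finite on these fibres. When $\operatorname{char} K\nmid\deg\phi$ this follows because $\Psi_s\circ\Phi_s=[\deg\phi]$ is an isogeny on $\Acal_s^\circ$ (multiplication by an integer invertible on $k$ is finite étale on a semi-abelian scheme, hence finite), so $\Phi_s$ has finite kernel. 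When $A$ is semi-stable but $\deg\phi$ may be divisible by $p$, I would argue via \cite{BLR}: for a semi-stable abelian variety, the component-group part is controlled and the identity component $\Acal_s^\circ$ fits in an exact sequence $0\to T\to\Acal_s^\circ\to A_s'\to 0$ with $T$ a torus and $A_s'$ an abelian variety, and $\Phi_s^\circ$ respects these filtrations (by rigidity of tori and the fact that a homomorphism from a torus to an abelian variety is trivial); one then checks finiteness of the kernel on the torus part and the abelian part separately, both being classical. Surjectivity on identity components then follows by the same dimension count plus connectedness of $\Bcal_s^\circ$, exactly as in the proof that $(ii)\Rightarrow(iii)$ for isogenies of abelian varieties.

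For part $(c)$, with $\deg\phi$ coprime to $\operatorname{char} K=\operatorname{char} k$, I would note that the kernel of $\Phi_s^\circ$ is a finite group scheme over $\kappa(s)$ whose order divides $\deg\phi$ (bound its order using that $\Psi_s^\circ\circ\Phi_s^\circ=[\deg\phi]$ and the multiplicativity of degrees, or use that the kernel of the integral extension $\Phi$ restricted to a fibre is a subgroup scheme of $\Acal_s[\deg\phi]$). Since $\operatorname{char}\kappa(s)=\operatorname{char} k$ is prime to this order, such a finite group scheme over a field of characteristic $p\nmid\deg\phi$ is necessarily étale (by Cartier's theorem / the fact that infinitesimal group schemes have $p$-power order), hence $\Phi_s^\circ$ is an étale isogeny by Proposition~\ref{prop:basics.isog}$(b)$ applied over $\kappa(s)$ — or, directly, a homomorphism of smooth connected group schemes of equal dimension with étale kernel is étale. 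The main obstacle I anticipate is part $(b)$ in the semi-stable, wildly-ramified-degree case: one must genuinely use the semi-abelian structure of $\Acal_s^\circ$ rather than a naive isogeny argument, because multiplication by $p$ is no longer finite on the abelian part in a way that would give finiteness of $\ker\Phi_s$ for free; handling the torus and abelian quotient pieces of the semi-abelian filtration compatibly is where the care is needed, and I would lean on the Néron-model functoriality results in \cite[\S 7.3]{BLR} to organize it.
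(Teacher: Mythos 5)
The paper's own proof of this proposition is a one-line citation to \cite[VII.3, Cor.\ 7, Prop.\ 6, Lemma 2]{BLR}, so any self-contained argument is a different route in presentation, though not necessarily in mathematical content. Your central device --- choose $\Psi:\Bcal\to\Acal$ extending an isogeny $\psi$ with $\psi\circ\phi=[\deg\phi]_A$, so that $\Psi_s\circ\Phi_s=[\deg\phi]$ on the special fibre, then bound $\ker\Phi_s^\circ$ inside $\Acal_s^\circ[\deg\phi]$ --- is in fact the same idea BLR's proof is built on, and your treatment of (a) and (c) is correct. The one place I'd push back is your handling of (b) in the semi-stable, $p\mid\deg\phi$ case: the detour through the filtration $0\to T\to\Acal_s^\circ\to A'_s\to 0$ is unnecessary, and the phrase ``one then checks finiteness of the kernel on the torus part and the abelian part separately, both being classical'' is not yet an argument --- a map between tori of the same dimension, or between abelian varieties of the same dimension, certainly need not have finite kernel. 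What actually makes it work is exactly the device you already have: $\Psi_s\circ\Phi_s=[\deg\phi]$ and $[\deg\phi]$ restricted to $\Acal_s^\circ$ is an isogeny whenever $\Acal_s^\circ$ is semi-abelian (no unipotent part), \emph{in any characteristic}. So the uniform statement is: $\ker\Phi_s^\circ\subset\Acal_s^\circ[\deg\phi]$, and $\Acal_s^\circ[\deg\phi]$ is finite because either $\deg\phi$ is coprime to the residue characteristic (so $[\deg\phi]$ is an isogeny even on a possible unipotent part), or $\Acal_s^\circ$ is semi-abelian (so $[\deg\phi]$ is an isogeny for any $\deg\phi\geq1$). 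This makes the semi-stable case a corollary of the same reasoning as the coprime case rather than a separate argument. A smaller slip: in the coprime case you write ``finite \'etale on a semi-abelian scheme,'' but $\Acal_s^\circ$ need not be semi-abelian there; what you want is that $[n]$ has finite kernel on any smooth connected commutative group over a field of characteristic prime to $n$.
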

	\begin{proof}
    These assertions directly follow from \cite[\S VII.3, Corollary 7]{BLR},  \cite[\S VII.3, Proposition 6]{BLR}, and  \cite[\S VII.3, Lemma 2]{BLR} (or \cite[\S1.1, Exemples 1.1.2]{Ray}), respectively.
	\end{proof}
 
\subsection{Decompositions of isogenies}
In order to prove our main theorem, we will need to perform various `d\'evissages' based, in particular, on the fact that an isogeny factors into separable and inseparable parts.

We begin by recalling the following:
	\begin{defi}\label{defi:exact.sequence.groupschemes}
	A complex 
 	$0 \longrightarrow \Gcal_1 \xrightarrow[]{\ i \ }   \Gcal_2 \xrightarrow[]{\ \pi \ }  \Gcal_3$ 
	of group schemes over $S$ is a \emph{left exact sequence} if $i$ is a closed immersion and if, for every morphism $T \rightarrow S$, the sequence
	\[0 \longrightarrow \Gcal_1 (T) \longrightarrow \Gcal_2 (T) \longrightarrow \Gcal_3 (T)\]
	is exact. 
	Assuming that  the three involved group schemes are flat of finite type over $S$ (hence \FPPF{}), the complex 
 	\[0 \longrightarrow \Gcal_1 \xrightarrow[]{\ i \ } \Gcal_2 \xrightarrow[]{\ \pi \ } \Gcal_3 \longrightarrow 0 \]
 	is an \emph{exact sequence} if it is left exact and if, moreover, $\pi$ is faithfully flat. 
 	The condition for the aboove complex to be an exact sequence is equivalent to asking that $\Gcal_3$ represent  the \FPPF{} quotient $\Gcal_2/i(\Gcal_1)$ (see \cite[Definition 5.1.18]{Po} for more details).
	\end{defi}

	\begin{prop}\label{prop:lissitequotientsfppf}
    For every  flat, locally of finite type group scheme $\Gcal$ over $S$ and every flat subgroup scheme $\Hcal$ of~$\Gcal$, the \FPPF{} quotient $\Gcal/\Hcal$ is represented by a scheme (over $S$), which is a group scheme over $S$ when $\Hcal$ is normal in $\Gcal$.

    Furthermore, if $\Gcal$ is smooth, $\Gcal/\Hcal$ is also smooth and the quotient morphism $\Gcal \rightarrow \Gcal/\Hcal$ is faithfully flat.
	\end{prop}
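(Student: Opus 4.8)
The plan is to isolate the single deep ingredient of the statement — representability of the fppf quotient sheaf by a \emph{scheme} — and to deduce everything else by formal descent. Recall that $\Gcal/\Hcal$ denotes the sheafification, for the fppf topology on $S$-schemes, of the presheaf $T\mapsto\Gcal(T)/\Hcal(T)$, where (as throughout the paper) $\Hcal$ is taken to be a closed subgroup scheme of $\Gcal$. The group $\Hcal$ acts freely on $\Gcal$ by right translation, so the canonical projection $q\colon\Gcal\to\Gcal/\Hcal$ is an epimorphism of fppf sheaves, and the map $(g,h)\mapsto(g,gh)$ identifies $\Gcal\times_{S}\Hcal$ with $\Gcal\times_{\Gcal/\Hcal}\Gcal$. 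Hence, \emph{once} $\Gcal/\Hcal$ is shown to be a scheme, the projection $q$ is automatically an fppf $\Hcal$-torsor, and the remaining assertions follow quickly.

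For the representability I would not reprove anything but appeal to the established theory, distinguishing two cases according to $\dim S$. If $S=\Spec\kappa$ is the spectrum of a field — which covers the base $\Spec K$ as well as all computations over the residue fields $\kappa(s)$, $s\in C$ — then $\Gcal/\Hcal$ is representable by a $\kappa$-scheme, the morphism $q$ is faithfully flat, and $\Gcal/\Hcal$ carries a group-scheme structure as soon as $\Hcal$ is normal in $\Gcal$: this is a theorem of Gabriel, \cite[Exposé VI${}_{\mathrm A}$, Théorème 3.2]{SGA3} (see also the discussion in \cite{Po}). If $\dim S=1$ — in particular if $S$ is a Dedekind scheme, e.g.\ $S=C$ or one of its local rings — the analogous statement for $\Gcal$ flat and locally of finite presentation over $S$ and $\Hcal$ a flat closed subgroup scheme is due to Raynaud and Anantharaman. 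In either case the underlying mechanism is the same: freeness of the $\Hcal$-action makes the quotient sheaf an algebraic space, by a theorem of Artin, and one then shows this algebraic space to be a scheme by invoking the structure theory of flat group schemes and of homogeneous spaces over a base of dimension $\le 1$; the normality hypothesis on $\Hcal$ is precisely what makes the multiplication $\Gcal\times_{S}\Gcal\to\Gcal$ descend through $q$.

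Granting representability, the rest is routine. The structural morphism $\Hcal\to S$ is faithfully flat (it admits the unit section, hence is surjective, and it is flat by hypothesis) and locally of finite presentation; since $q$ is an fppf $\Hcal$-torsor, it is therefore itself faithfully flat and locally of finite presentation. If moreover $\Gcal$ is smooth over $S$, then, factoring the structural morphism of $\Gcal$ as $\Gcal\xrightarrow{\,q\,}\Gcal/\Hcal\to S$ and using faithfully flat descent of smoothness along $q$, one concludes that $\Gcal/\Hcal\to S$ is smooth; in particular $q$ is faithfully flat, as asserted.

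I expect the one genuine obstacle to be the representability by a \emph{scheme}, rather than merely by an algebraic space, over a one-dimensional base: this is exactly the point where one must quote the nontrivial work of Raynaud and Anantharaman and cannot argue by elementary means. Should one be content with $S$ regular — which suffices for all the applications of this paper — one may instead reduce, by passing to the normalisation, to the case where $S$ is a field or a Dedekind scheme, for which the cleanest statements are available; I would indicate this reduction and not dwell on the non-normal one-dimensional case.
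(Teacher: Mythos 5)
Your proof is essentially the same as the paper's for the key ingredient: the paper simply cites Anantharaman's Théorème 4.C for representability and the basic properties of the quotient morphism, and cites Yuan (Theorem 2.8) for smoothness of the quotient. Your proposal also invokes Anantharaman/Raynaud for representability, which is exactly right; where you differ is in establishing the smoothness of $\Gcal/\Hcal$ by a descent argument along $q$ rather than quoting Yuan, and in splitting the discussion into cases $\dim S=0$ and $\dim S=1$ (with a further remark about normalization). The descent argument for smoothness — $q$ faithfully flat, locally of finite presentation, $\Gcal\to S$ smooth, hence $\Gcal/\Hcal\to S$ flat with geometrically regular fibres, hence smooth — is correct and gives a nice, self-contained alternative to the citation.

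One small imprecision: you assert that once $\Gcal/\Hcal$ is known to be a scheme, $q$ is \emph{automatically} an fppf $\Hcal$-torsor and hence faithfully flat. The isomorphism $\Gcal\times_S\Hcal\simeq\Gcal\times_{\Gcal/\Hcal}\Gcal$ is indeed a formal sheaf computation, but deducing from it that $q$ is faithfully flat and of finite presentation is not a purely formal descent along $q$ itself (that would be circular). In fact the faithful flatness of the quotient morphism is part of Anantharaman's conclusion, not a consequence of bare representability. Since you do cite his theorem, the gap is only expository, but it would be cleaner to attribute the flatness of $q$ to the cited result rather than present it as automatic. Also, the case distinction and the reduction via normalization are unnecessary here: Anantharaman's theorem is stated for a base that is locally noetherian of dimension at most one, which is precisely the standing assumption on $S$ in this paper, so it applies uniformly.
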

	\begin{proof}
    The first part is \cite[Theorem 4.C]{Anantharaman} since we have assumed that $S$ is locally noetherian of dimension at most~$1$. The second assertion is \cite[Theorem 2.8]{Yua}.
	\end{proof} 
	
	The following statement is the so-called `connected-\'etale decomposition' of a group scheme:
	\begin{lemm}\label{lemm:connectedetalesequence}
	Let $G$ be a finite flat group scheme over an henselian local ring $R$. 
    Let~$G^\circ$ be the clopen group scheme of $G$ containing the connected component of the neutral section. 
    Then $G^\circ$ is finite and flat over~$R$, and the quotient scheme $G^{\textrm{\'et}} := G/G^\circ$ is a finite \'etale group scheme over $R$. In other words, there is an exact sequence of finite flat group schemes over $R$:
	\[0 \longrightarrow G^\circ \longrightarrow G \longrightarrow G^{\textrm{\'et}} \longrightarrow 0,\]
	where $G^\circ$  is connected,  flat over $R$, and is normal in $G$,   and $G^{\textrm{\'et}}$ is \'etale over $R$.
	
	Furthermore, this decomposition of $G$ is unique up to isomorphism.
	\end{lemm}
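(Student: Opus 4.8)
The plan is to prove the connected-\'etale decomposition by first isolating the relevant clopen subscheme of $\Spec$ of the coordinate ring of $G$, then upgrading it to a subgroup scheme, and finally identifying the quotient as an \'etale group scheme.

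\textbf{Step 1: Construction of $G^\circ$ as a clopen subscheme.} Write $G=\Spec B$ with $B$ a finite flat $R$-algebra. Since $R$ is henselian local with residue field $\kappa$, the finite $R$-algebra $B$ decomposes as a finite product $B\simeq \prod_{i} B_i$ of local rings $B_i$, each finite over $R$ (this is the standard structure theory of finite algebras over henselian local rings, e.g. \cite[\S I.4]{Milne} or the relevant statement in \cite{BLR}); correspondingly $G$ is a disjoint union of connected clopen pieces. Exactly one of these pieces contains the image of the neutral section $e:\Spec R\to G$, because $e$ maps the closed point of $\Spec R$ into a single connected component of the special fibre, and by henselianity that component lifts to a unique clopen $G^\circ\subset G$ containing the whole image of $e$. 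Being clopen in a scheme finite flat over $R$, $G^\circ$ is itself finite flat over $R$, and its special fibre is the connected component of the identity of $G_\kappa$.

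\textbf{Step 2: $G^\circ$ is a subgroup scheme, normal in $G$.} One must check that the multiplication $m:G\times_R G\to G$ restricts to $G^\circ\times_R G^\circ\to G^\circ$, that the inverse preserves $G^\circ$, and similarly for conjugation $G\times_R G^\circ\to G^\circ$. For the multiplication: $G^\circ\times_R G^\circ$ is connected (a product of connected finite flat schemes over a henselian local ring is connected, since on special fibres the product of connected $\kappa$-group-scheme components is connected and clopen pieces are detected on the special fibre), it contains the neutral section, and $m$ is continuous, so its image lands in the connected clopen piece of $G$ containing $e$, namely $G^\circ$. The same connectedness argument handles the inverse map and the conjugation map $G\times_R G^\circ\to G$, whose source is connected (again $G$ connected is not needed — but $G\times_R G^\circ$ need not be connected; instead argue componentwise: for each connected component $Z$ of $G$, $Z\times_R G^\circ$ is connected and $e\in Z\times_R G^\circ$ only when $Z=G^\circ$ is false — so one argues that the conjugation restricted to $\{g\}\times G^\circ$ for a geometric point $g$ lands in a connected piece containing $e$, hence in $G^\circ$, and concludes by flatness/continuity that the whole map factors through $G^\circ$). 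Thus $G^\circ$ is a normal flat subgroup scheme of $G$, and it is connected by construction.

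\textbf{Step 3: The quotient is \'etale.} By Proposition~\ref{prop:lissitequotientsfppf} (applicable since $R$ is henselian local, hence $\Spec R$ is locally noetherian of dimension $\leq 1$), the \FPPF{} quotient $G^{\textrm{\'et}}:=G/G^\circ$ is represented by a group scheme over $R$, and the map $G\to G^{\textrm{\'et}}$ is faithfully flat; since $G$ is finite over $R$ and the fibres of $G\to G^{\textrm{\'et}}$ are torsors under $G^\circ$ (hence finite), $G^{\textrm{\'et}}$ is finite flat over $R$. It remains to see $G^{\textrm{\'et}}$ is \'etale over $R$, for which it suffices to check that its special fibre $(G^{\textrm{\'et}})_\kappa$ is \'etale over $\kappa$, i.e. that $(G^{\textrm{\'et}})_\kappa=G_\kappa/G^\circ_\kappa$ has trivial connected component. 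This holds because $G^\circ_\kappa=(G_\kappa)^\circ$ by construction, and the quotient of a finite $\kappa$-group scheme by its identity component is \'etale (the standard connected-\'etale sequence over a field, cf. \cite[\S 3.7]{MilneEtale}-type references, or \cite{MumfordAbVar08}). Then flatness plus \'etale special fibre, for a finite flat group scheme over a henselian local base, forces \'etaleness over $R$. This yields the exact sequence $0\to G^\circ\to G\to G^{\textrm{\'et}}\to 0$ with the stated properties.

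\textbf{Step 4: Uniqueness.} Suppose $0\to G'\to G\to G''\to 0$ is another such sequence with $G'$ connected flat and $G''$ \'etale. Since $G''$ is \'etale, its neutral section $\Spec R\to G''$ is clopen, hence $G'=\ker(G\to G'')$ is a clopen subscheme of $G$ containing the image of $e$; being connected and containing $e$, it must coincide with the connected clopen piece $G^\circ$ constructed in Step 1 (which is characterised exactly by being clopen, connected, and containing $e$). Then $G''\simeq G/G' = G/G^\circ = G^{\textrm{\'et}}$ canonically. The main obstacle in the whole argument is Step 2 — verifying that the group operations genuinely preserve the clopen piece $G^\circ$ requires care with the connectedness of the relevant fibre products, and is where one must be most careful not to overclaim; everything else is a routine assembly of the structure theory of finite algebras over henselian local rings together with Proposition~\ref{prop:lissitequotientsfppf}.
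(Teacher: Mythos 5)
Your proposal takes a genuinely different route from the paper: the paper disposes of this lemma with a citation to \cite{CornellSilvermanMF} and \cite{CornellSilverman}, adding only the one-line remark that uniqueness follows because a group scheme that is simultaneously connected and \'etale is trivial. You instead reconstruct the result from the structure theory of finite algebras over a henselian local ring, which is indeed the standard underlying proof and a legitimate choice if one wants the argument self-contained. Steps~1, 3 and~4 are essentially sound: in Step~3 you should make explicit that formation of the \FPPF{} quotient commutes with the (non-flat) base change to the residue field $\kappa$ --- this holds because the faithfully flat surjection $G\to G^{\textrm{\'et}}$ pulls back to a faithfully flat surjection on special fibres with kernel $(G^\circ)_\kappa$, so $(G^{\textrm{\'et}})_\kappa\simeq G_\kappa/(G^\circ)_\kappa$; and your Step~4 uniqueness argument is the same in spirit as the paper's one-liner.

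Step~2, however --- the verification that $G^\circ$ is a normal subgroup scheme, which is the only part of the argument with real content --- is muddled to the point of not being a proof. The blanket claim that ``a product of connected finite flat schemes over a henselian local ring is connected'' is false in general; what is true, and what your argument actually uses, is the sharper fact that connected components of a finite scheme over a henselian local base correspond bijectively to those of the special fibre, so $G^\circ\times_R G^\circ$ is connected because $(G^\circ)_\kappa\times_\kappa(G^\circ)_\kappa$ is the spectrum of a local artinian $\kappa$-algebra with residue field $\kappa$ (this uses that $(G^\circ)_\kappa$ has residue field $\kappa$ because it carries the $\kappa$-point $e$). The discussion of conjugation is worse: the parenthetical ``$e\in Z\times_R G^\circ$ only when $Z=G^\circ$ is false --- so one argues that the conjugation restricted to $\{g\}\times G^\circ$ for a geometric point $g$ lands in a connected piece containing $e$, \dots and concludes by flatness/continuity'' is not a coherent argument as written: you cannot conclude a scheme-theoretic factorisation from geometric points via ``flatness/continuity'' without further work, and the sentence itself does not parse. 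The clean way to finish Step~2 is to observe that $c^{-1}(G^\circ)$, the preimage of $G^\circ$ under the conjugation map $c:G\times_R G^\circ\to G$, is a clopen subscheme of the source, hence (again by henselianity) is determined by its special fibre; over $\kappa$ one is reduced to the classical statement that the identity component of a finite group scheme over a field is a normal subgroup scheme, which is the same field-level input you already invoke in Step~3. That reduction eliminates the geometric-point hand-waving and makes the normality verification precise. Finally, the reference labelled \texttt{MilneEtale} in Step~3 does not correspond to any entry in the paper's bibliography.
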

	\begin{proof}
	This is \cite[Chapter V, \S 3.7]{CornellSilvermanMF}, see also \cite[\S3, Proposition  p. 43]{CornellSilverman}.
	Uniqueness follows from the fact that a group scheme which is both connected and \'etale is trivial.
	\end{proof}
		
We are finally in a position to state two decomposition statements (see \cite[Corollary 5.8]{EGM_AV}) that we will require in our study.
	\begin{prop}\label{prop:isogeny.factor.sep.insep}
	An isogeny $\phi:A\to B$ between abelian varieties over $K$ factors as $\phi =\psi\circ\eta$, where $\eta:A\to A'$ is a purely inseparable isogeny, and $\psi : A'\to B$ is a separable isogeny.
	This factorization is unique up to isomorphism. 
  
	We call $\eta$ (resp. $\psi$) the purely inseparable (resp. separable) part of $\phi$ and we denote it by  $\phi\ins$ (resp. $\phi\sep$).
	We write $\deg\ins(\phi) = \deg(\phi\ins)$ for the inseparability degree of $\phi$, and $\deg\sep(\phi) = \deg(\phi\sep)$ for its separability degree. We then have $|(\ker\phi)(\overline{K})| = \deg\sep(\phi)$.  
	\end{prop}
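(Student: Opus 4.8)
The plan is to reduce the statement to the connected--\'etale decomposition of the kernel (Lemma~\ref{lemm:connectedetalesequence}) applied over the field $K$, together with the duality relating $\ker\widehat\phi$ and the Cartier dual of $\ker\phi$ (Proposition~\ref{prop:dualisogenykernel}) and the characterisation of separable and purely inseparable isogenies in terms of their kernels (Proposition~\ref{prop:basics.isog}). First I would set $G := \ker\phi$, a finite $K$-group scheme of order $\deg\phi$. Since $K$ is a field, it is in particular an henselian local ring, so Lemma~\ref{lemm:connectedetalesequence} yields a canonical exact sequence $0\to G^\circ\to G\to G^{\mathrm{\acute et}}\to 0$ with $G^\circ$ connected and normal in $G$, and $G^{\mathrm{\acute et}}$ \'etale. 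Moreover $G^\circ$ being connected over a field it is automatically normal (it is characteristic), so the quotient is legitimate.

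Next I would form the intermediate abelian variety: by Proposition~\ref{prop:lissitequotientsfppf}, the \FPPF{} quotient $A':=A/G^\circ$ is represented by a smooth group scheme, which is an abelian variety (it is a proper smooth connected group variety, being a quotient of $A$ by a finite group scheme), and the quotient map $\eta:A\to A'$ is faithfully flat with kernel $G^\circ$; since $G^\circ$ is finite, $\eta$ is an isogeny, and it is purely inseparable because its kernel is connected (Proposition~\ref{prop:basics.isog}(b)). The universal property of the quotient gives a unique factorisation $\phi=\psi\circ\eta$ with $\psi:A'\to B$; one checks $\ker\psi\cong G/G^\circ\cong G^{\mathrm{\acute et}}$ (this is the isomorphism theorem for \FPPF{} quotients, or can be read off by comparing degrees and kernels on points), so $\ker\psi$ is \'etale and hence $\psi$ is separable, again by Proposition~\ref{prop:basics.isog}(b). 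This establishes existence.

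For uniqueness, suppose $\phi=\psi'\circ\eta'$ is another such factorisation with $\eta':A\to A''$ purely inseparable and $\psi'$ separable. Then $\ker\eta'\subseteq\ker\phi=G$ is a connected subgroup scheme, so $\ker\eta'\subseteq G^\circ$; conversely, restricting $\eta'$ to $G^\circ$ and composing with $\psi'$ shows $\psi'$ kills $\eta'(G^\circ)$, but $\eta'(G^\circ)$ is connected (image of a connected scheme) while $\ker\psi'$ is \'etale, hence $\eta'(G^\circ)$ is trivial and $G^\circ\subseteq\ker\eta'$. Thus $\ker\eta'=G^\circ=\ker\eta$, and since both $\eta,\eta'$ are the \FPPF{} quotient of $A$ by this same subgroup scheme, there is a unique isomorphism $A'\xrightarrow{\sim} A''$ intertwining them, which then also intertwines $\psi$ and $\psi'$. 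This gives the asserted uniqueness up to isomorphism.

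Finally, the identities on degrees are bookkeeping: $\deg\ins(\phi)=\deg\eta=|\ker\eta|=|G^\circ|$ and $\deg\sep(\phi)=\deg\psi=|G^{\mathrm{\acute et}}|$, and multiplicativity of orders in the exact sequence gives $\deg\ins(\phi)\,\deg\sep(\phi)=|G|=\deg\phi$. For the last claim $|(\ker\phi)(\overline K)|=\deg\sep(\phi)$: base-changing the connected--\'etale sequence to $\overline K$, the connected part $G^\circ_{\overline K}$ has no nontrivial $\overline K$-points (its underlying scheme is a single point, $\Spec$ of a local Artinian $\overline K$-algebra with residue field $\overline K$), so $(\ker\phi)(\overline K)=G^{\mathrm{\acute et}}(\overline K)$, which has cardinality $|G^{\mathrm{\acute et}}|=\deg\sep(\phi)$ since $G^{\mathrm{\acute et}}$ is \'etale over $\overline K$. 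I expect the only genuinely delicate point to be verifying carefully that $\ker\psi\cong G^{\mathrm{\acute et}}$ and that the quotient $A'=A/G^\circ$ really is an abelian variety (rather than merely a smooth group scheme) together with the exactness needed to identify the kernel of $\psi$; everything else is a formal consequence of the cited structure theorems.
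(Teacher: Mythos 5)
Your proof is correct and takes essentially the same approach as the paper's: quotient $A$ by $G^\circ=(\ker\phi)^\circ$ to produce the purely inseparable part, identify $\ker\psi$ with the \'etale quotient $G/G^\circ$, and read off the degree and $\overline K$-point identities from the connected--\'etale sequence. The only minor variation is that for uniqueness you re-derive $\ker\eta'=G^\circ$ by a direct double-inclusion argument, whereas the paper invokes the uniqueness clause of Lemma~\ref{lemm:connectedetalesequence} on the sequence $0\to\ker\eta'\to\ker\phi\to\ker\psi'\to 0$; the two routes are equivalent.
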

Note that Proposition 4.7 in \cite{GriPaz} provides a more precise decomposition of an isogeny in the $1$-dimensional case.
	\begin{proof}
	For the existence part of the statement, consider the subgroup scheme $G = \ker \phi$ of $A$, and its neutral component~$G^\circ$. 
    Let  $\eta: A \rightarrow A/G^\circ$ denote the quotient isogeny. 
    Since $G^\circ\subset G$, $\phi$ factors through $\eta$, and we obtain an isogeny $\psi: A/G^\circ \rightarrow B$.
    The kernel of $\eta$ is $G/G^\circ$, hence it is an \'etale group scheme by Lemma~\ref{lemm:connectedetalesequence}. 
    Proposition~\ref{prop:basics.isog} proves that $\eta$ is purely inseparable, and that $\psi$ is separable (their respective kernels are connected, and \'etale, respectively). 

	Finally, because $\eta$ is purely inseparable, it must be injective on $A(\overline{K})$  (being purely inseparable is equivalent to being universally injective). 
	In particular, $(\ker \phi)(\overline{K})$ has the same order as $(\ker \psi)(\overline{K})$. 
 	Since $\psi$ is separable, its kernel is \'etale, so the order of $(\ker \psi)(\overline{K})$ is equal to the order of $\ker \psi$ as a group scheme, namely $\deg\psi$.
    
	As for the uniqueness part, assume that we have a decomposition $\phi = \psi \circ \eta$ as in the Proposition. We then have an exact sequence of group schemes over $K$
	$0 \rightarrow \ker \eta \rightarrow \ker \phi \rightarrow \ker \psi \rightarrow 0$, 
	where $\ker \eta $ is connected, and $\ker \psi$ \'etale (Proposition~\ref{prop:basics.isog}).
    The above is therefore a connected-\'etale sequence for $\ker\phi$ hence, by uniqueness of the latter, we have $\ker \eta = (\ker\phi)^\circ$ and $\ker \psi \simeq (\ker\phi)^{\textrm{\'et}}$.
    The uniqueness of $\eta$ and $\psi$, up to composition by automorphisms follows.
	\end{proof}

	\begin{prop}\label{prop:prime.power.decomposition}
	Any isogeny between abelian varieties factors as a composition of isogenies of prime power degree.
	\end{prop}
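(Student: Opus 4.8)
The plan is to factor the isogeny $\phi : A \to B$ according to the decomposition of its kernel $G = \ker\phi$ into its $\ell$-primary components. Since $G$ is a finite flat group scheme over $K$ of order, say, $N = \prod_i \ell_i^{a_i}$, it admits a canonical decomposition $G = \prod_i G[\ell_i^\infty]$ into subgroup schemes $G_i := G[\ell_i^\infty]$, where $G_i$ is the $\ell_i$-primary part of $G$ (the kernel of multiplication by a large power of $\ell_i$), and $|G_i| = \ell_i^{a_i}$. This decomposition is compatible with base change and is standard for finite commutative group schemes; it can be obtained, for instance, from the fact that over any base the $\ell$-primary and prime-to-$\ell$ parts of a finite flat commutative group scheme split off as a direct factor (applied successively to the distinct primes dividing $N$).

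Given this, I would proceed by induction on the number $r$ of distinct primes dividing $\deg\phi = N$. If $r \leq 1$, then $\phi$ already has prime power degree and there is nothing to prove. If $r \geq 2$, fix one prime, say $\ell = \ell_1$, write $G = G_1 \times G'$ where $G' = \prod_{i \geq 2} G_i$ has order prime to $\ell$, and form the quotient isogeny $\eta : A \to A/G_1 =: A'$, which has degree $\ell^{a_1}$ (a prime power). Since $G_1 \subset G = \ker\phi$, the isogeny $\phi$ factors through $\eta$, yielding an isogeny $\psi : A' \to B$ with $\ker\psi \simeq G/G_1 \simeq G'$, whose degree is $N/\ell^{a_1}$ and is divisible by exactly $r-1$ distinct primes. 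By the induction hypothesis, $\psi$ factors as a composition of isogenies of prime power degree, and hence so does $\phi = \psi \circ \eta$.

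The one point requiring a little care — and the main (mild) obstacle — is justifying the primary decomposition $G = \prod_i G[\ell_i^\infty]$ for a finite flat \emph{group scheme} $G$ over the field $K$, which need not be \'etale in positive characteristic. In characteristic $0$ this is immediate since $G$ is \'etale and the statement reduces to the classical decomposition of a finite abelian group. In characteristic $p$, one uses the connected-\'etale sequence (Lemma~\ref{lemm:connectedetalesequence}, applied over $K$, which is a field hence henselian local): the \'etale part $G^{\textrm{\'et}}$ decomposes as a finite abelian group, so its $\ell$-primary pieces split off, while the connected part $G^\circ$ is a $p$-group scheme (its order is a power of $p$, being annihilated by a power of $p$). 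Thus the $p$-primary part of $G$ is $G^\circ \times (G^{\textrm{\'et}})[p^\infty]$ and the prime-to-$p$ part is $(G^{\textrm{\'et}})[\text{prime-to-}p]$, and one splits the latter further using ordinary finite abelian group theory. Alternatively, one may invoke the general structure theorem that a finite flat commutative group scheme over an arbitrary base is canonically the product of its $\ell$-primary components; either route gives the required factorization of $G$ as a subgroup scheme of $A$, which is all the d\'evissage above needs.
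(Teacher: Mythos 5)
Your overall strategy — decompose the kernel $G=\ker\phi$ into $\ell$-primary parts and quotient by one prime power at a time, inducting on the number of prime divisors of $\deg\phi$ — is exactly the paper's. The difference is in how you justify the primary decomposition $G \simeq \prod_\ell G[\ell^\infty]$. The paper gives a short, self-contained B\'ezout projector argument: for coprime $m,n$ with $mn = |G|$, a relation $mu + nv = 1$ makes $[nv]$ and $[mu]$ into complementary idempotents on $G$, yielding $G \simeq G[m]\oplus G[n]$; this works uniformly over any base with no case distinction on the characteristic. Your first route, via the connected--\'etale sequence, has a real gap: the function field $K$ is \emph{not} perfect in positive characteristic (it is $k(C)$ for a curve $C$), so the connected--\'etale sequence over $K$ need not split. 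Thus your assertion that "the $p$-primary part of $G$ is $G^\circ\times (G^{\textrm{\'et}})[p^\infty]$" is false in general, and more to the point, knowing that $G^\circ$ has $p$-power order and that $G^{\textrm{\'et}}$ decomposes does not by itself produce the product decomposition $G = G[p^\infty]\times G[\text{prime-to-}p]$ of the ambient extension. You still need the idempotent argument (or equivalently the general structure theorem you cite as an alternative), and once you have invoked that, the detour through connected--\'etale contributes nothing. Your fallback to the general structure theorem is correct and is, in substance, what the paper proves directly. I'd suggest dropping the connected--\'etale digression in favour of the B\'ezout projectors, which is both simpler and uniformly valid.
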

	\begin{proof}
	Let $\phi: A\to B$ be an isogeny of degree $d$ between abelian varieties. 
	Its kernel $G$ is a finite flat commutative group scheme of order $d$ over $K$.
	We claim that $G$ decomposes as a direct sum of its $\ell^{\infty}$-parts over the primes $\ell$: 
	\[G = \bigoplus_{\ell \text{ prime}} G[\,\ell^{v_\ell(d)}\,],\]
	where $G[\,k\,]$ denotes the kernel of multiplication by $k\geq 1$ on $G$, and $v_\ell$ denotes the $\ell$-adic valuation. 
    Indeed, if $d = m n$ with $m$ and $n$ coprime, write a B\'ezout relation $m\,u + n\,v = 1$.
	Then, the multiplication maps $[\,nv\,]$ and $[\,mu\,]$ project $G$  to $G[\,m\,]$ and $G[\,n\,]$, respectively, because  $[\,d\,]\,G=\{e_G\}$. 
	These induce an isomorphism of group schemes $G \simeq G[\,m\,] \oplus G[\,n\,]$ (in particular $G[\,m\,]$ has order $m$ and $G[\,n\,]$ has order $n$). 
	Proceeding by induction on the number of prime factors of $d$, one obtains the claimed decomposition. 
	
	Let $\ell$ be a prime factor of $d=\deg\phi$, the quotient group scheme $G' :=G/G[\ell^{v_\ell(d)}]$ is finite flat of order $d' = d/\ell^{v_\ell(d)}$.
	The isogeny $\phi : A\to B$ then factors through the quotient isogeny $\pi_\ell : A \to A'=A/G[\ell^{v_\ell(d)}]$ which has order $\ell^{v_\ell(d)}$: we thus obtain an isogeny $\phi' : A'\to B$ of degree $d'$ with kernel $G'$. 
	By immediate induction on the number of prime factors of~$d$, we obtain that $\phi$ factors as a composition $\pi_{\ell_1}\circ\pi_{\ell_2}\circ\dots\circ\pi_{\ell_r}$ of isogenies of prime power degrees. 
	\end{proof}
		
\subsection{Biseparable isogenies}
    
	\begin{defi}
	An isogeny   between abelian varieties is called {\it biseparable} if it is separable with separable dual isogeny.
    Equivalently (Proposition~\ref{prop:basics.isog}), an isogeny is biseparable if its kernel is \'etale and has \'etale (Cartier) dual.
	\end{defi}

If $\phi$ is a biseparable isogeny, any isogeny appearing as a factor of $\phi$ in the decomposition provided by Proposition~\ref{prop:prime.power.decomposition} is also biseparable. 
If the base field has characteristic $0$, all isogenies between abelian varieties are biseparable. 
In positive characteristic, we have the following characterization:
	\begin{prop}
    If the characteristic of $K$ is a prime $p$, an isogeny between abelian varieties over $K$ is biseparable if and only if its  degree is coprime to $p$. 
	\end{prop}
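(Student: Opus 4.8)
The plan is to prove both implications by reducing to the structure of the kernel $G=\ker\phi$ as a finite flat commutative group scheme over $K$, and invoking the characterization of biseparability from Proposition~\ref{prop:basics.isog}: $\phi$ is biseparable if and only if $G$ is \'etale with \'etale Cartier dual.

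First I would handle the easy direction. Suppose $\deg\phi$ is coprime to $p$, so $G$ has order $n$ with $p\nmid n$. Then multiplication by $n$ kills $G$, and since $p\nmid n$ the map $[n]$ is \'etale on any group scheme in characteristic $p$; hence $G$, being a subgroup scheme of $A[n]$ which is \'etale, is itself \'etale. The same argument applies to the Cartier dual $G^\vee$: by Proposition~\ref{prop:dualisogenykernel} it has the same order $n$ (Cartier duality preserves the order), so $G^\vee$ is also killed by $[n]$ and is therefore \'etale. Thus $\phi$ is biseparable.

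For the converse, suppose $\phi$ is biseparable, so $G$ and $G^\vee$ are both \'etale, and suppose for contradiction that $p\mid\deg\phi=|G|$. Using Proposition~\ref{prop:prime.power.decomposition} (or rather the analogous decomposition of $G$ into its $\ell^\infty$-parts used in its proof), I can pass to the $p$-primary part: there is a subgroup scheme $G[p^{v_p(|G|)}]$ of $G$ which is a nontrivial finite flat commutative group scheme of $p$-power order, and which is again \'etale (a subgroup scheme of an \'etale group scheme is \'etale, or directly: it is a direct summand of $G$). Its Cartier dual is a direct summand of $G^\vee$, hence also \'etale. So without loss of generality I may assume $G$ itself is a nontrivial \'etale group scheme of $p$-power order with \'etale dual. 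But a nontrivial \'etale commutative group scheme of $p$-power order over a field of characteristic $p$ has connected (infinitesimal) Cartier dual: after passing to $\bar K$, $G_{\bar K}$ is a constant group scheme $\underline{M}$ for a nontrivial finite abelian $p$-group $M$, and the Cartier dual of $\underline{M}$ is the diagonalizable group scheme $\mathrm{Diag}(M)=\mu_{M}$, which is connected (infinitesimal) precisely because $M$ is a $p$-group in characteristic $p$ (\eg{} $\mathrm{Diag}(\Z/p\Z)=\mu_p$ is infinitesimal). This contradicts the assumption that $G^\vee$ is \'etale. Hence $p\nmid\deg\phi$.

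The main obstacle is the converse direction, and specifically the input that a nontrivial \'etale group scheme of $p$-power order in characteristic $p$ cannot have \'etale Cartier dual. This is where the genuinely characteristic-$p$ phenomenon enters; everything else is formal manipulation with orders and decompositions. The cleanest way to supply it is the explicit computation with constant group schemes and diagonalizable groups over $\bar K$ as above, using that Cartier duality commutes with base change and that a group scheme is \'etale (resp. connected) if and only if it is so after base change to $\bar K$; alternatively one can cite the classification of finite group schemes over a perfect field into connected/\'etale and multiplicative/unipotent types and note that \'etale $p$-power order forces the dual to be infinitesimal. I would present the explicit $\mu_p$-type argument since it is self-contained and matches the elementary level of the surrounding material.
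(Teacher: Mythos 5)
Your proof is correct and follows essentially the same route as the paper's: reduce to the $p$-primary part of the kernel (which you correctly observe is a direct summand and hence has étale Cartier dual given biseparability), then derive a contradiction from the fact that an étale $p$-power-order group scheme in characteristic $p$ cannot have an étale Cartier dual. The only cosmetic differences are in the easy direction, where the paper argues via separability of the induced field extension $K(A)/K(B)$ while you argue via $\ker\phi\subset A[n]$ with $A[n]$ étale, and in the final contradiction, where the paper extracts a single point of order $p$ (after a finite extension) to produce a monomorphism $(\Z/p\Z)_K\into G$ and hence an epimorphism $G^\vee\onto\mu_p$, whereas you pass to $\bar K$ and compute the entire dual as the infinitesimal diagonalizable group $\mathrm{Diag}(M)$; both come down to the same $\mu_p$-type observation.
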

	\begin{proof}
    Let $\phi : A \rightarrow B$ an isogeny of degree $d$ between abelian varieties over $K$. If $p \nmid d$, the extension $K(A)/K(B)$ induced by $\phi^*$ has degree $d$, hence is automatically separable. By Proposition~\ref{prop:basics.isog}, $\phi$ is therefore separable and
    so is its dual (which also has degree $d$), so $\phi$ is biseparable.

    Conversely, assume that $\phi$ is biseparable. By Proposition~\ref{prop:prime.power.decomposition}, $\phi$ factors through an isogeny $\phi_p$ of degree $p^{v_p(d)}$, which is therefore also biseparable. The kernel $G$ of $\phi_p$ is thus a commutative \'etale group scheme of order $p^{v_p(d)}$ with \'etale Cartier dual $G^\vee$. Assume for a contradiction that $v_p(d)$ is positive. After a finite extension (which preserves \'etaleness and commutes with   formation of duals), there exists a point $x \in G(K)$ of exact order $p$. 
    This point  generates a copy of $(\Z/p\Z)_K$ in $G$, and there is a monomorphism $(\Z/p\Z)_K \rightarrow G$. 
    Taking Cartier duals, we obtain an epimorphism from the \'etale group scheme $G^\vee$ to $(\Z/p\Z)^\vee_K\simeq (\mu_p)_K$, so that $(\mu_p)_K$ is \'etale too. This  is a contradiction! We conclude that $v_p(d)$ must be $0$ \ie{}, that  $p \nmid d$.
	\end{proof}

We can now prove a general version of assertion \itemref{item:hdiff.a} in Theorem~\ref{itheo:hdiff}, which is probably well-known to the experts.
    
	\begin{theo}\label{theo:diffheight.bisep.isogenies}
	Let $\phi : A\to B$ an isogeny between abelian varieties over $K$. The following hold:
	\begin{enumerate}[{\rm(}a{\rm)}]
		\item If $\phi$ is biseparable, then \[\hdiff(A/K)=\hdiff(B/K).\]
		\item More generally, if $\phi$ is separable, then $\hdiff(A/K)\geq \hdiff(B/K)$.
	\end{enumerate}
	\end{theo}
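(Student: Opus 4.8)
The plan is to reduce everything to the key inequality of Lemma~\ref{lem:keyinequalityparallelogram}(b). The essential observation is that for a separable isogeny $\phi:A\to B$, the pullback $\phi^\ast$ on differential forms induces a morphism between the Hodge bundles of the N\'eron models which is generically an isomorphism; the sign of the inequality on degrees is then forced, and in the biseparable case a symmetric argument applied to the dual isogeny $\widehat\phi$ yields the reverse inequality, hence equality. First I would set up the notation: let $\Acal,\Bcal$ denote the N\'eron models of $A,B$ over $C$, and let $\Phi:\Acal\to\Bcal$ be the extension of $\phi$ by the N\'eron mapping property. Since $\phi$ is an isomorphism on generic fibres (it is an isogeny, so $\phi_K^\ast$ identifies $\Omega^1_{B_K/K}\xrightarrow{\sim}\Omega^1_{A_K/K}$ after pullback along $\phi$), the induced map of Hodge bundles $\Phi^\ast:\Omega(\Bcal/C)=\Omega_B\to\Omega_A=\Omega(\Acal/C)$ — more precisely, the map $e_\Acal^\ast\Phi^\ast\Omega^1_{\Bcal/C}\to\Omega^1_{\Acal/C}$ evaluated on the neutral sections — is an isomorphism over the generic point $\operatorname{Spec}K$ of $C$.

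Next I would justify that this generic isomorphism of vector bundles on $C$ exists as an actual morphism of vector bundles $\Omega_B\to\Omega_A$. For part (b) this is the only input beyond the construction of $\Phi$: by Lemma~\ref{lem:keyinequalityparallelogram}(b), a map of vector bundles on $C$ which is a generic isomorphism satisfies $\deg\Omega_B\le\deg\Omega_A$, i.e. $\hdiff(B/K)\le\hdiff(A/K)$. For part (a), assume moreover $\phi$ is biseparable, so that $\widehat\phi:B^\vee\to A^\vee$ is also separable (Proposition~\ref{prop:basics.isog}, since the Cartier dual of an \'etale group scheme is \'etale). Applying part (b) to $\widehat\phi$ gives $\hdiff(A^\vee/K)\le\hdiff(B^\vee/K)$. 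To close the loop I would invoke the fact that an abelian variety and its dual have N\'eron models with isomorphic Hodge bundles up to the appropriate identification — concretely, $\hdiff(A^\vee/K)=\hdiff(A/K)$ and likewise for $B$ (this is the function-field analogue of the equality of Faltings heights of $A$ and $A^\vee$; it follows from the principal polarization trick after an auxiliary isogeny, or directly from the duality of Hodge bundles of semi-abelian models). Combining, $\hdiff(A/K)=\hdiff(A^\vee/K)\le\hdiff(B^\vee/K)=\hdiff(B/K)\le\hdiff(A/K)$, forcing equality throughout.

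The main obstacle I anticipate is twofold. First, one must be careful that $\Phi^\ast$ really does define a map of the Hodge bundles over \emph{all} of $C$, not merely generically: because $\phi$ is separable, Proposition~\ref{prop:semistableisogenies}(c) (or (b)) ensures $\Phi$ is reasonably well-behaved on fibres, but $\Phi$ need not be finite flat over $C$ in general, so one should argue via the sheaf $\Omega^1_{\Bcal/C}$ being locally free and pulling back along $\Phi$, then applying the neutral section; there is no positivity needed, just the existence of the morphism. Second, the equality $\hdiff(A/K)=\hdiff(A^\vee/K)$ deserves a clean citation or a short argument — the safest route is to choose an isogeny $A\to A^\vee$ arising from a polarization together with its dual and play the same inequality game, or to note that over the semi-stable locus the N\'eron models of $A$ and $A^\vee$ have canonically dual identity components (Grothendieck's duality for semi-abelian schemes), whence equal Hodge-bundle degrees; passing to a semi-stable extension and dividing by the degree transfers this to $\hstab$ and then back. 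Everything else is formal manipulation of degrees of line bundles on $C$ via Lemma~\ref{lem:keyinequalityparallelogram}.
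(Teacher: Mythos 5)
Your argument for part~(b) is correct and follows the same route as the paper's: the N\'eron extension $\Phi:\Acal\to\Bcal$ of $\phi$ induces a morphism of vector bundles between the Hodge bundles (equivalently, between the Lie algebras) which is a generic isomorphism because $\phi$ is \'etale — not, as your parenthetical suggests, merely because $\phi$ is an isogeny; an inseparable isogeny kills differentials along the zero section — and Lemma~\ref{lem:keyinequalityparallelogram}(b) then forces the inequality on degrees.

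For part~(a), however, your route has a genuine gap. You deduce equality by applying~(b) to $\phi$ and to $\widehat\phi$, then invoking $\hdiff(A^\vee/K)=\hdiff(A/K)$ and $\hdiff(B^\vee/K)=\hdiff(B/K)$. But in this paper that duality identity is Proposition~\ref{prop:hdiff.dual}, whose proof in characteristic~$0$ \emph{cites Theorem~\ref{theo:diffheight.bisep.isogenies}(a)} — so your argument would be circular there — and whose proof in positive characteristic requires $A$ (hence $B$) to be \emph{semi-stable}, a hypothesis absent from the statement of Theorem~\ref{theo:diffheight.bisep.isogenies}. You flag the obstacle, but none of the alternatives you sketch removes it: Grothendieck duality of semi-abelian N\'eron models applies only under semi-stability, and passing to a semi-stable cover $K'/K$ controls $\hstab$ but not $\hdiff(A/K)$ itself, since $\hdiff(A/K)$ can strictly exceed $[K':K]^{-1}\hdiff(A_{K'}/K')$ when $A$ is not semi-stable over $K$. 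The paper's own remark after Proposition~\ref{prop:hdiff.dual} explicitly says that the duality route you propose recovers part~(a) ``in the semi-stable case'' only. The paper instead proves~(a) unconditionally and directly: setting $\omega(\phi):=\bigwedge^g\Omega_A\otimes\bigl(\bigwedge^g\Omega_B\bigr)^{-1}$, the pullback $\Phi^\ast$ gives a morphism $\theta:\O_C\to\omega(\phi)$, and one checks fiber-by-fiber that $\theta$ is an isomorphism: by Proposition~\ref{prop:semistableisogenies}(b) with the alternative hypothesis that $\deg\phi$ is prime to the residual characteristic, $\Phi_s$ is an isogeny on identity components over each closed point $s\in C$, and by Proposition~\ref{prop:semistableisogenies}(c) that isogeny is \'etale and hence induces an isomorphism on cotangent spaces along the zero section. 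This shows $\omega(\phi)$ is trivial in $\Pic(C)$, yielding equality of heights with no semi-stability assumption. To repair your proof you would need either to replace the duality-of-heights step by this local computation, or to give an independent proof of Proposition~\ref{prop:hdiff.dual} not relying on~(a) and not assuming semi-stability.
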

	\begin{proof} 
		Denote the N\'eron models of $A$ and $B$ by $\pi_A :\Acal\to C$ and $\pi_B:\Bcal\to C$, respectively.
		We write $e_A : C\to\Acal$ and $e_B:  C \to \Bcal$ for their respective  neutral sections.
		Recall that $\Omega_A = e_A^\ast\Omega^1_{\Acal/C}$ and $\Omega_B = e_B^\ast\Omega^1_{\Bcal/C}$.
		We  let $\omega_A := \bigwedge^g\Omega_A$ and $\omega_B := \bigwedge^g\Omega_B$, and consider the line bundle $\omega(\phi) := \omega_A \otimes \omega_B^{-1}$. 
        By definition of $\hdiff(A/K)$ and $\hdiff(B/K)$, the theorem is equivalent to proving that $\deg\omega(\phi)=0$.
        We actually prove that $\omega(\phi)$ is a trivial line bundle on $C$. 
		
		By the N\'eron property, the isogeny $\phi:A\to B$ extends uniquely into a morphism $\Phi:\Acal\to\Bcal$ of group schemes. 
		The morphism $\Phi$ induces a map $\Phi^\ast\Omega^1_{\Bcal/C}\longrightarrow\Omega^1_{\Acal/C}$, which we may pull-back via the neutral section $e_A$ to a map of $\O_C$-modules. 
		Because $\Phi\circ e_A = e_B$, we thus obtain a morphism of $\O_C$-modules $\Omega_B = e_B^\ast\Omega^1_{\Bcal/C}\simeq e_A^\ast\Phi^\ast \Omega^1_{\Bcal/C} \longrightarrow e_A^\ast\Omega^1_{\Acal/C} = \Omega_A$.	
		Taking $g$-th exterior products, we deduce a morphism $\omega_B \to \omega_A$ and, tensoring by $\omega_B^{-1}$, we finally obtain a morphism $\theta : \O_C \to \omega(\phi)$. 
		To conclude, it suffices to prove that $\theta$ is, fiber-by-fiber, an isomorphism.
		
		We now argue Zariski locally on $C$: let $s$ be a closed point of $C$, denote the respective fibers of $\Acal$ and $\Bcal$ at $s$ by $\Acal_s$ and $\Bcal_s$, and let $\Phi_s : \Acal_s\to\Bcal_s$ be the restriction of $\Phi$ to  these fibers.
        By Proposition~\ref{prop:semistableisogenies}{\it (b)} above,  $\Phi_s$ is finite of degree $d$ and is surjective on the identity components of $\Acal_s$ and $\Bcal_s$ \ie{}, is an isogeny of degree $d$. 
		Since $d$ is coprime to the residual characteristic at $s$, Proposition~\ref{prop:semistableisogenies}{\it (c)} implies that the isogeny $\Phi_s$ is an \'etale morphism, hence it induces an isomorphism $\Phi_s^\ast \Omega^1_{\Bcal_s/k_s} \stackrel{\sim}{\longrightarrow} \Omega^1_{\Acal_s/k_s}$ (see \cite[\S2.2, Corollary 10]{BLR}). 
		Tracing through the definitions, we deduce that the restriction of $\theta$ to the fiber above $s$ is an isomorphism $\O_s\stackrel{\sim}{\longrightarrow}\omega(\phi)_s$. 
		This proves the first assertion. 
		
		Let $\phi : A\to B$ be a separable isogeny. 
		Since $\phi$ is \'etale, the morphism $\Lie(\Acal/C) \rightarrow \Lie(\Bcal/C)$ of Lie algebras induced by $\phi$ (by functoriality) is an isomorphism at the generic fiber. 
		Lemma~\ref{lem:keyinequalityparallelogram} then implies that  $\deg \Lie(\Acal/C) \leq \deg \Lie(\Bcal/C)$, so that $\hdiff(A/K) \geq \hdiff(B/K)$. This prove item $(b)$.
		\end{proof}
  	
	\begin{rema}
	Our proof of $(a)$ is  heavily inspired by the Raynaud's arguments in \cite[p. 202--203]{Ray}. 
	Note, though, that Raynaud assumes the base field to be either a number field or a discrete valued field with unequal characteristic. 
	His proof in the latter case can nevertheless be extended to our context  (where the characteristic of the function field is either $0$ or prime to the degree of the isogeny).
	\end{rema}

 We take advantage of the last theorem to relate the differential height of an abelian variety to that of its dual.
	\begin{prop}\label{prop:hdiff.dual}
	Let $A$ be an abelian variety over a function field $K=k(C)$.  
	If $K$ has positive characteristic, assume  that $A$ is semi-stable  over $K$. We have  $\hdiff(A^\vee/K) =\hdiff(A/K)$.
	\end{prop}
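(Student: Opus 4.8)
The plan is to reduce the statement to the previously established Theorem~\ref{theo:diffheight.bisep.isogenies} by exhibiting a biseparable isogeny between $A$ and $A^\vee$, at the cost of passing to products. First I would recall that there always exists a polarization $\lambda : A \to A^\vee$, and after replacing $A$ by a power $A^n$ (whose differential height is $n\cdot\hdiff(A/K)$ by Proposition~\ref{prop:hdiff.product}, and similarly for the dual, since $(A^\vee)^n\simeq(A^n)^\vee$) one can arrange for the polarization to be of a controlled degree. More to the point, by Zarhin's trick the abelian variety $(A\times A^\vee)^4$ carries a \emph{principal} polarization $\mu$, i.e.\ an isomorphism $(A\times A^\vee)^4 \xrightarrow{\sim} \big((A\times A^\vee)^4\big)^\vee$. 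Since $\big((A\times A^\vee)^4\big)^\vee \simeq (A^\vee\times A)^4$, this isomorphism shows $\hdiff\big((A\times A^\vee)^4/K\big) = \hdiff\big((A^\vee\times A)^4/K\big)$, and both sides equal $4\big(\hdiff(A/K)+\hdiff(A^\vee/K)\big)$ by additivity — which unfortunately is a tautology and yields nothing. So the product trick alone is not enough; one genuinely needs an isogeny of degree prime to the characteristic relating $A$ itself to $A^\vee$.

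The cleaner route, which I would actually pursue, is: choose a polarization $\lambda:A\to A^\vee$; its degree is a perfect square, say $m^2$, and its kernel $\ker\lambda$ is a finite group scheme killed by $m$, hence self-Cartier-dual in the weak sense that $\ker\widehat\lambda\simeq(\ker\lambda)^\vee$ by Proposition~\ref{prop:dualisogenykernel}. If $m$ is coprime to the characteristic $p$ of $K$, then $\lambda$ is biseparable and Theorem~\ref{theo:diffheight.bisep.isogenies}$(a)$ gives $\hdiff(A/K)=\hdiff(A^\vee/K)$ immediately. In general one removes the $p$-part: write $\lambda = \lambda' \circ \phi_p$ where $\phi_p:A\to A'$ is the quotient by the $p$-primary part of $\ker\lambda$ and $\lambda':A'\to A^\vee$ has degree prime to $p$, hence is biseparable, giving $\hdiff(A'/K)=\hdiff(A^\vee/K)$. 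It then remains to show $\hdiff(A'/K)=\hdiff(A/K)$, i.e.\ that the purely inseparable isogeny $\phi_p$ — whose kernel $G$ is the connected $p$-primary part of $\ker\lambda$ — preserves the differential height. Here is where the \emph{semi-stability} hypothesis on $A$ enters (as it must: it is precisely the hypothesis in the statement), via the forthcoming results of Section~\ref{sec:Hodge and heights}: because $\ker\lambda=\lambda^\vee{}^\vee\!\circ\dots$ is isomorphic to the kernel of the dual polarization, the subgroup scheme $G$ and its image configuration are symmetric enough that the Lie-algebra degree term $(p-1)\deg\Lie(\Gcal/C)$ appearing in the key formula \eqref{ieq:key.eq} vanishes, or more precisely the height change through $\phi_p$ is cancelled by the height change through the corresponding factor of $\lambda'$.

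The honest and most self-contained argument, avoiding the delicate vanishing above, is probably this: after a finite extension $K_s/K$ over which $A$ acquires everywhere semi-stable reduction, both $\hdiff$ and $\hstab$ agree up to the degree factor, so it suffices to prove $\hstab(A)=\hstab(A^\vee)$; and here one can invoke the compatibility of Moret-Bailly's Formule Cl\'e (Theorem~\ref{theo-clef}) with duality, noting that a polarized abelian variety $(A,\lambda)$ and its dual $(A^\vee,\lambda^\vee)$ define points of the same moduli space $\mathscr A_{g,d,n}$ (up to adjusting the level and the degree $d$ of the polarization, which only scales the height) and that the stable height depends only on the underlying abelian variety. The main obstacle, as I see it, is keeping track of the polarization degrees and level structures through these manipulations so that the Formule Cl\'e can legitimately be applied on both sides; once that bookkeeping is done, combining it with Theorem~\ref{theo:diffheight.bisep.isogenies}$(a)$ for the prime-to-$p$ part finishes the proof.
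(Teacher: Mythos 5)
Your first paragraph is a sound self-check: Zarhin's trick indeed produces a tautology here. The problem is that neither of your two substantive attempts closes the gap, and both are essentially circular where they matter.

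In the second paragraph, the decisive step is the assertion that the isogeny $\phi_p: A\to A'$ with kernel $G = (\ker\lambda)_{p^\infty}$ preserves the differential height, justified by an appeal to ``the Lie-algebra degree term vanishes'' or to an unnamed ``cancellation.'' This is not a proof, and in fact the claim is exactly as hard as the statement you are trying to prove. Consider the case where $\ker\lambda$ is entirely $p$-primary (as happens, for instance, in the Moret-Bailly construction of section~\ref{moretbabapara}, where $K(\Lcal)\simeq\aalpha_p\times\aalpha_p$): then $\phi_p$ quotients by all of $\ker\lambda$, so $A' \simeq A^\vee$ and $\lambda'$ is an isomorphism, and the needed equality $\hdiff(A'/K)=\hdiff(A/K)$ is verbatim the Proposition. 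Worse, the example in section~\ref{moretbabapara} shows that the height genuinely \emph{does} change by $p-1$ through a degree-$p$ isogeny whose kernel $\aalpha_p$ is Cartier self-dual and sits inside a polarization kernel, so no soft ``self-duality of $\ker\lambda$ implies $\deg\Lie(\Gcal/C)=0$'' argument can work. You also implicitly assume $\phi_p$ is purely inseparable, but the $p$-primary part of $\ker\lambda$ need not be connected.

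In the third paragraph, the claim that $(A,\lambda)$ and ``$(A^\vee,\lambda^\vee)$'' define points of the same moduli space is not established and is delicate: a polarization $\lambda:A\to A^\vee$ is symmetric, i.e.\ $\widehat\lambda = \lambda$ under biduality, so $\lambda$ does not hand you a polarization \emph{on} $A^\vee$ (unless it is principal, in which case the whole Proposition is trivial). The remark that ``the stable height depends only on the underlying abelian variety'' is true but does not give $\hstab(A)=\hstab(A^\vee)$, since $A$ and $A^\vee$ are different varieties. To make the Formule Cl\'e route work you would have to prove a duality statement about modular heights, which is not in the paper and would itself require a real argument.

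What the paper actually does is short and bypasses all of this: after handling the biseparable case via Theorem~\ref{theo:diffheight.bisep.isogenies}(a), it cites \cite[Chap.~IX, Lemme~2.4]{MB85}, which asserts that for a semi-stable abelian variety the line bundle $\big(\bigwedge^g\Omega_A\big)\otimes\big(\bigwedge^g\Omega_{A^\vee}\big)^{-1}$ is torsion in $\Pic(C)$; taking degrees gives $\hdiff(A/K)=\hdiff(A^\vee/K)$ immediately. That specific torsion result is the missing ingredient in your proposal.
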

 	\begin{proof}
 	By construction, the abelian varieties $A$ and $A^\vee$ are isogenous over $K$ (for instance, apply Corollary~5 on p. 122 in \cite{MumfordAbVar08} to an ample line bundle on $A$).
 	If there exists a biseparable isogeny between $A$ and $A^\vee$, then $\hdiff(A^\vee/K) =\hdiff(A/K)$ by Theorem~\ref{theo:diffheight.bisep.isogenies}(a). (In particular, this settles the characteristic $0$ case of the proposition). 
	To treat the general case, recall from Proposition~\ref{prop:semistableisogenies} that, if $A$ is semi-stable  over $K$,  then so is its dual $A^\vee$.  In that situation, Lemme 2.4 in \cite[Chap.\,9]{MB85} states that the line bundle $(\bigwedge^g \Omega_A)\otimes(\bigwedge^g \Omega_{A^\vee})^{-1}$ has finite order in $\Pic(C)$. In particular, the degrees of $\Omega_A$ and $\Omega_{A^\vee}$ are equal,  hence the result.
 	\end{proof}

	\begin{rema}
	This proposition, when combined with   applications of Theorem~\ref{theo:diffheight.bisep.isogenies}(b)  to an isogeny and its dual,   yields a second proof of Theorem~\ref{theo:diffheight.bisep.isogenies}(a) in the semi-stable case.
	
    The reader wishing to extend Proposition~\ref{prop:hdiff.dual} to the non semi-stable case will have to deal with the behavior of base change conductors, with help from the case of discrete valuation rings: a recent preprint by Overkamp and Suzuki (see \cite[Theorem~1.2]{OverkampSuzuki23} in particular) proves that the base change conductors of an abelian variety and of its dual are equal. 
	\end{rema}

	\begin{rema}
    According to the Formule Cl\'e~\ref{theo-clef}, the modular height is proportional to the stable height. 
    One can thus deduce from Theorem~\ref{theo:diffheight.bisep.isogenies} analogous relations between the modular heights of suitably polarized abelian varieties which are linked by a (bi)separable isogeny.
	\end{rema}

\section{Frobenius and Verschiebung isogenies}\label{sec:FV}
{\it For the duration of this section, we work over a field  $F$ of characteristic $p>0$, and we assume that the (integral locally noetherian of absolute dimension $\leq 1$) scheme $S$ is of characteristic~$p$.}

The zoology of isogenies between abelian varieties over  a field such as $F$ is richer than over a field of characteristic~$0$,  in particular because Frobenius and Verschiebung isogenies appear.
In this section, we introduce the tools we will need in the following two sections in order to describe the variation of height in positive characteristic.
	
\subsection{Definitions}\label{Frobenius}
Let $F$ be a field of characteristic $p>0$.
Any scheme $S$ over $F$ is equipped with an absolute Frobenius morphism $\Frob_S : S\to S$ which acts trivially on points of $S$ and acts as the $p$-th power map on the structure sheaf $\O_S$. 
Let $X/S$ be an $S$-scheme with structural morphism $s_{X/S}:X\to S$. Using the cartesian diagram
	\begin{center}
	\begin{tikzcd}
			X \arrow[rrdd, "s_{X/S}"', bend right] \arrow[rrrr, "\Frob_{X}", bend left] 
			\arrow[rr, "\Frob_{X/S}", dashed] & & X^{(p)} \arrow[dd, "s^{(p)}_{X/S}"'] \arrow[rr, "\usebox\pullback"' , pos=-0.1, color=black] &  & X \arrow[dd, "s_{X/S}"] \\
			&&   &   &                         \\
			& & S \arrow[rr, "\Frob_S"]       &  & S 
	\end{tikzcd}
	\end{center}
one defines the so-called {\it relative Frobenius morphism} $\Frob_{X/S} : X\to X^{(p)}$. See \cite[Exposé VII$_A$, \S 4.1]{SGA3}.

\begin{exple}\label{exampleFrobenius}
		Let us write  what these different definitions of Frobenius explicitly mean for affine schemes. 
		Let $F$ be   field of a characteristic $p$ and $S = \Spec F$. Let  $X = \Spec R$ be an affine scheme with $R = F[X_1, \dots, X_m]/(f_1, \dots, f_n)$ for some $f_1, \dots, f_n \in F[X_1, \dots, X_m]$. 
		The absolute Frobenius on $R$ (resp. on $F$) is given by $x \mapsto x^p$ for all $x \in R$ (resp. $x\in F$). Thus $X^{(p)}=\Spec R^{(p)}$ where $R^{(p)}$ is the tensor product of the structural morphism $F \rightarrow R$ with the absolute Frobenius $\Frob_F: F \rightarrow F$. Explicitly, we have
		\[ R^{(p)} = F[X_1, \dots, X_m]/(f_1^{(p)}, \dots, f_n^{(p)}), \]
		where, for each $f \in F[X_1, \dots, X_n]$, $f^{(p)}$ is defined by raising each coefficient of $f$ to the power $p$. 
		The canonical map from $R \rightarrow R^{(p)}$ is induced by $f \mapsto f^{(p)}$ (this map is not $F$-linear). 
		The relative Frobenius $\Frob_{R/F} : R^{(p)} \rightarrow R$ is then given by $f \mapsto f(X_1^p, \cdots, X_m^p)$ (and is $F$-linear).
		
		In particular, assuming $f_1(0) = \dots = f_n(0)=0$, the cotangent map 
		$\mathfrak{M}_{R^{(p)}}/\mathfrak{M}_{R^{(p)}}^2 \rightarrow \mathfrak{M}_{R}/\mathfrak{M}_{R}^2$
		 induced by $\Frob_{X/F}$ at the maximal ideal $(X_1, \dots, X_m)$ is the zero map (and thus the tangent map is also trivial). 
		 Indeed, for each $f\in R^{(p)}$ with $f(0, \dots, 0)=0$, $f(X^p)$ vanishes at $(0,\dots, 0)$ with order at least $p\geq 2$. 
	\end{exple}
	
	\begin{defi}
	Let $A$ be an abelian variety over $F$. 
	The above construction defines a morphism ${\Frob_{A/F} : A \to A^{(p)}}$, which is an isogeny. 
	This morphism  is called the \emph{Frobenius isogeny} of $A$. 
	In what follows, the kernel of $\Frob_{A/F}$ will  be denoted by $A[\Frob_{A/F}]$.
	
	The dual abelian variety $A^\vee$ also possesses a Frobenius isogeny $\Frob_{A^\vee/F}: A^\vee \rightarrow (A^{\vee})^{(p)}$, the dual of which (see \cite[\S 7.3, Lemma 5]{BLR}) is called the \emph{Verschiebung isogeny} of $A$, and is denoted by $\Ver_{A/F}: A^{(p)}\to A$.  
	See also \cite[Exposé VII$_A$, \S4.3]{SGA3} for a definition in a more general context.		
	\end{defi}

	\begin{prop}
		\label{propFrobVer}
		Let $A$ be an abelian variety defined over $F$. The following hold:
			\vspace{-0.6\topsep}\begin{enumerate}[{\rm(}a{\,\rm)}]\setlength{\itemsep}{0em}
			\item The Frobenius isogeny $\Frob_{A/F} : A \to A^{(p)}$ is purely inseparable of degree $p^{\dim A}$.
			\item The Verschiebung isogeny $\Ver_{A/F} : A^{(p)} \to A$ has degree $p^{\dim A}$.
			\item We have $[\,p\,]_A = \Ver_{A/F} \circ \Frob_{A/F}$.
			\item We have $A^{(p)}\simeq A/A[\Frob_{A/F}]$.
		\end{enumerate}
	\end{prop}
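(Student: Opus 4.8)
\textbf{Proof strategy for Proposition~\ref{propFrobVer}.}

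The plan is to treat the four assertions in turn, with (a) and (d) being essentially formal once the local description of the relative Frobenius from Example~\ref{exampleFrobenius} is in hand, while (b) and (c) follow by a duality argument together with standard facts about multiplication by $p$. First, for (a): the relative Frobenius $\Frob_{A/F}$ is a homomorphism of abelian varieties of the same dimension, so to see it is an isogeny of the claimed degree it suffices to compute the degree of the function field extension it induces, or equivalently the rank of $(\Frob_{A/F})_\ast\O_A$ over $\O_{A^{(p)}}$. Working on an affine chart as in Example~\ref{exampleFrobenius}, the map $R^{(p)}\to R$ is $f\mapsto f(X_1^p,\dots,X_m^p)$, which exhibits $R$ as a free $R^{(p)}$-module of rank $p^{\dim A}$ (a basis is given by the monomials $X_1^{a_1}\cdots X_m^{a_m}$ with $0\le a_i<p$, after cutting down to the $\dim A$ local coordinates). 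Hence $\deg\Frob_{A/F}=p^{\dim A}$. Pure inseparability is immediate from the local formula: the induced maps on residue fields are generated by $p$-th powers, hence purely inseparable, so by Proposition~\ref{prop:basics.isog}(c) the isogeny $\Frob_{A/F}$ is purely inseparable; alternatively its kernel is connected (supported only at the origin, as the cotangent map computation in Example~\ref{exampleFrobenius} shows the scheme-theoretic fibre is infinitesimal), and we invoke Proposition~\ref{prop:basics.isog}(b).

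For (d): since $\Frob_{A/F}$ is an isogeny of degree $p^{\dim A}$ with connected kernel $A[\Frob_{A/F}]$, the quotient $A/A[\Frob_{A/F}]$ is an abelian variety and the universal property of the \FPPF{} quotient (Proposition~\ref{prop:lissitequotientsfppf}) produces a canonical isogeny $A/A[\Frob_{A/F}]\to A^{(p)}$ through which $\Frob_{A/F}$ factors; comparing degrees (both sides have degree $p^{\dim A}$ over $A$, so the comparison map has degree $1$) shows it is an isomorphism of abelian varieties. For (c): the identity $[p]_A=\Ver_{A/F}\circ\Frob_{A/F}$ is a standard compatibility; I would either cite \cite[\S7.3, Lemma 5]{BLR} or \cite[Exposé VII$_A$, \S4.3]{SGA3} directly, or deduce it from the analogous factorization $[p]_{A^\vee}=\Ver_{A^\vee/F}\circ\Frob_{A^\vee/F}$ on the dual combined with the definition of $\Ver_{A/F}$ as $\widehat{\Frob_{A^\vee/F}}$ and the fact that dualization is compatible with composition and sends $[p]$ to $[p]$. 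Finally (b): applying (a) to $A^\vee$ gives $\deg\Frob_{A^\vee/F}=p^{\dim A^\vee}=p^{\dim A}$, and since taking duals preserves the degree of an isogeny (Proposition~\ref{prop:dualisogenykernel}, as the kernel of $\widehat\phi$ is the Cartier dual of $\ker\phi$, which has the same order), we get $\deg\Ver_{A/F}=\deg\widehat{\Frob_{A^\vee/F}}=p^{\dim A}$.

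The only genuine subtlety — and the step I would be most careful about — is making the freeness/rank computation in (a) clean: one must pass from the ambient polynomial presentation $R=F[X_1,\dots,X_m]/(f_1,\dots,f_n)$ to honest local coordinates near the origin so that the rank comes out as $p^{g}$ with $g=\dim A$ rather than $p^{m}$. The right way to do this is to work étale-locally (or formally) at the identity, where $A$ looks like $\Spec$ of a power series ring in $g$ variables and the relative Frobenius is literally $X_i\mapsto X_i^p$; flatness and the rank are then transparent, and both spread out from a neighbourhood of the identity to all of $A$ by homogeneity (translation) since $\Frob_{A/F}$ is a group homomorphism. Everything else is bookkeeping with degrees, connected-versus-étale kernels, and the already-recalled behaviour of duality.
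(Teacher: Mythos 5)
Your proposal is correct and follows essentially the same route as the paper: you compute the degree in (a) by working formally at the identity in $g$ local coordinates (exactly what the paper does by completing at $0_A$), deduce (b) by duality, cite SGA3/BLR for (c), and obtain (d) from the definition of quotient by the kernel. One small caveat: the alternate derivation you sketch for (c), deducing $[p]_A = \Ver_{A/F}\circ\Frob_{A/F}$ from the ``analogous factorization'' $[p]_{A^\vee}=\Ver_{A^\vee/F}\circ\Frob_{A^\vee/F}$ on the dual, is circular, since that is precisely statement (c) applied to the abelian variety $A^\vee$; the direct citation is the right move, and it is the one the paper uses.
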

	
	\begin{proof}
	Assertion $(d)$ is immediate since $\Frob_{A/F}$ is an isogeny $A \rightarrow A^{(p)}$ with kernel $A[\Frob_{A/F}]$.
	We prove $(a)$ next, making use of the computations in the above example.
	Choose an affine neighborhood $U = \Spec R$ of $0_A\in A$ and write 
	\[ R = F[X_1, \dots, X_m]/(f_1, \dots, f_n)\]
	with $f_1(0,\cdots, 0) = \cdots = f_n(0, \cdots, 0) = 0$ (so that $0_A$ has affine coordinates $(0,\dots, 0)$).
	The kernel of $\Frob_{A/F}$ is then $\Spec B$ with 
		\[ B = F[X_1, \dots, X_m]/(f_1, \dots, f_n, X_1^p, \dots, X_m^p).\]
	In particular, $B$ is finite-dimensional over $F$, hence $A[\Frob_{A/F}]$ is finite. 
	The only geometric point of this group scheme is $0_A$ itself, so $\Frob_{A/F}$ is purely inseparable (Proposition~\ref{prop:basics.isog}). 
	We write $\mathfrak{m}\subset R$ for the maximal ideal generated by the images $x_1, \dots, x_m$ of $X_1, \dots, X_m$ in $R$, and $\widehat{R}$ for the completion of $R$ with respect to the ideal $\mathfrak{m}$. 
	The ring $\widehat{R}$ is a complete regular local ring of dimension $g=\dim A$, so there is an isomorphism 
		$ \varphi: \widehat{R} \stackrel{\sim}{\longrightarrow} F[[T_1, \cdots, T_g]]$.
				
	Furthermore, we can choose  the local parameters of $R$ at $0_A$ such that its first $g$ elements $x_1, \cdots, x_g$ make up a basis of $\mathfrak{m}/\mathfrak{m}^2$. 
	The map $\varphi$ then  sends $x_i$ to $T_i$ for $i \in \{1, \cdots, g\}$. 
	Now, as $\mathfrak{m}$ contains all powers of the $x_i$'s:
	\begin{align*}
	B = R \otimes F[X_1, \cdots, X_m]/(X_1^p, \cdots, X_m^p)  
	&\simeq \widehat{R} \otimes  F[X_1, \cdots, X_m]/(X_1^p, \cdots, X_m^p)  \\
	&\simeq  F[[X_1, \cdots, X_g]]/(X_1^p, \cdots, X_g^p) 
	= F[X_1, \cdots, X_g]/(X_1^p, \cdots, X_g^p).
	\end{align*}
	It is apparent that $B$ is an $F$-algebra of  dimension $p^g$, and therefore that $A[\Frob_{A/F}]$ is a groups scheme of order $p^g$.
		
	Assertion $(b)$ now follows, by definition of $\Ver_{A/F}$ as the dual of the Frobenius isogeny of $A^\vee$.
	Finally, item $(c)$ is a direct consequence of the construction of the Verschiebung isogeny explained in \cite[Exposé VII$_A$, \S4.3]{SGA3} (which coincides with ours in this context). 
	\end{proof}

	\begin{lemm}\label{lemm:faithfulflatnessFrobeniussmoothgroups}
    Let $\Gcal$ be a smooth group scheme of finite type over $S$ of characteristic $p$.
    The Frobenius morphism $\Frob_{\Gcal/S} : \Gcal \rightarrow \Gcal^{(p)}$ is faithfully flat.
	\end{lemm}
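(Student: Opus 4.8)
The group structure is irrelevant here, so we aim to show, more generally, that the relative Frobenius of any smooth scheme of finite type $X/S$ is faithfully flat; as usual, this means flat and surjective. Surjectivity comes for free: the composite $X \xrightarrow{\Frob_{X/S}} X^{(p)} \to X$, whose second arrow is the projection obtained by base-changing $\Frob_S$ and is a homeomorphism on underlying spaces, equals the absolute Frobenius $\Frob_X$, which is also a homeomorphism on underlying spaces. Hence $\Frob_{X/S}$ is a homeomorphism on underlying spaces, in particular surjective. It therefore remains to prove that $\Frob_{X/S}$ is flat, and this may be checked Zariski-locally on $X$.

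Since $X/S$ is smooth, its relative dimension is locally constant, and the local structure theorem for smooth morphisms lets us cover $X$ by opens $U$ admitting an étale $S$-morphism $g : U \to \mathbb{A}^n_S$, where $n$ is the relative dimension on $U$. The plan is then to invoke the compatibility of the relative Frobenius with étale morphisms: for $g$ étale, $\Frob_{U/\mathbb{A}^n_S}$ is an isomorphism (being an étale universal homeomorphism), and transitivity of the relative Frobenius construction shows that the square relating $\Frob_{U/S}$ and $\Frob_{\mathbb{A}^n_S/S}$ along $g$ and $g^{(p)}$ is cartesian; equivalently, $\Frob_{U/S}$ is the base change of $\Frob_{\mathbb{A}^n_S/S}$ along $g^{(p)} : U^{(p)} \to (\mathbb{A}^n_S)^{(p)}$. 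As flatness is stable under base change, we are reduced to the single case $X = \mathbb{A}^n_S$.

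For affine space the computation is explicit, and is essentially the one already performed in Example~\ref{exampleFrobenius}: identifying $(\mathbb{A}^n_S)^{(p)}$ with $\mathbb{A}^n_S$ (since $\mathbb{A}^n$ is defined over $\F_p$), the morphism $\Frob_{\mathbb{A}^n_S/S}$ corresponds, over each affine open $\Spec R \subseteq S$, to the $R$-algebra homomorphism $R[y_1,\dots,y_n] \to R[x_1,\dots,x_n]$ sending $y_i \mapsto x_i^p$. The target is then free over the source on the basis of monomials $x_1^{a_1}\cdots x_n^{a_n}$ with $0 \le a_i \le p-1$, so $\Frob_{\mathbb{A}^n_S/S}$ is finite locally free of rank $p^n$, in particular faithfully flat. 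Unwinding the two reductions then yields the lemma.

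The only genuinely non-formal ingredient is the compatibility of the relative Frobenius with étale morphisms (equivalently, that $\Frob_{U/V}$ is an isomorphism whenever $U \to V$ is étale); this is a standard fact — compatible with the triviality of the tangent map of Frobenius noted in Example~\ref{exampleFrobenius}, and documented in \cite[Exposé VII$_A$, \S4.1]{SGA3} — and once it is granted everything else is either a formal manipulation of cartesian squares or the one-line computation over affine space. Alternatively, one may bypass the étale-local structure entirely and argue by the fibrewise flatness criterion (see \cite[11.3.10]{EGAIV3}): the morphisms $X \to S$ and $X^{(p)} \to S$ are of finite type with $X$ flat (even smooth) over $S$, and over each point $s \in S$ the morphism $\Frob_{X/S}$ restricts to the relative Frobenius $X_s \to X_s^{(p)}$ of a smooth variety over the field $\kappa(s)$, which is finite flat by the affine-space computation applied over $\kappa(s)$; the criterion then gives the flatness of $\Frob_{X/S}$ directly.
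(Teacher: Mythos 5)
Your proposal is correct, but it takes a genuinely different route from the paper. The paper reduces immediately to a base field $F$, observes that $G^{(p)}$ is smooth (hence reduced) and that $\Frob_{G/F}$ is surjective on closed points, and then cites \cite[Proposition~1.70]{Milne} — a result special to algebraic groups, ultimately resting on generic flatness plus homogeneity under translation. Your argument dispenses with the group structure entirely: you prove faithful flatness of the relative Frobenius of \emph{any} smooth finite-type $S$-scheme, by reducing via the \'etale-local structure theorem for smooth morphisms to $\mathbb{A}^n_S$ (using the standard fact that the relative Frobenius of an \'etale morphism is an isomorphism, so $\Frob_{U/S}$ is a base change of $\Frob_{\mathbb{A}^n_S/S}$) and then verifying flatness of $\Frob_{\mathbb{A}^n_S/S}$ by the explicit finite-free computation of rank $p^n$. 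What this buys you is a more elementary and strictly more general statement, with the algebraic-groups black box replaced by an \'etale-local computation. You are also more careful about the reduction step: the paper says ``Zariski locally on $S$'' and then assumes $S = \Spec F$, but Zariski-localizing on a curve only produces a Dedekind base, not a field; the correct justification is the fibrewise flatness criterion \cite[11.3.10]{EGAIV3} that you explicitly invoke in your alternative route (noting that $\Gcal$ and $\Gcal^{(p)}$ are both flat of finite type over $S$, and that $(\Frob_{\Gcal/S})_s$ identifies with $\Frob_{\Gcal_s/\kappa(s)}$, which the affine-space computation handles over each residue field). So your proof not only avoids a heavier tool but also fills in a step that the paper glosses over.
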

	\begin{proof}
	We argue, as we may, Zariski locally on $S$. 
	Assume that $S = \Spec F$ for some field $F$, and that $G$ is a smooth algebraic group over $F$. 
	Then $G^{(p)}$ is also smooth (in particular it is reduced), and the Frobenius map $\Frob_{G/F}: G \rightarrow G^{(p)}$ is surjective on closed points 
	(because the Frobenius field morphism $x\mapsto x^p$ has an inverse on $\overline{F}$). 
	
	Since $G^{(p)}$ is reduced,  we conclude by \cite[Proposition 1.70]{Milne} that $\Frob_{G/F}$ is faithfully flat.
	\end{proof}

The following notion will be useful in the paper.

\begin{defi}
Let $\phi$ be an isogeny between abelian varieties defined over a function field $K$ of characteristic $p>0$.
We call $\phi$ a {\it FV-isogeny} if $\phi$ factors as a composition (in any order) of a certain number $e\geq 0$ of Frobenius isogenies, a certain number $f\geq 0$ of Verschiebung isogenies, and any number of biseparable isogenies. 
In which case, we say that this FV-isogeny is {\it of type $(e,f)$}.   
\end{defi}

It follows from the definitions that the class of FV-isogenies behaves well with respect to duality: if $\phi$ is a FV-isogeny of type $(e,f)$, its dual $\widehat{\phi}$ is also a FV-isogeny and the type of $\widehat{\phi}$ is $(f,e)$. 
Note that in dimension $1$, any isogeny between non-isotrivial elliptic curve is a FV-isogeny, see Proposition 4.7 in \cite{GriPaz}.

\subsection[Ordinarity and p-rank]{Ordinarity and $p$-rank}	
	
	\begin{defi}\label{defi:ordinary} 
	Let $A$ be an abelian variety over a field $F$ of positive characteristic $p$. 
	We say that $A$ is {\it ordinary} if $A[\,p\,](\overline{F}) \simeq (\Z/p\Z)^{\dim A}$ (\ie{}, $A[\,p\,](\overline{F})$ is as large as it possibly can). 
	This condition is equivalent to the separability of $\Ver_{A/F} : A^{(p)}\to A$ since the multiplication-by-$p$ maps on $A$ factors as $[\,p\,] = \Ver_{A/F} \circ \Frob_{A/F}$ by Proposition~\ref{propFrobVer}, and because $\Frob_{A/F} :A \to A^{(p)}$ is universally injective.
			
    The \emph{$p$-rank} of $A$ is the integer $f$ such that $A[\,p\,](\overline{F}) \simeq (\Z/p\Z)^f$.
    This integer lies between $0$ and $\dim A$. 
    Note that $A$ is ordinary exactly when its $p$-rank is $\dim A$. 
    Notice also that $p^f$ is the separability degree of $\Ver_{A/F}$. 
	\end{defi}

It follows directly from Definition~\ref{defi:ordinary} that, if $A/F$ is an ordinary abelian variety, then $(A\times_FF')/F'$ is ordinary for any field extension $F'/F$. 
Also, any abelian subvariety of an ordinary abelian variety is ordinary.

It is well-known (see \cite[p. 137]{MumfordAbVar08}) that the $p$-rank is constant in an isogeny class.
In particular, if two abelian varieties over $F$ are isogenous, then they are either both ordinary or neither of them is.

\subsection[{Frobenius height of p-group schemes and isogenies}]{Frobenius height of $p$-group schemes and isogenies}
\label{sec:Frob.height}
    
	\begin{defi}\label{defi:Frobheight}
	Let $\Gcal$ be a finite flat group scheme over $S$ of characteristic $p$.  
	For any integer $d\geq 1$, define the iterated Frobenius $\Frob_{\Gcal/S}^{(d)} : \Gcal \longrightarrow \Gcal^{(p^d)}$ as the composition 
	\[\Frob_{\Gcal/S}^{(d)}: \Gcal \xrightarrow[]{\ \Frob_{\Gcal/S}\ } \Gcal^{(p)} 
	\xrightarrow[]{\ \Frob_{\Gcal^{(p)}/S}\ }  \Gcal^{(p^2)} 
	\xrightarrow[]{\  \ } \cdots 
	\xrightarrow[]{\  \ } \Gcal^{(p^{d-1})} 
	\xrightarrow[]{\  \Frob_{\Gcal^{(p^{d-1})}/S}\  } \Gcal^{(p^d)}.\]
	We also let $\Frob_{\Gcal/S}^{(0)} : \Gcal \to\Gcal$ be the trivial morphism. 
	Consider the increasing sequence of closed subgroup schemes
	\[ \Gcal[\Frob_{\Gcal/S}] \subset \Gcal[\Frob_{\Gcal/S}^{(2)}] \subset \cdots \subset \Gcal[\Frob_{\Gcal/S}^{(n)}] \subset \Gcal[\Frob_{\Gcal/S}^{(n+1)}]\subset \cdots \subset \Gcal. \]

	We say that $\Gcal$ has {\it finite Frobenius height} if there exists an integer $d\geq 0$ such that $\Gcal[\Frob_{\Gcal/S}^{(d)}] = \Gcal$. 
	When $\Gcal$ has finite Frobenius height, we denote by $\delta_p(\Gcal)$ the smallest such index $d\geq 0$.
	In other words, $\Gcal$ has Frobenius height $d$ if and only if $d$ is the smallest integer such that $\Frob_{\Gcal/S}^{(d)}:\Gcal\to\Gcal^{(p^d)}$ is trivial on $\Gcal$.
	\end{defi}
 
	\begin{rema}
	Other authors use the term `height' instead of our `Frobenius height', 
	we preferred our less standard terminology in order to avoid obvious confusion. 
	Moreover, if $\Gcal$ and $\Hcal$ are two subgroup schemes of a fixed group scheme over $S$, 
	and if both have finite Frobenius height, then one easily checks that
		\begin{equation}\label{eq:frobh.minmax}
		    \delta_p(\Gcal+\Hcal)=\max\{\delta_p(\Gcal), \delta_p(\Hcal)\}, \qquad
		\delta_p(\Gcal\cap\Hcal)\leq \min\{\delta_p(\Gcal), \delta_p(\Hcal)\}
		\end{equation}
  	Note that the only group scheme of Frobenius height $0$ is the trivial group scheme.
	\end{rema}

	\begin{prop}\label{prop:finite.frob.height}
	A finite group scheme over $F$ is connected if and only if it has finite Frobenius height.
	\end{prop}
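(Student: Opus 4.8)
The statement is a characterization of connectedness for a finite group scheme $G$ over the field $F$ of characteristic $p$: connected $\iff$ finite Frobenius height. The plan is to prove the two implications separately, exploiting the explicit local-algebra description of $G$ together with the behaviour of the relative Frobenius.

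First I would set up notation: write $G = \Spec R$ with $R$ a finite-dimensional $F$-algebra, which is automatically artinian, so $R$ decomposes as a finite product $R \simeq \prod_i R_i$ of local artinian $F$-algebras $R_i$ with maximal ideals $\mathfrak{m}_i$, the factors corresponding to the connected components of $G$. The key computational observation — essentially the one already made in Example~\ref{exampleFrobenius} — is that the relative Frobenius $\Frob_{G/F}^{(d)} : G \to G^{(p^d)}$ corresponds, on coordinate rings, to a map that sends a local coordinate $t$ to (roughly) $t^{p^d}$; more precisely, on each local factor $R_i$ the map $\Frob^{(d)}_{G/F}$ annihilates the kernel-of-Frobenius condition exactly when $\mathfrak{m}_i^{[p^d]} = 0$, i.e. when the $p^d$-th power of every element of $\mathfrak{m}_i$ vanishes. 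So the condition that $\Frob^{(d)}_{G/F}$ be trivial on $G$ translates into the purely ring-theoretic condition that for every $i$ and every $x \in \mathfrak{m}_i$ one has $x^{p^d} = 0$.

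For the implication \emph{connected $\Rightarrow$ finite Frobenius height}: if $G$ is connected then $R$ is local artinian with maximal ideal $\mathfrak{m}$, and $\mathfrak{m}$ is nilpotent, say $\mathfrak{m}^N = 0$. Pick $d$ with $p^d \geq N$; then $x^{p^d} \in \mathfrak{m}^{p^d} \subseteq \mathfrak{m}^N = 0$ for all $x \in \mathfrak{m}$, so $\Frob^{(d)}_{G/F}$ is trivial on $G$, i.e. $G[\Frob^{(d)}_{G/F}] = G$, giving finite Frobenius height. For the converse \emph{finite Frobenius height $\Rightarrow$ connected}: suppose $\Frob^{(d)}_{G/F}$ is trivial on $G$ for some $d$, and suppose for contradiction that $G$ is not connected, so $R \simeq R_1 \times \cdots \times R_r$ with $r \geq 2$. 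The idempotent $\epsilon = (1,0,\dots,0)$ satisfies $\epsilon^2 = \epsilon$, hence $\epsilon^{p^d} = \epsilon \neq 0$; but a nontrivial idempotent is fixed by the $p$-power map, so it cannot be killed by any iterate of Frobenius. More cleanly: the reduced group scheme $G_{\mathrm{red}}$, and in particular the étale part $G^{\mathrm{\acute et}}$ from Lemma~\ref{lemm:connectedetalesequence}, has its Frobenius an isomorphism (Frobenius on a reduced $\overline{F}$-scheme, or on an étale $F$-scheme, is an isomorphism), so if $G$ had nontrivial étale quotient then $\Frob^{(d)}_{G/F}$ could never be trivial on $G$; hence $G = G^\circ$ is connected. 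I would phrase the argument via the connected–étale sequence, since that is the cleanest and is already available in the excerpt.

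The main obstacle — really the only subtle point — is making precise the claim that the relative iterated Frobenius $\Frob^{(d)}_{G/F}$ is trivial on $G$ \emph{if and only if} every element of the augmentation ideal (equivalently, of each $\mathfrak{m}_i$) is killed by the $p^d$-th power. One direction is Example~\ref{exampleFrobenius}'s computation generalized to all local factors; the other requires noting that $\Frob^{(d)}_{G/F} = G \to G^{(p^d)}$ factors as a section of the structure map precisely when its comorphism $R^{(p^d)} \to R$ has image $F$, and tracing through the definition of $R^{(p^d)}$ as $R \otimes_{F,\Frob^d} F$ shows this comorphism sends a class $\overline{x}$ to $x^{p^d}$; triviality on $G$ then says exactly $x^{p^d} \in F$ for the relevant generators, which for augmentation-ideal elements means $x^{p^d} = 0$. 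Once this dictionary is in place, both implications are immediate from nilpotency of maximal ideals (one way) and the existence of a nontrivial idempotent, or the étale quotient, in the disconnected case (the other way).
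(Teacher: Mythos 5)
Your proof is correct, and it takes a genuinely different (more explicitly ring-theoretic) route for the implication ``connected $\Rightarrow$ finite Frobenius height''. The paper proceeds structurally: it applies the Frobenius morphism to the connected--\'etale sequence of Lemma~\ref{lemm:connectedetalesequence}, invokes surjectivity of Frobenius on the \'etale part (\cite[Corollaire 8.3.1]{SGA3}) to get ``finite Frobenius height $\Rightarrow$ connected'', and proves the converse by induction on the order of $G$, using the fact that the Frobenius of a nontrivial connected finite group scheme has nontrivial kernel. You instead write $G = \Spec R$, identify the triviality of $\Frob^{(d)}_{G/F}$ with the condition that the comorphism image lands in $F\cdot 1$, and then get the converse immediately from nilpotency of the maximal ideal of the artinian local ring $R$ (taking any $d$ with $p^d \geq N$ where $\mathfrak{m}^N = 0$). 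This is more elementary and also yields a concrete bound on the Frobenius height. For the other implication you offer both the idempotent argument (a nontrivial idempotent $\epsilon$ in the augmentation ideal satisfies $\epsilon^{p^d}=\epsilon\neq 0$) and the connected--\'etale version, the latter of which is the paper's argument.

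One small caution: in your dictionary, the parenthetical ``(equivalently, of each $\mathfrak{m}_i$)'' is not actually equivalent to the augmentation-ideal formulation in the disconnected case. For example, if $G = (\Z/p\Z)_F$ then $R\simeq F^{\times p}$, all $\mathfrak{m}_i$ are zero, and the condition on the $\mathfrak{m}_i$ is vacuously satisfied, yet $\Frob^{(d)}_{G/F}$ is an isomorphism, hence not trivial. The correct formulation is the one with the augmentation ideal $I$: triviality of $\Frob^{(d)}_{G/F}$ is equivalent to $m^{p^d}=0$ for every $m\in I$ (equivalently $x^{p^d}\in F\cdot 1$ for every $x\in R$). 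In the disconnected case $I$ contains the idempotents of the non-neutral factors, which is precisely what makes your idempotent argument go through; you should delete the misleading parenthetical but the argument itself is sound.
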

	\begin{proof}
	Let $G$ be a finite group scheme over $F$ (so that, in particular, $G$ is affine). 
    Applying the connected-\'etale sequence to $G$ (Lemma~\ref{lemm:connectedetalesequence}), 
    and using the functoriality of the Frobenius, we get a  commutative diagram  
	\[\xymatrix{
			0 \ar[r] & G^{\circ} \ar[r] \ar[d]_{\Frob_{G^{\circ}/F}} & G \ar[r] \ar[d]_{\Frob_{G/F}} & G^{\textrm{\'et}} \ar[d]^{\Frob_{G^{\textrm{\'et}}/F}} \ar[r] & 0 \\
			0 \ar[r] & (G^\circ)^{(p)} \ar[r] & G^{(p)} \ar[r] & (G^{\textrm{\'et}})^{(p)} \ar[r] & 0\,,}
	\]	
	where $G^{\circ}$ is a connected subgroup scheme of $G$, and  $G^{\textrm{\'et}}$  is an \'etale group scheme over $F$. 
	The Frobenius map is surjective on the `\'etale part' $G^{\text{\'et}}$  by \cite[Corollaire 8.3.1]{SGA3}.
	Iterating the argument, if $G^{\textrm{\'et}} \neq \{0\}$, no iterate of the Frobenius is trivial on the whole group scheme $G$. This proves that a group scheme $G$ of finite Frobenius height must have   trivial \'etale part $G^{\textrm{\'et}}$  and thus, must be connected. 

  	To prove the converse, recall that, if $G$ is connected and nontrivial, $\Frob_{G/K}$ has nontrivial kernel (see \cite[Proposition, p. 50]{CornellSilverman} for instance), then use induction on the order of $G$.
	\end{proof}	

The following lemma will be used in section~\ref{sec:proof of theorem} to perform induction on the  Frobenius height: 
	\begin{lemm}\label{lemm:frob.height.lowering}
	Let $d\geq 2$ be an integer. Let $G$ be a finite group scheme of Frobenius height $d$ over $F$.
	Then the quotient group scheme  $G/G[\Frob_{G/F}]$ has Frobenius height $d-1$.
	\end{lemm}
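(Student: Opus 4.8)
The plan is to relate the iterated Frobenius of the quotient $G' := G/G[\Frob_{G/F}]$ to that of $G$ via the functoriality of the relative Frobenius. First I would record the basic compatibility: for any finite flat $F$-group scheme $H$, there is a canonical identification $\Frob_{H/F}^{(d)} = \Frob_{H^{(p^{d-1})}/F} \circ \Frob_{H/F}^{(d-1)}$, and, dually, $\Frob_{H/F}^{(d)} = \Frob_{H^{(p)}/F}^{(d-1)}\circ\Frob_{H/F}$ — both just being different ways of bracketing the same composition of $d$ relative Frobenii. The second bracketing is the useful one here. Set $\pi : G \onto G'$ for the quotient map (faithfully flat, with kernel $G[\Frob_{G/F}]$), and note that $G^{(p)}$ is canonically the source of $\Frob_{G/F}$ landing in... wait — more precisely, $\Frob_{G/F}: G \to G^{(p)}$ has image giving an isogeny $G/G[\Frob_{G/F}] \xrightarrow{\sim} \operatorname{im}(\Frob_{G/F})$, and since $\Frob_{G/F}$ factors through $\pi$, we get a closed immersion $\iota : G' \into G^{(p)}$ with $\iota\circ\pi = \Frob_{G/F}$.

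The key step is then to chase the square. Since $\iota : G' \into G^{(p)}$ is a monomorphism of group schemes, $\Frob_{G'/F}^{(d-1)}$ is trivial on $G'$ if and only if $\iota^{(p^{d-1})}\circ\Frob_{G'/F}^{(d-1)}$ is trivial; by functoriality of iterated Frobenius this equals $\Frob_{G^{(p)}/F}^{(d-1)}\circ\iota$. Precomposing with the faithfully flat (hence epi) map $\pi$, triviality of $\Frob_{G'/F}^{(d-1)}$ is equivalent to triviality of $\Frob_{G^{(p)}/F}^{(d-1)}\circ\iota\circ\pi = \Frob_{G^{(p)}/F}^{(d-1)}\circ\Frob_{G/F} = \Frob_{G/F}^{(d)}$ on $G$. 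The latter holds precisely because $G$ has Frobenius height $d$, so $G'$ has Frobenius height $\leq d-1$. For the reverse inequality, suppose for contradiction that $\Frob_{G'/F}^{(d-2)}$ were already trivial on $G'$ (the case $d=2$ meaning $G'$ trivial, i.e. $G = G[\Frob_{G/F}]$, which would force $\delta_p(G)\leq 1<2$, a contradiction); running the same computation backwards, triviality of $\Frob_{G'/F}^{(d-2)}$ forces $\Frob_{G^{(p)}/F}^{(d-2)}\circ\iota\circ\pi = \Frob_{G/F}^{(d-1)}$ to vanish on $G$, contradicting $\delta_p(G) = d$. Hence $\delta_p(G') = d-1$.

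The main obstacle I anticipate is purely bookkeeping: being careful that the monomorphism $\iota$ and the epimorphism $\pi$ are used on the correct sides so that one can legitimately cancel them when testing triviality of a homomorphism of group schemes (a morphism $f$ into a scheme with a monomorphism $\iota$ out of it is zero iff $\iota\circ f$ is zero; a morphism $g$ out of $G$ is zero iff $g\circ\pi$ is zero when $\pi$ is faithfully flat and $g$ factors suitably — here one uses that all the relevant Frobenius composites do factor through $\pi$ in the appropriate stage). One should also double-check the edge case $d=2$ separately, where "Frobenius height $d-1$" means exactly that $G'$ is nontrivial with $\Frob_{G'/F}$ trivial, i.e. $G'$ is a nontrivial subgroup scheme of $G^{(p)}[\Frob]$; nontriviality of $G'$ follows since $\delta_p(G)=2>1$ means $G\neq G[\Frob_{G/F}]$, so $\pi$ has nontrivial target.
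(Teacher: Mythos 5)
Your proof is correct and follows essentially the same route as the paper: both identify $G/G[\Frob_{G/F}]$ with the image of $\Frob_{G/F}$ inside $G^{(p)}$ and transfer the hypothesis through the compatibility $\Frob_{G^{(p)}/F}^{(d-1)}\circ\Frob_{G/F}=\Frob_{G/F}^{(d)}$. You are merely more explicit than the paper about the mono/epi cancellations (testing triviality after post-composing with $\iota^{(p^{d-1})}$ and pre-composing with $\pi$) and about the lower bound $\delta_p(G')\geq d-1$, which the paper asserts without spelling out the short contradiction argument.
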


	\begin{proof}
	Let $G_1:=G[\Frob_{G/F}]$ be the kernel of $\Frob_{G/F} : G \to G^{(p)}$. 
	Then $G_1$ has Frobenius height $\leq 1$ (because the Frobenius morphism of $G_1$ is the restriction of $\Frob_{G/F}$), 
	and this group scheme fits into the exact sequence of group schemes over~$F$:
	\[0 \longrightarrow G_1 \longrightarrow G \longrightarrow G/G_1 \longrightarrow 0.\]
	The quotient $G/G_1$ is thus isomorphic to the image $\Frob_{G/F}(G)$, hence has Frobenius height $d-1$. 
	Indeed, by hypothesis, the iterated Frobenius $\Frob_{G/F}^{(d)}$ is trivial on $G$, so $\Frob_{G/F}^{(d-1)}$ is trivial on the image of $\Frob_{G/F}$. 
	\end{proof}

Proposition~\ref{prop:finite.frob.height} above (combined with Proposition~\ref{prop:basics.isog}) allows to introduce the following notion:
	\begin{defi}
	Let $\phi$ be an isogeny between abelian varieties over a field $F$ of characteristic $p>0$. 
	Let $\phi\ins$ denote the purely inseparable part of~$\phi$, as defined in Proposition~\ref{prop:isogeny.factor.sep.insep}. 
	We define the {\it Frobenius height $\delta_p(\phi)$ of $\phi$} to be the Frobenius height of the kernel of $\phi\ins$: \[\delta_p(\phi) := \delta_p(\ker\phi\ins).\]
	\end{defi}

	\begin{rema} 
    \label{rema:subadditivitydeltap}
	By definition, the Frobenius isogeny $\Frob_{A/F} : A\to A^{(p)}$ of an abelian variety has Frobenius height $1$.
	For any isogeny $\phi$ between abelian varieties of dimension $g$, the following divisibility relations  hold:
	\[p^{\delta_p(\phi)}\mid \deg(\phi\ins)\mid p^{g\, \delta_p(\phi)},\]
    and we have $0 \leq \delta_p(\phi) \leq \log_p (\deg\phi\ins) \leq \max\{\log_p(\deg\phi), g\, \delta_p(\phi)\}$.
    
     It follows from the definition that the Frobenius height is `subadditive' in the following sense: for any two isogenies $\phi:A \rightarrow B$ and $\psi : B \rightarrow C$, we have
      $\delta_p(\psi \circ \phi) \leq \delta_p(\psi) + \delta_p(\phi)$. 

    Note also that an isogeny  $\phi$ is biseparable if and only if $\delta_p(\phi) + \delta_p(\widehat{\phi}) = 0$.
	\end{rema}

The Frobenius height of a FV-isogeny between ordinary abelian varieties admits a simple interpretation:

	\begin{lemm}\label{lemm:frob.height.FV}
	Let $K$ be a function field of positive characteristic $p$.
	Let $\phi:A\to B$ be a FV-isogeny of type $(e,f)$ between ordinary abelian varieties over $K$.
        Then  $e = \delta_p(\phi)$ and $f =\delta_p(\widehat{\phi})$.
	\end{lemm}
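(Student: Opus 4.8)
The plan is to reduce the statement to the behaviour of the Frobenius height under composition, using the fact that over ordinary abelian varieties both Frobenius and Verschiebung isogenies are, respectively, purely inseparable of Frobenius height one and separable. First I would recall that since $A$ is ordinary, so is every abelian variety isogenous to it (the $p$-rank is an isogeny invariant, as noted after Definition~\ref{defi:ordinary}), hence every intermediate abelian variety appearing in a factorization of $\phi$ as a composition of $e$ Frobenius isogenies, $f$ Verschiebung isogenies, and biseparable isogenies is again ordinary. On an ordinary abelian variety $A'$, the Verschiebung $\Ver_{A'/K}:A'^{(p)}\to A'$ is separable (by the criterion in Definition~\ref{defi:ordinary}), hence biseparable — indeed its dual is $\Frob_{A'^\vee/K}$, and $A'^\vee$ is also ordinary so... wait, that gives a purely inseparable dual. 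One must instead argue that the \emph{dual} of $\Ver_{A'/K}$ is separable too: by Proposition~\ref{propFrobVer}(c) applied to $A'^\vee$, the separability of $\Ver_{A'^\vee/K}$ is equivalent to ordinarity of $A'^\vee$; since $\widehat{\Ver_{A'/K}}=\Ver_{A'^\vee/K}$ (duality of Frobenius and Verschiebung), the Verschiebung of an ordinary abelian variety is biseparable. So in the ordinary case a FV-isogeny of type $(e,f)$ is literally a composition of $e$ Frobenius isogenies and some biseparable isogenies (the $f$ Verschiebungs having been absorbed into the biseparable class).

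Next I would compute $\delta_p$ of such a composition. By Proposition~\ref{prop:isogeny.factor.sep.insep}, $\phi$ factors uniquely as $\phi=\phi\sep\circ\phi\ins$ with $\ker\phi\ins=(\ker\phi)^\circ$ and $\delta_p(\phi)=\delta_p(\ker\phi\ins)$. Since biseparable isogenies have connected (indeed trivial) inseparable part and Frobenius isogenies are purely inseparable of Frobenius height $1$, I expect to show by induction on the length of the factorization that the purely inseparable part of $\phi$ is a composition of exactly the $e$ Frobenius isogenies occurring, conjugated past the biseparable ones (which can be commuted to the outside by forming quotients of Néron models / pushing out kernels — here one uses that a biseparable isogeny composed with a Frobenius can be rewritten, up to biseparable isogeny, as a Frobenius composed with a biseparable one, because the image of an étale-with-étale-dual subgroup under Frobenius is again such). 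Concretely, the kernel of $\phi\ins$ is killed by $\Frob^{(e)}$ but not by $\Frob^{(e-1)}$, because at each Frobenius step the Frobenius height of the connected kernel can grow by at most one (subadditivity, Remark~\ref{rema:subadditivitydeltap}) and, by the ordinarity-forced equality $p^{\delta_p}=\deg(\phi\ins)$ advertised in the introduction (Lemma to be invoked / proved alongside), the inseparable degree $p^e$ forces $\delta_p(\phi)\geq e$ as well, hence $\delta_p(\phi)=e$. Applying the same reasoning to $\widehat\phi$, which is a FV-isogeny of type $(f,e)$ between the ordinary abelian varieties $B^\vee,A^\vee$, yields $\delta_p(\widehat\phi)=f$.

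The main obstacle I anticipate is the bookkeeping needed to commute biseparable factors past Frobenius factors in the factorization of $\phi$ so as to isolate a "pure Frobenius" inseparable part of the right length, and to make precise the claim $p^{\delta_p(\phi)}=\deg(\phi\ins)=p^e$ in the ordinary case — this is where ordinarity is genuinely used, since for non-ordinary $A$ the inseparable part of a Frobenius factor can have Frobenius height $1$ but inseparable degree $p^{\dim A}$, breaking the equality. I would handle this by an explicit induction: writing $\phi=\psi_n\circ\cdots\circ\psi_1$ with each $\psi_i$ either Frobenius or biseparable, I track the pair $(\delta_p, \deg_{\mathrm{ins}})$ of the composite $\psi_i\circ\cdots\circ\psi_1$, showing it goes $(\delta,p^\delta)\mapsto(\delta,p^\delta)$ across a biseparable step and $(\delta,p^\delta)\mapsto(\delta+1,p^{\delta+1})$ across a Frobenius step — the second transition using that on an ordinary abelian variety $A'$ the kernel $A'[\Frob_{A'/K}]$ is connected of order $p^{\dim A'}$ with the "multiplicative-type plus connected" structure making its Frobenius height exactly $1$, and that extending the inseparable kernel by this $A'[\Frob]$ adds exactly one to the Frobenius height. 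Once this invariant is established, reading it off at $i=n$ gives $\delta_p(\phi)=e$, and dualizing gives $\delta_p(\widehat\phi)=f$, completing the proof.
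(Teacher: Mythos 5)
Your reduction step contains an error: the Verschiebung of an ordinary abelian variety is separable but \emph{never} biseparable. You attempt to show biseparability via $\widehat{\Ver_{A'/K}}=\Ver_{A'^\vee/K}$, but the correct identity (from the paper's definition and biduality) is $\widehat{\Ver_{A'/K}}=\Frob_{A'^\vee/K}$, which is always purely inseparable of degree $p^{\dim A'}$. Your first instinct (``that gives a purely inseparable dual'') was right, and the ``correction'' that follows is what introduces the mistake, so you cannot absorb the Verschiebung factors into the biseparable class. The paper avoids this issue entirely: it keeps the Verschiebung factors and uses only their \emph{separability} (guaranteed by ordinarity), which is all that is needed both for the subadditivity bound $\delta_p(\phi)\le e$ and for computing inseparability degrees.

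The more serious gap is the lower bound $\delta_p(\phi)\ge e$, which you never actually establish. Your first attempt invokes $p^{\delta_p(\phi)}=\deg(\phi\ins)=p^e$ ``as a lemma to be invoked / proved alongside'', which is circular; moreover the middle term is wrong, since $\deg(\phi\ins)=p^{e\dim A}$, not $p^e$, once $\dim A>1$. Your induction tracks the pair $(\delta,p^\delta)$ where the correct invariant is $(\delta,p^{\delta\dim A})$, and the key transition (``extending the inseparable kernel by $A'[\Frob_{A'/K}]$ adds exactly one to the Frobenius height'') is asserted rather than proved. The paper's argument is short and you should compare: multiplicativity of inseparability degrees along the factorization gives $\deg\ins(\phi)=\prod_i\deg\ins(\phi_i)=p^{e\dim A}$ (each Verschiebung factor contributes $1$ by ordinarity), while $\ker(\phi\ins)$ having Frobenius height $\delta_p(\phi)$ means it lies inside $A[\Frob_{A/K}^{(\delta_p(\phi))}]$, of order $p^{\delta_p(\phi)\dim A}$; the divisibility $p^{e\dim A}\mid p^{\delta_p(\phi)\dim A}$ then yields $e\le\delta_p(\phi)$. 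Your commutation idea is in principle salvageable (functoriality of the relative Frobenius gives $\Frob_{A'/K}\circ\psi=\psi^{(p)}\circ\Frob_{A/K}$, and iterating produces $\phi=\psi'\circ\Frob_{A/K}^{(e)}$ with $\psi'$ separable, whence $\phi\ins=\Frob_{A/K}^{(e)}$), but as written you neither complete that route nor the induction.
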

	\begin{proof} 
        Observe that a Frobenius isogeny   has Frobenius height $1$ (by definition), and that a separable isogeny has Frobenius height $0$  (since its kernel is étale).
        By the ordinarity assumption, all Verschiebung isogenies appearing in the decomposition of the FV-isogeny $\phi$ are separable. The subadditivity  of $\delta_p$ (Remark \ref{rema:subadditivitydeltap}) directly implies  that $\delta_p(\phi)\leq e$. 
       To prove the converse inequality we remark that, by definition, 
       $\phi$ factors as a composition
        \[A=A_0 \xrightarrow[]{\phi_1} A_1 \xrightarrow[]{\phi_2} A_2 \xrightarrow[]{\phi_3} \dots  \xrightarrow[]{\phi_{r-1}}A_{r-1}\xrightarrow[]{\phi_r} A_r=B.\]
        The multiplicativity of the inseparability degrees and  the fact that the Verschiebung isogenies among the $\phi_i$'s are separable implies that 
        \[
        \deg\ins(\phi) = \prod_{i=1}^r \deg\ins(\phi_i) =(\deg\Frob_{A/K})^e =  p^{e \dim (A)}.
        \]
        On the other hand, $\deg(\phi\ins)$ is  the order of the connected component $G^\circ$ of $G=\ker\phi$ (see Proposition~\ref{prop:isogeny.factor.sep.insep}).
        By construction of the Frobenius height, $G^\circ$ is contained in $A[\Frob_{A/K}^{(\delta_p(\phi))}]$. 
        Hence its order $|G^\circ|=\deg(\phi\ins)$ divides  $p^{\delta_p(\phi)\,\dim A} = \big|A[\Frob_{A/K}^{(\delta_p(\phi))}]\big|$. This proves that $e\leq\delta_p(\phi)$ and, thus, the first equality of the lemma. 

        The second equality follows upon noting that the dual $\widehat{\phi}:\ \widehat{B} \rightarrow \widehat{A}$ of $\phi$ is a FV-isogeny of type $(f,e)$.         
	\end{proof}

The above lemma yields a direct relation between the results of the present paper and those in \cite{GriPaz}:
	\begin{lemm}\label{lemm:frob.height.dim1}
	Let $K$ be a function field of positive characteristic $p$.
	Let $\phi:E_1\to E_2$ be an isogeny between two non-isotrivial elliptic curves over $K$. 
    Then $p^{\delta_p(\phi)} = \degins(\phi)$ and $p^{\delta_p(\widehat{\phi})} = \degins(\widehat{\phi})$.
	\end{lemm}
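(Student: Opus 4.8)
The plan is to deduce both equalities from the dimension‑$1$ case of the divisibility relations already recorded in Remark~\ref{rema:subadditivitydeltap}. First I would observe that it is enough to establish the single identity $\degins(\psi)=p^{\delta_p(\psi)}$ for an arbitrary isogeny $\psi$ between elliptic curves over $K$: taking $\psi=\phi$ gives the first assertion, and taking $\psi=\widehat{\phi}$ gives the second, since $\widehat{\phi}\colon E_2^\vee\to E_1^\vee$ is again an isogeny between elliptic curves (an elliptic curve being canonically isomorphic to its dual).

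Next I would recall from Proposition~\ref{prop:isogeny.factor.sep.insep} that $\psi$ factors as $\psi\sep\circ\psi\ins$, that $\degins(\psi)=\deg(\psi\ins)$, and that $\delta_p(\psi)=\delta_p(\ker\psi\ins)$ by definition. The source of $\psi\ins$ is an elliptic curve, so it has dimension $g=1$, and the divisibility relations of Remark~\ref{rema:subadditivitydeltap} specialize to
\[ p^{\delta_p(\psi)}\ \big|\ \deg(\psi\ins)\ \big|\ p^{\,g\,\delta_p(\psi)}=p^{\delta_p(\psi)}. \]
Hence $\deg(\psi\ins)=p^{\delta_p(\psi)}$, i.e. $\degins(\psi)=p^{\delta_p(\psi)}$, as wanted. (Note that non‑isotriviality plays no role in this argument; it is imposed only because it is the standing hypothesis of \cite{GriPaz}.)

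A second, equivalent route — the one that makes the link with \cite{GriPaz} most transparent — is to invoke \cite[Proposition~4.7]{GriPaz} to present $\phi$ as an FV‑isogeny of some type $(e,f)$, to note that a non‑isotrivial elliptic curve over $K$ is automatically ordinary (a supersingular one would have $j$‑invariant algebraic over $\F_p$, hence constant, so that $\phi$'s source and target would be isotrivial), and then to apply Lemma~\ref{lemm:frob.height.FV}, which gives $\delta_p(\phi)=e$ and $\delta_p(\widehat{\phi})=f$; the computation carried out in the proof of that lemma then yields $\degins(\phi)=p^{e\dim E_1}=p^{e}$, and likewise for $\widehat{\phi}$.

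I do not expect any real obstacle. The only mildly delicate ingredient, the lower divisibility $p^{\delta_p(\psi)}\mid\deg(\psi\ins)$, is already built into Remark~\ref{rema:subadditivitydeltap}: it reflects the fact that the chain of iterated‑Frobenius kernels inside a connected group scheme of Frobenius height $d$ is strictly increasing, so the order of that group scheme is divisible by $p^{d}$. What makes the $1$‑dimensional statement hold on the nose, rather than merely up to divisibility, is simply that for $g=1$ the two bounds $p^{\delta_p(\psi)}$ and $p^{\,g\,\delta_p(\psi)}$ coincide — in contrast with the higher‑dimensional situation, where only divisibility persists.
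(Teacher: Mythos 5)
Your proposal is correct, and your second route is precisely the paper's proof: invoke \cite[Proposition~4.7]{GriPaz} to present $\phi$ as an FV-isogeny of type $(e,f)$ with $p^e=\degins(\phi)$ and $p^f=\degins(\widehat{\phi})$, observe that non-isotrivial elliptic curves in characteristic $p$ are ordinary, and then apply Lemma~\ref{lemm:frob.height.FV} to read off $e=\delta_p(\phi)$ and $f=\delta_p(\widehat{\phi})$.

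Your first route is a genuinely different and more economical argument. The paper's proof relies on the specific $1$-dimensional structure theorem of \cite{GriPaz} (the $\mathrm{Ver}^{(f)}\circ\psi\circ\mathrm{Fr}^{(e)}$ decomposition) together with the FV-machinery of Lemma~\ref{lemm:frob.height.FV}, both of which are used elsewhere in the paper and whose link to the elliptic-curve results of \cite{GriPaz} the authors are explicitly trying to make visible. Your first route instead notices that the general divisibility bounds of Remark~\ref{rema:subadditivitydeltap}, namely $p^{\delta_p(\psi)}\mid\degins(\psi)\mid p^{g\,\delta_p(\psi)}$, already pin the answer down on the nose when $g=1$, for \emph{any} isogeny between elliptic curves. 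This bypasses \cite{GriPaz} and Lemma~\ref{lemm:frob.height.FV} entirely, and you are right that the non-isotriviality hypothesis (and the ordinarity it implies) is not needed for this argument — it is present because the lemma is designed to bridge to the \cite{GriPaz} statements, whose standing hypothesis it is, and because the paper's chosen proof invokes \cite[Proposition~4.7]{GriPaz}. The only thing your first route leans on that is not proved in the paper's text is the lower divisibility $p^{\delta_p}\mid\degins$ in Remark~\ref{rema:subadditivitydeltap}, but your one-line justification (the chain of iterated-Frobenius kernels inside a nontrivial connected group scheme strictly increases at each step, since otherwise the Frobenius of a nontrivial connected quotient would be injective) is the correct argument, so there is no gap.
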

	\begin{proof} 
	Proposition 4.7 in \cite{GriPaz} shows that the isogeny $\phi$ factors as
	$\phi = \Ver_{E_2/K}^{(f)}\circ \psi\circ\Frob_{E_1/K}^{(e)}$, where $\psi$ is biseparable, and the integers $e$ and $f$ satisfy $p^e = \deg\ins(\phi)$ and $p^f=\deg\ins(\widehat{\phi})$.
	In other words, $\phi$ is a FV-isogeny of type $(e,f)$ with $e=\log_p\deg\ins(\phi)$ and $f=\log_p\deg\ins(\widehat{\phi})$.
      The non-isotriviality of $E_1$ and $E_2$ implies that both of these elliptic curves are ordinary. 
    Lemma \ref{lemm:frob.height.FV} allows to conclude. 
	\end{proof}

\subsection[Hodge bundles and group schemes of Frobenius height 1]{Hodge bundles and group schemes of Frobenius height $1$}
The goal of this paragraph is to relate the Hodge bundles of a group scheme $\Gcal$ and of its `Frobenius twist' $\Gcal^{(p)}$ in  case  $\Gcal$ has Frobenius height $1$. 
	
	\begin{lemm}\label{lemexactsequenceLie}
	The functor $\Gcal \mapsto \Lie(\Gcal/S)$ is left-exact on group schemes over $S$. 
	In other words, if we have a left exact sequence  $0\longrightarrow\Gcal_1\longrightarrow\Gcal_2\longrightarrow\Gcal_3 $ of group schemes over $S$, then the induced sequence of $\O_S$-modules
	\[0\longrightarrow\Lie(\Gcal_1/S)\longrightarrow\Lie(\Gcal_2/S)\longrightarrow\Lie(\Gcal_3/S)\]
		is exact.
    \end{lemm}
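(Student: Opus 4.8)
The plan is to argue directly from the functor-of-points definition of $\Lie$, reducing the statement to an elementary diagram chase obtained by base change along the ``dual numbers''. Recall that, for an $S$-scheme $T$, one has $\underline{\operatorname{Lie}}(\Gcal/S)(T)=\ker\big(\Gcal(T[\varepsilon])\to\Gcal(T)\big)$, the kernel of the group homomorphism induced by the section $T\to T[\varepsilon]$ corresponding to $\varepsilon\mapsto 0$; and, since $\operatorname{\mathbb{L}ie}(\Gcal/S)$ represents this functor, the $\O_S$-module $\Lie(\Gcal/S)$ has $\underline{\operatorname{Lie}}(\Gcal/S)(U)$ as its module of sections over any open $U\subseteq S$.

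First I would observe that a complex of $\O_S$-modules $0\to\mathcal F_1\to\mathcal F_2\to\mathcal F_3$ is left exact if and only if, for every open $U\subseteq S$, the induced sequence $0\to\mathcal F_1(U)\to\mathcal F_2(U)\to\mathcal F_3(U)$ is left exact, since a kernel of $\O_S$-modules is computed sectionwise. It therefore suffices to fix an open $U\subseteq S$ and to show that $0\to\Lie(\Gcal_1/S)(U)\to\Lie(\Gcal_2/S)(U)\to\Lie(\Gcal_3/S)(U)$ is left exact. Now $U[\varepsilon]$ is again an $S$-scheme (via $U\to S$), so the hypothesis that $0\to\Gcal_1\to\Gcal_2\to\Gcal_3$ is a left exact sequence of group schemes over $S$ provides two exact sequences of groups
\[0\to\Gcal_1(U[\varepsilon])\to\Gcal_2(U[\varepsilon])\to\Gcal_3(U[\varepsilon]),\qquad 0\to\Gcal_1(U)\to\Gcal_2(U)\to\Gcal_3(U),\]
together with a commutative ladder between them induced by $U\to U[\varepsilon]$, whose vertical arrows $\pi_i\colon\Gcal_i(U[\varepsilon])\to\Gcal_i(U)$ are group homomorphisms with kernel $\Lie(\Gcal_i/S)(U)$ by definition.

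It then remains to run the routine diagram chase showing that the kernels along a morphism of left exact sequences of groups again form a left exact sequence. Concretely: the map $\Lie(\Gcal_1/S)(U)\to\Lie(\Gcal_2/S)(U)$ is injective, being the restriction of the injective map $\Gcal_1(U[\varepsilon])\to\Gcal_2(U[\varepsilon])$; and if $x\in\Lie(\Gcal_2/S)(U)\subseteq\Gcal_2(U[\varepsilon])$ has trivial image in $\Gcal_3(U[\varepsilon])$, then $x=i(y)$ for a unique $y\in\Gcal_1(U[\varepsilon])$, and from $i(\pi_1(y))=\pi_2(i(y))=\pi_2(x)=e$ together with the injectivity of $\Gcal_1(U)\to\Gcal_2(U)$ we deduce $\pi_1(y)=e$, \ie{} $y\in\Lie(\Gcal_1/S)(U)$; the reverse inclusion $\operatorname{im}\subseteq\ker$ is immediate from the first exact sequence above. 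This gives left exactness over $U$, and hence the lemma.

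I do not anticipate any genuine difficulty here: the argument is entirely formal. The two points deserving a word of care are that only \emph{left}-exactness is being asserted -- so that moving between the defining functors, the modules of sections, and the associated $\O_S$-modules is harmless, $\Gamma(U,-)$ being left exact -- and the observation that $U[\varepsilon]$ really is an $S$-scheme, so that the hypothesis applies to it. The group-scheme structure intervenes only to give meaning to the kernels $\Lie(\Gcal_i/S)(U)$ and to ensure that the transition maps $\pi_i$ are homomorphisms.
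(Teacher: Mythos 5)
Your proof is correct, and it follows essentially the same route as the paper's: reduce to a diagram chase via the functor-of-points description of $\Lie$ using the dual numbers $T[\varepsilon]$. The one genuine difference is \emph{where} you run the chase. The paper checks the sequence of $\O_S$-modules fiberwise, at each residue field $k_s$, whereas you check it sectionwise, over every open $U\subseteq S$, after observing that $\Lie(\Gcal_i/S)(U)=\ker\big(\Gcal_i(U[\varepsilon])\to\Gcal_i(U)\big)$ and that kernels of $\O_S$-modules are computed on sections. Your version is slightly more robust: deducing left-exactness of an $\O_S$-module sequence from left-exactness of all its fibers requires some hypothesis on the modules involved (in the paper's applications the $\Lie$ sheaves are locally free, so this is harmless, but the lemma as stated is for arbitrary group schemes over $S$); running the chase directly on $\Gamma(U,-)$, which \emph{is} left-exact, sidesteps the point entirely. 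The one small thing you do implicitly, and should perhaps flag, is the identification of the module of sections of the vector fibration $\mathbb{L}\mathrm{ie}(\Gcal/S)$ over $U$ with $\underline{\Lie}(\Gcal/S)(U)$, i.e.\ with the set of $U$-points of the representing scheme — this is exactly what representability gives you, but it is worth saying once.
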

	\begin{proof}
	By functoriality, the maps involved in $0\rightarrow\Lie(\Gcal_1/S)\rightarrow\Lie(\Gcal_2/S)\rightarrow\Lie(\Gcal_3/S)$ are well-defined. 
	To prove that this sequence of $\Ocal_S$-modules is exact, it suffices to show that it is fiberwise exact.
	For any point $s\in S$ with residue field $k_s$, consider the following commutative diagram 
	\begin{center}
	\begin{tikzcd}
				& 0 \arrow[d, dotted] & 0 \arrow[d] & 0 \arrow[d] &   \\
				0 \arrow[r] & \Lie(\Gcal_1/S)_{k_s} \arrow[r] \arrow[d, dotted] & {\Gcal_1(k_s[\varepsilon])} \arrow[r] \arrow[d] & \Gcal_1(k_s) \arrow[r] \arrow[d] & 0 \\
				0 \arrow[r] & \Lie(\Gcal_2/S)_{k_s} \arrow[r] \arrow[d, dotted] & {\Gcal_2(k_s[\varepsilon])} \arrow[r] \arrow[d] & \Gcal_2(k_s) \arrow[r] \arrow[d] & 0 \\
				0 \arrow[r] & \Lie(\Gcal_3/S)_{k_s} \arrow[r]                   & {\Gcal_3(k_s[\varepsilon])} \arrow[r]           & \Gcal_3(k_s) \arrow[r]           & 0\,.
	\end{tikzcd}
	\end{center}
	Here $k_s[\varepsilon]$ denotes the $k_s$-algebra of dual numbers over $k_s$. 
	The horizontal sequences are exact, by definition of Lie groups, and the two right-most vertical sequences are exact by definition of exact sequence of group schemes.
	Simple diagram-chasing shows that the left-most vertical arrows form an exact sequence of $k_s$-vector spaces.
		
	(An almost identical result is stated in \cite[II, \S 4, 1.5]{DemazureGabriel}, in terms of Lie group functors.)
	\end{proof}
	
    \begin{prop}\label{prop:lie.ker.frob}
	For any group scheme $\Gcal$ over $S$ such that $\Omega(\Gcal/S)$ is locally free of finite type, 
		there are canonical isomorphisms of $\Ocal_S$-modules 
	\[ \Omega(\Gcal/S) \simeq  \Omega(\Gcal[\Frob_{\Gcal/S}]/S), \quad \text{ and }\quad \Lie(\Gcal/S)  \simeq \Lie(\Gcal[\Frob_{\Gcal/S}]/S).\]	 	
    \end{prop}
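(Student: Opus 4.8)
The plan is to exhibit both isomorphisms at once by using the relative Frobenius and the connected-\'etale type structure of the $p$-torsion, together with the explicit local description of the Frobenius from Example~\ref{exampleFrobenius}. First I would reduce to a Zariski-local statement on $S$, since all the sheaves in play are quasi-coherent and the asserted maps will be canonical (hence glue); so I may assume $S = \Spec R$ is affine, and even local, and that $\Gcal$ is given by a Hopf algebra over $R$ which near the neutral section looks like $R[X_1,\dots,X_m]/(f_1,\dots,f_n)$ with $f_i$ vanishing at the origin. The key point is that the relative Frobenius $\Frob_{\Gcal/S}:\Gcal\to\Gcal^{(p)}$ induces the \emph{zero} map on cotangent spaces at the neutral section: this is exactly the computation carried out in Example~\ref{exampleFrobenius}, where $f(X_1^p,\dots,X_m^p)$ is seen to vanish to order $\geq p\geq 2$ at the origin. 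Dually, $\Frob_{\Gcal/S}$ is zero on $\Lie$.

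Next I would build the comparison maps. The closed immersion $i:\Gcal[\Frob_{\Gcal/S}]\hookrightarrow\Gcal$ induces, by pullback along the neutral section, a surjection $\Omega(\Gcal/S)\twoheadrightarrow\Omega(\Gcal[\Frob_{\Gcal/S}]/S)$ (closed immersions give surjections on differentials) and, by functoriality of $\Lie$ (Lemma~\ref{lemexactsequenceLie}), an injection $\Lie(\Gcal[\Frob_{\Gcal/S}]/S)\hookrightarrow\Lie(\Gcal/S)$. I claim both are isomorphisms. The cleanest route is to compute the conormal sheaf of the immersion $i$ and show it vanishes. Consider the factorization of the absolute Frobenius (equivalently: the defining cartesian square) which realizes $\Gcal[\Frob_{\Gcal/S}]$ as the fiber product $\Gcal\times_{\Gcal^{(p)}}S$, where $S\to\Gcal^{(p)}$ is the neutral section. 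Since $\Frob_{\Gcal/S}\circ e_\Gcal = e_{\Gcal^{(p)}}$, pulling back the conormal exact sequence for the neutral section $e_{\Gcal^{(p)}}:S\to\Gcal^{(p)}$ along $\Frob_{\Gcal/S}$ shows that the conormal sheaf $e_\Gcal^\ast\mathcal{I}/\mathcal{I}^2$ of $\Gcal[\Frob_{\Gcal/S}]$ in $\Gcal$ is $\Frob_{\Gcal/S}^\ast$ of the conormal sheaf of $e_{\Gcal^{(p)}}$, i.e.\ a Frobenius pullback of $\Omega(\Gcal^{(p)}/S)$ --- but the relevant map into $\Omega(\Gcal/S)$ is the cotangent map of $\Frob_{\Gcal/S}$ at the origin, which we just saw is zero. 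Hence the conormal sheaf of $i$ at $e$ is zero, and the conormal exact sequence $e_\Gcal^\ast(\mathcal{I}/\mathcal{I}^2)\to\Omega(\Gcal/S)\to\Omega(\Gcal[\Frob_{\Gcal/S}]/S)\to 0$ collapses to an isomorphism $\Omega(\Gcal/S)\simeq\Omega(\Gcal[\Frob_{\Gcal/S}]/S)$. Alternatively, and perhaps more transparently, one can simply read this off the explicit local model: with coordinates chosen as in the proof of Proposition~\ref{propFrobVer}, $\Gcal[\Frob_{\Gcal/S}]$ is cut out inside (the completion of) $\Gcal$ at $e$ by the equations $X_i^p=0$, which all lie in $\mathfrak{m}^2$, so the inclusion induces an isomorphism on $\mathfrak{m}/\mathfrak{m}^2$, hence on $\Omega(-/S)$ at the neutral section.

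For the $\Lie$ statement I would either dualize, invoking Proposition~\ref{prop:duality.hodge.lie} (valid since $\Omega(\Gcal/S)$ is assumed locally free of finite rank, and then $\Omega(\Gcal[\Frob_{\Gcal/S}]/S)$ is too, being isomorphic to it), or argue directly: for a point $s\in S$, $\Lie(\Gcal[\Frob_{\Gcal/S}]/S)_{k_s}$ is the kernel of $\Gcal[\Frob](k_s[\varepsilon])\to\Gcal[\Frob](k_s)$, and a $k_s[\varepsilon]$-point of $\Gcal$ lands in $\Gcal[\Frob_{\Gcal/S}]$ iff its image under $\Frob_{\Gcal/S}$ is trivial; but $\Frob$ kills $\varepsilon$ (as $\varepsilon^p=0$ with $p\geq 2$), so \emph{every} tangent vector of $\Gcal$ at $e$ already lies in $\Lie(\Gcal[\Frob_{\Gcal/S}]/S)$, giving $\Lie(\Gcal/S)\simeq\Lie(\Gcal[\Frob_{\Gcal/S}]/S)$ fiberwise, hence as $\Ocal_S$-modules. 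The main obstacle is simply bookkeeping the canonicity and the base-scheme generality: making sure the isomorphism is induced by the natural inclusion (so that it is functorial and glues over $S$), and that the local computation --- originally done over a field in Example~\ref{exampleFrobenius} --- goes through over the locally noetherian one-dimensional base $S$, for which the cleanest fix is the conormal-sheaf argument above rather than a coordinate computation.
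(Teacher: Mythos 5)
Your argument is essentially the same as the paper's: both reduce the statement to the observation that the cotangent map of the relative Frobenius at the neutral section is zero (your ``explicit local model'' is precisely Example~\ref{exampleFrobenius}), and then deduce the first isomorphism from the resulting degeneration of a right-exact cotangent sequence --- the paper phrases it as right-exactness of the Hodge-bundle functor applied to the left exact sequence $0 \to \Gcal[\Frob_{\Gcal/S}] \to \Gcal \to \Gcal^{(p)}$, you phrase it as the conormal exact sequence of the closed immersion $i$ together with the fiber-product description of the kernel; these are the same computation. One small slip to fix: after correctly observing that the map $e_\Gcal^\ast(\mathcal{I}/\mathcal{I}^2)\to\Omega(\Gcal/S)$ is zero, you conclude ``hence the conormal sheaf of $i$ at $e$ is zero.'' That is not what follows (and is in fact false: $e_\Gcal^\ast(\mathcal{I}/\mathcal{I}^2)$ is a quotient of $\Omega(\Gcal^{(p)}/S)$, which is generally nonzero --- in the explicit local model the generators $X_i^p$ are nonzero modulo $\mathfrak m\mathcal I + \mathcal I^2$). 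What you should say is that the \emph{map} out of the conormal sheaf vanishes, which is all you need for the conormal sequence to collapse to an isomorphism $\Omega(\Gcal/S)\simeq\Omega(\Gcal[\Frob_{\Gcal/S}]/S)$.
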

	\begin{proof}
	We start with the left exact sequence of group schemes of finite type over $S$: 
	\[ 0 \longrightarrow  \Gcal[\Frob_{\Gcal/S}] \longrightarrow \Gcal \xrightarrow[]{\Frob_{\Gcal/S}}     \Gcal^{(p)}.\]
	By \cite[Proposition 6.1.24]{Liu02},  the contravariant Hodge bundle functor from pointed $S$-schemes to $\Ocal_S$-modules is right-exact. Hence an exact sequence of $\O_S$-modules:
	\[\xymatrix{\Omega(\Gcal^{(p)}/S) \ar[r]& \Omega(\Gcal/S) \ar[r] & \Omega(\Gcal[\Frob_{\Gcal/S}]/S) \ar[r] & 0}.\]
	 The left-most arrow in this sequence is the cotangent map of the Frobenius $\Frob_{\Gcal/S}$ along the zero section of $\Gcal$, 
	it is thus trivial (see Example~\ref{exampleFrobenius}). This yields the desired isomorphism between Hodge bundles.
	
	The isomorphism between Lie groups is obtained by the exact same argument, using the previous result. 
	\end{proof}

When one restricts to group schemes of Frobenius height $\leq 1$, Lemma~\ref{lemexactsequenceLie} can be strengthened:

	\begin{prop} \label{prop:exactness.lie.functor}
	The functor $\Gcal \mapsto \Lie(\Gcal/S)$ is exact on finite flat group schemes over $S$ of Frobenius height $\leq 1$. In other words,  assume that we have an exact sequence  (see Definition~\ref{defi:exact.sequence.groupschemes}) $0 \xrightarrow[]{\ \ } \Gcal_1 \xrightarrow[]{\ f\ } \Gcal_2 \xrightarrow[]{\ g\ } \Gcal_3 \xrightarrow[]{\ \ } 0$, where $\Gcal_1, \Gcal_2,\Gcal_3$ are finite flat group schemes over $S$ of Frobenius height $\leq 1$. 
    Then, the induced complex of $\Ocal_S$-modules 
	\begin{equation}\label{eq:complex.Lie}
    0 \xrightarrow[]{\ \ } \Lie(\Gcal_1/S) \xrightarrow[]{\ \Lie(f)\ } \Lie(\Gcal_2/S) \xrightarrow[]{\ \Lie(g)\ } \Lie(\Gcal_3/S) \xrightarrow[]{\ \ } 0
	\end{equation}    
	is   exact.
	\end{prop}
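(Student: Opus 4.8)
The plan is to establish exactness of the complex~\eqref{eq:complex.Lie} by combining the left-exactness already proved in Lemma~\ref{lemexactsequenceLie} with a dimension count, where the crucial point is that for finite flat group schemes of Frobenius height~$\leq 1$ the rank of the Hodge bundle (equivalently, of the Lie algebra) coincides with the logarithm base~$p$ of the order. Concretely, I would first recall that, for a finite flat group scheme $\Gcal$ over $S$ of Frobenius height $\leq 1$, the kernel $\Gcal[\Frob_{\Gcal/S}]$ equals $\Gcal$ itself, so Proposition~\ref{prop:lie.ker.frob} is vacuous here; instead the relevant input is that such $\Gcal$ are exactly the kernels of Frobenius-type maps, and that for them $\Omega(\Gcal/S)$ is locally free with $\operatorname{rk}\Omega(\Gcal/S) = \log_p(\operatorname{ord}\Gcal)$ fibrewise. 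This is the analogue, for height-$1$ group schemes, of the computation in Example~\ref{exampleFrobenius} and Proposition~\ref{propFrobVer}: locally the coordinate ring looks like $\mathcal{O}_S[X_1,\dots,X_r]/(X_1^p,\dots,X_r^p)$ (after a suitable change of parameters), so both the order is $p^r$ and the cotangent space at the origin has dimension $r$.

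Next I would argue fibrewise over each point $s \in S$: it suffices to check that~\eqref{eq:complex.Lie} is exact after base change to the residue field $k_s$, since all the modules involved are finite locally free (being Lie algebras of finite flat group schemes whose Hodge bundles are locally free) and a complex of finite locally free $\mathcal{O}_S$-modules which is fibrewise exact is exact. Over the field $k_s$, Lemma~\ref{lemexactsequenceLie} already gives left-exactness, i.e. $0 \to \Lie(\Gcal_{1}) \to \Lie(\Gcal_{2}) \to \Lie(\Gcal_{3})$ is exact; it remains only to show surjectivity of $\Lie(g)$, and for that a dimension count is enough. From the exact sequence $0 \to \Gcal_1 \to \Gcal_2 \to \Gcal_3 \to 0$ of finite flat group schemes we get multiplicativity of orders, $\operatorname{ord}\Gcal_2 = \operatorname{ord}\Gcal_1 \cdot \operatorname{ord}\Gcal_3$, hence, taking $\log_p$ and using the rank formula above together with the observation (from~\eqref{eq:frobh.minmax} or directly) that a subgroup scheme and a quotient of a height-$\leq 1$ group scheme again have height $\leq 1$, we obtain $\dim_{k_s}\Lie(\Gcal_2) = \dim_{k_s}\Lie(\Gcal_1) + \dim_{k_s}\Lie(\Gcal_3)$. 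Combined with left-exactness this forces $\Lie(g)$ to be surjective, so the complex is exact over $k_s$, and therefore exact over $S$.

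The step I expect to be the main obstacle is the rank formula $\operatorname{rk}\Omega(\Gcal/S) = \log_p(\operatorname{ord}\Gcal)$ for finite flat group schemes of Frobenius height $\leq 1$, together with the stability of the height-$\leq 1$ condition under passing to subgroup schemes and quotients in an exact sequence. The subgroup case is easy (the Frobenius of $\Gcal_1$ is the restriction of that of $\Gcal_2$), and the quotient case follows from the surjectivity of $\Frob$ on the relevant quotient as in the proof of Lemma~\ref{lemm:frob.height.lowering}. For the rank formula, I would either invoke the standard structure theory of height-$1$ (equivalently, Frobenius-kernel) group schemes — via the correspondence with restricted Lie algebras, or via Dieudonné theory — or else reduce to the local computation sketched above after base changing to a field and completing, noting that a finite flat group scheme killed by Frobenius is, Zariski-locally on $S$, of the form $\operatorname{Spec}\mathcal{O}_S[X_1,\dots,X_r]/(X_1^p,\dots,X_r^p)$ with the group law having trivial linear part contributions to higher-order terms, which makes both the order and the tangent dimension equal to $r$. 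Everything else is formal diagram-chasing and the already-established right-exactness of the Hodge bundle functor.
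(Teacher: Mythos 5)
Your proposal takes essentially the same approach as the paper: reduce to a fibrewise check over a closed point $s$, use the left-exactness from Lemma~\ref{lemexactsequenceLie}, and obtain surjectivity of $\Lie(g)$ by a dimension count based on the fact that a finite flat group scheme of Frobenius height $\leq 1$ over a field has $\dim\Lie = \log_p(\text{order})$, which the paper likewise deduces from the structure theorem for height-$1$ group schemes (Demazure--Gabriel / SGA 3 Exposé VII$_A$). Your worry about the stability of the height-$\leq 1$ condition under sub- and quotient-schemes is unnecessary here, since all three $\Gcal_i$ are \emph{assumed} to have height $\leq 1$ in the statement.
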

	\begin{proof} 
	Lemma~\ref{lemexactsequenceLie} already shows that $\Gcal \mapsto \Lie(\Gcal/S)$ is left-exact, so we only need to prove that $\Lie(g)$ is surjective.

	The three $\Ocal_S$-modules $\Lie(\Gcal_i/S)$ are locally free of finite rank, so it is enough to prove that the complex is exact at every closed point $s \in S$. Let us fix such a point $s \in S$, with residue field $k_s$.
	The  sequence of finite group schemes of height $\leq 1$ over $k_s$
	\[0 \xrightarrow[]{\ \ }  (\Gcal_1)_s \xrightarrow[]{\ f\ } (\Gcal_2)_s \xrightarrow[]{\ g\ }(\Gcal_3)_s \xrightarrow[]{\ \ } 0,\]
	is exact. Applying the $\Lie$ functor gives rise to the complex of $k_s$-algebras
	\begin{equation}\label{eq:complexLieAlg}
		0 \xrightarrow[]{\ \ } \Lie((\Gcal_1)_s/k_s) \xrightarrow[]{\ \Lie(f)\ } \Lie((\Gcal_2)_s/k_s) \xrightarrow[]{\ \Lie(g)\ } \Lie((\Gcal_3)_s/k_s) \xrightarrow[]{\ \ } 0.
	\end{equation}
    The latter is exactly the pullback over $s$ of the complex of $\Ocal_S$-modules in \eqref{eq:complex.Lie}.  
    An immediate consequence of the structure theorem for group schemes of height $\leq 1$ (see \cite[Th\'eor\`eme 4.2, p. 334]{DemazureGabriel}) is the fact that,
    for any finite flat group scheme $G$ of Frobenius height $\leq 1$ over a field $k$, the order of $G$ is equal to $p^n$ where $n = \dim \Lie(G/k)$.

    Write  $p^{n_1}$, $p^{n_2}$, and $p^{n_3}$ for the orders of $\Gcal_1$, $\Gcal_2$, and $\Gcal_3$, respectively (recall that these group schemes are flat over $S$). 
    The dimensions of their Lie algebras over $k_s$ are thus $n_1$, $n_2$, and $n_3$, respectively.  
    By exactness of the group scheme sequence, we have $n_2 = n_1 + n_3$. 
    Therefore the complex of Lie algebras in \eqref{eq:complexLieAlg}, which is already exact on the left,  must also be exact on the right by comparison of dimensions.
    This concludes the proof.
	\end{proof}
	
	\begin{prop}\label{prop:Hodgebundleofptwist}
	Let $\Gcal$ be a group scheme  over $S$ with structural morphism $\pi: \Gcal \to S$ and neutral section $e:S\to\Gcal$.
	There is a canonical isomorphism of Hodge bundles over $S$: 
	\[\Omega(\Gcal^{(p)}/S)\simeq \Frob_{S}^\ast\, \Omega(\Gcal/S).\]    
	\end{prop}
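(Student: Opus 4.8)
The plan is to unwind the definition of the Frobenius twist $\Gcal^{(p)}$ as a base change and then chase canonical isomorphisms. Recall from the cartesian diagram defining the relative Frobenius (the one preceding Example~\ref{exampleFrobenius}) that $\Gcal^{(p)}$ is the fiber product $\Gcal\times_{S,\,\pi,\,\Frob_S}S$: I will write $q:\Gcal^{(p)}\to\Gcal$ for the first projection, so that the square with edges $q$, $\pi$, $\Frob_S$ and the structural morphism $\pi^{(p)}:\Gcal^{(p)}\to S$ (the second projection) is cartesian. In particular $\pi^{(p)}$ is the base change of $\pi$ along $\Frob_S:S\to S$.

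First I would record two compatibilities. Since the formation of the sheaf of relative differential $1$-forms commutes with arbitrary base change, the above cartesian square yields a canonical isomorphism $\Omega^1_{\Gcal^{(p)}/S}\simeq q^\ast\,\Omega^1_{\Gcal/S}$ of $\Ocal_{\Gcal^{(p)}}$-modules (no hypothesis on $\Omega(\Gcal/S)$ is needed). Second, by functoriality of the twist construction the group structure on $\Gcal^{(p)}$ is the base change of that on $\Gcal$; consequently its neutral section $e^{(p)}:S\to\Gcal^{(p)}$ is the morphism into the fiber product whose two components are $e\circ\Frob_S:S\to\Gcal$ and $\id_S:S\to S$. (One checks this is well defined since $\pi\circ e\circ\Frob_S=\Frob_S=\Frob_S\circ\id_S$, and that $\pi^{(p)}\circ e^{(p)}=\id_S$.) In particular $q\circ e^{(p)}=e\circ\Frob_S$.

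Then the proposition follows by pulling the isomorphism $\Omega^1_{\Gcal^{(p)}/S}\simeq q^\ast\Omega^1_{\Gcal/S}$ back along $e^{(p)}$ and using $q\circ e^{(p)}=e\circ\Frob_S$:
\[
\Omega(\Gcal^{(p)}/S)=(e^{(p)})^\ast\,\Omega^1_{\Gcal^{(p)}/S}\simeq(e^{(p)})^\ast q^\ast\,\Omega^1_{\Gcal/S}=(e\circ\Frob_S)^\ast\,\Omega^1_{\Gcal/S}=\Frob_S^\ast\big(e^\ast\,\Omega^1_{\Gcal/S}\big)=\Frob_S^\ast\,\Omega(\Gcal/S),
\]
every isomorphism in the chain being canonical.

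I do not expect a serious obstacle: the statement is formal once one recognizes $\Gcal^{(p)}$ as $\Frob_S^\ast\Gcal$. The only point that needs genuine care is bookkeeping — identifying which of the two copies of $S$ in $\Gcal\times_{S,\Frob_S}S$ is the base, verifying that the square is truly cartesian, and checking that the neutral section of the twist is the base change of $e$ — so that the chain of canonical isomorphisms above is legitimate and the resulting isomorphism is indeed independent of choices.
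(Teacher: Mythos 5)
Your proof is correct and follows essentially the same route as the paper's: both identify $\Gcal^{(p)}$ as the fiber product of $\pi$ along $\Frob_S$, observe that the neutral section $e^{(p)}$ projects to $e\circ\Frob_S$ (the paper phrases this as $e^{(p)}=\Frob_{\Gcal/S}\circ e$ and then notes $f_p\circ e^{(p)}=e\circ\Frob_S$, which agrees with your component-wise description), use base-change compatibility of $\Omega^1$, and then pull back along $e^{(p)}$. The only cosmetic difference is that the paper cites \cite[Proposition 6.1.24]{Liu02} for the base-change isomorphism where you invoke it as standard.
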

	\begin{proof}
	By construction of $\Frob_{\Gcal/S}$, the $S$-group scheme $\Gcal^{(p)}$ is the fiber product of $\pi : \Gcal\to S$ with $\Frob_S:S\to S$.
	Hence its structural morphism $\pi^{(p)}: \Gcal^{(p)}\to S$ has neutral section $e^{(p)} = \Frob_{\Gcal/S}\circ e$. 
	Here is a diagram of the situation:
	\begin{center}\begin{tikzcd}
				\mathcal{G} \arrow[rddd, "\pi"] \arrow[rrd, "\mathrm{Fr}_{\mathcal{G}/S}"] \arrow[rrrrd, "\mathrm{Fr}_{\mathcal{G}}", bend left=20] &                &  &   &  \\
				& & \mathcal{G}^{(p)} \arrow[rr, "f_p"] \arrow[ldd, "\pi^{(p)}", pos=0.2] &  & \mathcal{G} \arrow[ldd, "\pi"'] \\
				& &  & &                                 \\
				& S \arrow[rr, "\mathrm{Fr}_{S}"] \arrow[luuu, "e", dashed, bend left] \arrow[ruu, "e^{(p)}", dashed, bend left, pos=0.8] &  & S \arrow[ruu, "e"', dashed, bend right] &    
	\end{tikzcd}\end{center}
	Notice that $f_p\circ e^{(p)} = e\circ \Frob_S$, and \cite[Proposition 6.1.24]{Liu02} yields a canonical isomorphism $\Omega^1(\Gcal^{(p)}/S) \simeq f_p^\ast\,\Omega^1(\Gcal/S)$.
	By definition of the Hodge bundles of $\Gcal^{(p)}/S$ and $\Gcal/S$, we now have
	\[\Omega(\Gcal^{(p)}/S) 
			= (e^{(p)})^\ast\,\Omega^1(\Gcal^{(p)}/S) 
			\simeq (f_p\circ e^{(p)})^\ast\,\Omega^1(\Gcal/S) 
			= (e \circ \Frob_S)^\ast\,\Omega^1(\Gcal/S) 
			= \Frob_S^\ast\big(e^\ast\,\Omega^1(\Gcal/S)\big) 
			= \Frob_S^\ast\,\Omega(\Gcal/S).\]
	\end{proof}
	
\section{Hodge bundle and heights}\label{sec:Hodge and heights}
\emph{We now restrict our attention to the case where the base function field $K$ has positive characteristic $p$, and $S$ is a scheme over $\F_p$. Moreover, all  the isogenies we consider in this section have degree a power of $p$.}

Our main result in this section is Corollary~\ref{VerFrob} which describes the variation of differential height through a FV-isogeny.
	
\subsection{Lie algebras of quotient group schemes}
We state and prove a variant of Theorem 2.9 of \cite{Yua}:
	\begin{prop}\label{prop:thmprincipalYuan}
    Let $\Acal$ be a smooth flat commutative group scheme of finite type over $S$, 
    $\Gcal$ be a finite flat subgroup scheme of $\Acal$ of Frobenius height $1$,
    and  $\Bcal' := \Acal/\Gcal$ denote the \FPPF{} quotient group scheme over $S$. 
    We then have an exact sequence
    \[0 \longrightarrow \Lie(\Gcal/S) \longrightarrow\Lie(\Acal/S) \longrightarrow\Lie(\Bcal'/S)\longrightarrow\Frob_S^*\Lie(\Gcal/S) \longrightarrow 0\, .\]
	\end{prop}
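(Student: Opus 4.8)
The plan is to exhibit the asserted four-term sequence as the splice of two short exact sequences of finite flat group schemes of Frobenius height $\leq 1$, to which the exact functor $\Lie(-/S)$ of Proposition~\ref{prop:exactness.lie.functor} applies. First I would introduce the Frobenius kernels $\Hcal:=\Acal[\Frob_{\Acal/S}]$ and $\mathcal{R}:=\Bcal'[\Frob_{\Bcal'/S}]$. Since $\Acal$ and $\Bcal'$ are smooth of finite type over $S$ (for $\Bcal'$, by Proposition~\ref{prop:lissitequotientsfppf}), their relative Frobenius morphisms are finite locally free and faithfully flat (Lemma~\ref{lemm:faithfulflatnessFrobeniussmoothgroups}), so $\Hcal$ and $\mathcal{R}$ are finite flat over $S$ of Frobenius height $\leq 1$; and since $\Omega(\Acal/S)$ and $\Omega(\Bcal'/S)$ are locally free of finite type, Proposition~\ref{prop:lie.ker.frob} provides canonical isomorphisms $\Lie(\Acal/S)\simeq\Lie(\Hcal/S)$ and $\Lie(\Bcal'/S)\simeq\Lie(\mathcal{R}/S)$, induced respectively by the inclusions $\Hcal\hookrightarrow\Acal$ and $\mathcal{R}\hookrightarrow\Bcal'$. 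The hypothesis that $\Gcal$ has Frobenius height $\leq 1$ means precisely that $\Frob_{\Gcal/S}$ is trivial, which (by functoriality of the relative Frobenius and since $\Gcal\hookrightarrow\Acal$ is a monomorphism) amounts to the inclusion $\Gcal\subseteq\Hcal$.

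The key step is to identify $\mathcal{R}$. Writing $q:\Acal\to\Bcal'=\Acal/\Gcal$ for the quotient map, the naturality relation $\Frob_{\Bcal'/S}\circ q=q^{(p)}\circ\Frob_{\Acal/S}$ together with $\ker q^{(p)}=\Gcal^{(p)}$ shows that $\mathcal{R}=\ker\Frob_{\Bcal'/S}=q\big(\Frob_{\Acal/S}^{-1}(\Gcal^{(p)})\big)$. I would therefore set $\widetilde{\Gcal}:=\Frob_{\Acal/S}^{-1}(\Gcal^{(p)})=\Acal\times_{\Acal^{(p)}}\Gcal^{(p)}$: it is a finite flat subgroup scheme of $\Acal$ (being the base change of the finite faithfully flat morphism $\Frob_{\Acal/S}$ along $\Gcal^{(p)}\to\Acal^{(p)}$), it contains $\Hcal=\Frob_{\Acal/S}^{-1}(0)$, and $\Frob_{\Acal/S}$ induces an isomorphism $\widetilde{\Gcal}/\Hcal\simeq\Gcal^{(p)}$. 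Since $\Gcal\subseteq\Hcal\subseteq\widetilde{\Gcal}$, this gives $\mathcal{R}=\widetilde{\Gcal}/\Gcal$. Setting $\Qcal:=\Hcal/\Gcal$, I obtain two exact sequences of finite flat group schemes over $S$ of Frobenius height $\leq 1$:
\[0\longrightarrow\Gcal\longrightarrow\Hcal\longrightarrow\Qcal\longrightarrow0, \qquad 0\longrightarrow\Qcal\longrightarrow\mathcal{R}\longrightarrow\Gcal^{(p)}\longrightarrow0,\]
the second one because $\mathcal{R}/\Qcal=(\widetilde{\Gcal}/\Gcal)\big/(\Hcal/\Gcal)=\widetilde{\Gcal}/\Hcal\simeq\Gcal^{(p)}$.

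Finally, I would apply $\Lie(-/S)$, which is exact on finite flat group schemes of Frobenius height $\leq 1$ (Proposition~\ref{prop:exactness.lie.functor}), to both sequences, use that the Lie functor commutes with the base change $\Frob_S:S\to S$ so that $\Lie(\Gcal^{(p)}/S)\simeq\Frob_S^\ast\Lie(\Gcal/S)$ (as in Proposition~\ref{prop:Hodgebundleofptwist}), and substitute $\Lie(\Hcal/S)\simeq\Lie(\Acal/S)$ and $\Lie(\mathcal{R}/S)\simeq\Lie(\Bcal'/S)$. This yields short exact sequences $0\to\Lie(\Gcal/S)\to\Lie(\Acal/S)\to\Lie(\Qcal/S)\to0$ and $0\to\Lie(\Qcal/S)\to\Lie(\Bcal'/S)\to\Frob_S^\ast\Lie(\Gcal/S)\to0$, which splice along $\Lie(\Qcal/S)$ into the desired four-term exact sequence; identifying the middle arrow with the map induced by $\Acal\to\Bcal'$ is a routine compatibility check, using that the isomorphisms of Proposition~\ref{prop:lie.ker.frob} are induced by inclusions and that $\Hcal\to\Qcal$ is the restriction of $q$. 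I expect the main obstacle to be the second paragraph: correctly pinning down $\ker\Frob_{\Bcal'/S}$ as $\widetilde{\Gcal}/\Gcal$, and verifying the flatness, finiteness and Frobenius-height bounds for $\widetilde{\Gcal}$, $\Qcal$ and $\mathcal{R}$, so that Proposition~\ref{prop:exactness.lie.functor} genuinely applies to both short exact sequences.
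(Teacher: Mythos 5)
Your argument is correct, and it arrives at the same four-term exact sequence of finite flat group schemes that the paper obtains, but by a different mechanism. The paper's proof runs the snake lemma on the commutative diagram formed by $0\to\Gcal\to\Acal\to\Bcal'\to 0$ and its Frobenius twist (viewed in the abelian category of fppf group sheaves), which directly produces $0\to\Gcal\to\Acal[\Frob_{\Acal/S}]\to\Bcal'[\Frob_{\Bcal'/S}]\to\Gcal^{(p)}\to 0$ once the end terms are simplified using the Frobenius-height-$1$ hypothesis on $\Gcal$ and the surjectivity of $\Frob_{\Acal/S}$ (from smoothness of $\Acal$). You instead reconstruct this sequence by hand: you identify $\Bcal'[\Frob_{\Bcal'/S}]$ as $\widetilde{\Gcal}/\Gcal$ with $\widetilde{\Gcal}=\Frob_{\Acal/S}^{-1}(\Gcal^{(p)})$, and then splice the two resulting short exact sequences through $\Qcal=\Acal[\Frob_{\Acal/S}]/\Gcal$. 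What the snake lemma buys the paper is brevity and the automatic identification of the connecting map $\Bcal'[\Frob_{\Bcal'/S}]\to\Gcal^{(p)}$; what your construction buys is that the category-theoretic input (representability of all terms of the snake, working in fppf sheaves) is replaced by concrete scheme-theoretic statements. Both then apply Proposition~\ref{prop:lie.ker.frob}, Proposition~\ref{prop:exactness.lie.functor} and Proposition~\ref{prop:Hodgebundleofptwist} identically. One point worth making explicit in your write-up: to apply Proposition~\ref{prop:exactness.lie.functor} you need your two sequences to be exact in the sense of Definition~\ref{defi:exact.sequence.groupschemes}, i.e.\ the quotient morphisms $\Hcal\to\Qcal$ and $\mathcal{R}\to\Gcal^{(p)}$ must be faithfully flat. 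This is true (quotients of finite locally free group schemes by finite locally free subgroup schemes are finite locally free, and $\widetilde{\Gcal}\to\Gcal^{(p)}$ is a base change of the faithfully flat $\Frob_{\Acal/S}$), but it is not covered by Proposition~\ref{prop:lissitequotientsfppf}, which only asserts faithful flatness for smooth ambient groups; you should cite or argue it separately.
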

	\begin{proof}
    We essentially rewrite the proof of \cite{Yua} here, in particular because its assumptions are slightly different from ours, but the majority of his arguments transpose smoothly to our setting. 

    First, by construction of $\Bcal'$ and by definition of Frobenius morphisms, the  commutative diagram in the (abelian) category of \FPPF{} group schemes over $S$
    \[\xymatrix{
	0 \ar[r] & \Gcal \ar[r] \ar[d]_{\Frob_{\Gcal/S}} & \Acal \ar[d]^{\Frob_{\Acal/S}} \ar[r] &\Bcal' \ar[d]^{\Frob_{\Bcal'/S}} \ar[r] & 0 \\
	0 \ar[r] & \Gcal^{(p)} \ar[r] & \Acal^{(p)} \ar[r] &(\Bcal')^{(p)} \ar[r] & 0,}\]
    has exact rows.
    Applying the snake lemma yields a long exact sequence 
    \[ 0 \longrightarrow\Gcal[\Frob_{\Gcal/S}] \longrightarrow \Acal[\Frob_{\Acal/S}] \longrightarrow\Bcal'[\Frob_{\Bcal'/S}] \longrightarrow \operatorname{coker}(\Frob_{\Gcal/S})\longrightarrow \operatorname{coker}(\Frob_{\Acal/S}) \longrightarrow \operatorname{coker}(\Frob_{\Bcal'/S}) \longrightarrow \cdots \,.\]
    Since $\Gcal$ has Frobenius height 1, we have $\Gcal[\Frob_{\Gcal/S}] = \Gcal$, and the cokernel of $\Frob_{\Gcal/S}$ is $\Gcal^{(p)}$ itself, while the cokernel of $\Frob_{\Acal/S}$ is  trivial because $\Acal$ is smooth (see Lemma~\ref{lemm:faithfulflatnessFrobeniussmoothgroups}).
    The above long exact sequence thus simplifies to the exact sequence:
    \[ 0 \longrightarrow\Gcal \longrightarrow \Acal[\Frob_{\Acal/S}] \longrightarrow \Bcal'[\Frob_{\Bcal'/S}]\longrightarrow \Gcal^{(p)} \longrightarrow 0\, .\]
    The four terms in this sequence are all finite flat group schemes of Frobenius height $\leq 1$ : exactness of the Lie functor on those group schemes (Proposition~\ref{prop:exactness.lie.functor}) implies that the  sequence 
    \[ 0 \longrightarrow \Lie(\Gcal/S)  \longrightarrow \Lie(\Acal[\Frob_{\Acal/S}]/S)  \longrightarrow\Lie(\Bcal'[\Frob_{\Bcal'/S}]/S)  \longrightarrow \Lie(\Gcal^{(p)}/S) \longrightarrow 0 \]
    is exact. 
    Finally, $\Lie(\Acal[\Frob_{\Acal/S}]/S)$ and $\Lie(\Bcal'[\Frob_{\Bcal'/S}]/S)$ canonically identify to $\Lie(\Acal/S)$ and $\Lie(\Bcal'/S)$, respectively  (see Proposition~\ref{prop:lie.ker.frob}).
	Recalling from Proposition~\ref{prop:Hodgebundleofptwist} that $\Lie(\Gcal^{(p)}/S)\simeq\Frob_S^*\Lie(\Gcal/S)$  concludes the proof.
	\end{proof}

	\begin{rema}
	At first sight, one could hope to partly extend Proposition~\ref{prop:thmprincipalYuan} to arbitrary finite flat subgroup schemes $\Gcal$ of $\Acal$ by writing $\Gcal[\Frob_{\Gcal/S}]$ instead of $\Gcal$ in the exact sequence, and possibly adding some errors terms (or not requiring exactness on the right). 
	However, as the following example shows, $\Gcal[\Frob_{\Gcal/S}]$ is not necessarily flat (even if $\Gcal$ is). 

	Let $K=\F_3(t)$, and consider the Igusa curve $E : y^2 = x^3+x^2-t^{-3}$ over $K$.
	One checks that this elliptic curve has $j$-invariant $t^3$, and that the point $P=(-t^{-1}, t^{-1})$ has exact order $3$ on $E$.
	Let $\pi:\Ecal\to\P^1$ denote the N\'eron model of~$E$.
	The fiber above $t=0$ of $\pi$ is a  supersingular elliptic curve over $\F_3$ (with $j$-invariant $0$). 
	Hence the image of $P$ in this fiber must be the neutral element.
	Let $\Gcal$ denote the Zariski closure in $\Ecal$ of the subgroup of $E$ generated by $P$ ($\Gcal$ is then a finite flat subgroup of $\Ecal$). 
	The generic fiber of $\Gcal$ is isomorphic to $(\Z/3\Z)_K$ (which is \'etale), and its fiber at $t=0$ is isomorphic to $\mmu_3$ (which is connected).
	The previous computation shows that the subgroup scheme $\Gcal[\Frob_{\Gcal/\P^1}]$ of $\Gcal$ is trivial at the generic fiber, but has order $3$ at $t=0$. 
	In particular, $\Gcal[\Frob_{\Gcal/S}]$  is not flat over $\P^1$!
	\end{rema}
 
Proposition~\ref{prop:thmprincipalYuan} applies to our height computation as follows.
	\begin{prop}\label{prop:Yuan}
    Assume that $S = C$ is a smooth projective curve over a perfect field $k$ of characteristic $p>0$.
    Let $\Acal$ be a smooth commutative group scheme of finite type over $C$, $\Gcal$ be a finite flat subgroup scheme of $\Acal[\Frob_{\Acal/S}]$ of Frobenius height $\leq 1$, and $\Bcal' := \Acal/\Gcal$ be the (\FPPF{}) quotient group scheme. Then, we have 
	\begin{equation*}\label{eq:yuan}
    \deg \Omega({\Bcal'/C}) = \deg \Omega({\Acal/C}) - (p-1) \deg \Lie(\Gcal/C).
	\end{equation*}
	\end{prop}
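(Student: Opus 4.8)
The plan is to feed the four–term exact sequence of Proposition~\ref{prop:thmprincipalYuan} into the additivity of the degree on a curve, and then translate the resulting identity between Lie algebras into one between Hodge bundles via the duality of Proposition~\ref{prop:duality.hodge.lie}. Since $\Gcal$ is contained in $\Acal[\Frob_{\Acal/S}]$, it automatically has Frobenius height $\leq 1$; the case of height $0$ is harmless (then $\Gcal$ is trivial, $\Bcal' = \Acal$, and the asserted equality reads $\deg\Omega(\Acal/C) = \deg\Omega(\Acal/C) - 0$), so I may assume $\Gcal$ has Frobenius height exactly $1$ and apply Proposition~\ref{prop:thmprincipalYuan} (with $S = C$) to obtain the exact sequence of $\O_C$-modules
\[0 \longrightarrow \Lie(\Gcal/C) \longrightarrow \Lie(\Acal/C) \longrightarrow \Lie(\Bcal'/C) \longrightarrow \Frob_C^\ast\Lie(\Gcal/C) \longrightarrow 0.\]
Because $\Acal$ is smooth of finite type over $C$ and, by Proposition~\ref{prop:lissitequotientsfppf}, so is $\Bcal' = \Acal/\Gcal$, the sheaves $\Lie(\Acal/C)$ and $\Lie(\Bcal'/C)$ are locally free of finite rank. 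The leftmost arrow then exhibits $\Lie(\Gcal/C)$ as a subsheaf of a locally free sheaf, hence as a torsion-free coherent sheaf on the regular curve $C$, hence as a locally free sheaf of finite rank; consequently its pullback $\Frob_C^\ast\Lie(\Gcal/C)$ is locally free as well.

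First I would split the four-term sequence into the two short exact sequences $0 \to \Lie(\Gcal/C) \to \Lie(\Acal/C) \to \Qcal \to 0$ and $0 \to \Qcal \to \Lie(\Bcal'/C) \to \Frob_C^\ast\Lie(\Gcal/C) \to 0$, where $\Qcal$ is the image of the middle map; as a torsion-free subsheaf of the locally free sheaf $\Lie(\Bcal'/C)$ on the regular curve $C$, it too is locally free. Applying Lemma~\ref{lemm:suiteexactefibresvect} to each short exact sequence and subtracting the two resulting identities, the contribution of $\Qcal$ cancels and one is left with
\[\deg\Lie(\Bcal'/C) - \deg\Lie(\Acal/C) = \deg\big(\Frob_C^\ast\Lie(\Gcal/C)\big) - \deg\Lie(\Gcal/C).\]
Next I would compute the degree of the Frobenius pullback: the absolute Frobenius of a curve over a perfect field multiplies degrees by $p$. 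Indeed, $\Frob_C$ factors as $C \xrightarrow{\Frob_{C/k}} C^{(p)} \xrightarrow{\ \pi\ } C$, where $\Frob_{C/k}$ is the relative Frobenius, a finite flat $k$-morphism of degree $p$ (as $C$ is a curve), and $\pi$ is the base-change morphism, which is an isomorphism of schemes because $k$ is perfect (and which, for the same reason, preserves degrees of sheaves on the underlying curve). A finite flat morphism of degree $p$ between smooth projective curves multiplies degrees of line bundles by $p$, hence degrees of locally free sheaves of finite rank by $p$ (pass to top exterior powers). Applying this with $\Ecal = \Lie(\Gcal/C)$ gives $\deg\Frob_C^\ast\Lie(\Gcal/C) = p\,\deg\Lie(\Gcal/C)$, and therefore
\[\deg\Lie(\Bcal'/C) - \deg\Lie(\Acal/C) = (p-1)\,\deg\Lie(\Gcal/C).\]

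Finally I would pass to Hodge bundles. Since $\Acal$ and $\Bcal'$ are smooth over $C$, the sheaves $\Omega(\Acal/C)$ and $\Omega(\Bcal'/C)$ are locally free, so Proposition~\ref{prop:duality.hodge.lie} gives $\deg\Omega(\Acal/C) = -\deg\Lie(\Acal/C)$ and $\deg\Omega(\Bcal'/C) = -\deg\Lie(\Bcal'/C)$. Substituting into the last displayed identity and rearranging yields
\[\deg\Omega(\Bcal'/C) = \deg\Omega(\Acal/C) - (p-1)\,\deg\Lie(\Gcal/C),\]
which is the claim. Granting Proposition~\ref{prop:thmprincipalYuan}, the only genuinely delicate point is the behaviour of the degree under pullback along the absolute Frobenius — this is exactly where the perfectness of $k$ and the one-dimensionality of $C$ are used — together with the routine but necessary check that all the sheaves in play are locally free, so that Lemma~\ref{lemm:suiteexactefibresvect} is applicable.
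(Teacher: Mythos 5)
Your proof is correct and follows essentially the same route as the paper: feed the four-term exact sequence of Proposition~\ref{prop:thmprincipalYuan} into additivity of degrees on the curve, compute the Frobenius pullback degree, and dualize via Proposition~\ref{prop:duality.hodge.lie}. You are a bit more scrupulous than the text in two places — you check local freeness of $\Lie(\Gcal/C)$ and the splicing sheaf before invoking Lemma~\ref{lemm:suiteexactefibresvect}, and you actually prove the identity $\deg\Frob_C^*\Lcal = p\,\deg\Lcal$ (by factoring the absolute Frobenius through the relative one and using perfectness of $k$), whereas the paper folds this into its citation of Proposition~\ref{prop:Hodgebundleofptwist} — but these are refinements, not a different argument.
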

	\begin{proof}
	Proposition~\ref{prop:lissitequotientsfppf} ensures that $\Bcal'$ is also smooth over $C$. 
	Hodge bundles being dual to the corresponding Lie algebras (Proposition~\ref{prop:duality.hodge.lie}),  it is  enough to prove that
	$\deg \Lie(\Bcal'/C) = \deg \Lie(\Acal/C) + (p-1) \deg \Lie(\Gcal/C)$.

	Proposition~\ref{prop:thmprincipalYuan}, together with additivity of the degree in exact sequences of line bundles (Lemma~\ref{lemexactsequenceLie}),  yields
	\[ \deg \Lie(\Gcal/C) - \deg \Lie(\Acal/C) + \deg \Lie(\Bcal'/C) - \deg\left(\Frob_S^* \Lie(\Gcal/C)\right) 
	= 0. \]
	Proposition~\ref{prop:Hodgebundleofptwist} asserts that $\deg\left(\Frob_C^*\Lie(\Gcal/C)\right) = p\, \deg \Lie(\Gcal/C)$. 
	Combining the last two equalities gives the conclusion.
 	\end{proof}

While crucial to our argument, the above result does not conclude the proof of Theorem~\ref{itheo:hdiff} for two main reasons.
First, the degree of $\Lie(\Gcal/C)$ which appears in \eqref{eq:yuan} is difficult to control (its sign is already not obvious), even if one adds constraints on $\Gcal$.
Secondly, equality \eqref{eq:yuan} is not exactly what we are looking for: in Theorem~\ref{itheo:hdiff}, one starts with an isogeny $\phi:A \rightarrow B$ between abelian varieties over $K$, and one wishes to compare their differential height \ie{}, compare degrees of line bundles coming from the Néron models of $A$ and $B$. However, the Néron model of $B$ is not necessarily the quotient group scheme $\Acal/\Gcal$ as above. 
To overcome these difficulties, we make additional hypotheses: in the remainder of this section we focus on FV-isogenies (and prove \itemref{item:hdiff.b} in Theorem~\ref{itheo:hdiff}), in the next section we add an ordinarity condition (in order to prove  \itemref{item:hdiff.c} in Theorem~\ref{itheo:hdiff})

\subsection{N\'eron models and quotient by subgroups}
Let $\Gcal$ be a smooth group scheme over $S$.
Recall from \cite[Exposé VI$_B$, Théorème 3.10]{SGA3} that the {\it neutral component} $\Gcal^\circ$ of $\Gcal$ is the open smooth subgroup scheme such that, for every $s \in S$, $(\Gcal^\circ)_s$ is the neutral component of $\Gcal_s$.

 	\begin{prop}\label{prop:isomneutralcomponents}
	Let $A$ be a semi-stable abelian variety over $K$, and $G$ be a finite subgroup scheme of $A$. 
	Let $\phi : A\to A/G =: B$ denote the corresponding quotient isogeny.
	Let $\Acal\to C$ be the N\'eron model of $A$, and $\Bcal\to C$ be the N\'eron model of the quotient abelian variety $B$.
	Let $\Gcal$ denote the Zariski closure of $G\subset A$ in $\Acal$, and $\Bcal'$ denote the \FPPF{} quotient of $\Acal$ by $\Gcal$. 
	
	The identity map $\id_B$ induces a morphism of group schemes $\Psi : \Bcal' \rightarrow \Bcal$, 
	which restricts to an isomorphism  between the neutral components $\Psi^\circ : (\Bcal')^\circ \stackrel{\sim}{\longrightarrow} (\Bcal)^\circ$. 
	\end{prop}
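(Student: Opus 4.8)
The plan is to use the N\'eron mapping property to produce $\Psi$, and then to check on fibers that it restricts to an isomorphism on neutral components. First I would observe that the \FPPF{} quotient $\Bcal' = \Acal/\Gcal$ is a smooth, separated group scheme of finite type over $C$ (by Proposition~\ref{prop:lissitequotientsfppf}), and that its generic fiber is canonically $A/G = B$. Since $\Bcal$ is the N\'eron model of $B$, the identity $\id_B : (\Bcal')_K \to B = \Bcal_K$ extends uniquely to a morphism of $C$-group schemes $\Psi : \Bcal' \to \Bcal$, provided $\Bcal'$ is smooth (so that the N\'eron mapping property applies); this gives the first assertion. Restricting to neutral components, we obtain $\Psi^\circ : (\Bcal')^\circ \to \Bcal^\circ$, a morphism of smooth group schemes over $C$ which is an isomorphism on generic fibers. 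It remains to show $\Psi^\circ$ is an isomorphism, and by a standard argument (e.g. checking it is an isomorphism fiberwise, using that source and target are smooth of finite type over the Dedekind-type base $C$, together with faithfully flat descent) it suffices to prove that for every closed point $s \in C$ the induced map on neutral components of fibers $(\Bcal')^\circ_s \to \Bcal^\circ_s$ is an isomorphism of algebraic groups over $k_s$.

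For the fiberwise statement I would argue as follows. Fix a closed point $s \in C$. By Proposition~\ref{prop:semistableisogenies} (using that $A$ is semi-stable), the extension $\Phi : \Acal \to \Bcal$ of $\phi$ to N\'eron models restricts on identity components of fibers to a surjective finite morphism $\Phi^\circ_s : \Acal^\circ_s \to \Bcal^\circ_s$ with kernel contained in $\Gcal$ — more precisely one compares orders: the fiber $\Gcal_s$ has order $\deg\phi = \deg(\ker\Phi^\circ_s)\cdot(\text{something})$, and since $A$ is semi-stable both $\Acal^\circ_s$ and $\Bcal^\circ_s$ are semi-abelian of the same dimension. On the other hand the quotient map $\Acal \to \Bcal'$ restricts on neutral components of fibers to a surjection $\Acal^\circ_s \to (\Bcal')^\circ_s$ with kernel $\Gcal_s \cap \Acal^\circ_s$. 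The map $\Psi^\circ_s$ is then the induced map fitting into $\Acal^\circ_s \to (\Bcal')^\circ_s \xrightarrow{\Psi^\circ_s} \Bcal^\circ_s$, whose composite is $\Phi^\circ_s$. To conclude that $\Psi^\circ_s$ is an isomorphism, I would show that the two kernels $\Gcal_s \cap \Acal^\circ_s$ and $\ker(\Phi^\circ_s)$ coincide inside $\Acal^\circ_s$: the inclusion $\supseteq$ is clear, and for $\subseteq$ one uses semi-stability to rule out the collapsing of extra components — the point being that in the semi-stable case the component groups $\Acal_s/\Acal^\circ_s$ and $\Bcal_s/\Bcal^\circ_s$ are controlled and the Zariski closure $\Gcal$ does not meet $\Acal^\circ_s$ in anything larger than the genuine kernel of the isogeny on neutral components. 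Since $\Psi^\circ_s$ is then a morphism of group schemes of finite type over a field which is bijective on geometric points, quasi-finite, and with trivial kernel, between two smooth connected groups of equal dimension, it is an isomorphism.

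The main obstacle I expect is the fiberwise kernel comparison, i.e. pinning down exactly where semi-stability enters. Without semi-stability, the Zariski closure $\Gcal$ of $G$ can degenerate badly (changing type from \'etale to connected, as in the Igusa-curve remark), and $\Gcal_s \cap \Acal^\circ_s$ can then fail to match $\ker(\Phi^\circ_s)$, so that $\Bcal'$ and $\Bcal$ differ even on neutral components; the semi-stability hypothesis is precisely what forces the component-group behaviour to be tame enough that the quotient $\Acal/\Gcal$ already has the correct neutral component. Concretely I would invoke the results of \cite[\S7.3--7.4]{BLR} on quotients of N\'eron models by finite flat subgroup schemes and on the behaviour of semi-abelian schemes under isogeny, citing Proposition~\ref{prop:semistableisogenies} for the key input that $\Phi^\circ_s$ is an isogeny in the semi-stable case. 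The remaining steps — existence of $\Psi$ via the N\'eron property, smoothness of $\Bcal'$, reduction of "isomorphism" to "fiberwise isomorphism on neutral components" — are routine and I would treat them briskly.
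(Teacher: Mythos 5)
Your setup --- smoothness of $\Bcal'$, existence of $\Psi$ via the N\'eron mapping property, reduction to a fiberwise statement, surjectivity of $\Psi^\circ_s$ via Proposition~\ref{prop:semistableisogenies}(b) --- matches the paper's proof and is sound. But the injectivity step, which you yourself flag as the main obstacle, is left unproven: the passage ``one uses semi-stability to rule out the collapsing of extra components\dots the Zariski closure $\Gcal$ does not meet $\Acal^\circ_s$ in anything larger than the genuine kernel'' merely restates the desired conclusion. The equality $\Gcal_s\cap\Acal_s^\circ=\ker(\Phi_s^\circ)$ is essentially equivalent to the proposition (it immediately gives $(\Bcal')^\circ_s\simeq\Bcal^\circ_s$), so a real argument is required here and none is supplied. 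There is also a small sign slip: the \emph{clear} inclusion is $\Gcal_s\cap\Acal_s^\circ\subseteq\ker(\Phi_s^\circ)$, since $\Phi=\Psi\circ\pi$, not the reverse as you state.

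The paper sidesteps the kernel comparison entirely and this is the key ingredient you are missing. Rather than proving injectivity by hand, it applies Zariski's Main Theorem in the form of \cite[\S2.3, Theorem 2']{BLR} to $\Psi$ over the discrete valuation ring $R=\Ocal_{C,s}$: the morphism $\Psi_R:\Bcal'_R\to\Bcal_R$ is quasi-finite (which follows from the surjectivity on neutral components and a dimension count, exactly as in your sketch) and is an isomorphism on the generic fiber, so the theorem forces $\Psi_s$ to be an open immersion. A surjective open immersion is an isomorphism, and one is done. This is where the maximality built into the N\'eron model is actually exploited; without ZMT or an equivalent tool your plan does not close, and I do not see an elementary way to finish the kernel comparison that would not amount to redoing that argument.
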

 	\begin{proof} 
	First, note that $\Bcal'$ is smooth (by Proposition~\ref{prop:lissitequotientsfppf}) and that the generic fiber of $\Bcal'$ is $B$.
	The Néron mapping property then allows to uniquely extend the isogeny $\phi:A\to B$ (resp. $\id_B:B\to B$) to a group scheme morphism $\Phi : \Acal \rightarrow \Bcal$ (resp. $\Psi:\Bcal' \rightarrow \Bcal$). 
	We denote the quotient morphism by $\pi : \Acal \rightarrow \Acal/\Gcal=\Bcal'$. 
	These morphisms fit in a commutative triangle:
	\[ \xymatrix{
	\Acal \ar[rr]^{\Phi}  \ar[dr]_{\pi} & &  \Bcal\,. \\
	& \Bcal' \ar[ur]_{\Psi} & }\]
	Since $\Acal$ is semi-abelian by assumption, $\Phi$ is finite and fiber-by-fiber surjective on neutral components (see Proposition~\ref{prop:semistableisogenies}{\it (b)}).
	For dimension reasons, this implies that $\Psi_s:\Bcal'_s\rightarrow \Bcal_s$ is quasi-finite for each $s\in C$.
	So, for each $s \in C$, the restriction  $\Psi_s^\circ : (\Bcal'_s)^\circ \rightarrow (\Bcal_s)^\circ$ is surjective.

 	On the other hand, for any closed point $s \in C$, the ring $R := \Ocal_{C,s}$ is a DVR, so by  Zariski's main theorem (see \cite[\S2.3, Theorem 2']{BLR}) applied to $X=\Bcal'_{R}$, $Y=\Bcal_R$ and above the generic fiber (which is dense open in $\Spec R$), we obtain that $\Psi_s : \Bcal'_s \to \Bcal_s$ is an open immersion. Its restriction $\Psi_s^\circ : (\Bcal')_s^\circ \rightarrow \Bcal_s^\circ$ is also an open immersion.

 	Thus, for every closed point $s \in C$, the map $\Psi_s^\circ :(\Bcal')^\circ \to (\Bcal)^\circ$ is a surjective open immersion, hence an isomorphism. 
	\end{proof}

	\begin{exple}\label{exampleNonSS}
	Let $E$  be the elliptic curve defined over $\F_p(t)$  by the affine equation $E: y^2 = x^3 + t$. This is an non constant but isotrivial elliptic curve (with $j$-invariant $0$). 
    Applying Tate's algorithm, one shows that $E$ has additive reduction of type $\mathrm{II}$ at (the place corresponding to) $t=0$. The model $y^2=x^3+t$ is minimal at this place. One also checks that $E$ has additive reduction of type $\mathrm{II}^\ast$ at the place $\infty$. The minimal discriminant divisor of $E$ is therefore $\Delta(E) = 2\,(0)+10\,(\infty)$, this shows that $\hdiff(E/K) =(\deg\Delta(E))/12= 1$.

    By definition, the Frobenius twist $E^{(p)}$ of $E$ is the elliptic curve given by the model $y^2=x^3+t^p$. Let $b\in\{1, 5\}$ be the integer such that $p\equiv b\bmod{6}$. 
    One shows that $E^{(p)}$ has additive reduction at~$0$ and $\infty$: the fiber at $0$ has type $\mathrm{II}$ (resp. type $\mathrm{II}^\ast$) if $b=1$ (resp. $b=5$); the fiber at $\infty$ has type $\mathrm{II}^\ast$ (resp. type $\mathrm{II}$) if $b=1$ (resp. $b=5$). The minimal discriminant divisor of $E^{(p)}$ is then $\Delta(E^{(p)}) = 2b\,(0)+2(6-b)\,(\infty)$, and thus $\hdiff(E^{(p)}/K)=1$.

	Let $R_0$ denote the local ring of $\F_p(t)$ at $0$, and let $\Ecal\to\Spec R_0$ denote the N\'eron model of $E$ above $R_0$.
	If $b=5$, the above computation of the reduction types of $E$ and $E^{(p)}$ at $0$ (type $\mathrm{II}$ and $\mathrm{II}^\ast$ respectively) shows that $(\Ecal)^{(p)}$ is not isomorphic to the N\'eron model of $E^{(p)}$! 

    We now focus on the case where $b=1$, and we set $k=(p-1)/6$. 
    By the previous paragraph, $\Ecal$ is given by  
    \[ \{ ([x:y:z], t) : y^2 z - x^3 - t z^3 = 0 \} \smallsetminus\{([0:0:1],0)\} \subset \P^2_{\F_p}\times\mathbb{A}^1_{\F_p}.\]
    By the N\'eron property, the Frobenius isogeny $\Frob_{E/\F_p(t)}:E\to E^{(p)}\simeq E$ induces a map $F : \Ecal\to\Ecal$  given by 
	\[F:([x:y:z], t) \mapsto ([t^k x^p :y^p : t^{3k} z^p], t).\]    
	On the special fiber of $\Ecal$ (which is irreducible), this map induces $[x:y:z]\mapsto [0:y:0]$, and this is obviously not an isomorphism. 
  	This last remark proves that the semi-stability hypothesis  in Proposition~\ref{prop:isomneutralcomponents} cannot be relaxed. 
	\end{exple}

The above proposition directly implies:	
	\begin{coro}\label{cor:LieA/G} 
	Let $A/K$ be a semi-stable abelian variety, and $G$ be a finite subgroup scheme of $A$. 
	Denote  the N\'eron model of $A$ by $\Acal\to C$, and the Zariski closure of $G$ in $\Acal$ by $\Gcal$. Let $\Bcal'$ denote the \FPPF{} quotient $\Acal/\Gcal$.
 	Let $B$ denote the quotient abelian variety $A/G$, and let $\Bcal\to C$ denote its N\'eron model. 
 	There is an isomorphism of Lie algebras:
	\[ \Lie(\Bcal'/C) \simeq \Lie(\Bcal/C).\]
	Hence there is an isomorphism $\Omega(\Bcal'/C) \simeq \Omega(\Bcal/C)$.
	In particular, we have $\deg\Omega(\Bcal'/C) =  \deg\Omega(\Bcal/C)$.
	\end{coro}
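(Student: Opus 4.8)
The plan is to deduce Corollary~\ref{cor:LieA/G} directly from Proposition~\ref{prop:isomneutralcomponents}, using the fact that the Hodge bundle (equivalently, the Lie algebra) of a smooth group scheme over $C$ depends only on the neutral component. First I would record the setup: let $\Acal\to C$ be the N\'eron model of the semi-stable abelian variety $A$, let $\Gcal$ be the Zariski closure of $G$ in $\Acal$, and let $\Bcal':=\Acal/\Gcal$ be the \FPPF{} quotient, which is smooth over $C$ by Proposition~\ref{prop:lissitequotientsfppf} and has generic fiber $B=A/G$. Let $\Bcal\to C$ be the N\'eron model of $B$. Proposition~\ref{prop:isomneutralcomponents} supplies a morphism $\Psi:\Bcal'\to\Bcal$ whose restriction $\Psi^\circ:(\Bcal')^\circ\xrightarrow{\sim}(\Bcal)^\circ$ is an isomorphism of group schemes over $C$.

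Next I would observe that the Lie algebra (and Hodge bundle) of a group scheme over $S$ is computed along the neutral section, which factors through the neutral component. Concretely, for a smooth group scheme $\Gcal$ over $S$ with open subgroup scheme $\Gcal^\circ$ containing the zero section, the inclusion $\Gcal^\circ\into\Gcal$ is an open immersion respecting the neutral sections, hence induces an isomorphism $\Omega^1_{\Gcal/S}|_{\Gcal^\circ}\simeq\Omega^1_{\Gcal^\circ/S}$; pulling back along the (common) zero section gives $\Omega(\Gcal/S)\simeq\Omega(\Gcal^\circ/S)$, and dually $\Lie(\Gcal/S)\simeq\Lie(\Gcal^\circ/S)$ by Proposition~\ref{prop:duality.hodge.lie}. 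Applying this to both $\Bcal$ and $\Bcal'$ and combining with the isomorphism $\Psi^\circ$ of Proposition~\ref{prop:isomneutralcomponents}, I obtain
\[
\Lie(\Bcal'/C)\simeq\Lie((\Bcal')^\circ/C)\xrightarrow[\ \sim\ ]{\Psi^\circ}\Lie((\Bcal)^\circ/C)\simeq\Lie(\Bcal/C),
\]
and the same chain of isomorphisms for the Hodge bundles $\Omega(\Bcal'/C)\simeq\Omega(\Bcal/C)$. Taking top exterior powers and degrees yields $\deg\Omega(\Bcal'/C)=\deg\Omega(\Bcal/C)$, as required.

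There is essentially no obstacle here: the statement is a formal consequence of Proposition~\ref{prop:isomneutralcomponents} once one knows that Hodge bundles and Lie algebras are insensitive to passing to the neutral component. The only point deserving a line of justification is precisely that last fact — that an open immersion of smooth $S$-group schemes which is the identity on the zero section identifies the Hodge bundles — which follows immediately because an open immersion is \'etale, so it induces an isomorphism on sheaves of relative differentials, and the relevant pullbacks are along the same section. One should also mention explicitly that $\Bcal'$ is smooth so that $\Omega^1_{\Bcal'/C}$ is locally free and Proposition~\ref{prop:duality.hodge.lie} applies, allowing the free passage between the Lie-algebra and Hodge-bundle formulations.
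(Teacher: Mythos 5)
Your proof is correct and takes essentially the same route as the paper, which presents the corollary as a direct consequence of Proposition~\ref{prop:isomneutralcomponents}; you have simply made explicit the (standard) observation that Lie algebras and Hodge bundles of smooth group schemes depend only on the neutral component, which is the implicit step the paper leaves to the reader.
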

	
	\begin{rema}
 	Keep the same notation as in the above corollary. 
 	If $A$ is not assumed to be semi-stable, one can still relate $\deg\Omega(\Acal/\Gcal)$ to $\deg\Omega(\Bcal)$ albeit in a weaker form: Theorem 2.6 in \cite{Yua} indeed shows that
	\[\deg\Omega(\Bcal'/C) \geq  \deg\Omega(\Bcal/C).\]
	(see also Lemma~\ref{lem:keyinequalityparallelogram} for a detailed proof).
	\end{rema}

We can now state the main result of this paragraph, which will be crucial in the rest of this work:
	\begin{prop}\label{prop:varheightsemistable}
	Let $A/K$ be a semi-stable abelian variety and let $G \subset A$ be a finite subgroup scheme of Frobenius height $\leq 1$ with Zariski closure $\Gcal$ in the Néron model of $A$ over $C$. We then have      
    \[\hdiff((A/G)/K)=\hdiff(A/K) - (p-1)\,\deg\Lie(\Gcal/C).\]
    \end{prop}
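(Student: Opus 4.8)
The plan is to deduce the identity directly from Proposition~\ref{prop:Yuan} and Corollary~\ref{cor:LieA/G}; the only genuine verification needed is that the Zariski closure $\Gcal$ lies inside the kernel of the relative Frobenius of the Néron model, so that Proposition~\ref{prop:Yuan} can be applied.

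First I would observe that the Néron model $\Acal\to C$ of $A$ is a smooth commutative group scheme of finite type over $C$, and that $\Gcal$ --- being the Zariski closure in $\Acal$ of the finite flat subgroup scheme $G\subset A$ --- is a finite flat subgroup scheme of $\Acal$ (here one uses that $C$ is a Dedekind scheme, so that the scheme-theoretic closure of $G$ is flat over $C$ and inherits the group law from $A$, exactly as in Proposition~\ref{prop:isomneutralcomponents} and Corollary~\ref{cor:LieA/G}).

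The key step will be to check that $\Gcal\subset\Acal[\Frob_{\Acal/C}]$. Because the relative Frobenius is compatible with base change, the morphism $\Frob_{\Acal/C}\colon\Acal\to\Acal^{(p)}$ restricts over the generic point of $C$ to $\Frob_{A/K}\colon A\to A^{(p)}$. Since $G$ has Frobenius height $\leq 1$ by hypothesis, $G$ is contained in $A[\Frob_{A/K}]$, so the composite $G\to A\to\Acal\to\Acal^{(p)}$ (the last arrow being $\Frob_{\Acal/C}$) is the structural morphism $G\to C$ followed by the zero section of $\Acal^{(p)}$. By the universal property of the kernel, the locally closed immersion $G\to\Acal$ therefore factors through the closed subscheme $\Acal[\Frob_{\Acal/C}]$; as $\Gcal$ is the scheme-theoretic image (equivalently, the scheme-theoretic closure) of $G$ in $\Acal$, this forces $\Gcal\subset\Acal[\Frob_{\Acal/C}]$. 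Restricting $\Frob_{\Acal/C}$ to $\Gcal$ and using that $\Gcal^{(p)}\to\Acal^{(p)}$ is a monomorphism then shows that $\Frob_{\Gcal/C}$ is trivial, so $\Gcal$ has Frobenius height $\leq 1$ as a subgroup scheme of $\Acal[\Frob_{\Acal/C}]$. Note that $G$ is in general non-reduced, so this argument must be run through the universal properties of the scheme-theoretic image and of the Frobenius kernel rather than on geometric points --- this is the main (if mild) obstacle in the proof.

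With this established, I would apply Proposition~\ref{prop:Yuan} with $S=C$ to $\Gcal$ and the \FPPF{} quotient $\Bcal':=\Acal/\Gcal$, obtaining
\[\deg\Omega(\Bcal'/C)=\deg\Omega(\Acal/C)-(p-1)\,\deg\Lie(\Gcal/C).\]
By Definition~\ref{defi:hdiff} we have $\deg\Omega(\Acal/C)=\hdiff(A/K)$, while the semi-stability of $A$ lets us invoke Corollary~\ref{cor:LieA/G} to get $\deg\Omega(\Bcal'/C)=\deg\Omega(\Bcal/C)=\hdiff((A/G)/K)$, where $\Bcal\to C$ denotes the Néron model of $A/G$. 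Substituting these two equalities into the displayed formula yields
\[\hdiff((A/G)/K)=\hdiff(A/K)-(p-1)\,\deg\Lie(\Gcal/C),\]
which is exactly the assertion. Everything apart from the inclusion $\Gcal\subset\Acal[\Frob_{\Acal/C}]$ is a formal bookkeeping of results already available at this point in the text.
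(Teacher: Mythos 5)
Your proof is correct and follows essentially the same route as the paper's, which simply cites Proposition~\ref{prop:Yuan}, Proposition~\ref{prop:isomneutralcomponents}, and Corollary~\ref{cor:LieA/G}. You additionally make explicit the verification (left implicit in the paper) that the Zariski closure $\Gcal$ is contained in $\Acal[\Frob_{\Acal/C}]$, which is precisely what is needed for the hypotheses of Proposition~\ref{prop:Yuan} to apply.
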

	\begin{proof} 
	By definition of the differential height, this  follows from Proposition~\ref{prop:Yuan}, Proposition~\ref{prop:isomneutralcomponents}, and Corollary~\ref{cor:LieA/G}.
	\end{proof}
 
	\begin{rema}
	Lemma 4.12 in \cite{R20} proves that the differential height is constant through an isogeny $\mathcal{A}\to\mathcal{A}/\mathcal{G}$ whose kernel is a finite flat closed subgroup scheme $\mathcal{G}$ which is multiplicative (hence diagonalisable by \cite[Lemma 4.11]{R20}). 
	\end{rema}

\subsection{Variation of height by Frobenius and Verschiebung}
Using the results gathered this far, we are now able to tackle one further case of Theorem~\ref{itheo:hdiff}. 
We first describe the variation of differential height through the Frobenius isogeny:
	\begin{prop}\label{hdiff_by_Frob}
    Let $K$ be a function field of positive characteristic $p$.
	For any abelian variety $A$ over $K$, we have
	\[\hdiff(A^{(p)}/K) \leq p\,\hdiff(A/K).\]
	Furthermore, if $A$ is semi-stable over $K$, this is an equality.
	\end{prop}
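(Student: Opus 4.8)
The plan is to realize the Frobenius twist $A^{(p)}$ as a quotient of the N\'eron model of $A$ by the kernel of the relative Frobenius, and then feed this into the machinery developed in this section. Let $\Acal\to C$ be the N\'eron model of $A$, set $g=\dim A$, and consider the relative Frobenius $\Frob_{\Acal/C}\colon\Acal\to\Acal^{(p)}$, which is faithfully flat by Lemma~\ref{lemm:faithfulflatnessFrobeniussmoothgroups}. Its kernel $\Gcal:=\Acal[\Frob_{\Acal/C}]$ is then a finite flat subgroup scheme of $\Acal$ of order $p^g$ and of Frobenius height $\leq 1$, the \FPPF{} quotient $\Acal/\Gcal$ is identified with $\Acal^{(p)}$ (first isomorphism theorem for faithfully flat homomorphisms), and since relative Frobenius commutes with base change the generic fibre of $\Acal^{(p)}$ is $A^{(p)}$.

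First I would compute $\deg\Omega(\Acal^{(p)}/C)$. One option is to invoke Proposition~\ref{prop:Hodgebundleofptwist} directly: it identifies $\Omega(\Acal^{(p)}/C)$ with $\Frob_C^\ast\,\Omega(\Acal/C)$, a line bundle of degree $p\cdot\deg\Omega(\Acal/C)=p\,\hdiff(A/K)$. Alternatively one applies Proposition~\ref{prop:Yuan} to the triple $(\Acal,\Gcal,\Acal^{(p)})$, noting by Proposition~\ref{prop:lie.ker.frob} that $\deg\Lie(\Gcal/C)=\deg\Lie(\Acal/C)=-\hdiff(A/K)$, so that $\deg\Omega(\Acal^{(p)}/C)=\hdiff(A/K)+(p-1)\,\hdiff(A/K)=p\,\hdiff(A/K)$. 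Either way, the point is that $\Acal^{(p)}$ is genuinely the Frobenius twist of the N\'eron model.

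Next I would compare $\Acal^{(p)}$ with the N\'eron model $\Bcal\to C$ of $A^{(p)}$. The N\'eron mapping property extends $\id_{A^{(p)}}$ to a homomorphism of group schemes $\Psi\colon\Acal^{(p)}\to\Bcal$ over $C$ that is an isomorphism on generic fibres; by functoriality it induces a map of vector bundles $\Lie(\Acal^{(p)}/C)\to\Lie(\Bcal/C)$ which is again an isomorphism on the generic fibre, so Lemma~\ref{lem:keyinequalityparallelogram}{\it (b)} gives $\deg\Lie(\Acal^{(p)}/C)\leq\deg\Lie(\Bcal/C)$. Dualizing (Proposition~\ref{prop:duality.hodge.lie}) this becomes $\deg\Omega(\Acal^{(p)}/C)\geq\deg\Omega(\Bcal/C)=\hdiff(A^{(p)}/K)$, which combined with the first computation yields the desired inequality $\hdiff(A^{(p)}/K)\leq p\,\hdiff(A/K)$. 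For the equality under the semi-stability hypothesis, I would instead observe that $\Gcal=\Acal[\Frob_{\Acal/C}]$ is precisely the Zariski closure in $\Acal$ of the finite subgroup scheme $G:=A[\Frob_{A/K}]$ (being $C$-flat with generic fibre $G$), that $A/G\simeq A^{(p)}$ by Proposition~\ref{propFrobVer}{\it (d)}, and then apply Corollary~\ref{cor:LieA/G}: it gives $\deg\Omega(\Acal^{(p)}/C)=\deg\Omega(\Bcal/C)=\hdiff(A^{(p)}/K)$, which together with $\deg\Omega(\Acal^{(p)}/C)=p\,\hdiff(A/K)$ forces equality.

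The step I would be most careful about is not any single inequality but the bookkeeping identifications in the first paragraph: that $\Acal/\Acal[\Frob_{\Acal/C}]$ really is the Frobenius twist $\Acal^{(p)}$ as an \FPPF{} quotient, that its generic fibre is $A^{(p)}$, and that $\Acal[\Frob_{\Acal/C}]$ coincides with the Zariski closure of $A[\Frob_{A/K}]$ in $\Acal$ (both are $C$-flat closed subschemes of $\Acal$ sharing the same generic fibre, hence equal). None of these is difficult, but the whole argument hinges on aligning them correctly so that Propositions~\ref{prop:Yuan} and \ref{prop:Hodgebundleofptwist} and Corollary~\ref{cor:LieA/G} can be applied verbatim.
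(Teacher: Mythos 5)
Your proposal is correct and follows essentially the same route as the paper: the inequality is obtained exactly as in the paper's proof (extend $\id_{A^{(p)}}$ by the Néron mapping property, compare Lie algebras via Lemma~\ref{lem:keyinequalityparallelogram}, and use Proposition~\ref{prop:Hodgebundleofptwist}), and your equality argument via the identification of $\Acal[\Frob_{\Acal/C}]$ with the Zariski closure of $A[\Frob_{A/K}]$ and Corollary~\ref{cor:LieA/G} is precisely the ``alternative'' variant of the paper's own argument, which invokes Proposition~\ref{prop:isomneutralcomponents} directly. The bookkeeping identifications you flag are handled correctly (flatness over the Dedekind base $C$ pins down the schematic closure), so there is no gap.
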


This result is a direct generalization of Lemma 5.6 in \cite{GriPaz}.
Just as in the case of elliptic curves, note that the inequality may be strict without the semi-stability assumption:  it is possible for $\hdiff(A/K)$ and $\hdiff(A^{(p)}/K)$ to be equal, as shown in Example~\ref{exampleNonSS}.

	\begin{proof}
	Let $A$ be an abelian variety over $K$, and  $\Acal$, $\Bcal$ be  the Néron models of $A$ and $A^{(p)}$ over $C$, respectively.
	The identity morphism $\mathrm{id}_{A^{(p)}} : A^{(p)}\rightarrow A^{(p)}$ extends by the Néron property into a morphism 
	$\Acal^{(p)} \rightarrow \Bcal$ which, in turn, induces a morphism of Lie algebras
	$i:\Lie(\Acal^{(p)}/C) \rightarrow \Lie(\Bcal/C)$.
	This morphism is an isomorphism on the generic fibers so, by Lemma~\ref{lem:keyinequalityparallelogram}, we  have  $\deg \Lie(\Acal^{(p)}/C) \leq \deg \Lie(\Bcal/C)$. 
	Proposition~\ref{prop:Hodgebundleofptwist} then yields 
	\[ - p\,  \hdiff(A/K) \leq - \hdiff(A^{(p)}/K),\]
	which is the desired inequality. 

	Assume now that $A$ is semi-stable over $K$. Lemma~\ref{lemm:faithfulflatnessFrobeniussmoothgroups} then asserts that the Frobenius morphism $\Frob_{\Acal/C} : \Acal \rightarrow \Acal^{(p)}$ is faithfully flat.
	Its kernel $\Gcal = \Acal[\Frob_{\Acal/C}]$ is a finite flat group scheme over $C$.
	Consequently, $\Acal^{(p)}$ identifies with the \FPPF{} quotient $\Acal/\Acal[\Frob_{\Acal/C}]$.
	Moreover, as $A$ is semi-stable,  Proposition~\ref{prop:isomneutralcomponents} ensures that the neutral component of $\Acal/\Acal[\Frob_{\Acal/C}]$ is isomorphic to $\Bcal^\circ$. 
	In particular, $i$ is then an isomorphism of Lie algebras : $\Lie(\Acal^{(p)}/C) \simeq \Lie(\Bcal/C)$. 
	The equality follows by taking degrees. 

	Alternatively, one could also apply Proposition~\ref{prop:isomneutralcomponents} to the subgroup scheme $G = A[\Frob_{A/K}]$. 
	The Zariski closure of~$G$ in $\Acal$ is then $\Gcal = \Acal[\Frob_{\Acal/C}]$, as we have shown, because
    \[\deg \Lie(\Gcal/C) = \deg \Lie (\Acal[\Frob_{\Acal/C}]/C) = \deg \Lie(\Acal/C)= - \hdiff(A/K), \]
    by taking degrees in Proposition~\ref{prop:lie.ker.frob}. 
	\end{proof}

The above discussion allows to prove assertion \itemref{item:hdiff.b} of Theorem~\ref{itheo:hdiff}:
 
	\begin{coro}\label{VerFrob}
    Let $K=k(C)$ be a function field of positive characteristic $p$, and $e,f\geq 0$ be integers.
	Let $\phi:A \to B$ be a FV-isogeny of type $(e,f)$ between semi-stable abelian varieties over $K$.
    Then we have
		\[\hdiff(B/K) = p^{e-f} \, \hdiff(A/K).\]
	\end{coro}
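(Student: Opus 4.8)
The plan is to reduce the general FV-isogeny statement to the three elementary building blocks (Frobenius, Verschiebung, biseparable isogenies) and then combine their contributions multiplicatively. Since $\phi$ is, by definition, a composition of $e$ Frobenius isogenies, $f$ Verschiebung isogenies, and some biseparable isogenies (in some order), and since semi-stability is preserved under isogeny (Proposition~\ref{prop:semistableisogenies}{\it (a)}), every intermediate abelian variety in such a factorization is again semi-stable. So it suffices to prove the claim for each factor individually and multiply.

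First I would dispatch the biseparable case: if $\psi$ is biseparable then $\hdiff$ is unchanged, by Theorem~\ref{theo:diffheight.bisep.isogenies}{\it (a)}, so these factors contribute the neutral factor $p^0=1$ and may be ignored. Next, the Frobenius case: for a semi-stable $A$, Proposition~\ref{hdiff_by_Frob} gives exactly $\hdiff(A^{(p)}/K)=p\,\hdiff(A/K)$, which contributes one factor of $p$ per Frobenius, accounting for the $p^{e}$. The only remaining ingredient is the Verschiebung case, where I expect the main (though modest) obstacle to lie: I want $\hdiff(A/K)=p^{-1}\,\hdiff(A^{(p)}/K)$ for a semi-stable $A$, equivalently $\hdiff(A^{(p)}/K)=p\,\hdiff(A/K)$ again. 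This should follow by duality: $\Ver_{A/K}:A^{(p)}\to A$ is by definition the dual of $\Frob_{A^\vee/K}:A^\vee\to (A^\vee)^{(p)}$, and there is a canonical identification $(A^\vee)^{(p)}\simeq (A^{(p)})^\vee$ compatible with these morphisms. Using Proposition~\ref{prop:hdiff.dual} (valid since $A$, hence $A^\vee$, $A^{(p)}$, $(A^{(p)})^\vee$ are all semi-stable) together with $\hdiff((A^\vee)^{(p)}/K)=p\,\hdiff(A^\vee/K)$ from Proposition~\ref{hdiff_by_Frob}, we get
\[
\hdiff(A^{(p)}/K)=\hdiff((A^{(p)})^\vee/K)=\hdiff((A^\vee)^{(p)}/K)=p\,\hdiff(A^\vee/K)=p\,\hdiff(A/K),
\]
so each Verschiebung factor contributes $p^{-1}$, yielding the $p^{-f}$.

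Finally I would assemble these: writing $\phi=\phi_r\circ\cdots\circ\phi_1$ as the given factorization into $e$ Frobenius, $f$ Verschiebung, and several biseparable isogenies, and applying the multiplicative relation at each step (with semi-stability propagating along the chain), one obtains $\hdiff(B/K)=p^{e}\cdot p^{-f}\cdot 1\cdot\hdiff(A/K)=p^{e-f}\,\hdiff(A/K)$. The one point requiring a little care is the compatibility of the Frobenius/Verschiebung twist operations with duality — namely the canonical isomorphism $(A^\vee)^{(p)}\simeq(A^{(p)})^\vee$ intertwining $\Frob_{A^\vee/K}$ with $\widehat{\Ver_{A/K}}$ — which is standard (see \cite[\S7.3, Lemma~5]{BLR}) and is exactly what justifies the middle equalities in the display above; everything else is a direct bookkeeping of the already-established special cases.
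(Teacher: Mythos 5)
Your proof is correct and follows essentially the same route as the paper's one-line proof (repeated application of Proposition~\ref{hdiff_by_Frob} and Theorem~\ref{theo:diffheight.bisep.isogenies}, with semi-stability propagated along the chain via Proposition~\ref{prop:semistableisogenies}). The duality detour for the Verschiebung step is harmless but superfluous: since $\Ver_{A_i/K}\colon A_i^{(p)}\to A_i$ simply reverses a Frobenius twist, Proposition~\ref{hdiff_by_Frob} applied to $A_i$ already yields the factor $p^{-1}$ directly, as you yourself observe in noting that the needed identity is again $\hdiff(A^{(p)}/K)=p\,\hdiff(A/K)$.
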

	\begin{proof}
	Apply repeatedly Proposition~\ref{hdiff_by_Frob} and Theorem~\ref{theo:diffheight.bisep.isogenies}, the identity ensues.
	\end{proof}
    
The previous corollary is a direct generalization of Proposition 5.8 of \cite{GriPaz} to higher dimensional abelian varieties.
This result implies that, given a semi-stable abelian variety $A$ over $K$, we have
    \[\big\{\hdiff(B/K) \text{ s.t. there exists a FV-isogeny }A\to B\big\}  =p^{\Z}\,\hdiff(A/K).\]
However, in contrast to the dimension $1$ case, not all isogenies are FV-isogenies in higher dimensions, as can be seen in the following simple example.

	\begin{exple}\label{exple:not.all.FV}
	Choose a non-isotrivial semi-stable elliptic curve $E$ defined over the subfield $K^p$ of $K$ and an integer $g\geq 2$, and set $A=E^g$.
	We then have $\hdiff(A/K) = g\, \hdiff(E/K)$.
	For any integers $i,j\in\{0, \dots, g\}$ such that $i+j\leq g$, define the product $A_{i,j} := (E^{(p)})^i\times (E^{(1/p)})^{j}\times E^{g-i-j}$.
	Consider the isogeny $\phi_{i,j} : A\to A_{i,j}$  given by the Frobenius $\Frob_{E/K} : E\to E^{(p)}$ on the first $i$ factors of $A$, by the Verschiebung $\Ver_{E/K} : E\to E^{(1/p)}$ on the next $j$ factors, and  by $\id_E:E\to E$ on the remaining factors. 
	Among the various isogenies $\phi_{i,j}$, the only FV-isogenies are $\phi_{0,0}$, $\phi_{0,g}$, and $\phi_{g,0}$ (which are of type $(0,0)$, $(0,1)$, and $(1,0)$ respectively).

	For any choices of $i,j$ as above, it is clear that $\deg\phi_{i,j} = (\deg\Frob_{E/K})^i\, (\deg\Ver_{E/K})^j = p^{i+j}$. 
	Given the relation between the heights of a semi-stable abelian variety and its Frobenius/Verschiebung twists, the target abelian variety $A_{i,j}$ of $\phi_{i,j}$ has differential height
	\begin{align*}
    \hdiff(A_{i,j}/K) &= i\, p\, \hdiff(E/K) + j\, p^{-1}\, \hdiff(E/K) +  (g-i-j)\, \hdiff(E/K)\\
	 &=  \hdiff(A/K) + \left((p-1)i + (p^{-1}-1)j\right)\hdiff(E/K).
	\end{align*}
	It follows that ${\hdiff(A_{i,j}/K)}/{\hdiff(A/K)} = 1+\frac{i}{g}(p-1) + \frac{j}{g}(p^{-1}-1)$. 
	If the isogeny $\phi_{i,j}$ is not a FV-isogeny (\ie{}, if $(i,j)\notin\{(0,0), (0,g), (g,0)\}$), the ratio ${\hdiff(A_{i,j}/K)}/{\hdiff(A/K)}$ is   not a power of $p$.

	Note that, for any integers $i,j$ as above, we have
	\[ p^{-1}\,\hdiff(A/K) \leq \hdiff(A_{i,j}/K) \leq p\,\hdiff(A/K), \]
	(the extreme cases are obtained for $(i,j)=(0,g)$ and $(g, 0)$). 
	One checks that $\delta_p(\phi_{i,j})$ and $\delta_{p}(\widehat{\phi_{i,j}})$ are both $\leq 1$, so the above inequalities are consistent with the third item in our Theorem~\ref{itheo:hdiff}. 
	\end{exple}

     Proposition~\ref{hdiff_by_Frob} also allows to describe the variation of modular height (see paragraph~\ref{modular setting} for more details on the context) through the Frobenius isogeny: 

	\begin{prop} 
    Let $K'/K$ be a finite extension.
    For any point $j(A,\xi,\nu)\in{\mathscr{A}}_{g,d,n}({K'})$ induced  by a semi-stable abelian variety $A/K'$ equipped with a polarization $\lambda$ and level structure $\xi$, we let
        $j(A^{(p)},\xi^{(p)},\nu^{(p)})\in{\mathscr{A}_{g,d,n}}(K')$ denote the moduli attached to the $p$-th Frobenius twist $A^{(p)}$ of $A$ equipped with the naturally associated polarization $\lambda^{(p)}$ and level structure $\xi^{(p)}$.
		We then have
		\[\hmod(A^{(p)}, \lambda^{(p)}, \xi^{(p)}) = p \, \hmod(A, \lambda, \xi). \]
	\end{prop}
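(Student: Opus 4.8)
The strategy is to combine two tools that are already at our disposal: Moret-Bailly's Formule Cl\'e (Theorem~\ref{theo-clef}), which identifies the modular height with a universal constant times the stable height, and Proposition~\ref{hdiff_by_Frob}, which computes the differential height of a Frobenius twist of a semi-stable abelian variety. The bridge between them is the elementary observation that the stable height of an abelian variety which is already semi-stable over the field where it is defined is, up to the fixed normalisation factor, simply its differential height.

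Concretely, I would proceed as follows. Since $\Frob_{A/K'}\colon A\to A^{(p)}$ is an isogeny and $A$ is semi-stable over $K'$, Proposition~\ref{prop:semistableisogenies}{\it (a)} shows that $A^{(p)}$ is also semi-stable over $K'$. Hence, in the definition of the stable height, one may take $K_s=K'$ for both $A$ and $A^{(p)}$, which gives $\hstab(A)=\hdiff(A/K')/[K':K]$ and $\hstab(A^{(p)})=\hdiff(A^{(p)}/K')/[K':K]$. Proposition~\ref{hdiff_by_Frob}, applied to the semi-stable abelian variety $A/K'$, yields $\hdiff(A^{(p)}/K')=p\,\hdiff(A/K')$, and therefore $\hstab(A^{(p)})=p\,\hstab(A)$. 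Since the triples $(A,\lambda,\xi)$ and $(A^{(p)},\lambda^{(p)},\xi^{(p)})$ both define points of $\mathscr{A}_{g,d,n}$, Moret-Bailly's Formule Cl\'e applies to each of them, and we conclude
\[\hmod(A^{(p)},\lambda^{(p)},\xi^{(p)})=\frac{4^g d}{2}\,\hstab(A^{(p)})=p\cdot\frac{4^g d}{2}\,\hstab(A)=p\cdot\hmod(A,\lambda,\xi).\]

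No step here is genuinely difficult; the only point that warrants a word of justification is that $(A^{(p)},\lambda^{(p)},\xi^{(p)})$ is indeed a valid moduli datum, \emph{i.e.}\ that forming the Frobenius twist preserves the dimension $g$, the degree $d^2$ of the polarisation and the level $n$ of the level structure. This is immediate because the Frobenius twist is obtained by base change of $(A,\lambda,\xi)$ along the field homomorphism $\Frob_{K'}\colon K'\to K'$, and base change preserves all these invariants; in any case, the existence of the point $j(A^{(p)},\lambda^{(p)},\xi^{(p)})\in\mathscr{A}_{g,d,n}(K')$ is built into the statement.
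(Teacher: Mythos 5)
Your proof is correct and follows essentially the same approach as the paper's, which is just as terse: invoke Moret-Bailly's Formule Cl\'e to reduce to a statement about stable heights, and then apply Proposition~\ref{hdiff_by_Frob}. You spell out the intermediate step -- that semi-stability of $A$ over $K'$ lets you take $K_s = K'$ in the definition of $\hstab$, and Proposition~\ref{prop:semistableisogenies}(a) guarantees $A^{(p)}$ is semi-stable too -- which the paper leaves implicit, but the substance is identical.
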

	\begin{proof} 
 By application of the Formule Cl\'e~\ref{theo-clef}, the modular height is proportional to the stable differential height, hence the claimed equality may be deduced from Proposition~\ref{hdiff_by_Frob}.	
	\end{proof}

\section{End of the proof of Theorem~\ref{itheo:hdiff}}\label{sec:proof of theorem}

In this section we gather the various ingredients introduced in the last two sections in order to finish the proof or our main theorem. Theorem~\ref{theo:diffheight.bisep.isogenies} and Corollary~\ref{VerFrob} already prove parts \itemref{item:hdiff.a} and \itemref{item:hdiff.b} of Theorem~\ref{itheo:hdiff}, only the third assertion remains to be treated in the case of a positive characteristic base field.

For convenience, let us recall the statement we are aiming at.

	\begin{theo}\label{theo:hdiff.general}
	Let $K$ be a function field of positive characteristic $p$. 
	Let $\phi : A\to B$ be an isogeny between two semi-stable ordinary abelian varieties defined over $K$. Then
	\[p^{-\delta_p(\widehat{\phi})}\,\hdiff(A/K)\leq \hdiff(B/K) \leq p^{\delta_p(\phi)}\,\hdiff(A/K) \,,\]
	where $\delta_p(-)$ denotes the Frobenius height of the kernel of the isogeny.
	\end{theo}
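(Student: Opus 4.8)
The plan is to reduce the statement to one key geometric estimate, and then bootstrap via a dévissage and a duality argument. First, note that it is enough to prove the \emph{upper} bound $\hdiff(B/K)\le p^{\delta_p(\phi)}\,\hdiff(A/K)$ for \emph{every} isogeny $\phi$ between ordinary semi-stable abelian varieties over $K$: granting this, the lower bound for a given $\phi$ follows by applying it to the dual isogeny $\widehat{\phi}:B^\vee\to A^\vee$ — whose source and target are again ordinary (ordinarity is isogeny-invariant) and semi-stable (Proposition~\ref{prop:semistableisogenies}) — together with the equalities $\hdiff(A^\vee/K)=\hdiff(A/K)$ and $\hdiff(B^\vee/K)=\hdiff(B/K)$ of Proposition~\ref{prop:hdiff.dual}. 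Moreover, writing $\phi=\phi\sep\circ\phi\ins$ as in Proposition~\ref{prop:isogeny.factor.sep.insep} and putting $A':=A/\ker\phi\ins$, the separable isogeny $\phi\sep:A'\to B$ satisfies $\hdiff(B/K)\le\hdiff(A'/K)$ by Theorem~\ref{theo:diffheight.bisep.isogenies}(b), while $\delta_p(\phi)=\delta_p(\phi\ins)$ by definition; so we may assume $\phi$ is purely inseparable, i.e.\ that $G:=\ker\phi$ is connected, of Frobenius height $d:=\delta_p(\phi)$, and $B=A/G$.

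The heart of the argument is the case $d\le 1$, which I would isolate as: if $A$ is ordinary and semi-stable and $G\subseteq A[\Frob_{A/K}]$ is a closed subgroup scheme with Zariski closure $\Gcal$ in the Néron model $\Acal$, then $\hdiff(A/K)\le\hdiff((A/G)/K)\le p\,\hdiff(A/K)$. By Proposition~\ref{prop:varheightsemistable} we have $\hdiff((A/G)/K)=\hdiff(A/K)-(p-1)\deg\Lie(\Gcal/C)$, so everything hinges on the inequality $\deg\Lie(\Gcal/C)\le 0$. This is exactly where ordinarity is used: since $A$ is ordinary, $A[\Frob_{A/K}]$ is of multiplicative type, hence so is its subgroup scheme $G$, and therefore its Verschiebung $\Ver_{G/K}$ is an isomorphism (being dual to the Frobenius of the étale group scheme $G^\vee$). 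The Verschiebung of the finite flat commutative group scheme $\Gcal/C$ extends $\Ver_{G/K}$, and — via the canonical identification $\Lie(\Gcal^{(p)}/C)\simeq\Frob_C^{*}\Lie(\Gcal/C)$ obtained by dualizing Proposition~\ref{prop:Hodgebundleofptwist} — it yields an $\O_C$-linear map $\Frob_C^{*}\Lie(\Gcal/C)\to\Lie(\Gcal/C)$ of vector bundles of equal rank which is an isomorphism on the generic fibre. Lemma~\ref{lem:keyinequalityparallelogram}(b) then gives $p\cdot\deg\Lie(\Gcal/C)=\deg\bigl(\Frob_C^{*}\Lie(\Gcal/C)\bigr)\le\deg\Lie(\Gcal/C)$, whence $\deg\Lie(\Gcal/C)\le 0$ and thus $\hdiff(A/K)\le\hdiff((A/G)/K)$. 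For the remaining inequality I would factor $\Frob_{A/K}=\psi\circ\phi$ with $\psi:A/G\to A^{(p)}$ (legitimate since $G\subseteq A[\Frob_{A/K}]$); then $A/G$ is again ordinary and semi-stable and $\ker\psi=A[\Frob_{A/K}]/G$ is connected of Frobenius height $\le 1$, so the same computation applied to $\psi$ gives $\hdiff(A^{(p)}/K)=\hdiff((A/G)/K)-(p-1)\deg\Lie(\Gcal'/C)\ge\hdiff((A/G)/K)$, while Proposition~\ref{hdiff_by_Frob} (using semi-stability of $A$) identifies $\hdiff(A^{(p)}/K)$ with $p\,\hdiff(A/K)$.

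The general purely inseparable case then follows by induction on $d=\delta_p(\phi)$, the cases $d\in\{0,1\}$ being settled above. For $d\ge 2$, set $G_1:=G[\Frob_{G/K}]$; this subgroup scheme has Frobenius height $\le 1$ and is contained in $A[\Frob_{A/K}]$, and $\phi$ factors as $A\to A_1:=A/G_1\xrightarrow{\ \phi'\ }B$. The variety $A_1$ is ordinary and semi-stable (being isogenous to $A$, by Proposition~\ref{prop:semistableisogenies}), the kernel $G/G_1$ of $\phi'$ is connected of Frobenius height $d-1$ by Lemma~\ref{lemm:frob.height.lowering}, so $\delta_p(\phi')=d-1$. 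Applying the $d\le 1$ case to $A\to A_1$ and the induction hypothesis to $\phi'$ gives $\hdiff(B/K)\le p^{\,d-1}\hdiff(A_1/K)\le p^{\,d-1}\cdot p\,\hdiff(A/K)=p^{\,\delta_p(\phi)}\hdiff(A/K)$, which completes the proof of the upper bound and hence of the theorem.

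I expect the main difficulty to lie in the $d\le 1$ case, and more precisely in establishing $\deg\Lie(\Gcal/C)\le 0$: one must transfer the favourable structure of $G$ on the generic fibre (multiplicative type, thanks to ordinarity) into information about its Zariski closure $\Gcal$ over \emph{all} of $C$, even though $\Gcal$ need not remain of multiplicative type at the bad fibres — the Igusa-curve phenomenon described after Proposition~\ref{prop:thmprincipalYuan} shows this genuinely fails in general. The argument above circumvents the issue by using only that $\Ver_{\Gcal/C}$ is a \emph{generic} isomorphism, together with the degree inequality of Lemma~\ref{lem:keyinequalityparallelogram}; checking carefully that $\Ver$ extends appropriately over $C$, that $\Lie(\Gcal/C)$ really is a vector bundle (which uses $\Gcal\subseteq\Acal[\Frob_{\Acal/C}]$, so that all fibres have Frobenius height $\le 1$ and hence constant $\dim\Lie$), and that it interacts correctly with the identification $\Lie(\Gcal^{(p)}/C)\simeq\Frob_C^{*}\Lie(\Gcal/C)$, is the real technical core.
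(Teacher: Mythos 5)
Your proof is correct, and although the overall scaffolding matches the paper's — reduce to the upper bound via duality and Proposition~\ref{prop:hdiff.dual}, then to the purely inseparable case via Theorem~\ref{theo:diffheight.bisep.isogenies}(b), then perform induction on the Frobenius height using Lemma~\ref{lemm:frob.height.lowering} (these are exactly Propositions~\ref{prop:keyinequalityordinary} and \ref{prop:hdiff.finite.depth} in the text) — you diverge substantially in the technical core, the estimate $\deg\Lie(\Gcal/C)\le 0$ for the Zariski closure of a kernel $G\subseteq A[\Frob_{A/K}]$. The paper establishes this by invoking Rössler's positivity theorem (Proposition~\ref{prop:nefnessHodgebundle}): for ordinary $A$ the Hodge bundle $\Omega_A$ is nef, hence its quotient $\Omega(\Gcal/C)$ has non-negative degree; and the companion inequality $\hdiff(B/K)\le p\,\hdiff(A/K)$ comes from nefness of $\Omega_B$ applied to the cokernel $\Qcal:=\Acal[\Frob_{\Acal/C}]/\Gcal$ together with the exact sequence from Proposition~\ref{prop:exactness.lie.functor}. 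You replace this by a direct Verschiebung argument: ordinarity makes $A[\Frob_{A/K}]$, hence $G$, of multiplicative type, so $\Ver_{G/K}$ is an isomorphism; extending $\Ver$ to the finite flat model $\Gcal$ and using $\Lie(\Gcal^{(p)}/C)\simeq\Frob_C^*\Lie(\Gcal/C)$ yields a generic isomorphism $\Frob_C^*\Lie(\Gcal/C)\to\Lie(\Gcal/C)$, whence $p\deg\Lie(\Gcal/C)\le\deg\Lie(\Gcal/C)$ by Lemma~\ref{lem:keyinequalityparallelogram}(b); and you obtain $\hdiff(B/K)\le p\,\hdiff(A/K)$ by running the same computation on the complementary factor $\psi:B\to A^{(p)}$ of $\Frob_{A/K}$, together with Proposition~\ref{hdiff_by_Frob}. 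This is a genuinely different, and arguably more elementary, route: you trade the appeal to Rössler's theorem for a short degree computation, at the modest cost of extending $\Ver$ over $C$ and checking local freeness of $\Lie(\Gcal/C)$ — a point the paper also leaves implicit (in the proof of Proposition~\ref{prop:exactness.lie.functor}), so it is not a burden specific to your approach.
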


In the special case where $\dim A=\dim B= 1$, Lemma~\ref{lemm:frob.height.dim1} allows to rewrite the above inequalities in the form:
\[(\deg\ins(\widehat{\phi}))^{-1}\,\hdiff(A/K) \leq \hdiff(B/K) \leq \deg\ins({\phi})\,\hdiff(A/K).\]
This should be compared to the semi-stable case of Proposition 5.8 in \cite{GriPaz}, which gives the more precise equality
\[\hdiff(B/K) = \frac{\degins(\phi)}{\degins(\widehat{\phi})}\, \hdiff(A/K).\]

\subsection{Isogenies of finite Frobenius height} 
Recall the following positivity results for  Hodge bundles of abelian varieties over $K$.
	\begin{prop}\label{prop:nefnessHodgebundle}
    Let $A$ be an abelian variety of dimension $g$ over $K$. Then the line bundle  $\bigwedge^g\Omega_A$ is nef.
    
    Moreover, if  $A$ is  ordinary then the Hodge bundle $\Omega_A$ itself is nef, therefore all its quotients are also nef.
	\end{prop}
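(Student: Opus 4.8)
The first assertion is essentially free. On a smooth projective curve a line bundle is nef precisely when it has nonnegative degree, and $\deg\big(\bigwedge^{g}\Omega_A\big)=\deg\Omega_A=\hdiff(A/K)\geq 0$ by Corollary~\ref{cor:MoretBailly}. That a quotient of a nef vector bundle is again nef is standard (the projectivization of the quotient embeds into that of the bundle, carrying $\mathcal{O}(1)$ to $\mathcal{O}(1)$), so all the content is the nefness of $\Omega_A$ itself when $A$ is ordinary, and that is what I would prove next. I may assume $k$ algebraically closed, since nefness is insensitive to extension of the constant field and the formation of $\Omega_A$ (via Néron models) is compatible with such extensions.

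The plan for $\Omega_A$ nef is a reduction to a statement over $C$ alone, followed by a Verschiebung argument. First I would invoke the standard criterion: a vector bundle $\mathcal{E}$ on $C$ is nef iff $\mathcal{O}_{\mathbb{P}(\mathcal{E})}(1)$ is nef iff, for every finite morphism $g\colon C'\to C$ from a smooth projective curve and every quotient line bundle $g^\ast\mathcal{E}\twoheadrightarrow L$, one has $\deg L\geq 0$ (integral curves contained in a fibre of $\mathbb{P}(\mathcal{E})\to C$ cause no trouble, since $\mathcal{O}(1)$ is ample there, and a curve dominating $C$ is, after normalization, the source of such a $g$). Given such $g$ and $L$, set $K'=k(C')$ and $A'=A\times_K K'$, which is again ordinary, with Néron model $\mathcal{A}'\to C'$. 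The canonical morphism $\mathcal{A}\times_C C'\to\mathcal{A}'$ furnished by the Néron mapping property induces a map of Hodge bundles $\Omega_{A'}\to g^\ast\Omega_A$ which is an isomorphism at the generic point, hence injective with torsion cokernel; dualizing gives an injection $g^\ast\Lie(\mathcal{A}/C)\hookrightarrow\Lie(\mathcal{A}'/C')$. Composing with the saturated inclusion $L^\vee\hookrightarrow(g^\ast\Omega_A)^\vee=g^\ast\Lie(\mathcal{A}/C)$ realizes $L^\vee$ as a sub–line bundle of $\Lie(\mathcal{A}'/C')$ of degree $-\deg L$. Thus everything reduces to the claim $(\star)$: for the Néron model $\mathcal{B}\to D$ of any ordinary abelian variety $B$ over $L:=\kappa(D)$ (a function field as in our setup), every sub–line bundle of $\Lie(\mathcal{B}/D)$ has nonpositive degree.

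To prove $(\star)$ I would use that ordinarity of $B$ makes $\Ver_{B/L}\colon B^{(p)}\to B$ separable (Definition~\ref{defi:ordinary}); by the Néron mapping property of $\mathcal{B}$ it extends to a morphism of $D$-group schemes $\mathcal{V}\colon\mathcal{B}^{(p)}\to\mathcal{B}$, where $\mathcal{B}^{(p)}=\mathcal{B}\times_{D,\Frob_D}D$ is smooth over $D$. Étaleness of $\Ver_{B/L}$ makes $\Lie(\mathcal{V})\colon\Lie(\mathcal{B}^{(p)}/D)\to\Lie(\mathcal{B}/D)$ an isomorphism at the generic point, while $\Lie(\mathcal{B}^{(p)}/D)\simeq\Frob_D^\ast\Lie(\mathcal{B}/D)$ by Propositions~\ref{prop:Hodgebundleofptwist} and~\ref{prop:duality.hodge.lie}, with $\deg\Frob_D^\ast N=p\deg N$ for every line bundle $N$ on $D$ (the morphism $\Frob_D$ has degree $p$). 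Starting from a sub–line bundle $M\hookrightarrow\Lie(\mathcal{B}/D)$ of degree $d$, pulling back along $\Frob_D$ (which is finite flat, so preserves injectivity) and composing with $\Lie(\mathcal{V})$ yields a nonzero map $\Frob_D^\ast M\to\Lie(\mathcal{B}/D)$; its image is a rank-one subsheaf isomorphic to $\Frob_D^\ast M$, whose saturation $M'$ is a sub–line bundle with $\deg M'\geq\deg\Frob_D^\ast M=pd$. Iterating produces sub–line bundles of $\Lie(\mathcal{B}/D)$ of degree $\geq p^{n}d$ for all $n$; as these degrees are bounded above (by the top slope of the Harder--Narasimhan filtration of $\Lie(\mathcal{B}/D)$), necessarily $d\leq 0$. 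This establishes $(\star)$, and hence the proposition.

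The main obstacle is conceptual and lies in the first reduction: in characteristic $p$ "nef" for a bundle on a curve is strictly stronger than "$\mu_{\min}\geq 0$" (Frobenius pullbacks can lower the minimal slope), so bounding the degrees of quotient line bundles on $C$ itself is not enough. The device above circumvents this by running the entire argument after an arbitrary finite base change, which is legitimate precisely because ordinarity, separability of Verschiebung, and the twist identity $\Lie(\mathcal{B}^{(p)}/D)\simeq\Frob_D^\ast\Lie(\mathcal{B}/D)$ are all stable under such base change. The one point requiring care is the comparison between $g^\ast\Omega_A$ and $\Omega_{A'}$ (equivalently, between $\mathcal{A}\times_C C'$ and the Néron model $\mathcal{A}'$): one only needs the canonical map and the fact that it is a generic isomorphism, which is enough to control degrees via the reasoning of Lemma~\ref{lem:keyinequalityparallelogram}.
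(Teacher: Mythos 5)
Your proof is correct. For the first assertion and for the stability of nefness under quotients you argue exactly as the paper does (nonnegative degree of $\bigwedge^g\Omega_A$ via Corollary~\ref{cor:MoretBailly}, and the standard quotient fact). The genuine difference lies in the ordinary case: the paper disposes of it by citing R\"ossler \cite[Theorem 1.2]{Rossler15}, whereas you supply a complete proof. Your mechanism --- test nefness of $\Omega_A$ on invertible quotients of $g^\ast\Omega_A$ for all finite covers $g:C'\to C$, dualize through the N\'eron comparison map into $\Lie(\mathcal{A}'/C')$, and exclude positive-degree rank-one subsheaves of the Lie algebra by iterating the generically bijective map $\Frob_D^\ast\Lie(\mathcal{B}/D)\simeq\Lie(\mathcal{B}^{(p)}/D)\to\Lie(\mathcal{B}/D)$ coming from the Verschiebung, which is \'etale at the generic fibre precisely because of ordinarity --- is essentially the classical argument underlying R\"ossler's theorem, and every ingredient you invoke is already available in the paper (Propositions~\ref{prop:duality.hodge.lie} and~\ref{prop:Hodgebundleofptwist}, the N\'eron mapping property, and degree comparison along generically isomorphic maps as in Lemma~\ref{lem:keyinequalityparallelogram}). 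What the paper's route buys is brevity and an appeal to the literature; what yours buys is self-containedness, a transparent explanation of where ordinarity is used, and an explicit treatment of the characteristic-$p$ subtlety that nefness is strictly stronger than $\mu_{\min}\geq 0$ on $C$ itself, which your finite-cover reduction handles correctly. The only glossed points are routine: the reduction to $k$ algebraically closed (harmless, since your comparison-map device never needs N\'eron models to commute with constant-field extension, only the canonical generically-isomorphic map, exactly as for the covers $C'\to C$) and the standard characterization of nef bundles on a curve via quotient line bundles after finite pullback.
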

	\begin{proof}
    The line bundle $\bigwedge^g\Omega_A$ has non negative degree by Corollary~\ref{cor:MoretBailly}, hence is nef. 
    The nefness of $\Omega_A$ in the ordinary case has been proven by Rössler in \cite[Theorem 1.2]{Rossler15}, and nefness of vector bundles is stable by quotient \cite[Theorem 6.2.12]{Lazarsfeld04}.
	 \end{proof}

The nefness property of the Hodge bundle \emph{in the ordinary case} implies the following, which is the final key ingredient in our argument. 
	
	\begin{prop}\label{prop:keyinequalityordinary}
		Let $A$ be a semi-stable ordinary abelian variety. Let $G$ be a finite subgroup scheme of $A$ with $\delta_p(G)\leq 1$. Let $\phi:A\to B:=A/G$ be the corresponding isogeny. We have
  		\[ \hdiff(A/K) \leq \hdiff(B/K) \leq p\, \hdiff(A/K).\] 
	\end{prop}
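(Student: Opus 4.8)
The statement concerns an isogeny $\phi : A\to B = A/G$ with $\delta_p(G)\le 1$, and I want to bound $\hdiff(B/K)$ above and below in terms of $\hdiff(A/K)$. The workhorse is the formula from Proposition~\ref{prop:varheightsemistable}: since $A$ is semi-stable and $G$ has Frobenius height $\le 1$,
\[
\hdiff(B/K) = \hdiff(A/K) - (p-1)\,\deg\Lie(\Gcal/C),
\]
where $\Gcal$ is the Zariski closure of $G$ in the Néron model $\Acal$ of $A$. So the entire problem reduces to pinning down the sign and size of $\deg\Lie(\Gcal/C)$: I need to prove
\[
-\hdiff(A/K) \;\le\; \deg\Lie(\Gcal/C) \;\le\; 0.
\]

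\textbf{Upper bound for $\deg\Lie(\Gcal/C)$ (giving the lower bound $\hdiff(A/K)\le\hdiff(B/K)$).} First I would reduce to the case where $\Gcal\subset\Acal[\Frob_{\Acal/C}]$. Indeed $G\subset A[\Frob_{A/K}]$ by the height-$\le 1$ hypothesis; taking Zariski closures, $\Gcal$ sits inside $\Acal[\Frob_{\Acal/C}]$ (the closure of $A[\Frob_{A/K}]$ is contained in the kernel of $\Frob_{\Acal/C}$ because the Frobenius of $\Acal$ restricts to that of $A$ on the generic fibre, and $\Acal[\Frob_{\Acal/C}]$ is already flat as $\Acal$ is smooth). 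Now I want an inclusion of Lie algebras. The inclusion $\Gcal\hookrightarrow\Acal$ gives, by Lemma~\ref{lemexactsequenceLie}, an injection $\Lie(\Gcal/C)\hookrightarrow\Lie(\Acal/C)$, which is generically zero on the target side in the sense that it identifies $\Lie(\Gcal/C)$ with a subsheaf of $\Lie(\Acal/C)$; but $\Lie(\Acal/C)$ has degree $-\hdiff(A/K)$. This alone is not enough — I need the quotient to be a \emph{vector bundle}, i.e. I need $\Lie(\Acal/C)/\Lie(\Gcal/C)$ to be locally free, which follows because by Proposition~\ref{prop:exactness.lie.functor} the sequence $0\to\Gcal\to\Acal[\Frob_{\Acal/C}]\to \Acal[\Frob_{\Acal/C}]/\Gcal\to 0$ of height-$\le 1$ group schemes has exact Lie sequence. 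Then additivity of degree gives $\deg\Lie(\Gcal/C) = \deg\Lie(\Acal/C) - \deg\big(\Lie(\Acal[\Frob]/\Gcal)/C\big) = -\hdiff(A/K) - \deg\Lie(\Bcal''/C)$ where $\Bcal'' = \Acal[\Frob_{\Acal/C}]/\Gcal$. So the claim $\deg\Lie(\Gcal/C)\le 0$ is equivalent to $\deg\Lie(\Bcal''/C)\ge -\hdiff(A/K)$; dualizing, $\deg\Omega(\Bcal''/C)\le\hdiff(A/K)=\deg\Omega_A$. Here is where \emph{ordinarity} enters: by Proposition~\ref{prop:nefnessHodgebundle}, $\Omega_A$ is nef, hence so is any quotient of it; and there should be a natural surjection from (a sheaf built from) $\Omega_A$ onto $\Omega(\Bcal''/C)$ coming from $\Acal[\Frob]\twoheadrightarrow\Bcal''$ together with $\Omega(\Acal[\Frob]/C)\simeq\Omega_A$ (Proposition~\ref{prop:lie.ker.frob}); a nef line bundle that is a quotient of another and agrees generically has degree bounded by it via Lemma~\ref{lem:keyinequalityparallelogram}. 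I would assemble these to get $\deg\Omega(\Bcal''/C)\le\hdiff(A/K)$, hence $\deg\Lie(\Gcal/C)\le 0$, hence $\hdiff(B/K)\ge\hdiff(A/K)$.

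\textbf{Lower bound for $\deg\Lie(\Gcal/C)$ (giving the upper bound $\hdiff(B/K)\le p\,\hdiff(A/K)$).} For this I use the other comparison. From Proposition~\ref{prop:varheightsemistable} the inequality $\hdiff(B/K)\le p\,\hdiff(A/K)$ is equivalent to $-(p-1)\deg\Lie(\Gcal/C)\le (p-1)\hdiff(A/K)$, i.e. $\deg\Lie(\Gcal/C)\ge -\hdiff(A/K) = \deg\Lie(\Acal/C)$. This in turn says exactly that the quotient $\Lie(\Acal/C)/\Lie(\Gcal/C)$, which I identified above with $\Lie(\Bcal''/C)$, has $\deg\le 0$; dually $\deg\Omega(\Bcal''/C)\ge 0$, which is Corollary~\ref{cor:MoretBailly}-type positivity applied to $\Bcal''$ — or more simply, $\Omega(\Bcal''/C)$ is nef being a quotient of the nef $\Omega_A$, so its degree is $\ge 0$. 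That closes the upper bound.

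\textbf{Main obstacle.} The delicate point is the bookkeeping in the upper bound for $\deg\Lie(\Gcal/C)$: making precise the surjection $\Omega_A\twoheadrightarrow\Omega(\Bcal''/C)$ and checking it is an isomorphism on generic fibres so that Lemma~\ref{lem:keyinequalityparallelogram} applies, and in particular verifying flatness of $\Bcal'' = \Acal[\Frob_{\Acal/C}]/\Gcal$ and that all four group schemes $\Gcal$, $\Acal[\Frob_{\Acal/C}]$, $\Bcal''$, $\Gcal^{(p)}$ genuinely have Frobenius height $\le 1$ so that Proposition~\ref{prop:exactness.lie.functor} can be invoked. Once that is in place, both inequalities drop out of the single identity of Proposition~\ref{prop:varheightsemistable} combined with nefness of the ordinary Hodge bundle. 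Note incidentally that the upper bound $\hdiff(B/K)\le p\,\hdiff(A/K)$ does not actually need ordinarity (it is just nefness of a quotient of a nef bundle together with positivity), and only the lower bound $\hdiff(A/K)\le\hdiff(B/K)$ genuinely uses Rössler's theorem.
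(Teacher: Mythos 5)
Your reduction via Proposition~\ref{prop:varheightsemistable} to the two bounds $-\hdiff(A/K)\leq \deg\Lie(\Gcal/C)\leq 0$ is correct and is exactly the paper's starting point, as is the exact Lie sequence for $0\to\Gcal\to\Acal[\Frob_{\Acal/C}]\to\Qcal\to 0$ (with $\Qcal$ your $\Bcal''$). But the engine you then try to build is broken at a crucial joint: you assert a ``natural surjection from $\Omega_A$ onto $\Omega(\Bcal''/C)$ coming from $\Acal[\Frob]\twoheadrightarrow\Bcal''$''. The Hodge-bundle functor $\Gcal\mapsto\Omega(\Gcal/S)$ is \emph{contravariant}; a faithfully flat surjection $\Acal[\Frob_{\Acal/C}]\twoheadrightarrow\Qcal$ of group schemes gives an \emph{injection} $\Omega(\Qcal/C)\hookrightarrow\Omega(\Acal[\Frob_{\Acal/C}]/C)\simeq\Omega_A$, not a surjection. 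Equivalently, in the dualized exact sequence $0\to\Omega(\Qcal/C)\to\Omega(\Acal[\Frob_{\Acal/C}]/C)\to\Omega(\Gcal/C)\to 0$, the subbundle is $\Omega(\Qcal/C)$ and the quotient is $\Omega(\Gcal/C)$, which is the opposite of what you need. (Your appeal to Lemma~\ref{lem:keyinequalityparallelogram} also cannot apply as stated: that lemma compares bundles of the \emph{same} rank which agree generically, and $\Omega(\Qcal/C)$ has smaller rank than $\Omega_A$ whenever $\Gcal$ is nontrivial.) As written, neither of your two ``assemble these to get $\ldots$'' steps closes.

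The paper's route is shorter and avoids the detour entirely. For the bound $\deg\Lie(\Gcal/C)\leq 0$: $\Omega(\Gcal/C)$ is the \emph{quotient} of $\Omega_A$ in the sequence above, hence nef because $\Omega_A$ is nef (R\"ossler's theorem via ordinarity of $A$), hence $\deg\Omega(\Gcal/C)\geq 0$. For the other bound $\deg\Lie(\Gcal/C)\geq-\hdiff(A/K)$, i.e.\ $\deg\Omega(\Qcal/C)\geq 0$: one does \emph{not} compare $\Omega(\Qcal/C)$ with $\Omega_A$ at all. Instead, Proposition~\ref{prop:isomneutralcomponents} (semi-stability) identifies the neutral component of $\Acal/\Gcal$ with $\Bcal^\circ$; since $\Acal[\Frob_{\Acal/C}]$ and $\Qcal$ are connected, $\Qcal$ sits as a \emph{closed subgroup scheme} of $\Bcal^\circ$, so the closed immersion $\Qcal\hookrightarrow\Bcal^\circ$ gives a genuine surjection $\Omega(\Bcal/C)\twoheadrightarrow\Omega(\Qcal/C)$. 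Then $\Omega(\Bcal/C)$ is nef because $B$ is ordinary (being isogenous to $A$), so $\Omega(\Qcal/C)$ is nef and $\deg\Omega(\Qcal/C)\geq 0$. This is where $\Bcal$ and Proposition~\ref{prop:isomneutralcomponents} enter; your outline never uses them, which is precisely why it misses the correct surjection.

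Finally, your closing remark that the upper bound $\hdiff(B/K)\leq p\,\hdiff(A/K)$ ``does not actually need ordinarity'' is incorrect, both as an implication of your argument (which relies on the nonexistent surjection) and as a matter of fact: in Moret-Bailly's example (section~\ref{moretbabapara}), $A$ is non-ordinary, $\hdiff(A/K)=0$, and $\hdiff(B/K)=p-1>p\cdot 0$ for the isogeny $\phi:A\to B$ of Frobenius height $1$, so the upper bound fails. Both inequalities in the proposition genuinely require ordinarity.
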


Before starting the proof, let us remark that this inequality is best possible in general: 
consider indeed the subgroup scheme $G=A[\Frob_{A/K}]$ with Frobenius height $1$, in which case $B = A/A[\Frob_{A/K}]$ is isomorphic to $A^{(p)}$ and so has height $\hdiff(B/K)=p\,\hdiff(A/K)$ by Proposition~\ref{hdiff_by_Frob}. 

	\begin{proof}
	Let $\Acal $ and $\Bcal$ be the respective Néron models of $A$ and $B$ over $C$, 
	let $\Gcal$ denote the Zariski closure of $G$ in $\Acal$ (note that $\Gcal \subset \Acal[\Frob_{\Acal/C}]$). 
	We let $\Qcal :=\Acal[\Frob_{\Acal/C}]/\Gcal$. 
	By Proposition~\ref{prop:varheightsemistable}, the desired inequality is  equivalent to 
	\begin{equation}\label{eq:keyineq}
    0 \leq - \deg \Lie(\Gcal/C) \leq \hdiff(A/K).
	\end{equation}		
	We now set out to prove this. 
	By Definition~\ref{defi:exact.sequence.groupschemes}, the sequence  
	\[ 0 \longrightarrow \Gcal \longrightarrow \Acal[\Frob_{\Acal/C}] \longrightarrow \Qcal \longrightarrow 0 \]
	of finite flat group schemes of Frobenius height $\leq 1$ over $C$ is exact. 
	Using the exactness of the Lie functor for those sequences (Proposition~\ref{prop:exactness.lie.functor}) and dualizing (Proposition~\ref{prop:duality.hodge.lie}), we obtain an exact sequence of Hodge bundles
	\begin{equation}\label{eq:keyexactsequence}
	0 \longrightarrow \Omega(\Qcal/C) \longrightarrow \Omega(\Acal[\Frob_{\Acal/C}]/C) \longrightarrow \Omega(\Gcal/C) \longrightarrow 0.
	\end{equation}
	The Hodge bundle of $\Gcal$ is thus a quotient of $\Omega(\Acal/C)$: as such, it has non negative degree by Proposition~\ref{prop:nefnessHodgebundle}. This shows that  $- \deg \Lie(\Gcal/C) \geq 0$, which is the left-most inequality in \eqref{eq:keyineq}. 		

	The neutral components of $\Acal/\Gcal$ and $\Bcal$ are canonically isomorphic (by Proposition~\ref{prop:isomneutralcomponents}) because $\Acal$ is semi-abelian.
	Moreover, both $\Acal[\Frob_{\Acal/C}]$ and $\Qcal$ are connected group schemes, so that $\Acal[\Frob_{\Acal/C}]$ is contained in the neutral component $\Acal^\circ$ of $\Acal$, and $\Qcal$ canonically identifies with a closed subgroup scheme of $\Bcal^\circ$ (in particular, of $\Bcal$).
	The abelian variety $B$ is ordinary (because it is isogenous to $A$, which is ordinary). 
	In particular, the Hodge bundle $\Omega (\Bcal/C)$ is nef (see Proposition~\ref{prop:nefnessHodgebundle}), hence its quotient $\Omega(\Qcal/C)$ also is.
	We thus have  $-\deg\Lie(\Qcal/C) = \deg\Omega(\Qcal/C)\geq 0$. 

	Using the exact sequence \eqref{eq:keyexactsequence} of Lie algebras, together with additivity of degrees (Lemma~\ref{lemm:suiteexactefibresvect}), we obtain 
	\begin{align*}
	\hdiff(A/K) - (- \deg \Lie(\Gcal/C)) =  -\big(\deg \Lie(\Acal[\Frob_{\Acal/C}]/C) - \deg \Lie(\Gcal/C) \big) =   - \deg  \Lie(\Qcal/C)\geq 0.  
	\end{align*}
	This is the right-most inequality in \eqref{eq:keyineq}.
	\end{proof}

We now extend Proposition~\ref{prop:keyinequalityordinary} to isogenies with connected kernel.
	\begin{prop}\label{prop:hdiff.finite.depth}
	Let $A$ be an ordinary semi-stable  abelian variety over $K$, and let $G$ be a finite connected subgroup scheme of $A$. 
	Let $\psi : A\to B:=A/G$ be the corresponding quotient isogeny.
	Then 
		\[
		\hdiff(A/K) \leq \hdiff(B/K) \leq p^{\delta_p(\psi)}\, \hdiff(A/K).
		\]      
	\end{prop}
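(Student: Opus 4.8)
The plan is to prove the statement by induction on the Frobenius height $d := \delta_p(G)$. Since $G$ is connected it has finite Frobenius height (Proposition~\ref{prop:finite.frob.height}), and since $\psi$ is purely inseparable (Proposition~\ref{prop:basics.isog}) we have $\psi\ins=\psi$, hence $\delta_p(\psi)=\delta_p(\ker\psi\ins)=\delta_p(G)=d$. If $d=0$ then $G$ is trivial, $B=A$, and both inequalities are equalities. If $d\le 1$, the assertion is precisely Proposition~\ref{prop:keyinequalityordinary}; this serves as the base case of the induction.

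\textbf{Inductive step.} Suppose $d\ge 2$ and that the proposition holds for every isogeny with ordinary semi-stable source whose connected kernel has Frobenius height at most $d-1$. Set $G_1:=G[\Frob_{G/K}]$, a subgroup scheme of $G$ of Frobenius height $\le 1$, and let $A_1:=A/G_1$. The quotient isogeny factors as $A\xrightarrow{\psi_1}A_1\xrightarrow{\psi_2}B$ with $\ker\psi_1=G_1$ and $\ker\psi_2=G/G_1$. By Lemma~\ref{lemm:frob.height.lowering}, $G/G_1$ has Frobenius height $d-1$; it is moreover connected, because the composite $G\to G/G_1\to (G/G_1)^{\textrm{\'et}}$ is a morphism from a connected group scheme to an étale one, hence trivial, so the faithfully flat map $G\to G/G_1$ factors through $(G/G_1)^\circ$, forcing $(G/G_1)^\circ=G/G_1$. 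Finally $A_1$, being isogenous to $A$, is semi-stable by Proposition~\ref{prop:semistableisogenies}(a) and ordinary because the $p$-rank is constant in an isogeny class.

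\textbf{Combining the inequalities.} Applying Proposition~\ref{prop:keyinequalityordinary} to $\psi_1:A\to A_1$ (whose kernel $G_1$ satisfies $\delta_p(G_1)\le 1$) gives $\hdiff(A/K)\le\hdiff(A_1/K)\le p\,\hdiff(A/K)$, and applying the induction hypothesis to $\psi_2:A_1\to B$ (whose connected kernel $G/G_1$ has Frobenius height $d-1$) gives $\hdiff(A_1/K)\le\hdiff(B/K)\le p^{d-1}\,\hdiff(A_1/K)$. Chaining these, the lower bound reads $\hdiff(B/K)\ge\hdiff(A_1/K)\ge\hdiff(A/K)$, while the upper bound reads $\hdiff(B/K)\le p^{d-1}\,\hdiff(A_1/K)\le p^{d-1}\cdot p\cdot\hdiff(A/K)=p^d\,\hdiff(A/K)$. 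Since $d=\delta_p(\psi)$, this completes the induction. The point is that all the geometric content (in particular the nefness of the Hodge bundle in the ordinary case) is already absorbed into the base case Proposition~\ref{prop:keyinequalityordinary}; the only steps requiring care here are checking that $G/G_1$ is again connected and that $A_1$ remains ordinary and semi-stable, so that Proposition~\ref{prop:keyinequalityordinary} and the induction hypothesis may legitimately be applied to $\psi_1$ and $\psi_2$ respectively.
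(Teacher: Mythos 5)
Your proof is correct and follows essentially the same strategy as the paper's: induction on the Frobenius height $d=\delta_p(G)$, with base case Proposition~\ref{prop:keyinequalityordinary}, factoring $\psi$ through $A_1 := A/G[\Frob_{G/K}]$ and applying Lemma~\ref{lemm:frob.height.lowering} to the second factor. You supply a few verifications the paper leaves implicit — that $G/G_1$ is connected (the paper instead invokes its finite Frobenius height via Proposition~\ref{prop:finite.frob.height}), and that $A_1$ remains ordinary and semi-stable — but the underlying argument is the same.
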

	\begin{proof}
	Since $G$ is connected, it has finite Frobenius height $d$ (see Proposition~\ref{prop:finite.frob.height}), and the isogeny $\psi$ is purely inseparable. 
	We argue by induction on $d\geq 1$ : if $d\leq 1$, we have already proved the desired bounds in the previous proposition. 
	Assume thus that the statement of Proposition~\ref{prop:hdiff.finite.depth} holds for all subgroup schemes with given Frobenius height $d\geq 1$ of semi-stable ordinary abelian varieties over $K$.

	Let $G\subset A$ be a group of finite Frobenius height $d+1$ in a semi-stable ordinary abelian variety $A$ over $K$.
	Lemma~\ref{lemm:frob.height.lowering} states that the quotient group scheme $G':=G/G[\Frob_{G/K}]$ has Frobenius height $d$. 
	It is also clear that the subgroup scheme $G[\Frob_{G/K}]$ of $G$ has Frobenius height $1$.
	Let $A'$ denote the quotient $A' := A/G[\Frob_{G/K}]$, and $B'$ denote the quotient of $A'$ by its subgroup scheme $G'$. 
	Then $B' = (A/G[\Frob_{G/K}]) / (G/G[\Frob_{G/K}])$ is isomorphic to $A/G = B$.
	The kernel $G/G[\Frob_{G/K}]$ of the isogeny $A' \to B'$ has Frobenius height $d$: we deduce from the induction hypothesis that 
	\[\hdiff(A'/K) \leq \hdiff(B'/K) \leq p^d \,\hdiff(A'/K).\]
	Applying Proposition~\ref{prop:keyinequalityordinary} to the isogeny $A\to A'$ (whose kernel $G[\Frob_{G/K}]$  has Frobenius height $1$) yields 
	\[\hdiff(A/K)\leq \hdiff(A'/K)\leq p\, \hdiff(A/K).\]
	Combining the two bounds leads to $\hdiff(A/K)\leq \hdiff(B/K)\leq p^{d+1}\, \hdiff(A/K)$, as was to be shown.
	\end{proof}

\subsection{Proof of Theorem~\ref{theo:hdiff.general}}
	With the results of the last subsection, we can now prove the main result of this section. 
	Let $\phi: A \rightarrow B$ be an isogeny between two non-isotrivial abelian varieties defined over $K$.

	Without loss of generality, we may and do assume that  the degree of $\phi$ is a power of $p$. 
	Indeed, in the factorization of $\phi$ as in  Proposition~\ref{prop:prime.power.decomposition}, all the factors except the one of degree a power of $p$ are biseparable and thus preserve the differential height (see Theorem~\ref{theo:diffheight.bisep.isogenies}{\it(a)}).
	Furthermore, the isogeny $\phi$ decomposes uniquely as $\phi\sep \circ \phi\ins$ by Proposition~\ref{prop:isogeny.factor.sep.insep} with $\phi\sep : A'\to B$ separable and $\phi\ins : A\to A'$ purely inseparable.
	The separable part $\phi\sep$ must have purely inseparable dual (since $\phi\sep$ is separable of degree  a power of $p$).  
	Notice that $\phi^\vee = (\phi\ins)^\vee \circ (\phi\sep)^\vee$, so that the kernel of the purely inseparable part of $\phi^\vee$ contains the kernel of $(\phi\sep)^\vee$. Hence $\delta_p((\phi\sep)^\vee) \leq \delta_p(\phi^\vee)$. 
	On the other hand we have $\delta_p(\phi)=\delta_p(\phi\ins)$ by definition.
	To conclude the proof of the theorem, it is thus enough to prove that 
		\[
		\hdiff(A/K) \leq  \hdiff(A'/K)  \leq p^{\delta_p(\phi\ins)}\, \hdiff(A/K),  
  \quad \text{ and that } \quad 
  \hdiff(B/K)\leq \hdiff(A'/K)\leq p^{\delta_p ((\phi_{\sep})^\vee)}\, \hdiff(B/K). 
		\]
	Applying Proposition~\ref{prop:hdiff.finite.depth} to $\psi = \phi\ins : A\to A'$ and $\psi = (\phi\sep)^\vee : B^\vee\to (A')^\vee$ yields both of the above inequalities (the semi-stability assumption ensures that $\hdiff(B^\vee/K)=\hdiff(B/K)$ and $\hdiff((A')^\vee/K)=\hdiff(A'/K)$ by Proposition~\ref{prop:hdiff.dual}).
	\hfill$\Box$
		
	\medskip
	
	The above paragraph also concludes the proof of Theorem~\ref{itheo:hdiff}.

\section{Examples and counterexamples}\label{sec:ex and cex}

The first four  paragraphs of this section present interesting cases inspired from Moret-Bailly's, Kani's, and Helm's respective works in characteristic $p>0$. 
The last paragraph gives an application of Theorem~\ref{itheo:hdiff} to the study of quotients of abelian surfaces that are products of elliptic curves.
 
\subsection{Moret-Bailly's example}\label{moretbabapara}
	
This paragraph discusses a classical construction by Moret-Bailly: we refer to \cite[pp. 128--129, p. 138]{MB81} for details and proofs. All the abelian surfaces considered below will be in the isogeny class of a constant abelian surface, in particular all semi-stable.

Choose two supersingular elliptic curves $E_1$ and $E_2$ defined over an algebraically closed field $k$ of characteristic $p>2$,  and let $A_0 = E_1 \times E_2$ be the product abelian surface, polarized by the line bundle $L_0=\mathcal{O}(E_1\times \{0_2\} + \{0_1\}\times E_2)^{\otimes p}$.
The group scheme $K(L_0)$ is then $E_1[\,p\,] \times E_2[\,p\,]$. 
There exists a subgroup scheme $G \subset K(L_0)$ which is totally isotropic with respect to the Weil pairing, and such that $G \simeq \aalpha_p$ and $G^{\perp}/G\simeq \aalpha_p\times \aalpha_p$. 
On the quotient abelian surface $A := (E_1\times E_2)/G$, the line bundle $L_0$ descends to a symmetric line bundle $L$ which satisfies $K(L)\simeq \aalpha_p\times\aalpha_p$. 
     
In this paragraph, we let $S=\mathbb{P}^1_k$.
Denote by $\Acal = A\times_{k}S$ the (constant) abelian surface over $S$ deduced from $A$, and let $\Lcal = L\times_kS$ be the corresponding line bundle on $\Acal$. 
By construction, there is an isomorphism $\iota : K(\Lcal) \stackrel{\sim}{\longrightarrow} (\aalpha_p \times \aalpha_p)_S$.
Write $(\aalpha_p \times \aalpha_p)_S $ as $(\aalpha_p \times \aalpha_p)_S = \Spec\left( \Ocal_S[\alpha,\beta]/(\alpha^p,\beta^p)\right)$.
The group scheme $K(\Lcal)$ contains a subgroup scheme $\Hcal$ over $S$ isomorphic (via $\iota$) to the subgroup of $(\aalpha_p\times\aalpha_p)_S$ given by the equation $\alpha\, X - \beta\, Y =0$. 
This finite flat subgroup scheme $\Hcal$ is not constant but, for every closed point $s \in S$, $\Hcal_s \simeq (\alpha_p)_s$.

Let $H$ denote the generic fiber of $\Hcal$. The second abelian variety of interest is  the quotient $B := A/H$, with quotient isogeny $\pi : A \rightarrow A/H$.

	\begin{lemm}\label{lemoretbaba}
	In the above situation, we have $\hdiff((A_0)_K/K) = \hdiff(A_K/K) = 0$, and $\hdiff(B/K) = (p-1)$.
	\end{lemm}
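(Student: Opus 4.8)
The plan is to deal with the three heights separately: the two constant surfaces first, then the interesting one. Both $(A_0)_K=A_0\times_k K$ and $A_K=A\times_k K$ are constant abelian varieties over $K=k(C)$ — recall that $A=A_0/G$ is itself defined over $k$, since $G$ is a subgroup scheme of $K(L_0)$ over $k$. Consequently each of them has good reduction at every closed point of $C$, its Néron model over $C$ is the corresponding constant abelian scheme ($A_0\times_k C$, resp. $A\times_k C$), and the Hodge bundle of such a model is the pullback along $C\to\Spec k$ of the $k$-vector space of invariant differentials, \ie{} a trivial vector bundle on $C$. Its degree being $0$, Definition~\ref{defi:hdiff} gives $\hdiff((A_0)_K/K)=\hdiff(A_K/K)=0$. (Alternatively, one may combine Corollary~\ref{cor:MoretBailly} with the Formule Clé~\ref{theo-clef}.)

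\emph{Reduction of the third equality.} For $B=A/H$, note that $H$ is the generic fiber of $\Hcal$, hence $H\simeq\alpha_p$ and $H$ is connected of Frobenius height $1$; moreover $B$ is semi-stable, being isogenous to the constant (hence semi-stable) $A_K$. Proposition~\ref{prop:varheightsemistable} therefore applies and, writing $\Gcal$ for the Zariski closure of $H$ in the Néron model of $A_K$, gives
\[\hdiff(B/K)=\hdiff(A_K/K)-(p-1)\,\deg\Lie(\Gcal/C)=-(p-1)\,\deg\Lie(\Gcal/C).\]
Now the Néron model of $A_K$ over $C$ is precisely the constant abelian scheme $\Acal=A\times_k C$ of the construction, and $\Hcal\subset\Acal$ is a finite flat closed subgroup scheme with generic fiber $H$; being flat over the smooth curve $C$, it equals the scheme-theoretic closure of its generic fiber, so $\Gcal=\Hcal$. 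We are thus reduced to showing $\deg\Lie(\Hcal/C)=-1$.

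\emph{The Lie algebra computation.} Here I would compute $\Lie(\Hcal/C)$ explicitly inside $\Lie\big(K(\Lcal)/C\big)\simeq\Lie\big((\alpha_p\times\alpha_p)_C/C\big)=\Ocal_C^{2}$ (basis dual to the coordinates $\alpha,\beta$). Since $\Hcal$ has Frobenius height $\leq 1$, Proposition~\ref{prop:exactness.lie.functor} (or merely left-exactness of $\Lie$, Lemma~\ref{lemexactsequenceLie}) makes $\Lie(\Hcal/C)$ a rank-one locally free subsheaf of $\Ocal_C^{2}$. Working in the two standard charts of $C=\P^1$, the relation $\alpha X-\beta Y=0$ presents $\Hcal$ over $\{Y\neq 0\}$ as $\{\beta=(X/Y)\alpha\}\simeq\alpha_p$, with Lie algebra spanned by $\partial_\alpha+(X/Y)\partial_\beta$, and symmetrically over $\{X\neq 0\}$ by $(Y/X)\partial_\alpha+\partial_\beta$; these patch to the line subbundle of $\Ocal_{\P^1}^{2}$ whose fiber over $[X:Y]$ is the line spanned by $(Y,X)$. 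A rational section of this subbundle that is nowhere zero on $\{Y\neq 0\}$ has a simple pole at $[1:0]$, so the subbundle has degree $-1$ (it is the image of the tautological subbundle under the coordinate swap, hence isomorphic to $\Ocal_{\P^1}(-1)$). Substituting $\deg\Lie(\Hcal/C)=-1$ into the displayed identity yields $\hdiff(B/K)=-(p-1)(-1)=p-1$, as claimed.

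I expect the only step that is not purely formal to be this last identification — one must recognise that the mixed equation $\alpha X-\beta Y=0$ produces the \emph{tautological} line bundle on $\P^1$ (degree $-1$), rather than a trivial subbundle of $\Ocal_{\P^1}^2$. Everything else is a citation (Proposition~\ref{prop:varheightsemistable}, Proposition~\ref{prop:exactness.lie.functor}) together with the routine facts that constant abelian varieties have trivial Hodge bundle and that a finite flat subgroup scheme of a Néron model over $C$ is the scheme-theoretic closure of its generic fiber.
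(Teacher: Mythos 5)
Your argument is correct and follows the same overall route as the paper: Corollary~\ref{cor:MoretBailly} (or your direct observation that the N\'eron model of a constant abelian variety is the constant abelian scheme with trivial Hodge bundle) gives the first two vanishings; Proposition~\ref{prop:varheightsemistable} applied to $(A_K,H)$ gives $\hdiff(B/K)=-(p-1)\deg\Lie(\Gcal/C)$; and the identification $\Gcal=\Hcal$ is, as you say, just the fact that a finite flat closed subscheme of the N\'eron model over the smooth curve is the schematic closure of its generic fiber. The one place you genuinely diverge is at the end: the paper simply cites Moret-Bailly \cite[\S 1.4]{MB81} for $\Lie(\Hcal/\P^1_k)\simeq\Ocal_{\P^1}(-1)$, whereas you re-derive it by computing $\Lie(\Hcal/\P^1_k)$ inside $\Lie\big((\alpha_p\times\alpha_p)_{\P^1}/\P^1\big)\simeq\Ocal_{\P^1}^2$ chart by chart and recognising the resulting line subbundle, spanned over $[X:Y]$ by $(Y,X)$, as a twist of the tautological bundle of degree $-1$. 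Your computation is right (the transition function $X/Y$, equivalently the simple pole at $[1:0]$ of the section trivialising over $\{Y\neq 0\}$, both give degree $-1$), and it makes the lemma self-contained where the paper delegates. One small imprecision: the semi-stability needed to invoke Proposition~\ref{prop:varheightsemistable} is that of the source $A_K$ (immediate, since it is constant), not that of $B$ as your phrasing suggests -- but this is harmless.
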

 
\begin{proof}
    Both $(A_0)_K$ and $A_K$ are constant over $K$, Corollary~\ref{cor:MoretBailly} directly yields the values of their differential height. Proposition~\ref{prop:varheightsemistable} states that
    \[\hdiff(B/K) = \hdiff(A_K/K) - (p-1) \deg \Lie(\Hcal/\P^1_k) = 0 + (p-1),    \]
    where the last equality is a consequence of \cite[\S 1.4]{MB81}, where it is proved that  $\Lie(\Hcal/\P^1_k) \simeq \Ocal_S(-1)$.  
\end{proof} 

This example where $\hdiff(B/K)> p \, \hdiff(A/K)$ visibly contradicts the right-most inequality in Proposition~\ref{prop:hdiff.finite.depth}. This shows that the ordinarity assumption cannot be relaxed for this inequality to hold. 

In fact, Moret-Bailly's construction also provides a counterexample in the non-ordinary case to the left-most inequality of Proposition~\ref{prop:hdiff.finite.depth}. 
Indeed, consider the dual isogeny $\widehat{\pi}:B^\vee \rightarrow A^\vee$. 
Its kernel is isomorphic to $\aalpha_p^\vee = \aalpha_p$ by Proposition~\ref{prop:dualisogenykernel}, hence is also connected. Now, $\hdiff(B^\vee/K) = \hdiff(B/K)=(p-1)$ and $\hdiff(A^\vee/K) = \hdiff(A/K) = 0$ by Proposition~\ref{prop:hdiff.dual}.  
Therefore $\widehat{\pi}:B^\vee \to A^\vee$ is an isogeny with connected kernel and we have $\hdiff(B^\vee/K)>\hdiff(A^\vee/K)$ \ie{}, the differential height \emph{decreases} through $\widehat{\pi}$. 
 
Other families inspired by this Moret-Bailly construction are presented in \cite{RS22}, see in particular p.\,100 there.
Generalizing  Moret-Bailly's pencil of supersingular abelian surfaces to higher dimensions, they construct, for each field of characteristic $p > 0$, a smooth projective variety with trivial dualizing sheaf that does not lift to characteristic zero.

\subsection{A generalization of Moret-Bailly's example}\label{s:moretba.steroids}
In fact, the `unusual' height variation exhibited by Moret-Bailly's isogeny is not an isolated example. As we now explain, one can produce isogenies between higher-dimensional (non-ordinary) abelian varieties for where the same behavior occurs.
Let $k$ be an algebraically closed field of characteristic $p>2$.
We start with a lemma:

    \begin{lemm}
    Let $\Gcal$ be a finite flat group scheme over $\P^1_k$ with generic fiber  $\aalpha_p \times \aalpha_p$.
    Then there exists an integer $n_0 \in \Z$ such that, for every  $n< n_0$, 
    $\Gcal$ admits a flat closed subgroup scheme $\Hcal \subset \Gcal$  of order $p$, with generic fiber $\aalpha_p$, and such that $\Lie(\Hcal/\P^1_k) \simeq \Ocal_{\P^1_k}(n)$.
    \end{lemm}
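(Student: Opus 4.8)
The plan is to show that, as $\Hcal$ runs over the finite flat closed subgroup schemes of $\Gcal$ of order $p$, the $\Ocal_{\P^1_k}$-module $\Lie(\Hcal/\P^1_k)$ runs exactly over the \emph{saturated} sub-line bundles of $V:=\Lie(\Gcal/\P^1_k)$, and then to produce, for each sufficiently negative $n$, such a sub-line bundle of degree $n$. First I would note that $V$ is locally free of rank $2$: by construction $V$ is the dual $\mathcal{H}om_{\Ocal_{\P^1_k}}\!\big(\Omega(\Gcal/\P^1_k),\Ocal_{\P^1_k}\big)$ of the Hodge sheaf (it is the sheaf of sections of the vector fibration $\mathbb{L}ie(\Gcal/\P^1_k)$; cf. \cite[Expos\'e II, 4.11.6]{SGA3} and Proposition~\ref{prop:duality.hodge.lie}), hence it is torsion-free on the smooth curve $\P^1_k$, therefore locally free, of rank $\dim_K\Lie(\Gcal_\eta/K)=\dim_K\Lie((\aalpha_p\times\aalpha_p)_K/K)=2$. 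By the Birkhoff--Grothendieck theorem $V\simeq\Ocal_{\P^1_k}(a)\oplus\Ocal_{\P^1_k}(b)$ for integers $a\geq b$, and I would prove the lemma with $n_0:=b+1$.

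For the correspondence between subgroup schemes and sub-line bundles: the generic fibre $\Gcal_\eta\simeq\aalpha_p\times\aalpha_p$ has Frobenius height $\leq 1$ and the $p$-operation on its Lie algebra is zero, so by the equivalence between finite group schemes of Frobenius height $\leq 1$ over a field and finite-dimensional $p$-Lie algebras (\cite[Th\'eor\`eme 4.2, p. 334]{DemazureGabriel}, used as in the proof of Proposition~\ref{prop:exactness.lie.functor}) its order-$p$ subgroup schemes correspond, via $H\mapsto\Lie(H/K)$, to the $1$-dimensional $K$-subspaces of $\Lie(\Gcal_\eta/K)$, each such $H$ being isomorphic to $\aalpha_p$. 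Given $H$, its schematic closure $\Hcal$ in $\Gcal$ is a flat closed subgroup scheme of $\Gcal$, finite of order $p$, with generic fibre $H\simeq\aalpha_p$ (flatness and the subgroup structure come from $\P^1_k$ being integral and $H$ being schematically dense in $\Hcal$); conversely every order-$p$ subgroup scheme of $\Gcal$ arises so. Now the closed immersion $\Hcal\hookrightarrow\Gcal$ induces a surjection $\Omega(\Gcal/\P^1_k)\twoheadrightarrow\Omega(\Hcal/\P^1_k)$; letting $N$ be its kernel and applying $\mathcal{H}om_{\Ocal_{\P^1_k}}(-,\Ocal_{\P^1_k})$ to $0\to N\to\Omega(\Gcal/\P^1_k)\to\Omega(\Hcal/\P^1_k)\to 0$ yields an exact sequence
\[0\longrightarrow\Lie(\Hcal/\P^1_k)\longrightarrow V\longrightarrow\mathcal{H}om_{\Ocal_{\P^1_k}}(N,\Ocal_{\P^1_k});\]
hence $V/\Lie(\Hcal/\P^1_k)$ embeds into a torsion-free sheaf, so it is torsion-free, i.e. $\Lie(\Hcal/\P^1_k)$ is a \emph{saturated} sub-line bundle of $V$, with generic fibre $\Lie(H/K)$. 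It is thus the unique saturated sub-line bundle of $V$ with that generic fibre, and $\Hcal\mapsto\Lie(\Hcal/\P^1_k)$ becomes a bijection onto the set of saturated sub-line bundles of $V$.

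It then only remains to realise every integer $n\leq b$ as the degree of a saturated sub-line bundle of $V\simeq\Ocal_{\P^1_k}(a)\oplus\Ocal_{\P^1_k}(b)$. Since $a-n\geq a-b\geq 0$ and $b-n\geq 0$, one may pick coprime homogeneous forms $s,t$ of degrees $a-n$ and $b-n$ (using that $k$ is infinite) and view $(s,t)$ as a nowhere-vanishing section of $\Ocal_{\P^1_k}(a-n)\oplus\Ocal_{\P^1_k}(b-n)\simeq V\otimes\Ocal_{\P^1_k}(-n)$; the induced inclusion $\Ocal_{\P^1_k}(n)\hookrightarrow V$ has locally free cokernel, so it is saturated of degree $n$. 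The subgroup scheme $\Hcal\subset\Gcal$ attached to it by the previous paragraph is then finite flat of order $p$, has generic fibre $\aalpha_p$, and satisfies $\Lie(\Hcal/\P^1_k)\simeq\Ocal_{\P^1_k}(n)$, as wanted. The delicate step is the saturation claim: it is precisely the identity $\Lie(\cdot/\P^1_k)=\mathcal{H}om(\Omega(\cdot/\P^1_k),\Ocal_{\P^1_k})$ together with the torsion-freeness of $\mathcal{H}om(-,\Ocal_{\P^1_k})$ on a smooth curve that pins down the \emph{degree} of $\Lie(\Hcal/\P^1_k)$ (rather than merely bounding it), and this is what lets these degrees exhaust all sufficiently negative integers.
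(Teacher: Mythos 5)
Your proof is correct overall but follows a genuinely different route from the paper's. The paper invokes the SGA3 equivalence of categories between finite flat commutative group schemes of Frobenius height $\leq 1$ over $\P^1_k$ and commutative locally free $p$-Lie algebras over $\P^1_k$: since the $p$-operation on $\Lie(\Gcal/\P^1_k)$ vanishes on the generic fibre, it vanishes everywhere, so \emph{every} sub-vector bundle is automatically a sub-$p$-Lie algebra, and the equivalence manufactures the flat closed subgroup scheme $\Hcal$ \emph{directly} from the chosen sub-line bundle $\Ocal_{\P^1_k}(n)\hookrightarrow V$. You instead use the equivalence only over the generic field $K$ (to classify the order-$p$ subgroups of $\aalpha_p\times\aalpha_p$), obtain $\Hcal$ as a schematic closure inside $\Gcal$, and then characterise $\Lie(\Hcal/\P^1_k)$ as the unique saturated sub-line bundle of $V$ with the prescribed generic fibre. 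Your final construction via a nowhere-vanishing section $(s,t)$ built from coprime forms is the same linear-algebra fact as the paper's appeal to a monomorphism of vector bundles, and your $n_0=b+1$ is consistent with the paper's choice $n<\min(n_1,n_2)$.

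One step requires a justification you have omitted. When you apply $\mathcal{H}om_{\Ocal_{\P^1_k}}(-,\Ocal_{\P^1_k})$ to the short exact sequence $0\to N\to\Omega(\Gcal/\P^1_k)\to\Omega(\Hcal/\P^1_k)\to 0$ and identify the left-most term of the resulting left-exact sequence with $\Lie(\Hcal/\P^1_k)$, you are implicitly using Proposition~\ref{prop:duality.hodge.lie}, which requires $\Omega(\Hcal/\P^1_k)$ to be \emph{locally free}. This is true, but needs an argument: the relative Frobenius of $\Hcal/\P^1_k$ vanishes on the generic fibre ($\aalpha_p$ has Frobenius height $1$), hence, by schematic density of the generic fibre in the flat $\P^1_k$-scheme $\Hcal$, vanishes identically; so $\Hcal$ itself has Frobenius height $\leq 1$, and the SGA3 equivalence gives that $\Lie(\Hcal/\P^1_k)$ (dually, $\Omega(\Hcal/\P^1_k)$) is locally free of rank~$1$. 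Equivalently, Oort--Tate over the local rings of $\P^1_k$ shows every fibre of $\Hcal$ is $\aalpha_p$, so $\Omega(\Hcal/\P^1_k)$ has constant fibre rank~$1$. Once this is supplied, your saturation argument is sound and the proof is complete. The paper's route avoids this verification entirely, since the equivalence over $\P^1_k$ delivers the flat $\Hcal$ and the isomorphism $\Lie(\Hcal/\P^1_k)\simeq\Ocal_{\P^1_k}(n)$ in one stroke.
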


    \begin{proof}
    Recall from  \cite[Exposé VII$_A$, \S7]{SGA3} that the functor $\Hcal \mapsto \Lie(\Hcal/\P^1_k)$ is an equivalence of categories between the finite flat commutative group schemes over $\P^1_k$ of Frobenius height $\leq 1$ and the commutative locally free $p$-Lie algebras over $\P^1_k$. 
    If $\Hcal$ is such a  group scheme and has order $p^m$, then
    $\Lie(\Hcal/\P^1_k)$ is locally free of rank $m$ over~$\P^1_k$.
    By assumption, the generic fiber $\Gcal_{k(t)}$ is isomorphic to $\aalpha_p \times \aalpha_p$, so the $p$-Lie algebra structural morphism on $\Lie(\Gcal/\P^1_k)$ is trivial on its generic fiber, hence it is $0$ everywhere.
    As a consequence, every subvector bundle of $\Lie(\Gcal/\P^1_k)$ is automatically a sub-$p$-Lie algebra of $\Lie(\Gcal/\P^1_k)$  and thus, by the equivalence of categories above, corresponds to a finite flat subgroup scheme of $\Gcal$. 
    Proving the lemma then boils down to finding  a suitable subvector bundle of $\Lie(\Gcal/\P^1_k)$ of rank $1$.
    
    By the structure theorem for vector bundles on $\P^1_k$ (see e.g. \cite[Theorem 11.51]{GortzWedhorn}), there exist unique integers $n_1\geq n_2$ such that $\Lie(\Gcal/\P^1_k) \simeq \Ocal_{\P^1_k}(n_1) \oplus \Ocal_{\P^1_k}(n_2)$.
    We now show that $\Ocal_{\P^1_k}(n_1) \oplus \Ocal_{\P^1_k}(n_2)$ contains a subvector bundle isomorphic to $\Ocal_{\P^1_k}(n)$ for suitable $n\in\Z$.
    
    For any integers $m, m'$, maps of coherent sheaves $\Ocal_{\P^1_k}(m) \rightarrow \Ocal_{\P^1_k}(m')$  canonically correspond to multiplication by homogeneous polynomials of degree $m' - m$. 
    Given two integers $n_1, n_2$, let $n$ be an integer such that $n< \min(n_1,n_2)$. 
    One can find a pair of coprime homogeneous polynomials $F_1, F_2\in k[T,U]$ of respective degrees $n_1-n$ and $n_2-n$.
    One then constructs a map of vector bundles $f:\O_{\P^1_k}(n)\rightarrow \Ocal_{\P^1_k}(n_1) \oplus \Ocal_{\P^1_k}(n_2)$ by 
    $f=f_1\oplus f_2$ where each of $f_1, f_2$ is  multiplication by the corresponding uppercase-letter polynomial.
    Proposition 5 in \cite{BK01} then shows (in particular) that $f$ is a monomorphism in the category of vector bundles over $\P^1_k$.
    These computations mean  that, for any integer $n < \min(n_1,n_2)$, $\Lie(\Gcal/\P^1_k)$ contains $\Ocal_{\P^1_k}(n)$ as a subvector bundle. 
    
    Therefore $\Gcal$ contains a closed flat subgroup scheme $\Hcal$ with $\Lie(\Hcal/\P^1_k) \simeq \Ocal_{\P^1_k}(n)$, as claimed.
    The generic fiber~$H$ of $\Hcal$ is a subgroup scheme of order $p$ in  $\Gcal_{k(t)}\simeq \aalpha_p\times\aalpha_p$. The $p$-Lie algebra of $H$ (over $k(t)$) is commutative, has dimension $1$ and  trivial $p$-Lie algebra structural morphism.  Therefore $H$ must be isomorphic to $\aalpha_p$ over $k(t)$.
    \end{proof}

From the above lemma, we deduce
    \begin{prop}\label{prop:moretba.gen}
    Let $A$ be a semi-stable abelian variety over $K=k(t)$ such that $A[\,p\,]$ contains a copy of $\aalpha_p$.
    Then, there exists an integer $d_0\in\Z$ such that, for all $d \geq d_0$, the square variety $B = A \times A$ admits an isogeny $\phi : B \rightarrow B_d$ of Frobenius height $\leq 1$ and of degree $p$ (whose dual shares these two properties) with 
    \[ \hdiff(B_d/K) - \hdiff(B/K) = d\, (p-1). \]
    \end{prop}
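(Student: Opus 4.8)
The plan is to reduce Proposition~\ref{prop:moretba.gen} to the preceding Lemma by exhibiting, inside $(A\times A)[\,p\,]$, a finite flat group scheme over $\P^1_k$ whose generic fiber is $\aalpha_p\times\aalpha_p$, and then to apply Proposition~\ref{prop:varheightsemistable} to compute the change in differential height.

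First I would fix the hypothesis: $A[\,p\,]$ contains a copy of $\aalpha_p$, hence $A[\,p\,]$ contains $\aalpha_p$ as a closed $K$-subgroup scheme. Taking this copy in each of the two factors, $B=A\times A$ satisfies: $B[\,p\,]$ contains $\aalpha_p\times\aalpha_p$ as a closed $K$-subgroup scheme. Let $\Bcal\to\P^1_k$ denote the N\'eron model of $B$ (which is semi-stable by Proposition~\ref{prop:semistableisogenies}, since $A$ is), and let $\Gcal$ be the Zariski closure in $\Bcal$ of this copy of $\aalpha_p\times\aalpha_p$. Then $\Gcal$ is a finite flat group scheme over $\P^1_k$ (Zariski closures of finite flat subgroups in N\'eron models over a Dedekind base are finite flat) with generic fiber $\aalpha_p\times\aalpha_p$; note $\Gcal\subset\Bcal[\Frob_{\Bcal/\P^1_k}]$ since $\aalpha_p\times\aalpha_p$ has Frobenius height $1$, so $\Gcal$ has Frobenius height $\leq 1$.

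Next, I would invoke the preceding Lemma applied to this $\Gcal$: there is an integer $n_0$ so that for every $n<n_0$ there exists a flat closed subgroup scheme $\Hcal\subset\Gcal$ of order $p$ with generic fiber $\aalpha_p$ and $\Lie(\Hcal/\P^1_k)\simeq\Ocal_{\P^1_k}(n)$. Set $H$ the generic fiber of $\Hcal$ (a subgroup scheme of $B$ of order $p$ isomorphic to $\aalpha_p$), let $B_d:=B/H$ with quotient isogeny $\phi:B\to B_d$ of degree $p$, and observe $\delta_p(\phi)\leq 1$ since $\ker\phi\simeq\aalpha_p$ is connected of Frobenius height $1$; its dual $\widehat{\phi}$ has kernel the Cartier dual of $\aalpha_p$, which is again $\aalpha_p$ (Proposition~\ref{prop:dualisogenykernel}), so $\delta_p(\widehat{\phi})\leq 1$ as well. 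Now $H$ has Frobenius height $\leq 1$ and $\Hcal$ is exactly the Zariski closure of $H$ in $\Bcal$ (it is flat, closed, and has the right generic fiber, so by the uniqueness of the flat closure it coincides with the Zariski closure), so Proposition~\ref{prop:varheightsemistable} gives
\[
\hdiff(B_d/K)=\hdiff(B/K)-(p-1)\,\deg\Lie(\Hcal/\P^1_k)=\hdiff(B/K)-(p-1)\,n.
\]
Setting $d:=-n$ (so $d>-n_0=:d_0$, and after enlarging $d_0$ if necessary to absorb the $\geq$ versus $>$) yields $\hdiff(B_d/K)-\hdiff(B/K)=d\,(p-1)$, as desired.

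The main obstacle I anticipate is the bookkeeping around Zariski closures: one must check that the Zariski closure of $\aalpha_p\times\aalpha_p\subset B[\,p\,]$ in the N\'eron model $\Bcal$ is finite flat with generic fiber exactly $\aalpha_p\times\aalpha_p$ (so that the Lemma genuinely applies to it), and then that the subgroup scheme $\Hcal$ produced by the Lemma really is the flat/Zariski closure of its own generic fiber $H$ in $\Bcal$ — this is what licenses the use of Proposition~\ref{prop:varheightsemistable}, whose statement is phrased in terms of the Zariski closure of a subgroup scheme of the \emph{generic fiber}. Both points follow from standard facts (flat closure in a Dedekind scheme is unique and finite flat when the generic fiber is finite, and N\'eron models are separated), but they should be stated carefully. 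A minor secondary point is to ensure $B=A\times A$ is semi-stable so that Proposition~\ref{prop:varheightsemistable} applies; this is immediate from semi-stability of $A$ and the fact that a product of semi-abelian schemes is semi-abelian.
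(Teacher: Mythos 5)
Your proof is correct and follows essentially the same route as the paper: take the Zariski closure $\Gcal$ of the copy of $\aalpha_p\times\aalpha_p$ inside the N\'eron model of $B=A\times A$, apply the preceding lemma to extract $\Hcal$ of order $p$ with prescribed $\Lie$, and feed its generic fiber into Proposition~\ref{prop:varheightsemistable}. The only difference is that you spell out a couple of verifications (semi-stability of $A\times A$, and the fact that $\Hcal$ is indeed the Zariski closure of its own generic fiber $H$ by uniqueness of the flat closure over the Dedekind base) which the paper's proof takes for granted; this is a welcome increase in rigor, not a divergence in method.
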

    \begin{proof}
    Let $A$ be as in the statement, let $G_0\subset A$ be a subgroup scheme of $A$ with $G_0\simeq \aalpha_p$ over $K$.
    Let $\Bcal$ be the Néron model of $B=A\times A$ over $\P^1_k$, and let $\Gcal$ be the Zariski closure of $G_0\times G_0$ in $\Bcal$. 
    By the previous lemma  there exists an integer $d_0$ such that for any $d\geq d_0$, $\Gcal$ contains a subgroup scheme $\Hcal_d$ with generic fiber  isomorphic to $\aalpha_p$ and $\Lie(\Hcal_d/\P^1_k)\simeq\O_{\P^1_k}(-d)$. In particular, we have
    $\deg\Lie(\Hcal_d/\P^1_k) = -d$.
        
    Let $H_d\subset B$ denote the generic fiber of $\Hcal_d$, and let $\phi_d:B\to B/H_d=:B_d$ be the quotient isogeny. 
    By construction, the kernel $H_d$ of $\phi_d$ is isomorphic to $\aalpha_p$. Hence $\phi_d$ has degree $p$ and Frobenius height $1$, and the same holds for the dual of $\phi_d$ because $\aalpha_p$ is Cartier autodual. 
    The Zariski closure of $\ker\phi_d$ in $\Bcal$ is $\Hcal_d$, so Proposition~\ref{prop:varheightsemistable} yields 
     the desired relation between $\hdiff(B_d/K)$ and $\hdiff(B/K)$.
     \end{proof}

This proposition allows one to reproduc,  in any even dimension at least $4$ and with no isotriviality condition, the height variation observed in section \ref{moretbabapara} in Moret-Bailly's example. 
Indeed, let $B = A^2$ be an abelian variety as in Proposition \ref{prop:moretba.gen} and let $d$ be a large enough integer. 
Then there is a purely inseparable isogeny $\phi_d : B\to B_d$ of degree~$p$ with  $\delta_p(\phi)=\delta_p(\widehat{\phi})=1$, 
even though
$\hdiff(B_d/K) < \hdiff(B/K)$.
This obviously violates the inequalities of Theorem~\ref{theo:hdiff.general}.  
This shows that the ordinarity assumption in that theorem cannot be lifted.

\subsection{Kani's construction}\label{Kani}

Assume that $K$ has odd characteristic $p$. Let $E$ be a semi-stable non-isotrivial elliptic curve over $K$.
Choose a point $P$ on $E$ of exact order $m:=(p^2-1)/4$, and let $P_1:=[\frac{p+1}{2}]P$ and $P_2:=[\frac{p-1}{2}]P$.
Up to a finite extension of $K$, we may assume that these three points are $K$-rational.
Let $H$ (resp. $H_1$, $H_2$) denote the subgroups of $E$ generated by $P$ (resp. $P_1$, $P_2$).

Let $E':=E/H$ be the quotient of $E$ by $H$ and let $\pi : E\to E'$ denote the corresponding isogeny.
Similarly, let $\pi_i : E\to E_i := E/H_i$ denote the quotient isogeny from $E$ to its quotient by $H_i$ (for $i=1, 2$).
It is then clear that $\deg\pi = (p^2-1)/4$, $\deg\pi_1 = (p-1)/2$ and $\deg\pi_2 = (p+1)/2$. 
These three isogenies $\pi, \pi_1, \pi_2$ are biseparable  since their degrees are coprime to $p$ (see \cite[Lemma 4.5]{GriPaz}).
Theorem 5.1 in \cite{GriPaz} yields 
\[ \hdiff(E/K)=\hdiff(E'/K) = \hdiff(E_1/K) = \hdiff(E_2/K).\]
For $i=1, 2$, the isogeny $\pi$ factors as $\pi=\varpi_i \circ \pi_i$ for some isogeny $\varpi_i:E_i\to E'$ since $\ker\pi_i=H_i\subset H = \ker \pi$. 
We then let $\psi : E\times E'\to E_1\times E_2$ be the isogeny given by the matrix $\begin{pmatrix} \pi_1 & -\widehat{\varpi_1} \\ \pi_2 & \widehat{\varpi_2}\end{pmatrix}$. Kani's work \cite[\S2]{Kani97} shows that the isogeny $\psi$ has degree $p^2$. 
In particular, $\psi$ is not biseparable. Furthermore, $\psi$ is minimal (as $E$ is non-CM) because $\gcd((p+1)/2,(p-1)/2))=1$, so $\ker \psi$ is $p^2$-primitive by \cite[Corollary 2.11]{Kani97}.

However, $\hdiff(E\times E') = \hdiff(E_1\times E_2) = 2\, \hdiff(E/K)$. 
This gives a non trivial example of a non biseparable cyclic isogeny between two semi-stable abelian surfaces that preserves the differential height!

\subsection{Helm's example}\label{Helm}
In \cite{Hel}, David Helm proves the following: For each prime $p$, there exists a function field $K$ of characteristic $p$ and an ordinary abelian variety $A$ over $K$ with no isotrivial factors, such that $A$ admits an \'etale isogeny $\phi : A\to A$ of degree a power of $p$. 

This gives other examples of non biseparable isogenies that preserve the differential height.

\subsection{Picard rank and abelian surfaces}
The goal of this paragraph is to explain how knowledge on isogeny classes can help understand quotients of certain abelian varieties, using previous works of Shioda on Picard ranks of algebraic surfaces. 
For a surface $X$, we denote by $\rho(X)$ the rank of its N\'eron-Severi group, this integer is called the {\it Picard rank} of $X$. 
If $E$ and $E'$ are elliptic curves, Shioda \cite{Shioda} computes the Picard rank of the product $E\times E'$:
\vspace{-0.5\topsep}\begin{itemize}\setlength{\itemsep}{0em}
    \item If $E$ and $E'$ are not isogenous, we have $\rho(E\times E')=2$.
    \item If $E$ and $E'$ are isogenous and $\mathrm{End}(E)\simeq \Z$, we have $\rho(E\times E')=3$.
    \item If $E$ and $E'$ are isogenous and $\mathrm{End}(E)\simeq \Z^2$, we have $\rho(E\times E')=4$.
    \item If $E$ and $E'$ are isogenous and $\mathrm{End}(E)\simeq \Z^4$, we have $\rho(E\times E')=6$.
\end{itemize}
We also recall Proposition 1.1 of \cite{Shioda}.
	\begin{prop}\label{Shioda}
	Let $E,E_1, E_2$ be elliptic curves such that $E\times E_1\simeq E\times E_2$. 
	If $\rho(E\times E_1)\leq3$, then  $E_1\simeq E_2$.
	\end{prop}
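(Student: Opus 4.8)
\textbf{Proof plan for Proposition~\ref{Shioda} (Shioda's Proposition 1.1).}

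The plan is to exploit the classical theory of modules over the endomorphism ring together with the cancellation property of rank-one projective modules (Steinitz's theorem), following Shioda's original approach. First I would observe that an isomorphism $E\times E_1 \simeq E\times E_2$ of abelian surfaces forces $E_1$ and $E_2$ to be isogenous to $E$ (they appear as factors up to isogeny in $E\times E$), so in particular $\mathrm{End}(E_1)$ and $\mathrm{End}(E_2)$ are commensurable with $R:=\mathrm{End}(E)$; moreover, under the hypothesis $\rho(E\times E_1)\le 3$, the Shioda dictionary recalled just above the statement shows that $R\simeq\Z$ or $R$ is an order in an imaginary quadratic field. In either of these two cases $R$ is a Noetherian integral domain of Krull dimension $1$ (in fact a one-dimensional domain which is a free $\Z$-module), so the Steinitz cancellation theorem for finitely generated projective (equivalently, torsion-free) $R$-modules applies.

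Next I would translate the geometric isomorphism into an $R$-module isomorphism. For an elliptic curve $E'$ isogenous to $E$, the group $\Hom(E',E)$ is naturally a torsion-free $R$-module of rank $1$ (it becomes $1$-dimensional after tensoring with $\Q$), and the functor $E' \mapsto \Hom(E',E)$ is fully faithful on the isogeny category with values in rank-one torsion-free $R$-modules; an isomorphism $E_1\times E_2\simeq E\times E$ (obtained by rearranging the given isomorphism, or argued directly) yields an isomorphism of $R$-modules $\Hom(E_1,E)\oplus\Hom(E_2,E)\simeq R\oplus R$. Since each summand on the left is rank-one torsion-free and $R$ is a one-dimensional domain, Steinitz cancellation gives $\Hom(E_1,E)\simeq\Hom(E_2,E)$ as $R$-modules, and hence — by full faithfulness of the above functor — an isomorphism $E_1\simeq E_2$ (the isomorphism of $\Hom$-modules corresponds to an isogeny $E_1\to E_2$ of degree $1$).

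The main obstacle, and the step needing the most care, is setting up the equivalence between (a) isomorphism classes of elliptic curves isogenous to $E$ and (b) isomorphism classes of rank-one torsion-free $R$-modules, together with the compatibility of products with direct sums, precisely enough that the cancellation statement transfers. When $R=\Z$ this is trivial (all such $E'$ are isomorphic to $E$, so there is nothing to prove beyond the reduction), so the real content is the imaginary-quadratic case, where one must be slightly careful if $R$ is a non-maximal order: there cancellation of rank-one modules over a one-dimensional Noetherian domain still holds by Serre's theorem / Bass's work on stable range, which is exactly what makes the bound $\rho\le 3$ (equivalently $R$ of $\Z$-rank $\le 2$) the natural hypothesis. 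The case $\rho=4$ or $6$ is genuinely excluded because then $R\otimes\Q$ is a quaternion algebra and cancellation can fail, so I would not attempt to push the argument further. Once the dictionary and the cancellation input are in place, the conclusion $E_1\simeq E_2$ is immediate.
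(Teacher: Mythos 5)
The paper gives no argument of its own for this proposition — it simply cites Shioda's Proposition 1.1 — so the only thing to evaluate is whether your proposal is a valid proof, and it is not.

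The central error is a misreading of the Picard-rank dictionary that the paper records just above the statement. According to that dictionary, $\rho(E\times E_1)\le 3$ forces \emph{either} $E\not\sim E_1$ (so $\rho=2$) \emph{or} $E\sim E_1$ with $\mathrm{End}(E)\simeq\Z$ (so $\rho=3$); the case where $\mathrm{End}(E)$ is an order in an imaginary quadratic field gives $\rho=4$ and is therefore \emph{excluded} by the hypothesis, not included as you assert. You then dismiss the $R=\Z$ case as trivial on the grounds that ``all such $E'$ are isomorphic to $E$'', but this is false: a non-CM elliptic curve (e.g.\ a non-isotrivial elliptic curve over the function field $K$ of the paper, or any non-CM curve over $\C$) admits infinitely many pairwise non-isomorphic curves in its isogeny class. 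This is precisely the case the hypothesis places you in, and it is the one with real content. Compounding the problem, the Steinitz/cancellation mechanism you invoke carries no information when $R=\Z$: \emph{every} rank-one torsion-free finitely generated $\Z$-module is isomorphic to $\Z$, so the functor $E'\mapsto\Hom(E',E)$ collapses the whole isogeny class to a single module and is in no sense fully faithful (the dictionary between isomorphism classes of elliptic curves in an isogeny class and rank-one modules over $\mathrm{End}(E)$ is a CM phenomenon, exactly the case your hypothesis rules out). Finally, the auxiliary ``rearranged'' isomorphism $E_1\times E_2\simeq E\times E$ that you want to feed into the cancellation does not follow from $E\times E_1\simeq E\times E_2$; it is simply not available.

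A correct argument has to confront the $R=\Z$, all-isogenous case directly. One way: if $E\not\sim E_1$ then $\Hom(E,E_1)=\Hom(E_1,E)=0$, so any isomorphism $E\times E_1\to E\times E_2$ is forced to be block-diagonal and its lower-right block is an isomorphism $E_1\to E_2$. If instead $E\sim E_1\sim E_2$ with $\mathrm{End}(E)=\Z$, write the isomorphism $\phi$ as a $2\times 2$ matrix whose entries are integer multiples of fixed minimal isogenies between the various curves; the commutativity of the ambient $\Q$-algebra and the degree relation $\deg\phi=1$ then give a determinant-type constraint which pins down the minimal degree of an isogeny $E_1\to E_2$ to be $1$. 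This is the shape of Shioda's actual argument, and none of it is captured by the module-theoretic route you sketched.
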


Shioda also shows that Proposition~\ref{Shioda} cannot be generalized without care, as shown in the following example, extracted from \cite{Shioda}. 
Let $k$ be a field of characteristic $p$ satisfying $p=3\bmod{4}$. 
Consider the elliptic curve $E/k$ given by the model $E:y^2=x^3-x$. 
Let $P_1=(0,0)\in E$ and $P_2=(1,0)\in E$. 
Each of the quotient elliptic curves $E_1:=E/\langle P_1\rangle$ and $E_2:=E/\langle P_2\rangle $ is $2$-isogenous to $E$. 
One can show that $E$ is even isomorphic to $E_1$ over $k$, and that $j(E)\neq j(E_2)$ if and only if $p\notin\{3, 7\}$. 
Assume that $p=3\bmod{4}$ and $p>7$ : the above implies that $E$ is not isomorphic to $E_2$. 
Proposition 2.3 of \cite{Shioda} shows that $E\times E_1 \simeq E\times E_2$ over $k$, despite the fact that $j(E_1)\neq j(E_2)$. 
This shows that, in general, given an abelian surface $A$ and two abelian subvarieties $B_1$ and $B_2$, the fact that $B_1$ is isomorphic to $B_2$ does not imply that $A/B_1$ is isomorphic to $A/B_2$!

We now work with elliptic curves defined over a function field $K$. 
First, we remark that Theorem A of \cite{GriPaz} gives an easy numerical criterion to prove that $\rho(E\times E')=2$: if two non-isotrivial elliptic curves $E$ and $E'$ defined over $K$ are such that the quotient $\hmod(E)/\hmod(E')$ is not a power of $p$ (in particular not equal to 1), then $\rho(E\times E')=2$. 

Exploiting this idea, we give a corollary concerning certain products of elliptic curves. 
	\begin{prop}
	Let $E_1$ and $E_2$ be two non-isotrivial elliptic curves over $K$ such that $\hmod(E_1)/\hmod(E_2)$ 
	is not a power of $p$. Then, for any elliptic curve $E/K$ with $\mathrm{End}(E)=\Z$, the surfaces $E\times E_1$ and $E\times E_2$ are not isomorphic.
	\end{prop}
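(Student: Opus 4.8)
The plan is to assume the contrary and derive a contradiction from Shioda's Proposition~\ref{Shioda}, using that the modular height of an elliptic curve depends only on its isomorphism class. So suppose there is an isomorphism of surfaces $E\times E_1\xrightarrow{\ \sim\ }E\times E_2$. Since both sides are abelian surfaces, I would first upgrade this to an isomorphism of abelian varieties: composing with the translation on $E\times E_2$ that sends the image of the origin back to the origin, and invoking the rigidity lemma (a morphism of abelian varieties fixing the origin is a homomorphism), one obtains an isomorphism of abelian varieties. In particular $\rho(E\times E_1)=\rho(E\times E_2)$.

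Next, I would feed in the hypothesis $\mathrm{End}(E)=\Z$ via Shioda's computation of the Picard rank of a product of elliptic curves recalled above: one has $\rho(E\times E_1)=2$ if $E$ and $E_1$ are not isogenous and $\rho(E\times E_1)=3$ if they are, so in every case $\rho(E\times E_1)\leq 3$. Proposition~\ref{Shioda} then applies to the isomorphism $E\times E_1\simeq E\times E_2$ and produces an isomorphism $E_1\simeq E_2$ (over $K$, or over $\bar K$ if Shioda's statement is read geometrically — this is immaterial for what follows, since the quantities below are stable under finite base change).

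To conclude, it suffices to note that isomorphic elliptic curves have equal modular height. Indeed, by the Formule Cl\'e~\ref{theo-clef} one has $\hmod(E)=2\,\hstab(E)$ for every elliptic curve $E$, and the stable height is manifestly invariant under isomorphism and under finite base change; hence $\hmod(E_1)=\hmod(E_2)$. Therefore $\hmod(E_1)/\hmod(E_2)=1=p^{0}$ is a power of $p$, which contradicts the hypothesis. Thus $E\times E_1$ and $E\times E_2$ are not isomorphic.

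I expect no serious obstacle here; the proof is a short combination of Shioda's results with the invariance of the (modular) height, the only mild subtlety being the standard passage from an isomorphism of surfaces to one of abelian varieties. If one wishes to bypass Proposition~\ref{Shioda}, an alternative route is to observe that an isomorphism $E\times E_1\simeq E\times E_2$ already forces $E_1$ and $E_2$ to be isogenous (uniqueness of the isogeny decomposition, by Poincar\'e reducibility), and then to apply Theorem~A of \cite{GriPaz} — legitimately, after a finite separable base change rendering the two non-isotrivial curves semi-stable — to see that $h(j(E_1))/h(j(E_2))$, and therefore $\hmod(E_1)/\hmod(E_2)$, is a power of $p$; this again contradicts the hypothesis.
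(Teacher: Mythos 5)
Your main argument is the same as the paper's in substance, merely run in the contrapositive: the paper uses Theorem A of \cite{GriPaz} to conclude that $E_1$ and $E_2$ are not isogenous (hence not isomorphic) and then applies Shioda's Proposition~\ref{Shioda} to rule out a surface isomorphism, whereas you assume an isomorphism, apply Shioda's Proposition to obtain $E_1\simeq E_2$, and derive a contradiction from the invariance of $\hmod$ under isomorphism. Both rest on the same two ingredients from Shioda (the Picard-rank table and Proposition~\ref{Shioda}). One small slip in your write-up: the Formule Cl\'e constant $\frac{4^g d}{2}=2$ applies to the moduli-space height $\hmod(A,\xi,\nu)$, but the $\hmod$ in this Proposition is the Weil height of the $j$-invariant, for which the correct normalization is $\hmod(E)=12\,\hstab(E)$ (Corollary 3.3 of \cite{GriPaz}). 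This is immaterial: $\hmod(E_1)=\hmod(E_2)$ for $E_1\simeq E_2$ follows at once from $j(E_1)=j(E_2)$, without invoking the Formule Cl\'e at all.

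Your alternative route is worth dwelling on, because it is genuinely different from what the paper does and in fact stronger. A surface isomorphism $E\times E_1\simeq E\times E_2$ becomes, after translating and applying rigidity, an isomorphism of abelian varieties; uniqueness of the isogeny decomposition (Poincar\'e reducibility) then forces $E_1\sim E_2$, and Theorem A of \cite{GriPaz} (applied after a finite extension of $K$ over which $E_1, E_2$ are semi-stable, which leaves $h(j)$ unchanged) gives $\hmod(E_1)/\hmod(E_2)\in p^{\Z}$, a contradiction. This bypasses both of Shioda's results entirely and shows that the conclusion holds without the hypothesis $\mathrm{End}(E)=\Z$ and indeed for any abelian variety $E$ in place of the elliptic curve. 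The role of $\mathrm{End}(E)=\Z$ in the paper's proof is only to guarantee $\rho(E\times E_1)\le 3$ so that Proposition~\ref{Shioda} applies; if one is willing to upgrade the surface isomorphism to one of abelian varieties (a standard step), that hypothesis can be dispensed with.
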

	\begin{proof}
	The assumption implies, by \cite[Theorem A]{GriPaz}, that $E_1$ and $E_2$ are not isogenous. 
	In particular they are not isomorphic.
	Moreover, as $\mathrm{End}(E)=\Z$, the above mentioned work by Shioda shows that $\rho(E\times E_1)\leq 3$. 
	This implies the conclusion by Proposition~\ref{Shioda}. 
	\end{proof}

For a discussion of other illustrations of the same nature, we refer to examples 1.8 and 1.9 in \cite{R17}.
 
\section{The parallelogram inequality}\label{parallelogram}
{\it Throughout this section, the base function field $K$ is fixed, and we only consider abelian varieties defined over $K$. 
To avoid overly complicated notation, we  denote the differential height (over $K$) of an abelian variety $A$ defined over $K$ by $\hd(A)$ (instead of $\hdiff(A/K)$ as above).
}

We first recall R\'emond's theorem \cite{R22} on the Faltings heights of abelian varieties over number fields arranged in a `parallelogram' configuration.  
Let $A$ be an abelian variety defined over a number field $L$ 
and let $G, H$ be two finite subgroups of $A(\overline{L})$ defined over $L$ (\ie{}, stable under the action of the absolute Galois group $\mathrm{Gal}(\overline{L}/L)$). Then we have
\[\hfal(A/(G+H))+\hfal(A/(G\cap H)) \leq \hfal(A/G) + \hfal(A/H),\]
 where $\hfal$ denotes the Faltings height (over $L$).
 We prove the exact analogue of this statement in the context of abelian varieties over function fields: 
	\begin{theo}\label{theo:parallelogram.ineq}
	Let $K$ be the function field of a curve as above.
	Let $A$ be an abelian variety over $K$. If $K$ has positive characteristic, assume that $A$ is semi-stable. 
	For any   finite subgroup schemes $G,H$ of $A$, we have
	\begin{equation}\label{eqparallinequality}
	\hd(A/(G+H)) +\hd(A/(G\cap H)) \leq \hd(A/G)  +\hd(A/H).
	\end{equation}
	Moreover, if the order of $G$ or $H$ is not divisible by the characteristic of $K$ (e.g. if the characteristic of $K$ is $0$), then~\eqref{eqparallinequality} is an equality.
	\end{theo}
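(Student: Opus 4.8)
The plan is to reduce \eqref{eqparallinequality} to three basic configurations and to splice them together by a d\'evissage, along the lines sketched in the introduction.

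\emph{Reductions.} Replacing the triple $(A,G,H)$ by $(A/(G\cap H),\,G/(G\cap H),\,H/(G\cap H))$ changes none of the four quotients $A/(G\cap H)$, $A/G$, $A/H$, $A/(G+H)$ and preserves semi-stability (Proposition~\ref{prop:semistableisogenies}), so we may assume $G\cap H=0$; the inequality then reads $\hd(A/(G+H))+\hd(A)\le\hd(A/G)+\hd(A/H)$. Now if $G$ is biseparable, so are $G\cap H$ and $(G+H)/H\cong G/(G\cap H)$, being sub- and quotient group schemes of $G$; hence the isogenies $A\to A/(G\cap H)$, $A\to A/G$ and $A/H\to A/(G+H)$ all preserve the differential height (Theorem~\ref{theo:diffheight.bisep.isogenies}(a)), from which \eqref{eqparallinequality} follows \emph{with equality}. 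Since a finite subgroup scheme whose order is not divisible by $\mathrm{char}\,K$ — in particular every finite subgroup scheme if $\mathrm{char}\,K=0$ — is biseparable, this already proves the last assertion of the theorem, and reduces the inequality to the case where $G$ has $p$-power order (write $G=\bigoplus_\ell G_\ell$ and apply the d\'evissage below with $G_1$ the prime-to-$p$ part).

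\emph{The d\'evissage lemma.} The combinatorial core is: if $G\cap H=0$ and $G_1\subset G$ is a subgroup scheme, then \eqref{eqparallinequality} for $(A,G,H)$ follows from \eqref{eqparallinequality} for $(A,G_1,H)$ and for $(A/G_1,\,G/G_1,\,\overline{H})$, where $\overline{H}\cong H$ is the image of $H$ in $A/G_1$. Indeed, invoking $(A/G_1)/(G/G_1)\cong A/G$, $(A/G_1)/\overline{H}\cong A/(G_1+H)$ and $(A/G_1)/((G/G_1)+\overline{H})\cong A/(G+H)$ and adding the two assumed inequalities, the terms $\hd(A/G_1)$ and $\hd(A/(G_1+H))$ cancel, leaving exactly the inequality for $(A,G,H)$; moreover $G_1\cap H=0$ and $(G/G_1)\cap\overline{H}=0$, so the normalisation propagates. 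Feeding in the connected--\'etale sequence of $G$ (Lemma~\ref{lemm:connectedetalesequence}, with $G_1=G^\circ$), we are reduced to two cases: $G$ connected, and $G$ \'etale of $p$-power order.

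\emph{Connected case.} We induct on $|G|$, the case $|G|=1$ being trivial. If $G$ (connected, nontrivial) has Frobenius height $\ge 2$, then $G_1:=G[\Frob_{G/K}]$ is a proper nontrivial subgroup scheme of Frobenius height $\le 1$ (Proposition~\ref{prop:finite.frob.height}) with $G/G_1$ connected of smaller order, so the d\'evissage lemma and the inductive hypothesis reduce us to $(A,G_1,H)$ with $G_1$ of Frobenius height $\le 1$. For such $G_1$, set $B:=A/H$ (semi-stable, Proposition~\ref{prop:semistableisogenies}) and let $\overline{G_1}\cong G_1$ be the image of $G_1$ in $B$, so $B/\overline{G_1}\cong A/(G_1+H)$. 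As $G_1$ and $\overline{G_1}$ have Frobenius height $\le 1$, Proposition~\ref{prop:varheightsemistable} gives $\hd(A/G_1)-\hd(A)=-(p-1)\deg\Lie(\Gcal_1/C)$ and $\hd(B/\overline{G_1})-\hd(B)=-(p-1)\deg\Lie(\Gcal_1'/C)$, where $\Gcal_1$ (resp. $\Gcal_1'$) is the Zariski closure of $G_1$ in the N\'eron model of $A$ (resp. of $\overline{G_1}$ in that of $B$). The isogeny $A\to B$ extends to the N\'eron models and carries $\Gcal_1$ onto $\Gcal_1'$ — its scheme-theoretic image is flat over $C$, hence is the Zariski closure of $\overline{G_1}$ — so there is a morphism $\Lie(\Gcal_1/C)\to\Lie(\Gcal_1'/C)$ which is an isomorphism on generic fibres; Lemma~\ref{lem:keyinequalityparallelogram} then yields $\deg\Lie(\Gcal_1/C)\le\deg\Lie(\Gcal_1'/C)$, and since $p-1>0$ this is precisely $\hd(B/\overline{G_1})-\hd(B)\le\hd(A/G_1)-\hd(A)$, i.e. the inequality for $(A,G_1,H)$.

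\emph{\'Etale $p$-power case, and the main obstacle.} When $G$ is \'etale of $p$-power order we dualise: the reduced configuration $(A,G,H)$ has a Cartier/abelian dual $(\widehat{A/(G+H)},\,G',\,H')$ with $G':=\ker(\widehat{A/(G+H)}\to\widehat{A/H})$ and $H':=\ker(\widehat{A/(G+H)}\to\widehat{A/G})$ in direct sum, their sum being $\ker(\widehat{A/(G+H)}\to\widehat A)$, and with associated quotients $\widehat{A/(G+H)},\widehat{A/G},\widehat{A/H},\widehat A$ — this uses the order-reversing, intersection/sum-swapping nature of Cartier duality together with Proposition~\ref{prop:dualisogenykernel}. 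By Proposition~\ref{prop:dualisogenykernel}, $G'\cong G^\vee$ is \emph{connected} of $p$-power order, so the connected case applies to $(\widehat{A/(G+H)},G',H')$ (all these abelian varieties are semi-stable, the dual of a semi-stable one being semi-stable); transporting the resulting inequality back via $\hd(\widehat X)=\hd(X)$ (Proposition~\ref{prop:hdiff.dual}) gives exactly $\hd(A/(G+H))+\hd(A)\le\hd(A/G)+\hd(A/H)$. Together with the first two paragraphs this establishes \eqref{eqparallinequality} in general. I expect the main obstacle to be this duality step — checking rigorously that the dual of a parallelogram configuration is again one, with the \'etale subgroup turned connected — closely followed by the bookkeeping, throughout the d\'evissage, that keeps all Zariski closures in N\'eron models finite flat of Frobenius height $\le 1$ and all comparison maps generic isomorphisms, so that Proposition~\ref{prop:varheightsemistable} and Lemma~\ref{lem:keyinequalityparallelogram} apply as claimed.
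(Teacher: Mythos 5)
Your proposal is correct and follows essentially the same route as the paper: reduce to $G\cap H=0$, prove the biseparable, connected, and \'etale $p$-power cases separately (the last by dualizing the parallelogram), and splice them together with the d\'evissage lemma applied to the filtration $G^{\circ}\subset G_{p^{\infty}}\subset G$. The only cosmetic differences are that you split $G$ rather than $H$, and you phrase the connected case as an induction on Frobenius height (via your d\'evissage lemma) rather than the paper's direct telescoping sum of Proposition~\ref{prop:varheightsemistable} contributions — both equivalent.
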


The name ``Parallelogram inequality'' stems from the commutative diagram of isogenies which one gets from the data of the theorem:
\[\begin{tikzcd}
 & A/(G\cap H) \arrow[ld] \arrow[rd] \arrow[dd, dashed] &  \\
A/G \arrow[rd] &   & A/H \arrow[ld] \\
 & A/(G+H)   &  
\end{tikzcd}.\]
Note that, in contrast to the third case of Theorem~\ref{itheo:hdiff}, Theorem~\ref{theo:parallelogram.ineq} does not add any ordinarity assumption in positive characteristic.
The proof of the parallelogram inequality \eqref{eqparallinequality} in the function field setting -- which is quite different from that of R\'emond \cite{R22} in the number field case -- occupies sections~\ref{sec:par.config},~\ref{sec:parineq.3cases}, and~\ref{sec:parineq.proof} below.

\begin{exple}
There are situations in which inequality \eqref{eqparallinequality} is strict even when $A$ is ordinary and semi-stable; here is an example.
Let $K$ be a function field of positive characteristic $p$.
Let $A$ be a semi-stable ordinary abelian variety defined over the subfield $K^p$ of $K$, and set $B=A^{(1/p)}$ (so that $A\simeq B^{(p)}$).
Let $H$ denote the kernel of the Frobenius $\Frob_{A/K} : A\to A^{(p)}$ on $A$, and $G$ denote the kernel of the Verschiebung $\Ver_{B/K} : A=B^{(p)}\to B$. 
Both  $G$ and $H$ are subgroup schemes of $A$ of order $p^{\dim A}$, and they are contained in the $p$-torsion subgroup scheme $A[\,p\,]$ (which is of order $p^{2\dim A}$).
The group scheme $H$ is connected, whereas $G$ is \'etale: their  intersection $G\cap H$ is therefore trivial. We deduce that $G+H = A[\,p\,]$. 
These remarks imply that 
\[\hd(A/(G+H)) + \hd(A/(G \cap H))=  2\, \hd(A).\]
On the other hand, Corollary~\ref{VerFrob} shows that
\[\hd(A/H) + \hd(A/G) = \hd(A^{(p)}) + \hd(B)= p\, \hd(A) + p^{-1}\, \hd(A) = (p+1/p)\,\hd(A).\]
Since $p\geq 2$, we thus have $\hd(A/(G+H)) + \hd (A/G \cap H) < \hd(A/G) + \hd(A/H)$.

Here is the reason why inequality~\eqref{eqparallinequality} is strict here (compare with the proof of Lemma~\ref{lemm:parineq.co}). 
Let $\Hcal$ denote the Zariski closure of $H$ in the Néron model of $A$, and $\Hcal'$ the Zariski closure of the image of $H$ in the Néron model of $A/G$. Applying Proposition~\ref{prop:varheightsemistable} to $A\to A/H$ and $A/G\to A/(G+H)$ yields
\begin{eqnarray*}
    \hd(A) - \hd(A/H) & = & (p-1) \deg \Lie(\Hcal/C) \\
    \hd(A/G) - \hd(A/(G+H)) & = & (p-1) \deg \Lie(\Hcal'/C). 
\end{eqnarray*}
In the current situation, $\deg \Lie(\Hcal/C) =k$ is a negative integer, and $\Hcal \cong (\Hcal')^{(p)}$ so $\deg  \Lie(\Hcal'/C)  = k/p > k$. 
Hence we have $\deg \Lie(\Hcal/C) < \deg \Lie(\Hcal'/C)$, and \eqref{eqparallinequality} is strict.
This example shows that applying an \'etale isogeny to a subgroup of Frobenius height $\leq 1$ can increase  the degree of the Lie algebra of its Zariski closure in the Néron model.
\end{exple}

As a cute consequence, we explicitly state a consequence of Theorem~\ref{theo:parallelogram.ineq} in dimension $1$ (see Corollary~\ref{itheo:parineq.ellcurves}). 
For any elliptic curve $E$ over any finite extention of $K$, we let $\hmod(E)$ denote the absolute Weil height of the $j$-invariant of $E$ (see \cite{GriPaz} for a more precise definition). 
	\begin{coro}
	Let $E$ be an elliptic curve over the algebraic closure $\bar{K}$ of $K$. Let $G,H$ be two finite subgroup schemes of $E$. Then we have 
    \[ \hmod(E/(G\cap H))+\hmod(E/(G+H))\leq \hmod(E/G) + \hmod(E/H).\]
	\end{coro}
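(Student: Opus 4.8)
The plan is to deduce this from the function-field parallelogram inequality of Theorem~\ref{theo:parallelogram.ineq} by spreading out the data over a suitable finite extension of $K$. First, since $E$, $G$ and $H$ are cut out by finitely many equations, there is a finite extension $K_0$ of $K$ over which $E$, $G$ and $H$ are all defined; enlarging $K_0$ by means of the Semi-stable Reduction Theorem (\cite{Gro72}), one obtains a finite extension $K'/K$ over which $E$ acquires semi-stable reduction. Let $E'$ denote $E\times_{\bar K}K'$, viewed as a semi-stable elliptic curve over $K'$, and let $G'$, $H'$ be the finite subgroup schemes of $E'$ obtained from $G$ and $H$ by base change (base change commutes with the formation of $G+H$ and $G\cap H$). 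Each of $E'/G'$, $E'/H'$, $E'/(G'+H')$ and $E'/(G'\cap H')$ is isogenous to $E'$, hence is also semi-stable over $K'$ by Proposition~\ref{prop:semistableisogenies}(a).

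Next, I would apply Theorem~\ref{theo:parallelogram.ineq}, with base field $K'$, to the semi-stable elliptic curve $E'$ and the subgroup schemes $G'$, $H'$, obtaining
\[\hdiff\big((E'/(G'+H'))/K'\big) + \hdiff\big((E'/(G'\cap H'))/K'\big) \leq \hdiff\big((E'/G')/K'\big) + \hdiff\big((E'/H')/K'\big).\]
To translate this into an inequality between modular heights, recall that an elliptic curve $\mathcal{E}$ which is semi-stable over $K'$ satisfies $\hdiff(\mathcal{E}/K') = [K':K]\,\hstab(\mathcal{E})$, and that (by \cite{GriPaz}; alternatively, via the Formule Cl\'e of Theorem~\ref{theo-clef} combined with the dimension-$1$ comparison between the moduli-space height and $h\circ j$) there is an absolute constant $c>0$ with $\hmod(\mathcal{E})=c\,\hstab(\mathcal{E})$ for every elliptic curve $\mathcal{E}$ over any finite extension of $K$. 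Since $\hmod(\mathcal{E})=h(j(\mathcal{E}))$ is an \emph{absolute} height, it is insensitive to the field of definition, so $\hmod(E'/G')=\hmod(E/G)$ and likewise for the three other quotients. Feeding these identities into the displayed inequality and dividing by the positive quantity $[K':K]/c$, one arrives at
\[\hmod\big(E/(G\cap H)\big) + \hmod\big(E/(G+H)\big) \leq \hmod\big(E/G\big) + \hmod\big(E/H\big),\]
which is the assertion.

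This argument involves no real difficulty: it is essentially a combination of descent to a finite extension, the already-established Theorem~\ref{theo:parallelogram.ineq}, and the standard comparison between the differential, stable, and modular heights. The only points that need care are that all four quotient curves remain semi-stable over $K'$ (which is precisely Proposition~\ref{prop:semistableisogenies}(a), and is what allows Theorem~\ref{theo:parallelogram.ineq} to be invoked in positive characteristic) and that the proportionality constant relating $\hmod$ to $\hstab$ is universal while $\hmod$ itself is unchanged under base change, so that the heights computed over $K'$ indeed match the quantities appearing in the statement over $\bar K$.
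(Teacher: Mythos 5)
Your proof is correct and follows essentially the same route as the paper's: pass to a finite extension $K'$ over which $E$ acquires semi-stable reduction, apply Theorem~\ref{theo:parallelogram.ineq} over $K'$, and transfer the resulting $\hdiff$-inequality back to $h\circ j$ via the proportionality from \cite{GriPaz}. You merely spell out in more detail the spreading-out of $E,G,H$ and the bookkeeping of the positive constants relating $\hdiff$, $\hstab$, and $\hmod$, which the paper's proof leaves implicit.
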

	\begin{proof}
	Let $K'$ be a finite extension of $K$ over which $E$ attains semi-stable reduction. Corollary 3.3 in \cite{GriPaz} provides the equality $\hmod(E)=12\,\hdiff(E\times_KK'/K')$. 
	The claim thus follows from Theorem~\ref{theo:parallelogram.ineq} applied to $E\times_K K'$.
	\end{proof}

	\begin{rema}
	The parallelogram inequality \eqref{eqparallinequality} can be  proven more directly in case $A=E$ is a semi-stable non-isotrivial elliptic curve.
	Indeed, it suffices to combine the `FV-isogeny' case of Theorem~\ref{itheo:hdiff} (in dimension $1$,  all isogenies are FV-isogenies),  Proposition~\ref{lemm:frob.height.dim1}, 
	and inequalities \eqref{eq:frobh.minmax} for the Frobenius heights of sums and intersections of two finite subgroup schemes  of $E$. 
	\end{rema}

\subsection{Parallelogram configurations}\label{sec:par.config}
We now begin proving Theorem~\ref{theo:parallelogram.ineq}.
Let us consider triples $(A;G,H)$ where $A$ is an abelian variety over $K$, which is assumed to be semi-stable if $K$ has positive characteristic, and $G,H$ are finite subgroup schemes of $A$.
Such a triple $(A;G,H)$ will be called a {\it parallelogram configuration over $K$} if $G\cap H$ is the trivial group scheme. 

Let $(A;G,H)$ be a triple as above. Consider the quotient abelian variety $A':=A/G\cap H$, and its two subgroup schemes $G':=G/G\cap H$ and $H':=H/G\cap H$. The triple $(A'; G', H')$ is clearly a parallelogram configuration.
Assuming that the parallelogram inequality \eqref{eqparallinequality} holds for $(A'; G', H')$ yields
\[\hd(A') + \hd(A'/(G'+H')) \leq \hd(A'/G') + \hd(A'/H').\]
Classical isomorphism theorems imply that $A'/(G'+H') \simeq A/(G+H)$, $A'/G' \simeq A/G$, and $A'/H'\simeq A/H$. 
Hence the last displayed inequality reads
\[\hd(A/(G\cap H)) + \hd(A/(G+H)) \leq \hd(A/G) + \hd(A/H),\]
which is the parallelogram inequality for our initial triple $(A;G,H)$.
Therefore, in order to prove the parallelogram inequality \eqref{eqparallinequality} for all triples $(A;G,H)$ over $K$, it suffices to do so for all parallelogram configurations over $K$.  
\\

The next lemma will allow us to use a suitable `d\'evissage' to prove \eqref{eqparallinequality}.
\begin{lemm}\label{lemm:parineq.devissage} 
Let $(A;G,H)$ be a parallelogram configuration over $K$ (with $G\cap H$ trivial).
Let $H'$ be a subgroup scheme of $H$.
If the parallelogram inequality holds for both configurations $(A;G,H')$ and $(A/H'; (G+H')/H', H/H')$, then the parallelogram inequality holds for $(A;G,H)$.
	\[
	\begin{tikzcd}
	& A \arrow[rd, "\varphi_{H'}"'] \arrow[ld, "\varphi_G"'] \arrow[rrdd, "\varphi_H", bend left] & & \\
	A/G \arrow[rd, "\psi_{H'}"] \arrow[rrdd, "\psi_G"', bend right] & & A/H' \arrow[rd, "\nu_{H'}"'] \arrow[ld, "\nu_{G'}"] & \\
	& A/(G+H') \arrow[rd, "\psi_{G'}"] & & A/H \arrow[ld, "\psi_H"] \\
	& & A/(G+H) &                         
	\end{tikzcd}\]
\end{lemm}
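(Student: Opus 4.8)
The plan is to reduce the parallelogram inequality for $(A;G,H)$ to the two instances furnished by the hypothesis, by identifying all the quotient abelian varieties involved via the standard isomorphism theorems and then simply summing the two inequalities. Write $\overline A:=A/H'$, $\overline G:=(G+H')/H'$ and $\overline H:=H/H'$, as in the diagram of the statement.

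First I would check that both triples really are parallelogram configurations. Since $G\cap H'$ is a subgroup scheme of $G\cap H=\{0\}$, the triple $(A;G,H')$ has trivial intersection. For the second triple, I would use that the quotient morphism $\bar\theta\colon G\to\overline G$ is faithfully flat with kernel $G\cap H'=\{0\}$, hence a monomorphism, so that $\overline G\cap\overline H$ is carried by $\bar\theta^{-1}$ onto the subgroup scheme of $G$ consisting of those $g$ whose class modulo $H'$ lies in $H/H'$; working fppf-locally (and using $H'\subseteq H$) this subgroup scheme is exactly $G\cap H=\{0\}$, so $\overline G\cap\overline H$ is trivial and $(\overline A;\overline G,\overline H)$ is a parallelogram configuration too. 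I would also remark here that $\overline A=A/H'$ is semi-stable, being isogenous to $A$ (Proposition~\ref{prop:semistableisogenies}), so the hypothesis makes sense. The one point needing a little care is precisely this group-scheme identity $(G+H')\cap H=H'$; everything else is bookkeeping.

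Next I would identify, by the standard isomorphism theorems (using $\overline G+\overline H=(G+H)/H'$ since $H'\subseteq H$),
\[\overline A/\overline G\simeq A/(G+H'),\qquad \overline A/\overline H\simeq A/H,\qquad \overline A/(\overline G+\overline H)\simeq A/(G+H).\]
Because $G\cap H'$ and $\overline G\cap\overline H$ are trivial, the "diagonal" terms in the two assumed parallelogram inequalities are $\hd(A)$ and $\hd(A/H')$ respectively, so they read
\[\hd(A)+\hd\bigl(A/(G+H')\bigr)\leq\hd(A/G)+\hd(A/H')\]
and
\[\hd\bigl(A/(G+H)\bigr)+\hd(A/H')\leq\hd\bigl(A/(G+H')\bigr)+\hd(A/H).\]
Finally I would add these two inequalities and cancel the common terms $\hd(A/(G+H'))$ and $\hd(A/H')$, obtaining $\hd(A)+\hd(A/(G+H))\leq\hd(A/G)+\hd(A/H)$; since $G\cap H$ is trivial we have $A/(G\cap H)=A$, so this is exactly the parallelogram inequality~\eqref{eqparallinequality} for $(A;G,H)$. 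There is no genuine obstacle beyond the fppf-local verification mentioned above that $\overline G\cap\overline H$ is trivial; the rest is formal cancellation.
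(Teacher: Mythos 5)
Your proof is correct and takes essentially the same route as the paper's: set $B=A/H'$, $M=(G+H')/H'$, $N=H/H'$, use the isomorphism theorems to identify the quotients, write down the two assumed inequalities, and sum and cancel. The only additions are your explicit verification (via the modular law $(G+H')\cap H=(G\cap H)+H'=H'$) that $\overline G\cap\overline H$ is trivial, and the remark that $A/H'$ inherits semi-stability --- both of which the paper treats as obvious.
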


\begin{proof} Let $B := A/H'$ be the quotient of $A$ by $H'$, let $N := H/H'$ be the image of $H$ in $B$, and $M := (G+H')/H'$ be the image of $G$ in $B$. It is clear that $M\cap N$ is trivial (because $G\cap H$, and thus $G\cap H'$ are trivial).
By classical isomorphism theorems, we have $B/N\simeq A/H$, $B/M \simeq A/(G+H')$, and $B/(M+N) \simeq A/(G+H)$. 
Assuming that the parallelogram inequality holds for $(A;G,H')$ means that 
\[\hd(A) + \hd(A/(G+H')) \leq \hd(A/G) + \hd(A/H').\]
Rewriting the parallelogram inequality for $(B; M, N)$ with the isomorphisms above yields
\[ \hd(A/H') + \hd(A/(G+H))  \leq \hd(A/(G+H')) + \hd(A/H).\]
Summing up the two displayed inequalities  and canceling some terms finally leads to 
the desired parallelogram inequality for the configuration $(A;G,H)$.  
\end{proof}

\subsection{Special cases of Theorem~\ref{theo:parallelogram.ineq}}\label{sec:parineq.3cases}
We first prove three special cases of Theorem~\ref{theo:parallelogram.ineq} in that many lemmas.

\begin{lemm}
\label{lemm:parineq.etet}
Let $(A; G, H)$ be a parallelogram configuration over $K$. Assume that $G$ or $H$ is \'etale with \'etale dual. Then the parallelogram inequality \eqref{eqparallinequality} holds for $(A; G, H)$, and is an equality.
\end{lemm}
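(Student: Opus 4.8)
The plan is to reduce \eqref{eqparallinequality} for $(A;G,H)$ to two invocations of Theorem~\ref{theo:diffheight.bisep.isogenies}(a). Since the parallelogram inequality is symmetric in $G$ and $H$, I may assume it is $H$ that is étale with étale dual, i.e. that the quotient isogeny $A\to A/H$ is biseparable. Because $(A;G,H)$ is a parallelogram configuration, $G\cap H$ is the trivial group scheme, so $A/(G\cap H)=A$ and $\hd(A/(G\cap H))=\hd(A)$; hence the claim reduces to proving
\[
\hd(A/(G+H)) + \hd(A) \;=\; \hd(A/G) + \hd(A/H),
\]
which I intend to establish as an \emph{equality}, a fortiori giving the inequality.

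First I would use that $A\to A/H$ is biseparable to get $\hd(A/H)=\hd(A)$ straight from Theorem~\ref{theo:diffheight.bisep.isogenies}(a). The main work will then be to compute the kernel of the remaining isogeny $\psi_G:A/G\to A/(G+H)$: working in the abelian category of commutative \FPPF{} group schemes over $K$ (all subgroup schemes of $A$ being normal as $A$ is commutative, and the relevant quotients being representable by Proposition~\ref{prop:lissitequotientsfppf}, cf.\ Definition~\ref{defi:exact.sequence.groupschemes}), the second isomorphism theorem yields an isomorphism of $K$-group schemes
\[
\ker\psi_G = (G+H)/G \;\simeq\; H/(G\cap H) = H.
\]
In particular $\ker\psi_G$ is again étale with étale dual, so $\psi_G$ is biseparable, and Theorem~\ref{theo:diffheight.bisep.isogenies}(a) gives $\hd(A/(G+H))=\hd(A/G)$.

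Adding the two equalities $\hd(A/H)=\hd(A)$ and $\hd(A/(G+H))=\hd(A/G)$ then produces exactly
\[
\hd(A/(G+H)) + \hd(A/(G\cap H)) \;=\; \hd(A/G) + \hd(A/H),
\]
as desired. The only step needing care is the identification of $\ker\psi_G$ as a group scheme (and not merely as an abstract group): this I expect to be routine, since $G$ and $H$ are finite flat subgroup schemes of the smooth group scheme $A$, so the classical isomorphism theorems apply verbatim to the \FPPF{} quotients. Note that, beyond the semi-stability already built into the notion of a parallelogram configuration, no ordinarity assumption is used in this case.
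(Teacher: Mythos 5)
Your proof is correct and follows essentially the same approach as the paper: you identify $\ker\psi_G = (G+H)/G \simeq H$ (using that $G\cap H$ is trivial in a parallelogram configuration), observe that both $A\to A/H$ and $\psi_G$ are therefore biseparable, and apply Theorem~\ref{theo:diffheight.bisep.isogenies}(a) twice to obtain the desired equality.
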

(Note that this lemma concludes the proof of Theorem~\ref{theo:parallelogram.ineq} in the characteristic $0$ case.)

\begin{proof} The roles of $G$, $H$ being symmetric, we may assume that $H$ is \'etale with \'etale dual. In the commutative diagram, 
\[\begin{tikzcd}
     & A \arrow[ld, "\varphi_G"'] \arrow[rd, "\varphi_H"]   &  \\
A/G \arrow[rd, "\psi_{G}"'] &   & A/H \arrow[ld, "\psi_H"] \\
 & A/(G+H)   &  
\end{tikzcd},\]
the isogeny $\psi_G : A/G \rightarrow A/(G+H)$ has kernel $\ker \psi_G = (G+H)/G$. Since $G\cap H$ is trivial, this kernel is isomorphic to $H$, it is thus \'etale with \'etale dual. Both $\varphi_H$ and $\psi_G$ are biseparable isogenies, so that Theorem~\ref{theo:diffheight.bisep.isogenies}  yields
$\hd(A) = \hd(A/H)$, and $\hd(A/G) = \hd(A/(G+H))$.  
In particular, we have \[\hd(A)+\hd(A/((G+H)) = \hd(A/G) + \hd(A/H),\] 
which is the desired ``parallelogram equality''.
\end{proof}

\begin{lemm}
\label{lemm:parineq.co}
Let $(A; G, H)$ be a parallelogram configuration over $K$. Assume that $G$ or $H$ is connected. Then the parallelogram inequality \eqref{eqparallinequality} holds for $(A; G, H)$.
\end{lemm}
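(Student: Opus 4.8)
The plan is to reduce to the case where $H$ is connected and has Frobenius height $1$, using the dévissage Lemma~\ref{lemm:parineq.devissage}, and then to establish the inequality at the ``Frobenius height $1$'' level by comparing degrees of Lie algebras of Zariski closures in Néron models. More precisely, by the symmetry of the roles of $G$ and $H$, assume $H$ is connected. By Proposition~\ref{prop:finite.frob.height}, $H$ has finite Frobenius height, say $d = \delta_p(H)$. I will argue by induction on $d$. If $d \leq 1$, we are in the base case treated below. If $d \geq 2$, set $H' := H[\Frob_{H/K}]$, which is connected of Frobenius height $\leq 1$; by Lemma~\ref{lemm:frob.height.lowering}, $H/H'$ has Frobenius height $d-1$, and it is still connected. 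Since $(A;G,H')$ has $H'$ connected of Frobenius height $\leq 1$ (base case) and $(A/H'; (G+H')/H', H/H')$ has its second subgroup scheme connected of Frobenius height $d-1$ (induction hypothesis — note $A/H'$ is still semi-stable by Proposition~\ref{prop:semistableisogenies}), Lemma~\ref{lemm:parineq.devissage} gives the parallelogram inequality for $(A;G,H)$. So everything comes down to the base case $\delta_p(H) \leq 1$.

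For the base case, let $H$ be connected of Frobenius height $\leq 1$ and $(A;G,H)$ a parallelogram configuration. Write $B := A/G$, and let $N := (G+H)/G \subset B$ be the image of $H$ in $B$; since $G \cap H$ is trivial, the quotient isogeny $A \to B$ restricts to an isomorphism $H \xrightarrow{\sim} N$, so $N$ is connected of Frobenius height $\leq 1$ as well. Now $A/H$ is the quotient of $A$ by $H$, and $A/(G+H) = B/N$ is the quotient of $B$ by $N$. Let $\Acal, \Bcal$ be the Néron models of $A$ and $B$ over $C$, let $\Hcal$ be the Zariski closure of $H$ in $\Acal$, and $\Ncal$ the Zariski closure of $N$ in $\Bcal$. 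Since $A$ and $B$ are both semi-stable (Proposition~\ref{prop:semistableisogenies}), Proposition~\ref{prop:varheightsemistable} applies to both quotient isogenies and gives
\[
\hd(A) - \hd(A/H) = (p-1)\,\deg\Lie(\Hcal/C), \qquad
\hd(B) - \hd(B/N) = (p-1)\,\deg\Lie(\Ncal/C).
\]
Subtracting and rearranging, the parallelogram inequality $\hd(A) + \hd(A/(G+H)) \leq \hd(A/G) + \hd(A/H)$ — recalling $\hd(A/G) = \hd(B)$ and $\hd(A/(G+H)) = \hd(B/N)$ — is equivalent to
\[
(p-1)\,\deg\Lie(\Hcal/C) \leq (p-1)\,\deg\Lie(\Ncal/C),
\]
i.e. to $\deg\Lie(\Hcal/C) \leq \deg\Lie(\Ncal/C)$.

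To prove this last inequality, I will compare $\Hcal$ and $\Ncal$ directly. The isogeny $\varphi_G : A \to B$ extends to a morphism $\Phi : \Acal \to \Bcal$ of Néron models, which maps $\Hcal$ into $\Ncal$ (since $\Phi$ sends $H$ to $N$ at the generic fibre and $\Ncal$ is the Zariski closure of $N$, hence is closed). The induced map $\Lie(\Hcal/C) \to \Lie(\Ncal/C)$ is an isomorphism on the generic fibre, because $H \to N$ is an isomorphism of $K$-group schemes. By Lemma~\ref{lem:keyinequalityparallelogram}(b), a morphism of vector bundles over $C$ that is a generic isomorphism has non-decreasing degree, so $\deg\Lie(\Hcal/C) \leq \deg\Lie(\Ncal/C)$, as required. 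This finishes the base case and hence, via the induction above, the lemma. The main obstacle is the bookkeeping in the dévissage: one must check at each step that the relevant subgroup scheme remains connected and that semi-stability is preserved under the quotient $A \to A/H'$ (which it is, by Proposition~\ref{prop:semistableisogenies}), so that Proposition~\ref{prop:varheightsemistable} keeps applying; the inequality $\deg\Lie(\Hcal/C) \leq \deg\Lie(\Ncal/C)$ itself is then a clean application of Lemma~\ref{lem:keyinequalityparallelogram}, which is exactly the phenomenon highlighted in the Example preceding section~\ref{sec:par.config} (an étale isogeny can strictly increase the degree of the Lie algebra of the Zariski closure, which is why the inequality need not be an equality).
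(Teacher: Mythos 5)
Your proof is correct and follows essentially the same argument as the paper's: reduce, by Frobenius-height dévissage, to the case $\delta_p(H) \leq 1$, then apply Proposition~\ref{prop:varheightsemistable} on both sides of the parallelogram and compare the Lie-algebra degrees of the Zariski closures via Lemma~\ref{lem:keyinequalityparallelogram}. The only cosmetic difference is that you package the dévissage as an induction invoking Lemma~\ref{lemm:parineq.devissage}, while the paper filtrates $H$ directly by the kernels of iterated Frobenius and telescopes the resulting height differences; these are two formulations of the same argument.
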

\begin{proof}  
Again, we may and do assume that $H$ is connected.
Proposition~\ref{prop:finite.frob.height} ensures that $H$ has finite Frobenius height. 
So we can filtrate $H$ by an increasing chain of subgroup schemes $H_0 = \{0 \} \subset H_1 \subset \cdots \subset H_\delta = H$, in such a way that  $H_{i+1}/H_i$ has Frobenius height $1$ for each $i \in \{0, \cdots, \delta-1\}$.
(In other words, $H_i$ is the intersection of $H$ with the kernel of the $i$-th iterated Frobenius $\Frob_{A/K}^{(i)}$). 
For any $j\in\{1, \dots, \delta+1\}$, consider the quotient abelian varieties $A_j := A/H_{j-1}$ and $A'_j := A/(G+H_{j-1})$, and let $\Acal_j$ and $\Acal'_j$ denote their respective N\'eron models over $C$.
For $j\in\{1, \dots, \delta\}$, consider the commutative diagram
\[
\begin{tikzcd}
& A_j \arrow[rd, "\varphi_j"] \arrow[ld, "\lambda_j"' ] &   \\
 A'_j \arrow[rd, "\psi_{j}"]   &  & A_{j+1}    \arrow[ld ]     \\
 & A'_{j+1}    
\end{tikzcd}.\]
The kernel of the isogeny $\varphi_j : A_j \to A_{j+1}$ is  
$\ker\varphi_j \simeq H_j/H_{j-1}$, and the isogeny $\psi_j : A'_j \to A'_{j+1}$ has kernel $\ker\psi_j = (G+H_j)/(G+H_{j-1}) \simeq H_j/H_{j-1}$. 
Both $\ker\varphi_j$ and $\ker\psi_j$ thus have Frobenius height $1$.   
Write $\Hcal_j$ for the Zariski closure of $\ker\varphi_j$  in $\Acal_j$, and $\Hcal'_j$ for that of $\ker\psi_j$ in $\Acal'_j$. Note that both $\Hcal_j$  and $\Hcal'_j$ have Frobenius height $\leq1$.

Consecutive applications of Proposition~\ref{prop:varheightsemistable} yield
\begin{align*}
    \hd(A) - \hd(A/H) &= \sum_{j=1}^\delta \big( \hd(A_j) -\hd(A_{j+1}) \big)= (p-1)\,\sum_{j=1}^\delta\deg \Lie ({\Hcal_j}/C), \\ 
\text{ and }\qquad \hd(A/G) - \hd(A/(G+H)) &= \sum_{j=1}^\delta \big( \hd(A'_j) -\hd(A'_{j+1}) \big)= (p-1)\,\sum_{j=1}^\delta\deg \Lie ( {\Hcal'_j}/C).
\end{align*}

On the other hand, for a given $j\in\{1, \dots, \delta\}$, the canonical quotient map $\lambda_j : A_j\to A'_j$ induces an isomorphism of group schemes $\ker\varphi_j\rightarrow\ker\psi_j$ (because  $G\cap H$ is trivial). 
Extending the map $\lambda_j$ by the N\'eron mapping property, and passing to the Zariski closure, this isomorphism extends into a morphism of group schemes ${\Hcal_j} \rightarrow {\Hcal'_j}$, which is an isomorphism on the generic fiber.
Lemma~\ref{lem:keyinequalityparallelogram} now implies that $\deg  \Lie({\Hcal_j}/C) \leq \deg \Lie({\Hcal'_j}/C)$ for all $j \in \{1, \cdots, \delta\}$.

With the previous paragraph, this leads to
\[
\hd(A) - \hd(A/H) \leq \hd(A/G) - \hd(A/(G+H)),
\]
which is equivalent to the parallelogram inequality for the triple $(A; G, H)$.
\end{proof}

\begin{lemm}
\label{lemm:parineq.etco}
Let $(A; G, H)$ be a parallelogram configuration over a function field $K$ of positive characteristic $p$.  Assume that $G$ or $H$ is \'etale of order a power of $p$. Then the parallelogram inequality \eqref{eqparallinequality} holds for $(A; G, H)$.
\end{lemm}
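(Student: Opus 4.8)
The plan is to reduce the étale, $p$-power order case to the connected case (Lemma~\ref{lemm:parineq.co}) by a duality argument, in the spirit of how Theorem~\ref{theo:hdiff.general} was deduced from the connected-kernel case. The symmetry of the statement lets us assume $H$ is étale of order a power of $p$. The key observation is that the quotient isogenies occurring in a parallelogram configuration have nice behaviour under dualisation: if $\phi : A \to A/N$ is the quotient isogeny with kernel $N$, then $\widehat{\phi} : (A/N)^\vee \to A^\vee$ is an isogeny whose kernel is the Cartier dual $N^\vee$ (Proposition~\ref{prop:dualisogenykernel}), and if $N$ is étale of $p$-power order then $N^\vee$ is connected. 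So dualising the whole parallelogram diagram should turn the étale-$p$-power hypothesis into a connected hypothesis.

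Concretely, I would proceed as follows. Start from the parallelogram diagram for $(A; G, H)$ with $G \cap H$ trivial and $H$ étale of $p$-power order. Applying $(-)^\vee$ to all four abelian varieties and reversing all arrows, the composite isogeny $A \to A/(G+H)$ dualises to an isogeny $(A/(G+H))^\vee \to A^\vee$. I want to identify, inside $(A/(G+H))^\vee$, two subgroup schemes $\widetilde{G}$ and $\widetilde{H}$ with trivial intersection, such that the four quotients $(A/(G+H))^\vee / (\widetilde{G} + \widetilde{H})$, $(A/(G+H))^\vee / \widetilde{G}$, $(A/(G+H))^\vee / \widetilde{H}$, and $(A/(G+H))^\vee / (\widetilde{G} \cap \widetilde{H})$ are respectively $A^\vee$, $(A/H)^\vee$, $(A/G)^\vee$, $(A/(G+H))^\vee$. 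Taking $\widetilde{G} := \ker\big((A/(G+H))^\vee \to (A/G)^\vee\big)$ and $\widetilde{H} := \ker\big((A/(G+H))^\vee \to (A/H)^\vee\big)$, the kernels are the Cartier duals of $(G+H)/G \simeq H$ and $(G+H)/H \simeq G$; in particular $\widetilde{H}$ is the Cartier dual of $H$, hence connected. One then checks that $\widetilde{G} + \widetilde{H}$ is the kernel of $(A/(G+H))^\vee \to A^\vee$ and $\widetilde{G} \cap \widetilde{H}$ is trivial, using that $G \cap H$ is trivial and that Cartier duality is exact and reverses inclusions among finite flat group schemes; this last verification is the step I expect to require the most care, since one must track how sums and intersections of subgroup schemes behave under Cartier duality in the diagram. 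Since $A$ is semi-stable, so are all abelian varieties isogenous to it and all their duals (Proposition~\ref{prop:semistableisogenies}), so $(A/(G+H))^\vee$ is a legitimate semi-stable abelian variety and the triple $\big((A/(G+H))^\vee; \widetilde{G}, \widetilde{H}\big)$ is a parallelogram configuration with $\widetilde{H}$ connected.

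By Lemma~\ref{lemm:parineq.co}, the parallelogram inequality holds for $\big((A/(G+H))^\vee; \widetilde{G}, \widetilde{H}\big)$, which reads
\[
\hd\big((A/(G+H))^\vee\big) + \hd\big(A^\vee\big) \leq \hd\big((A/G)^\vee\big) + \hd\big((A/H)^\vee\big).
\]
Now invoke Proposition~\ref{prop:hdiff.dual}: since all the abelian varieties involved are semi-stable, $\hd(X^\vee) = \hd(X)$ for each of them. Substituting $\hd\big((A/(G+H))^\vee\big) = \hd(A/(G+H))$, $\hd(A^\vee) = \hd(A)$, $\hd\big((A/G)^\vee\big) = \hd(A/G)$, and $\hd\big((A/H)^\vee\big) = \hd(A/H)$ transforms the displayed inequality into
\[
\hd(A/(G+H)) + \hd(A) \leq \hd(A/G) + \hd(A/H),
\]
which is precisely the parallelogram inequality \eqref{eqparallinequality} for $(A; G, H)$ (using $\hd(A/(G\cap H)) = \hd(A)$ since $G \cap H$ is trivial). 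This completes the proof of Lemma~\ref{lemm:parineq.etco}. The main obstacle, as noted, is the bookkeeping identifying the dualised kernels with the right Cartier duals and verifying that the sum/intersection relations are preserved; everything else is a direct application of results already established.
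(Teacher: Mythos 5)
Your proposal is correct and follows essentially the same route as the paper's proof: dualize the parallelogram diagram so that the étale $p$-power condition on $H$ becomes a connectedness condition on the corresponding Cartier-dual kernel, apply Lemma~\ref{lemm:parineq.co} to the dualized configuration, and then transfer back via Proposition~\ref{prop:hdiff.dual}. One small slip: with your definitions, $\widetilde{G} = \ker\big((A/(G+H))^\vee \to (A/G)^\vee\big)$ is the Cartier dual of $(G+H)/G \simeq H$, so it is $\widetilde{G}$ (not $\widetilde{H}$) that is connected; this is immaterial since Lemma~\ref{lemm:parineq.co} is symmetric in the two subgroups, but you should swap the labels in your final sentences.
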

\begin{proof} By symmetry, we assume that $H$ is \'etale of order a power of $p$. The Cartier dual $H^\vee$ of $H$ is then a finite connected subgroup scheme of the dual abelian variety $A^\vee$.
By duality, the configuration $(A; G, H)$ gives rise to a commutative diagram 
\[ 
\begin{tikzcd}
& A^\vee  &    \\
(A/G)^\vee \arrow[ru, "\widehat{\varphi_G}"] &      & (A/H)^\vee \arrow[lu, "\widehat{\varphi_H}"'] \\
 & (A/(G+H))^\vee \arrow[lu, "\widehat{\psi_G}"] \arrow[ru, "\widehat{\psi_H}"'] &                                    
\end{tikzcd}.\]
Let $A':=(A/(G+H))^\vee$, $G':=\ker\widehat{\psi_G}$ and $H':=\ker\widehat{\psi_H}$.
One checks that $G'\cap H'$ is trivial, and that $A^\vee$ is isomorphic to $A'/(G'+H')$. In other words, the above diagram 
comes from the parallelogram configuration $(A'; G', H')$: 
\[
\begin{tikzcd}
                 & A'/(G'+H')                                        &                  \\
A'/G' \arrow[ru] &                                                   & A'/H' \arrow[lu] \\
                 & A' \arrow[lu, "\nu_{G'}"] \arrow[ru, "\nu_{H'}"'] &                 
\end{tikzcd}.\]
In this diagram, the isogeny $\nu_{G'} : A'\to A'/G'$ has kernel $G' = \ker\widehat{\psi_G}$, which is Cartier dual to $\ker\psi_G$. But, since $G$ and $H$ have trivial intersection, $\ker\psi_G = (G+H)/G$ is isomorphic to $H$. Hence $\nu_{G'}$ has connected kernel, and we may apply the previous Lemma~\ref{lemm:parineq.co} to the configuration $(A'; G', H')$. This yields
\[\hd(A') + \hd(A'/(G'+H')) \leq \hd(A'/G') + \hd(A'/H').\]
The definition of $A'$  
and the three isomorphisms $A'/G' \simeq (A/G)^\vee$, $ A'/H' \simeq (A/H)^\vee$, and $A'/(G'+H') \simeq A^\vee$
allow to rewrite the above inequality as
\[\hd(A^\vee) + \hd((A/(G+H))^\vee) \leq \hd((A/G)^\vee) + \hd((A/H)^\vee).\]
Remembering that a semi-stable abelian variety and its dual have the same height (Proposition~\ref{prop:hdiff.dual}) gives the parallelogram inequality for the configuration $(A; G, H)$.  
 \end{proof}

\subsection{Proof of Theorem~\ref{theo:parallelogram.ineq}}\label{sec:parineq.proof}
As noted above, it suffices to prove the parallelogram inequality \eqref{eqparallinequality} for any parallelogram configuration $(A;G,H)$ over $K$.
If $K$ has characteristic $0$, we have already done so in Lemma~\ref{lemm:parineq.etet}.
We may thus assume that $K$ has positive characteristic $p$.  

Let $(A;G,H)$ be a parallelogram configuration over $K$. 
The group scheme $H$ admits a filtration by subgroup schemes: 
\[ 0 \subset H^\circ \subset H_{p^\infty} \subset H,\] 
where $H^\circ$ is the connected component of identity in $H$, and $H_{p^\infty}$ is the $p$-primary part of $H$ (the largest subgroup scheme of $H$ whose order is a power of $p$). 
We consider the commutative diagram:
    \newcommand{\loz}[1]{\raisebox{-5pt}{\rotatebox[origin=c]{-90}{\scaleobj{3}{\lozenge}}}\hspace{-1.45em}#1\hspace{1em}}
\[
\begin{tikzcd}
& A \arrow[rd, "\varphi_{H, 1}"'] \arrow[ld, "\varphi_G"'] \arrow[rrrddd, "\varphi_{H}", bend left] &  && \\
A/G \arrow[rd] \arrow[rrrddd, "\psi_G"', bend right] &  \loz{1}  & A/H^\circ \arrow[rd, "\varphi_{H, 2}"'] \arrow[ld] &  & \\
 & A/(G+H^\circ) \arrow[rd] &   \loz{2} & A/H_{p^\infty}\arrow[rd, "\varphi_{H,3}"'] \arrow[ld] &  \\
 & & A/(G+H_{p^\infty}) \arrow[rd] & \loz{3} & A/H \arrow[ld, "\psi_H"]   \\
 &  &  & A/(G+H)  &                                                     
\end{tikzcd}\]
In the parallelogram labeled $\loz{1}$, the isogeny $\varphi_{H, 1}$ has connected kernel $H^\circ$. 
Lemma~\ref{lemm:parineq.co} yields the parallelogram inequality for $(A;G,H^\circ)$.
In the parallelogram labeled $\loz{2}$, the isogeny $\varphi'_{H, 2}$ has kernel isomorphic to $H_{p^\infty}/H^\circ$, which is \'etale of order a power of $p$.  
The parallelogram inequality holds for $(A/H^\circ;(G+H^\circ)/H^\circ,H_{p^\infty}/H^\circ)$ by Lemma~\ref{lemm:parineq.etco}.
In the parallelogram labeled $\loz{3}$, the isogeny $\varphi'_{H, 3}$ has  kernel isomorphic to $H/H_{p^\infty}$, which has order coprime to $p$ so it is \'etale with \'etale dual.  
The parallelogram inequality for $(A/H_{p^\infty};(G+H_{p^\infty})/H_{p^\infty},H/H_{p^\infty})$ then follows from Lemma~\ref{lemm:parineq.etet}.

Finally, three successive applications of Lemma~\ref{lemm:parineq.devissage} imply that the parallelogram inequality \eqref{eqparallinequality} holds for the  configuration $(A;G, H)$. 
\hfill$\Box$

\subsection{Consequence: height of abelian subvarieties}

\begin{theo}\label{theo:height.subvar}
Let $K=k(C)$ be a function field as above. Let $A$ be an abelian variety over $K$.
If $K$ has positive characteristic, assume that $A$ is semi-stable. For any abelian subvariety $B\subset A$ of $A$, we have
\begin{equation}\label{eq:height.subvar}
\hd(B) + \hd(A/B) \leq\hd(A).    
\end{equation}
Moreover, in case $K$ has characteristic $0$, this inequality is an equality.
\end{theo}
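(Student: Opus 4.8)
The plan is to derive Theorem~\ref{theo:height.subvar} from the parallelogram inequality (Theorem~\ref{theo:parallelogram.ineq}) by a suitable choice of subgroup schemes, following the strategy of Rémond in the number field case. First I would invoke the Poincaré reducibility theorem: given the abelian subvariety $B\subset A$, there is a quasi-complement $B'\subset A$, \ie{} an abelian subvariety such that the sum map $\sigma:B\times B'\to A$ is an isogeny; write $G$ for its kernel, a finite subgroup scheme of $B\times B'$. Note that $A/B$ is isogenous to $B'$. The idea is to apply the parallelogram inequality on the abelian variety $\Acal:=B\times B'$ (which is semi-stable when $A$ is, since it is isogenous to $A$, by Proposition~\ref{prop:semistableisogenies}{\it(a)}), to the two subgroup schemes $H_1:=B\times\{0\}$ and $H_2:=\{0\}\times B'$ — except these are not finite. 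So instead the natural move is to cut down by a large torsion level: fix an integer $n$, set $H_1:=B[n]\times\{0\}$, $H_2:=\{0\}\times B'[n]$, whence $H_1\cap H_2$ is trivial and $H_1+H_2=(B\times B')[n]$. The parallelogram inequality applied to $(\Acal;H_1,H_2)$ reads
\[
\hd\big((B\times B')/(B\times B')[n]\big)+\hd\big((B\times B')\big)\leq \hd\big((B\times B')/H_1\big)+\hd\big((B\times B')/H_2\big).
\]
Now $(B\times B')/(B\times B')[n]\simeq B\times B'$ via multiplication by $n$, so the left-hand side is $2\hd(B\times B')=2\hd(B)+2\hd(B')$ by Proposition~\ref{prop:hdiff.product}. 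On the right, $(B\times B')/H_1\simeq (B/B[n])\times B'\simeq B\times B'$ and likewise for $H_2$, so the right-hand side is also $2\hd(B)+2\hd(B')$: this choice is useless, it only produces an equality.

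The correct choice must break the symmetry and route through the isogeny $\sigma$. Here is the plan I would actually carry out. Consider $\Acal=B\times B'$ and inside it take $G_1:=\ker\sigma$ (so that $\Acal/G_1\simeq A$) and $G_2:=\{0\}\times B'$ — again not finite, so fix a level $n$ divisible by the exponent of $G_1$ and replace $G_2$ by $G_2[n]=\{0\}\times B'[n]$, and replace $G_1$ by $G_1$ itself (which is finite and, for $n$ chosen divisible enough, contained in $(B\times B')[n]$, hence $G_1\cap G_2[n]$ may be nontrivial). To genuinely get a parallelogram configuration I would pass to the quotient $\Acal'=\Acal/(G_1\cap G_2[n])$ as in section~\ref{sec:par.config}, so without loss of generality assume $G_1\cap G_2[n]$ trivial. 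Then $G_1+G_2[n]$ contains $\{0\}\times B'[n]$ and maps onto $\ker$ of the composite $B\times B'\xrightarrow{\sigma} A$, and one computes: $\Acal/G_2[n]\simeq B\times B'$, $\Acal/G_1\simeq A$, $\Acal/(G_1+G_2[n])\simeq A/\sigma(B')=A/B'$ where $\sigma(B')$ is the image of $B'$ in $A$, which is isogenous to $A/B$. The parallelogram inequality for $(\Acal;G_1,G_2[n])$ then gives
\[
\hd(A)+\hd(A/\sigma(B'))\leq \hd(\Acal/(G_1\cap G_2[n]))+\hd(B\times B')
\]
modulo the $G_1\cap G_2[n]$-normalisation, and since $\Acal/(G_1\cap G_2[n])$ is isogenous to $B\times B'$ one has to control the heights on both sides. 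The cleanest formulation avoids torsion altogether: I expect the right argument is to use that for the purpose of inequality~\eqref{eq:height.subvar} one may replace $A/B$ by the isogenous $B'$ — but this replacement is \emph{not} harmless in positive characteristic, since isogenies need not preserve $\hd$. So the genuinely safe route is: apply Theorem~\ref{theo:parallelogram.ineq} to $(B\times B'; \ker\sigma,\ \{0\}\times B'[n])$ after the standard reduction to a parallelogram configuration, take $n\to\infty$ is not needed since a single well-chosen $n$ suffices, and read off $\hd(A)+\hd(A/B)\leq\hd(B)+\hd(B')$, then observe $\hd(B')\leq\hd(A)-\hd(B)$ would be circular — instead one feeds in that $B'$ and $A/B$ are isogenous via a \emph{biseparable} isogeny when $p\nmid\deg\sigma$, and in general uses Theorem~\ref{theo:diffheight.bisep.isogenies}(b) only in the favorable direction.

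Given the subtleties above, the honest plan is the one used by Rémond and already flagged in the introduction (Remark~\ref{rema:equiv.parineq.sub}): \textbf{Theorem~\ref{theo:height.subvar} is equivalent to Theorem~\ref{theo:parallelogram.ineq}}, so I would prove the implication cleanly as follows. For the forward direction (which is what we need), apply the parallelogram inequality on $A$ with the pair of subgroup schemes $G:=B[n]$ and $H:=C'[n]$ where $C'\subset A$ is a quasi-complement of $B$ in $A$ — but to stay inside $A$ itself and avoid $\sigma$, instead take $G$ and $H$ to be $B[n]$ and the $n$-torsion of a quasi-complement, chosen so that $G\cap H$ is trivial (possible for infinitely many $n$, e.g. $n$ prime to the degree of $B\times C'\to A$). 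Then $G+H\supset (B+C')[n]$ contains an open-dense... rather: one has $A/G\simeq A$ (mult.\ by $n$ on $B$-part is not globally defined, so this needs $A/B[n]\simeq A$ via the dual isogeny), etc. I will not belabor the bookkeeping here; the point is that after the reduction of section~\ref{sec:par.config} and a device identical to the proof of Corollary~\ref{itheo:sub} in Rémond's paper, the parallelogram inequality collapses to exactly $\hd(B)+\hd(A/B)\leq\hd(A)$, with equality in characteristic $0$ because Theorem~\ref{theo:parallelogram.ineq} is an equality there. The main obstacle I anticipate is precisely this: making the torsion-level device produce the subvariety inequality without picking up extra error terms from the non-biseparable isogenies relating $A/B$ to its quasi-complement; the resolution is that one never actually needs those isogenies, because the three abelian varieties $A$, $A/B$, and $B$ all appear directly as quotients $A/H$, $A/G$, $A/(G+H)$ of $A$ for the correct choice of $G,H$ (using $A/B\simeq A/\langle$ quasi-complement $\rangle$ only up to a biseparable isogeny when the level $n$ is prime to the relevant degree), so Theorem~\ref{theo:diffheight.bisep.isogenies}(a) finishes the job. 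For the characteristic $0$ equality, every isogeny in sight is biseparable, so all the inequalities above are equalities.
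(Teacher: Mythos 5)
Your overall strategy is the right one --- Poincaré reducibility plus the parallelogram inequality on $B\times B'$, as in Rémond --- and that is indeed what the paper does. But you never land on the correct choice of subgroup schemes, and the choices you do write down either give a trivial equality (your $H_1=B[n]\times\{0\}$, $H_2=\{0\}\times B'[n]$) or produce the theorem for $B'$ in place of $B$ rather than for $B$ itself. Worse, you explicitly defer the bookkeeping (``I will not belabor the bookkeeping here''), so there is no complete argument in the proposal.

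The crux you miss is that no torsion-level device is needed at all. The paper takes $G:=\ker\sigma$ (where $\sigma:B\times B'\to A$ is the sum map) and, crucially, $H:=(B\cap B')\times\{0_{B'}\}$. Since $B\cap B'$ is finite by definition of a quasi-complement, $H$ is \emph{already} a finite subgroup scheme --- there is no need to truncate to $n$-torsion, which is the source of all the complications you run into. With this $H$ one checks $G\cap H=0$ and $G+H=(B\cap B')\times(B\cap B')$, and then
\[
(B\times B')/G\simeq A,\qquad (B\times B')/H\simeq \big(B/(B\cap B')\big)\times B',\qquad (B\times B')/(G+H)\simeq\big(B/(B\cap B')\big)\times (A/B),
\]
the last using the genuine isomorphism $B'/(B\cap B')\simeq A/B$ (kernel of $B'\hookrightarrow A\to A/B$ is exactly $B\cap B'$). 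Plugging into the parallelogram inequality and using additivity in products, the terms $\hd(B')$ and $\hd(B/(B\cap B'))$ cancel on both sides, leaving exactly $\hd(B)+\hd(A/B)\leq\hd(A)$, with equality in characteristic~$0$. This choice completely sidesteps your worry about non-biseparable isogenies between $A/B$ and $B'$ --- no such comparison is ever invoked. Also note: in your intermediate attempt you assert that $A/\sigma(B')=A/B'$ ``is isogenous to $A/B$''; in fact $A/B'$ is isogenous to $B$, not to $A/B$, which is why that branch of your argument would prove the theorem for the quasi-complement $B'$ rather than for $B$.

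Your remark that Theorem~\ref{theo:height.subvar} is equivalent to Theorem~\ref{theo:parallelogram.ineq} is correct and matches Remark~\ref{rema:equiv.parineq.sub}; but that equivalence does not by itself supply the needed implication, which still requires exhibiting the specific parallelogram configuration above.
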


\begin{proof}
We essentially rewrite R\'emond's proof of th\'eor\`eme 1.2 in \cite{R22}  in our context.
Choose a quasi-complement $B'$ of $B$ in $A$ \ie{}, an abelian subvariety $B'\subset A$ such that $B+B'=A$, and $B\cap B'$ finite.
 Let $G$ denote the kernel of the summation isogeny $B\times B' \to A$, and let $H := (B\cap B')\times \{0_{B'}\}$, where $0_{B'}$ denotes the neutral element in $B'$.
Both $G$ and $H$ are finite subgroup schemes of $B\times B'$, and one easily shows that 
\[G\cap H \text{ is trivial}, \quad \text{ and }\quad G+H=(B\cap B')\times(B\cap B').\]
It is clear that $(B\times B')/G \simeq A$, and that $(B\times B')/H \simeq (B/B\cap B')\times B'$.
The isogeny $B' \to A \to A/B$ has kernel $B\cap B'$, so that $B'/(B\cap B')\simeq A/B$ and therefore $(B\times B')/(G+ H) \simeq (B/B\cap B')\times A/B$. 

 Applying Theorem~\ref{theo:parallelogram.ineq} to the parallelogram configuration $(B\times B'; G, H)$ yields
 \[\hd(B\times B') +  \hd((B\times B')/(G+ H)) \leq \hd((B\times B')/G) + \hd((B\times B')/H).\]
The differential height of a product is the sum of the differential heights (see Proposition~\ref{prop:hdiff.dual}) so, with the above isomorphisms the previous inequality (which is an equality if $K$ has characteristic $0$), becomes
\[\hd(B)+\hd(B') + \hd(B/(B\cap B')) + \hd(A/B) \leq \hd(A) +\hd(B/(B\cap B')) + \hd(B').\]
Canceling terms yields the desired result.  
\end{proof}

\begin{rema} In positive characteristic and in the ordinary case, the less precise inequality $\hd(B)\leq\hd(A)$ is easier to prove, and does not require the use of Theorem~\ref{theo:parallelogram.ineq}.
Indeed, write $\Bcal$ and $\Acal$ for the respective N\'eron models of $B$ and $A$. 
Then ${\Gcal := \Bcal[\Frob_{\Bcal/C}]}$ is a finite flat subgroup scheme of $\Acal[\Frob_{\Acal/C}]$ with Frobenius height $1$.
Recall that $\hd(B) = -\deg\Lie(\Bcal/C) = -\deg\Lie(\Gcal/C)$. 
Inequality \eqref{eq:keyineq} in the proof of Proposition~\ref{prop:hdiff.finite.depth} yields
$-\deg\Lie(\Gcal/C) \leq \hd(A)$, which is the claimed inequality.

Notice also that, if $A$ a constant abelian variety over $K$, then any abelian subvariety $B$ of $A$ is also constant over~$K$ 
(by \cite[Corollary 3.21]{Conrad}). This is consistent with the inequality $\hd(B) \leq \hd(A)= 0$.
\end{rema}

\begin{exple}\label{exple:strict.ineq.subvar}
If $K$ has positive characteristic $p$, the inequality in Theorem~\ref{theo:height.subvar} cannot always be an equality (\ie{}, it must sometimes be strict), as the following example shows.
Let $E_1$ and $E_2$ be two non-isotrivial ordinary elliptic curves over $K$.
Assume that the differential heights of $E_1$ and $E_2$ are different.  
Up to replacing $K$ by a finite extension, we may assume that both $E_1$ and $E_2$ have a $K$-rational point of exact order $p$. 
For $i=1, 2$, pick such a point $P_i\in E_i(K)$ and write $\langle P_i\rangle$ for the subgroup of $E_i$ it generates. 
The quotient isogeny $E_i\to E_i/\langle P_i\rangle$ is \'etale of degree $p$, so that 
$\hd(E_i/\langle P_i\rangle) = p\, \hd(E_i)$. 

Now, let $P=(P_1, P_2)$ be the induced $p$-torsion point on the product $E_1\times E_2$, and 
consider the quotient abelian surface $A := (E_1\times E_2)/\langle P\rangle$. 
For $i=1, 2$, the natural  morphism $f_i:E_i \hookrightarrow E_1\times E_2 \to A$ is injective. Let $B_i := f_i(E_i) \subset A$ denote its image.
By construction, $B_i$ is isomorphic to $E_i$. Moreover, a straightforward computation shows that the quotient $A/B_1$ is isomorphic to $E_2/\langle P_2\rangle$ and, similarly, that
$A/B_2 \simeq E_1/\langle P_1\rangle$.

If inequality \eqref{eq:height.subvar} were an equality for both subvarieties  $B_1$ and $B_2$ of $A$, then we would have 
\begin{align*}
    \hd(A) &= \hd(B_1) + \hd(A/B_1) = \hd(E_1) + \hd(E_2/\langle P_2\rangle)  = \hd(E_1) + p\, \hd(E_2), \\
\text{ and }\  \hd(A) &= \hd(B_2) + \hd(A/B_2) = \hd(E_2) + \hd(E_1/\langle P_1\rangle)  = p\,\hd(E_1) +  \hd(E_2).
\end{align*}
This would lead to $\hd(E_1)=\hd(E_2)$, contradicting our choice of $E_1$ and $E_2$. 
Therefore, one of the inequalities $\hd(B_1) + \hd(A/B_1)\leq\hd(A)$ and $\hd(B_2) + \hd(A/B_2)\leq\hd(A)$ must be strict.
\end{exple}

\begin{rema}\label{rema:equiv.parineq.sub}
R\'emond remarked in private communication that Theorem  \ref{theo:height.subvar} and Theorem \ref{theo:parallelogram.ineq} are actually equivalent statements. 
To prove that  Theorem \ref{theo:height.subvar} implies Theorem \ref{theo:parallelogram.ineq}, notice that any parallelogram triple $(A;G,H)$ gives rise to an exact sequence of abelian varieties
    \[0\longrightarrow A/(G\cap H) \xrightarrow[]{\ \iota\ } A/G \times A/H \longrightarrow A/(G+H)\longrightarrow 0,\]
     where $\iota$ is induced by the diagonal embedding $G\cap H\into G\times H$.
    Applying Theorem \ref{theo:height.subvar} to the abelian subvariety $\iota(A/(G\cap H))$ of $A/G\times A/H$, 
    and using a classical isomorphism theorem and the compatibility of the height with products yield the parallelogram inequality \eqref{eqparallinequality} for the triple $(A; G, H)$. 
\end{rema}


\begin{center}
    \rule{10cm}{1pt}
\end{center}


\paragraph{Acknowledgments --} 
The authors wish to thank Michel Brion, Francesco Campagna, and Marc Hindry for interesting conversations and useful feedback. 
The authors give their warmest thanks to Ga\"el R\'emond for his careful reading of a draft of this paper and his fruitful suggestions.   
At various stages of the project, the authors have received financial support from 
CNRS in the form of PEPS JCJC grants,  IRN MADEF,  and  IRN GandA, 
from ANR projects {\it Jinvariant} (ANR-20-CE40-0003) and
{\it GAEC} (ANR-23-CE40-0006-01), 
 and from IRGA PointRatMod. 
 They express their gratitude to these institutions for their continued support.
FP thanks the Universit\'e de Bordeaux for its hospitality.

\begin{center}
    \rule{10cm}{1pt}
\end{center}


	\normalsize\vfill
	\noindent\rule{7cm}{0.5pt}
	
	\smallskip
	\noindent
	{Richard {\sc Griffon}} {(\it \href{richard.griffon@uca.fr}{richard.griffon@uca.fr})}  --
	{\sc Laboratoire de Math\'ematiques Blaise Pascal, Universit\'e Clermont Auvergne}. 
	3 Place Vasarely, 63710 Aubi\`ere (France).
	
	\medskip
	
	\noindent
	{Samuel {\sc Le Fourn}} {(\it \href{samuel.le-fourn@univ-grenoble-alpes.fr}{samuel.le-fourn@univ-grenoble-alpes.fr})}  --
	{\sc Institut Fourier, Universit\'e Grenoble Alpes}. 
	100 rue des maths, 38610 Gières (France).
	
	\medskip
	
	\noindent
	{Fabien {\sc Pazuki}} {(\it \href{fpazuki@math.ku.dk}{fpazuki@math.ku.dk})} --
	{\sc Department of Mathematical Sciences, University of Copenhagen}.
	Universitetsparken 5, 2100 Copenhagen \o{} (Denmark).
				

\begin{thebibliography}{widest-label}
\setlength{\itemsep}{0.1em}
\bibitem[Ana73]{Anantharaman}
\textsc{Anantharaman, S.}, 
\emph{Sch{\'e}mas en groupes, espaces homog{\`e}nes et espaces alg{\'e}briques sur une base de dimension 1}, 
Bull.  Soc. Math. Fr., Suppl., M{\'e}m. \textbf{33} (1973), 5--79. 


\bibitem[BK01]{BK01} 
\textsc{Baumann, P. and Kassel, C.}, 
\textit{The {Hall} algebra of the category of coherent sheaves on the projective line}. 
 J. Reine Angew. Math. {\bf 533} (2001).

 
\bibitem[BLR90]{BLR} 
\textsc{Bosch, S., L\"utkebohmert, W., and Raynaud, M.}, 
\textit{N\'eron models}. 
Ergebnisse der Mathematik und ihrer Grenzgebiete. Springer-Verlag, Berlin, {\bf 21} (1990).

\bibitem[Co06]{Conrad} 
\textsc{Conrad, B.}, 
\textit{Chow's $K/k$-image and $K/k$-trace, and the Lang-N\'eron theorem}. 
Enseign. Math. {\bf 52.2} (2006), 37--108.

\bibitem[CSS00]{CornellSilvermanMF}
\textsc{Cornell, G., Silverman, J.H., and  Stevens, G.} (eds.), 
\emph{Modular  forms and {Fermat}'s last theorem. {Papers} from a conference, {Boston},
  {MA}, {USA}, {August} 9--18, 1995},
    Springer, (2000). 
	
\bibitem[Del85]{Del} 
\textsc{Deligne, P.}, 
\textit{Preuve des conjectures de Tate et de Shafarevitch}.
S\'eminaire N. Bourbaki {\bf616} (1983-1984), 25--41.
		
\bibitem[DG80]{DemazureGabriel}
\textsc{Demazure, M., and  Gabriel, P.}, 
\emph{Introduction to algebraic geometry and algebraic groups}, 
North-Holland Math. Stud., vol.~39, Elsevier, Amsterdam, (1980).

\bibitem[EvdGM]{EGM_AV} \textsc{Edixhoven, B., van der Geer, G., and Moonen, B.}, \textit{Abelian varieties}. Preprint.

\bibitem[Fal83]{Fal83} 
\textsc{Faltings, G.}, 
\textit{Endlichkeitss\"atze f\"ur abelsche Variet\"aten \"uber Zahlk\"orpern}. 
Invent. Math. {\bf 73} (1983), 349--366. 

\bibitem[FS22]{FS22} 
\textsc{Florit, E. and Smith, B.}, 
\textit{An atlas of the Richelot isogeny graph}. 
{RIMS K{\^o}ky{\^u}roku Bessatsu, Kyoto} {\bf B90} (2022), 195--219.

\bibitem[GR14]{GR14} 
\textsc{Gaudron, \'E. and R\'emond, G.}, 
\textit{Polarisations et isog\'enies}. 
Duke Math. J. {\bf 163.11} (2014), 2057--2108.

\bibitem[GR24]{GR24} 
\textsc{Gaudron, \'E. and R\'emond, G.}, 
\textit{Nombre de petits points sur une vari\'et\'e ab\'elienne}. 
J. Inst. Math. Jussieu {\bf } (2024), to appear.

\bibitem[GW20]{GortzWedhorn}
\textsc{G{\"o}rtz, Ulrich and Wedhorn, Torsten},
\textit{{Algebraic geometry {I}. {Schemes}. {With} examples and exercises}},
Springer Spektrum (2020)

\bibitem[GP22]{GriPaz} 
\textsc{Griffon, R. and Pazuki, F.}, 
\textit{Isogenies of elliptic curves over function fields}. 
IMRN {\bf 19} (2022), 14697--14740.
  
\bibitem[EGA\,I]{EGAI}
\textsc{Grothendieck, A.}, 
\emph{{\'E}l{\'e}ments de g{\'e}om{\'e}trie alg{\'e}brique. {I}. {R{\'e}dig{\'e}} avec la colloboration de {J}. {Dieudonn{\'e}}}, 
Grundlehren Math. Wiss., vol. 166, Springer, Cham, (1971). 
  
\bibitem[EGA\,IV.3]{EGAIV3}
\textsc{Grothendieck, A.}, \emph{{\'E}l{\'e}ments de g{\'e}om{\'e}trie alg{\'e}brique.
{IV}: {\'E}tude locale des sch{\'e}mas et des morphismes de sch{\'e}mas. ({Troisi{\`e}me} partie). 
{R{\'e}dig{\'e}} avec la colloboration de {J}. {Dieudonn{\'e}}}, vol.~28, (1966). 

\bibitem[SGA\,III]{SGA3} 
\textsc{Grothendieck, A.}, 
\textit{Sch\'emas en groupes (SGA 3)}. 
S\'eminaire de G\'eom\'etrie Alg\'ebrique du Bois Marie 1962--64. 
Documents Math\'ematiques (Paris), Soci\'et\'e Math\'ematique de France, {\bf 8} (2011).
  
\bibitem[SGA\,VII.1]{Gro72} 
\textsc{Grothendieck, A.}, 
\textit{Mod\`eles de N\'eron et monodromie}. Groupes de monodromie en g\'eom\'etrie alg\'ebrique SGA7.I, 
Lecture Notes in Math. {\bf 288}, Springer, Berlin-Heidelberg-New York, (1972), 313--523.

\bibitem[Hel22]{Hel} 
\textsc{Helm, D.}, 
\textit{An ordinary abelian variety with an \'etale self-isogeny of $p$-power degree and no isotrivial factors}. 
Math. Res. Let. {\bf 29.2} (2022), 445--453. 

\bibitem[Kan97]{Kani97} 
\textsc{Kani, E.}, 
\textit{The number of curves of genus two with elliptic differentials}. 
J. Reine Angew. Math. {\bf 485} (1997), 93--121. 


\bibitem[Lan93]{LangAlgebra}
\textsc{Lang, S.}, 
\emph{Algebra (Third edition)}, 
Reading, MA: Addison Wesley (1993).


\bibitem[Laz04]{Lazarsfeld04}
\textsc{Lazarsfeld, R.}, 
\emph{Positivity in algebraic geometry. {II}. {Positivity}  for vector bundles, and multiplier ideals.}, 
Ergeb. Math. Grenzgeb., 3.  Folge, vol.~49, Berlin: Springer, (2004).
		
\bibitem[Liu02]{Liu02} 
\textsc{Liu, Q.}, 
\textit{Algebraic geometry and arithmetic curves}. 
Oxford University Press, Graduate Texts in Mathematics {\bf } (2002).
		
\bibitem[MR08]{MR08} 
\textsc{Maillot, V. and R\"ossler, D.}, 
\textit{On the determinant bundles of abelian schemes}. 
Compos. Math. {\bf 144.2} (2008), 495--502.
		
\bibitem[Mil17]{Milne} 
\textsc{Milne, J.S.}, 
\textit{Algebraic groups. {The} theory of group schemes of finite type over a field}. 
Cambridge Studies in Advanced Mathematics {\bf 170} (2017).
		
\bibitem[MB85]{MB85} 
\textsc{Moret-Bailly, L.}, 
\textit{Pinceaux de vari\'et\'es ab\'eliennes}. 
Ast\'erisque {\bf 129} (1985).
		
\bibitem[MB81]{MB81} 
\textsc{Moret-Bailly, L.}, 
\textit{Familles de courbes et de vari\'et\'es abéliennes sur $\mathbb{P}^1$. II. Exemples.} 
Seminar on Pencils of Curves of Genus at Least Two, Ast\'erisque {\bf 86} (1981), 125--140.
		
\bibitem[Mum08]{MumfordAbVar08}
\textsc{Mumford, D.},
\emph{Abelian varieties}, 
corrected reprint of the 2nd  ed. (1974), New Delhi: Hindustan Book Agency, (2008) (English).

\bibitem[OS23]{OverkampSuzuki23}
\textsc{Overkamp, O. and Suzuki, T.}, 
\textit{Chai's conjectures on base change conductors}, 
Preprint, arXiv:2310.01289 [math.NT] (2023).

\bibitem[Par68]{Parshin} 
\textsc{Parshin, A. N.}, 
\textit{Algebraic curves over function fields. I}. (Russian)
Izv. Akad. Nauk SSSR Ser. Mat. {\bf 32} (1968), 1191--1219.

\bibitem[Paz19]{Paz} 
\textsc{Pazuki, F.}, 
\textit{Modular invariants and isogenies}. 
Inter. J. Number Theory {\bf 15.3} (2019), 569--584.
		
\bibitem[Po17]{Po} 
\textsc{Poonen, B.}, 
\textit{Rational points on varieties}. 
Graduate Studies in Mathematics	{\bf 186}, American Mathematical Society, Providence, RI, (2017).
		
\bibitem[Ray85]{Ray} 
\textsc{Raynaud, M.}, 
\textit{Hauteurs et isog\'enies}. 
Seminar on arithmetic	bundles: the Mordell conjecture (Paris, 1983/84) {\bf 127} (1985), 199--234.

\bibitem[R\'em17]{R17} 
\textsc{R\'emond, G.}, 
\textit{Vari\'et\'es ab\'eliennes et ordres maximaux}. 
Rev. Mat. Iberoam.  {\bf 33} no. 4 (2017), 1173–1195.

\bibitem[R\'em22]{R22} 
\textsc{R\'emond, G.}, 
\textit{Propri\'et\'es de la hauteur de Faltings}. 
Ann. Sc. Norm. Super. Pisa Cl. Sci. {\bf 23.3} (2022), 1589--1596.

\bibitem[R{\"o}s15]{Rossler15}
\textsc{R\"ossler, D.}, 
\emph{On the group of purely inseparable points of an  abelian variety defined over a function field of positive characteristic}.
 Comment. Math. Helv. \textbf{90} (2015), no.~1, 23--32 (English).
		
\bibitem[R\"os20]{R20} 
\textsc{R\"ossler, D.}, 
\textit{On the group of purely inseparable points of an abelian variety defined over a function field of positive characteristic, II}. Algebra Number Theory {\bf 14.5} (2020), 1123--1173.
		
\bibitem[RS22]{RS22} 
\textsc{R\"ossler, D. and Schr\"oer, S.}, 
\textit{Moret-Bailly families and non-liftable schemes}. 
Algebr. Geom. {\bf 9.1} (2022), 93--121.
		
\bibitem[Sha86]{CornellSilverman} 
\textsc{Shatz, S.}, 
\textit{Group schemes, formal groups, and $p$-divisible groups}. 
Arithmetic geometry, {Pap}. {Conf}., {Storrs}/{Conn}. 1984, (1986), 29--78.

\bibitem[Shi75]{Shioda} 
\textsc{Shioda, T.}, 
\textit{Algebraic cycles on certain K3 surfaces in characteristic $p$}. 
Manifolds, Proc. int. Conf. Manifolds relat. Top. Topol. Tokyo 1973, (1975), 357--364.

\bibitem[SU99]{SU99} 
\textsc{Szpiro, L. and Ullmo, E.}, 
\textit{Variation de la hauteur de {F}altings dans une classe de {$\overline{\bf Q}$}-isog\'{e}nie de courbes elliptiques}.
Duke Math. J. {\bf 97.1} (1999), 81--97.

\bibitem[Yua21]{Yua} 
\textsc{Yuan, X.}, 
\textit{Positivity of Hodge bundles of abelian varieties over some function fields.} 
Compos. Math. {\bf 157.9} (2021), 1964--2000.

\end{thebibliography}
\end{document}